\newtheorem{thm}{Theorem}[subsection]
\newtheorem{lem}[thm]{Lemma}
\newtheorem{cor}[thm]{Corollary}
\newtheorem{prop}[thm]{Proposition}
\theoremstyle{definition}
\newtheorem{defn}[thm]{Definition}
\newtheorem{eg}[thm]{Example}
\theoremstyle{remark}
\newtheorem{rem}[thm]{Remark}
\numberwithin{equation}{section}
\newcommand{\mat}[1]{\ensuremath{
\left[\begin{matrix}#1
\end{matrix}\right]
}}
\def\xrarrow{\xrightarrow} %right arrow {label on top}
\def\then{\Rightarrow}
\newcommand{\cofib}{\rightarrowtail}
\def\-{\text{-}}
\newcommand{\ifff}{\Leftrightarrow}
\newcommand{\into}{\hookrightarrow}
 \newcommand{\onto}{\twoheadrightarrow}
 \newcommand{\cof}{\rightarrowtail}
\def\smallcoprod{\,{\textstyle{\coprod}}\,}
\def\<{\left<}
\def\>{\right>}
\DeclareMathOperator{\Ext}{Ext}%
\DeclareMathOperator{\End}{End}%
\newcommand{\field}[1]{\mathbb{#1}}
\newcommand{\ZZ}{\ensuremath{{\field{Z}}}}
\newcommand{\CC}{\ensuremath{{\field{C}}}}
\newcommand{\RR}{\ensuremath{{\field{R}}}}
\newcommand{\NN}{\ensuremath{{\field{N}}}}
\newcommand{\commentout}[1]{}
\def\el{\ell}
\def\ll{\lambda}
\newcommand{\cA}{\ensuremath{{\mathcal{A}}}}
\newcommand{\cB}{\ensuremath{{\mathcal{B}}}}
\newcommand{\cC}{\ensuremath{{\mathcal{C}}}}
\newcommand{\cF}{\ensuremath{{\mathcal{F}}}}
\newcommand{\cH}{\ensuremath{{\mathcal{H}}}}
\newcommand{\cL}{\ensuremath{{\mathcal{L}}}}
\newcommand{\cP}{\ensuremath{{\mathcal{P}}}}
\newcommand{\cR}{\ensuremath{{\mathcal{R}}}}
\newcommand{\cT}{\ensuremath{{\mathcal{T}}}}
\newcommand{\cX}{\ensuremath{{\mathcal{X}}}}
\def\a{\alpha}
\def\b{\beta}
\def\g{\gamma}
\def\d{\partial}
\def\e{\epsilon}
\def\f{\varphi}
\def\r{\rho}
\def\s{\sigma}
\def\t{\tau}
\def\th{\theta}
\def\z{\zeta}
\def\ov{\overline}
\def\ul{\underline}
\newcommand\mm{\mathfrak{m}}
\title{Continuous cluster categories of type $D$}
\author{Kiyoshi Igusa}
\address{Department of Mathematics, Brandeis University, Waltham, MA 02454}\email{igusa@brandeis.edu}
 \thanks{The first author is supported by NSA Grant \#H98230-13-1-0247}
\author{Gordana Todorov}
\address{Department of Mathematics, Northeastern University, Boston, MA 02115}
\email{g.todorov@neu.edu}
\subjclass[2010]{
18E30:16G20}
\begin{document}

\begin{abstract} We construct continuous Frobenius categories of type $D$. The stable categories of these Frobenius categories are cluster categories which contain the standard cluster categories of type $D_n$. When $n=\infty$, maximal compatible sets of indecomposable objects are laminations of the punctured disk. Discrete laminations are clusters. This new construction is topological and it also gives an algebraic interpretation of the ``tagged arcs'' which occur in Schiffler's geometric description \cite{S} of clusters of type $D_n$. 
\end{abstract}

\maketitle

\tableofcontents

%\newpage

%{\color{blue}
\section*{Introduction}

This paper is a sequel to the poset paper. We consider cyclic posets $\cX$ with the action of a finite group $G$ so that the effective stabilizer $H_X=\{h\in G\,|\,hX\cong X\}$ of any $X\in \cX$ is abelian. Take $R$ a discrete valuation ring so that the characteristic of the residue field $K=R/\mm$ does not divide the order $n$ of the group $G$. Assume also that $R$ contains all $n$th roots of unity. Assuming that the linearization $\cF=\cF(\cX,\cX_0,R)$ of the cyclic poset is a Frobenius category $\cF$ with $\cX_0\subseteq\cX$ begin the set of indecomposable projective-injective objects. Then $G$ acts on $\cF$ and the orbit category $\cF^G$ is Frobenius and Krull-Schmidt and we give a complete description of all indecomposable objects. In an appendix we extend this to the case when $H_X$ is nonabelian.
%(Actually we can weaken the hypotheses: For any finite group $G$, $\cF^G$ is Frobenius. The Krull-Schmidt theorem requires that the effective stabilizer $H_X=\{h\in G\,|\,hX\cong X\}$ of any $X\in \cX$ is abelian of order $n$ not divisible by $char\,K$ and $R$ needs to have all $n$th roots of unity only for $n=|H|$.)

To study continuous cluster categories of type $D$, we specialize this general setup to the case when $G$ is the cyclic group of order 2 with nontrivial element $\psi$ acting on the continuous Frobenius category $\cF=\cF(S^1)$ the unit circle by rotation by $\pi$. We assume that $char\,R/\mm\neq2$. Then $R$ contains $\pm1$, the required roots of unity of unity. In this case, the objects of the orbit Frobenius category, which we denote $\cF^\psi$, are pairs $(X,\xi)$ where $X$ is an object of $\cF_\phi$ and $\xi$ is an isomorphism $\xi:\psi X\cong  X$. We prove the crucial Krull-Schmidt theorem and show that the indecomposable objects are
\begin{enumerate}
\item \emph{regular objects} which have the form $(X\oplus\psi X,\xi)$ where $\xi$ simply reverses the two summands
\[
	\xi=\mat{0 & 1\\1 & 0}: \psi X\oplus X\cong X\oplus \psi X
\]
\item \emph{singular objects} $(X,\xi)$ where $X$ is indecomposable.
\end{enumerate}

We show that there are two kinds of singular objects corresponding to tagged half-edges which are ``plain'' and ``notched'' in Ralf Schiffler's geometric description of cluster categories of type $D$ \cite{S} which, in turn, is a categorification of a special case of \cite{FST}.

We also show that, for any odd prime $p$, the orbit category of the continuous cluster category under the action of the cyclic group $\ZZ/p$ acting by rotation by $2\pi/p$ has a cluster structure and, moreover, the group of orientation preserving homeomorphisms of the circle acts transitively on the set of clusters. They are also geometrically equivalent to the cluster structure we obtain for $p=2$ in the case when $char\,K=2$.

The paper starts with \emph{approximation categories}. These are pairs $(\cB,\cB_0)$ like we had before, but with the observation that:

\emph{There is at most one exact structure on $\cB$ which will make it a Frobenius category with $\cB_0$ being the full subcategory of projective-injective objects.}

This implies that the Frobenius category $\cF(\cX,\cX_0;R)$ is completely determined by $R$ and the pair $(\cX,\cX_0)$ which we call a \emph{Frobenius cyclic poset}. 

We continue with an analysis of all maximal compatible subsets of the orbit cluster category $\cC_\pi^{\ZZ/2}$. We call these \emph{laminations}. We show that laminations are closed and, therefore, limit points cannot be removed and therefore cannot be mutated. The discrete laminations on the other hand form a cluster structure since they have no limit points.
%}

 %\newpage
 
%-----------------------------------------------------------------------------------------
% INTRODUCTION
%-----------------------------------------------------------------------------------------

\section{Construction of the Frobenius category}

\subsection{Approximation categories}

We define Frobenius category structures using approximation sequences, similar to \cite{BM94}.

\begin{defn}
Suppose that $\cB$ is an additive Krull-Schmidt category which is idempotent complete. Let $\cB_0$ be a full subcategory of $\cB$ which is closed under isomorphism, direct sum and direct summands. Then, by a \emph{two-way $\cB_0$-approximation sequence} we mean a sequence
\[
	X\xrarrow i J\xrarrow p Y
\]
satisfying the following.
\begin{enumerate}
\item $X$ is the kernel of $p$ and $Y$ is the cokernel of $i$. In particular, $p\circ i=0$.
\item $J$ is a right $\cB_0$-approximation of $Y$ and a left $\cB_0$-approximation of $X$.
\end{enumerate}
We say that the sequence is \emph{minimal} if neither $X$ nor $Y$ contains a summand in $\cB_0$.

We say that $\cB$ has \emph{enough} two-way $\cB_0$-approximation sequences if every indecomposable object of $\cB$ not in $\cB_0$ is equal the kernel in one sequence and the cokernel in another sequence.
\end{defn}

To illustrate the assumptions in the definition: suppose that $Y=P\oplus Z$ where $P\in\cB_0$. Since $J$ is a right $\cB_0$-approximation of $Y$, the inclusion morphism $P\to Y$ lifts to a map $f:P\to J$. Then $(f,0)\circ p$ is an idempotent in $End(J)$. Since $\cB$ is idempotent complete, we get a decomposition $J\cong P\oplus Q$ where $Q\in\cB_0$ and $p=id_P\oplus q:P\oplus Q\to P\oplus Z$ for some $q:Q\to Z$. It follows that
\[
	X\to Q\xrarrow q Z
\]
is a two-way $\cB_0$ approximation sequence. By a similar argument we can factor out any summands of $X$ belonging to $\cB_0$ and we obtain a {minimal sequence} $X\to Q\to Z$ where $X,Z$ have no summands in $\cB_0$. This shows that, if $\cB$ has enough two-way $\cB_0$-approximation sequences then every object of $\cB$ having no components in $\cB_0$ is the kernel in one minimal sequence and the cokernel in another minimal sequence.

The key point of this definition is that it uniquely determines the Frobenius structure on $\cB$ if it exists:

\begin{prop}\label{exact structure of an approximation category}
Suppose that $\cB$ is a Frobenius category and $\cB_0$ is the full subcategory of projective injective objects. Then the exact structure of $\cB$ is given by the condition that
\[
	A\xrarrow fB\xrarrow gC
\]
is exact if and only if, for all object $P$ in $\cB_0$, the following are short exact sequences.
\begin{enumerate}
\item $0\to \cB(P,A)\to \cB(P,B)\to \cB(P,C)\to 0$
\item $0\to \cB(C,P)\to \cB(B,P)\to \cB(A,P)\to0$
\end{enumerate}
Furthermore, $\cB$ has enough two-way approximation sequences and any two-way approximation sequence is exact.
\end{prop}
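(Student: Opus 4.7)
The forward implication is the easy half: every $P \in \cB_0$ is simultaneously projective and injective, so $\cB(P,-)$ and $\cB(-,P)$ automatically carry conflations to short exact sequences of abelian groups. All the work lies in the converse, and once that is in hand the two ``furthermore'' assertions come for free. I verify that any two-way $\cB_0$-approximation sequence $X \cof J \onto Y$ satisfies (1) and (2) simply by unwinding the definitions: exactness at the outer terms uses $X = \ker(p)$ and $Y = \coker(i)$, while surjectivity at $\cB(P,Y)$ and at $\cB(X,P)$ is exactly the right/left $\cB_0$-approximation condition on $J$. Existence of enough such sequences is built into the Frobenius hypothesis: any chosen inflation $X \cof I(X)$ into $\cB_0$ extends to a conflation $X \cof I(X) \onto \Sigma X$, and $I(X)$ manifestly approximates both ends.

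For the hard direction, given $A \xrarrow{f} B \xrarrow{g} C$ satisfying (1) and (2), the plan is to construct the conflation via a pushout followed by a $3\!\times\!3$-lemma. Pick an inflation $\iota\colon A \cof I$ into a projective-injective; by (2), $\iota$ extends along $f$ to some $\bar\iota\colon B \to I$ with $\bar\iota f = \iota$. The pushout of $\iota$ along $f$ produces an inflation $\phi\colon B \cof P$ with $\coker\phi = \Sigma A$, and the pair $(\bar\iota, \mathrm{id}_I)$ descends to a retraction $P \to I$ of the bottom map, exhibiting $I \hookrightarrow P$ as split; thus $P \cong I \oplus C'$ with $C' = \coker(f)$ appearing as the complementary summand. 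Under this identification $\phi$ becomes $(\bar\iota, \pi_f)$ where $\pi_f\colon B \onto C'$ is the canonical projection. I then apply the $3\!\times\!3$-lemma to the grid whose columns are the three conflations $A \cof I \onto \Sigma A$, $B \cof I \oplus C' \onto \Sigma A$, and the trivial $C' = C' \to 0$, and whose bottom two rows are the split conflation $I \cof I \oplus C' \onto C'$ and $\Sigma A = \Sigma A \to 0$: three columns and two rows being conflations force the top row $A \xrarrow{f} B \xrarrow{\pi_f} C'$ to be a conflation.

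It remains to identify $C'$ with $C$. Let $\hat g\colon C' \to C$ be the unique map with $g = \hat g \pi_f$. Applying (2) to $P = I(C')$ and the inflation $\iota_{C'}\colon C' \cof I(C')$, the map $\iota_{C'} \pi_f$ kills $f$ and hence factors as $\bar\sigma g = \bar\sigma \hat g \pi_f$; cancelling the epi $\pi_f$ yields $\iota_{C'} = \bar\sigma \hat g$, so $\hat g$ is monic. Dually, (1) applied to $P = I(C)$ lifts the deflation $\pi_C\colon I(C) \onto C$ to some $\tilde\pi\colon I(C) \to B$; then $\hat g \pi_f \tilde\pi = \pi_C$ annihilates $\Omega C = \ker(\pi_C)$, whence $\hat g$ monic forces $\pi_f \tilde\pi$ itself to annihilate $\Omega C$, so $\pi_f \tilde\pi$ factors through $\pi_C$ and produces a section of $\hat g$. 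A monic split epi is an isomorphism, so $C \cong C'$ and $A \to B \to C$ is a conflation. The main obstacle in the whole argument is the pushout identification $P \cong I \oplus \coker(f)$: this is what first materializes $\coker(f)$ as an object of $\cB$ and what sets up the $3\!\times\!3$ lemma to convert the Hom-group conditions (1) and (2) into a genuine conflation.
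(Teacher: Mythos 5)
Your argument is correct, but it takes a genuinely different route from the paper's proof of the converse direction. The paper first identifies the end terms: it resolves an arbitrary object $X$ by two two-way approximation sequences to obtain an exact sequence of functors $0\to\cB(X,-)\to\cB(P,-)\to\cB(Q,-)$ with $P,Q\in\cB_0$, then applies the snake lemma to conditions (1) and (2) to conclude $A=\ker g$ and $C=\coker f$ (whence $gf=0$); only then does it exhibit $A\xrarrow f B\xrarrow g C$ as the pushout of the two-way approximation sequence $X\cof P\onto C$ along the induced map $X\to A$, using a Mayer--Vietoris computation of $\cB(-,Q)$ for $Q\in\cB_0$ to see that $B$ is the cokernel of $X\to P\oplus A$. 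You reverse the order: you manufacture the conflation from the $A$-side first, by pushing the injective copresentation $A\cof I\onto\Sigma A$ out along $f$, splitting $I$ off the pushout, and invoking the $3\times3$ lemma for exact categories (B\"uhler's version, which requires the top row to be a complex --- your $\pi_f f=0$ supplies this, and the commutativity of your grid does check out, since the only squares touching the second component of the transported deflation $I\oplus C'\to\Sigma A$ map into $0$) to get $A\cof B\onto \coker f$; the identification $\coker f\cong C$ comes last, via the monic-plus-split-epi argument. Your route isolates exactly which fragments of (1) and (2) are actually used, and it sidesteps the paper's somewhat delicate step of recognizing $B$ as a cokernel from Hom-exactness against $\cB_0$ alone; the paper's route avoids importing the $3\times3$ lemma, getting by with the snake lemma on Hom groups, and has $gf=0$ in hand before any factorization is attempted.

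That last point is the one genuine patch your writeup needs: the factorization $g=\hat g\,\pi_f$ presupposes $gf=0$, which you never establish. It follows in one line: choose a deflation $Q\onto A$ with $Q\in\cB_0$ (Frobenius categories have enough projectives); exactness of (1) forces the composite $\cB(Q,A)\to\cB(Q,C)$ to vanish, so $gf$ precomposed with an epimorphism is zero. Two further remarks, minor: splitting the complementary summand $C'$ off the pushout (equivalently, producing $\coker f$) splits an idempotent and so uses idempotent completeness of $\cB$ --- harmless here, since the paper's standing hypotheses include it, but worth flagging since the proposition says only ``Frobenius''; and for ``enough'' two-way sequences you exhibit each object only as a kernel, so you should add the dual conflation $\Omega X\cof P(X)\onto X$ from a projective-injective to realize it as a cokernel as the definition demands.
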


We use the notation $A\cof B\onto C$ to denote an exact sequence in an exact category. %Since a 2-way approximation sequence is the pushout of itself along the identity map $A\to A$, it is exact.

\begin{proof}
It follows from the definition of a Frobenius category that it has enough two-way approximation sequences and that they are all exact. It also follows from the first part of the proposition since a two-way approximation sequence clearly satisfies the two conditions to make it exact. It is clear that every exact sequence satisfies the two listed conditions. So it remains to show that they are sufficient to imply exactness of the sequence $A\to B\to C$.

Given any object $X$ in $\cB$ we have two-way approximation sequences
\[
	Y\xrarrow iP\xrarrow pX,\quad Z\xrarrow jQ\xrarrow qY
\]
giving an exact sequence of functors
\[
	0\to \cB(X,-)\xrarrow{p^\ast}\cB(P,-)\xrarrow{(iq)^\ast}\cB(Q,-)
\]
Thus, the snake lemma, applied to the following diagram
\[
\xymatrixrowsep{12pt}\xymatrixcolsep{10pt}
\xymatrix{%begin xy matrix
0\ar[r]&\cB(P,A)\ar[d]\ar[r] &
	\cB(P,B)\ar[d]\ar[r] &
	\cB(P,C)\ar[d]\ar[r]&0\\
0\ar[r]&\cB(Q,A) \ar[r]& 
	\cB(Q,B) \ar[r]&
	\cB(Q,C)\ar[r]&0
	}%end xy matrix
\]
whose rows are exact by (1), gives an exact sequence $0\to \cB(X,A)\to \cB(X,B)\to \cB(X,C)$, showing that $A$ is the kernel of $g:B\to C$. Similarly, exactness of (2) shows that $C$ is the cokernel of $f:A\to B$. In particular, $gf=0$.

Next, we will show that the sequence $A\cofib B\onto C$ is the pushout of the 2-way approximation sequence $X\cofib P\onto C$ for $C$ along a map $X\to A$. By the exactness of (1), the morphism $P\to C$ lifts to $B$ and we have an induced map of kernels $X\to A$ giving
\[
\xymatrixrowsep{14pt}\xymatrixcolsep{16pt}
\xymatrix{%begin xy matrix
X\ar[d]\ \ar@{>->}[r] &
	P\ar[d]\ar@{->>}[r] &
	C\ar[d]^=
\\
A\ \ar@{>->}[r] &
	B\ar@{->>}[r] &
	C
	}%end xy matrix
\]
For any $Q$ in $\cB_0$, this induces the following commuting diagram with exact rows.
\[
\xymatrixrowsep{12pt}\xymatrixcolsep{10pt}
\xymatrix{%begin xy matrix
0\ar[r]&\cB(C,Q)\ar[d]_=\ar[r] &
	\cB(B,Q)\ar[d]\ar[r] &
	\cB(A,Q)\ar[d]\ar[r]&0\\
0\ar[r]&\cB(C,Q) \ar[r]& 
	\cB(P,Q) \ar[r]&
	\cB(X,Q)\ar[r]&0
	}%end xy matrix
\]
By the Mayer-Vietoris argument, this gives a short exact sequence
\[
	0\to \cB(B,Q)\to \cB(A\oplus P,Q)\to \cB(X,Q)\to0
\]
Therefore, $B$ is the cokernel in the sequence $X\to P\oplus A\to B$. In other words, $A\cofib B\onto C$ is the pushout of $X\cofib P\onto C$ along the given morphism $X\to A$. So, $A\cofib B\onto C$ is one of the designated exact sequences in the Frobenius category $\cB$.
\end{proof}

\begin{cor}\label{cor: cB has a unique exact structure} Given an additive category $\cB$ and full subcategory $\cB_0$, there is at most one exact structure on $\cB$ to make it into a Frobenius category with $\cB_0$ being the subcategory of projective-injective objects.
\end{cor}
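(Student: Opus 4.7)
The plan is to derive Corollary \ref{cor: cB has a unique exact structure} directly from Proposition \ref{exact structure of an approximation category}. The key observation is that Proposition \ref{exact structure of an approximation category} gives an \emph{intrinsic} characterization of the exact sequences in any Frobenius structure on $\cB$ with $\cB_0$ as projective-injectives: namely, $A\xrightarrow{f} B\xrightarrow{g} C$ is exact precisely when conditions (1) and (2) hold. These two conditions refer only to the underlying additive category $\cB$, the subcategory $\cB_0$, and the Hom-functors; they do not mention any particular exact structure.

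Concretely, suppose $\cE_1$ and $\cE_2$ are two exact structures on $\cB$ each making $\cB$ into a Frobenius category in which the full subcategory of projective-injective objects is exactly $\cB_0$. Applying Proposition \ref{exact structure of an approximation category} to $\cE_1$, a sequence $A\to B\to C$ lies in $\cE_1$ if and only if conditions (1) and (2) are satisfied. Applying the proposition to $\cE_2$ yields the same biconditional. Hence the class of short exact sequences is the same for $\cE_1$ and $\cE_2$, so $\cE_1=\cE_2$.

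The only point that really needs checking is that conditions (1) and (2) in the statement of the proposition are honestly intrinsic, i.e.\ independent of the ambient exact structure. This is immediate: $\cB(P,-)$ and $\cB(-,P)$ are additive functors attached to the underlying category $\cB$, and the statement that a three-term sequence of abelian groups is short exact is a purely algebraic condition. Thus the main (and only) step in the argument is to invoke the proposition twice and observe that the two characterizations coincide word for word; there is no substantive obstacle.
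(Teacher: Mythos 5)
Your argument is correct and is exactly how the paper obtains Corollary \ref{cor: cB has a unique exact structure}: the corollary is stated without separate proof precisely because Proposition \ref{exact structure of an approximation category} characterizes the class of exact sequences by conditions (1) and (2), which depend only on $\cB$, $\cB_0$, and the Hom-functors, so any two Frobenius structures with the same $\cB_0$ must have the same exact sequences. Your explicit check that the conditions are intrinsic is the right (and only) point to verify, and it goes through as you say.
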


\begin{cor}\label{cor: psi is automatically exact}
Let $\cB$ be a Frobenius category with $\cB_0$ being the subcategory of projective-injective objects. Then any additive automorphism $\psi$ of $\cB$ so that $\psi \cB_0=\cB_0$ will be an exact functor.
\end{cor}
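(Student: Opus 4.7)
The plan is to deduce this directly from Proposition \ref{exact structure of an approximation category}, which characterizes the exact sequences of $\cB$ entirely in terms of the pair $(\cB,\cB_0)$. Since $\psi$ is an additive automorphism sending $\cB_0$ to $\cB_0$, any such intrinsic characterization will automatically be preserved by $\psi$.

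More concretely, I would start with a designated exact sequence $A\cof B\onto C$ in $\cB$ and aim to verify that $\psi A\to \psi B\to \psi C$ satisfies conditions (1) and (2) of the proposition, which by the uniqueness corollary (Corollary \ref{cor: cB has a unique exact structure}) will force it to be exact. The key observation is that for any object $P\in\cB_0$, the additive automorphism $\psi$ induces a natural isomorphism
\[
	\cB(P,\psi X)\cong \cB(\psi^{-1}P,X),\qquad \cB(\psi X,P)\cong \cB(X,\psi^{-1}P),
\]
for all $X\in\cB$. Because $\psi\cB_0=\cB_0$ and hence $\psi^{-1}\cB_0=\cB_0$, the object $\psi^{-1}P$ is again in $\cB_0$.

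Applying these natural isomorphisms to the three objects $A,B,C$, the condition
\[
	0\to \cB(P,\psi A)\to \cB(P,\psi B)\to \cB(P,\psi C)\to 0
\]
becomes the corresponding short exact sequence with $P$ replaced by $\psi^{-1}P\in\cB_0$ and $\psi A,\psi B,\psi C$ replaced by $A,B,C$. This latter sequence is short exact by Proposition \ref{exact structure of an approximation category} applied to the original exact sequence $A\cof B\onto C$. The analogous argument handles condition (2). Hence $\psi A\to \psi B\to \psi C$ satisfies both conditions and is therefore exact.

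There is no real obstacle here; the only thing to keep track of is that the hypothesis $\psi\cB_0=\cB_0$ (as opposed to just $\psi\cB_0\subseteq\cB_0$) is needed to guarantee that $\psi^{-1}P$ lies in $\cB_0$, so that the Hom-set conditions translate directly between $\cB_0$-tests on the original sequence and on its image under $\psi$.
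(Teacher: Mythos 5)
Your proof is correct and takes essentially the same approach as the paper: both verify conditions (1) and (2) of Proposition \ref{exact structure of an approximation category} for $\psi A\to\psi B\to\psi C$ by transporting the $\cB_0$-hom tests along the automorphism, with $\psi\cB_0=\cB_0$ guaranteeing that the test objects remain in $\cB_0$. The only cosmetic difference is that you pull the test object back via $\psi^{-1}$ where the paper pushes it forward via $\psi$, and your appeal to Corollary \ref{cor: cB has a unique exact structure} is superfluous since the ``if'' direction of the proposition already yields exactness.
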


\begin{proof}
Given any exact sequence $A\cofib B\onto C$ in $\cB$ and any $P\in\cB_0$, we have two exact sequences of additive groups: $0\to \cB(P,A)\to\cB(P,B)\to\cB(P,C)\to0$ and $0\to \cB(C,P)\to\cB(B,P)\to\cB(A,P)\to0$. Since $\psi$ is an additive automorphism of $\cB$, this gives an exact sequence
\[
	0\to \cB(\psi P,\psi A)\to \cB(\psi P,\psi B)\to \cB(\psi P,\psi C)\to0
\]
and the dual sequence. Since $\psi\cB_0=\cB_0$, this sequence and the dual sequence, show that $\psi A\to\psi B\to\psi C$ is exact.
\end{proof}

\begin{defn}\label{defn: approximation subcategory}
In case $\cB$ admits the structure of a Frobenius category with $\cB_0$ being the subcategory of projective-injective objects then we say that $\cB_0$ is an \emph{approximation subcategory} for $\cB$.
\end{defn}

We need an easy special case of this, namely, $\cB_0=\cB$ is always an approximation subcategory of $\cB$ for trivial reasons. We call the resulting Frobenius category \emph{trivial}.

\begin{prop}
Any additive Krull-Schmidt category $\cB$ becomes a trivial Frobenius category if we take $\cB_0=\cB$ and define a sequence $A\cofib B\onto C$ to be exact if and only if it is split exact (i.e., $B\cong A\oplus C$.)\qed
\end{prop}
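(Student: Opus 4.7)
The plan is to verify directly that the class of split short exact sequences satisfies Quillen's axioms for an exact structure on $\cB$, and then to check the two Frobenius conditions (enough projectives and injectives, plus coincidence of the two classes). Because every sequence in the proposed class splits, both parts reduce to manipulating biproduct decompositions, which is why the statement is marked routine by a \qed.

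First I would fix the class $\cE$ of candidate short exact sequences: those of the form $A \xrarrow{\binom{1}{0}} A\oplus C \xrarrow{(0,1)} C$ up to isomorphism. Closure under isomorphism is immediate. To verify the exact-category axioms, note that identities split trivially, and the composition of two admissible monomorphisms $A\hookrightarrow A\oplus B \hookrightarrow A\oplus B\oplus C$ is again a split inclusion, with analogous statement for admissible epimorphisms. For the pushout axiom, given an admissible mono $A\to A\oplus C$ and any map $f:A\to A'$, the pushout is $A'\to A'\oplus C$ with the obvious maps, which is again split; dually for pullbacks of admissible epis. The ``obscure axiom'' (if one uses Keller's formulation) is automatic here since every morphism with the relevant splittings is admissible in a trivial way.

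Next I would show that in this exact structure, every object $P\in\cB$ is both projective and injective. Given an admissible epi $g:A\oplus C\onto C$ and any $h:P\to C$, lift $h$ by $\binom{0}{h}:P\to A\oplus C$; then $g\binom{0}{h}=h$, so $P$ is projective. Dually, every $P$ is injective. Consequently $\cB_0=\cB$ is indeed contained in (in fact equals) the subcategory of projective-injective objects. There are trivially enough projectives and enough injectives: for any object $X$, the sequence $0\cof X\xrarrow{\mathrm{id}} X$ exhibits $X$ as embedded into a projective-injective object (namely itself), and $X\xrarrow{\mathrm{id}}X\onto 0$ exhibits it as a quotient of one.

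The only step requiring any thought is making sure no non-split sequences sneak in as exact once one closes the proposed class under isomorphism; but this is immediate from the definition, since being split is an isomorphism-invariant property of a sequence. Therefore $(\cB,\cE)$ is a Frobenius category with $\cB_0=\cB$ as its subcategory of projective-injective objects, and by Corollary~\ref{cor: cB has a unique exact structure} this is the unique such exact structure, justifying the name \emph{trivial Frobenius category}.
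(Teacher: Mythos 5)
Your proposal is correct: the paper states this proposition with an immediate \qed, i.e.\ it regards the verification as routine and omits it, and your argument --- checking Quillen's axioms for the class of split sequences, observing that sections/retractions make every object projective and injective, and using identity maps to get enough projectives and injectives --- is exactly the standard argument the paper suppresses. Your closing appeal to Corollary~\ref{cor: cB has a unique exact structure} to justify the word \emph{trivial} also matches the paper's framing, so there is nothing further to add.
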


%\newpage

\subsection{The Frobenius category} Suppose $G$ is a finite group of order $|G|$ and $R$ is a discrete valuation ring with unique maximal ideal $\mm$ generated by $u\in R$. Suppose that the residue field $K=R/\mm$ has characteristic not dividing $n$, the order of $G$, (and thus $R$ contains $\frac1{n}$). Suppose also that $R$ contains all $n$th roots of unity, i.e., the group of units of $R$ contains a cyclic group of order $n$.

Let $\cA$ be a small additive Krull-Schmidt $R$-category and let $\cX$ be the set of indecomposable objects of $\cA$. Suppose that, for any two objects $X,Y\in \cA$, the morphism set is $\cA(X,Y)\cong R$ with a chosen generator $f_{XY}$ which is equal to the identity when $X=Y$. We call $f_{XY}$ the \emph{basic morphism} from $X$ to $Y$. Suppose further that, for all $X,Y,Z\in \cX$ we have
\begin{equation}\label{composition rule for tN categories}
	f_{YZ}\circ f_{XY}=u^n f_{XZ}
\end{equation}
for some nonnegative integer $n$. Then, it is easy to see that $n=c(X,Y,Z)$ where $c:\cX^3\to\NN$ is a reduced cocycle in the following sense.

\begin{defn}\cite{IT11}\label{defn of reduced cocycle}
For any set $\cX$, a \emph{reduced cocycle} on $\cX$ is defined to be a function $c:\cX^3\to\NN$ satisfying the following two conditions.
\begin{enumerate}
\item $c(X,Y,Z)-c(W,Y,Z)+c(W,X,Z)-c(W,X,Y)=0$
for all $W,X,Y,Z\in\cX$.
\item $c(X,X,Y)=0=c(X,Y,Y)$
for all $X,Y\in\cX$. 
\end{enumerate}
We define a \emph{cyclic poset} to be a set $\cX$ together with a reduced cocycle $c:\cX^3\to\NN$. Note that every subset of a cyclic poset is a cyclic poset whose cocycle is given by restricting the cocycle of the larger set to the smaller set. Two elements $X,Y\in\cX$ are \emph{equivalent} if $c(X,Y,X)=0$. (It is easy to show that this is an equivalence relation.)
\end{defn}

Conversely, given the cyclic poset $\cX$ and DVR $R$ with uniformizer $u$, we can reconstruct the category $\cA$ as $add\,\cP(\cX)$ where $\cP(\cX)$ is given as follows. (See \cite{IT11}.)

\begin{defn}
Let $\cP(\cX)=\cP(\cX,c,R)$ denote the $R$-category whose object set is $\cX$ with all morphism sets equal to $R$ and composition given by the rule \eqref{composition rule for tN categories} with $n=c(X,Y,Z)$.
\end{defn}

As an example, a cyclically ordered set is equivalent to a cyclic poset where $c$ is bounded by $1$. In that case $X,Y,Z$ are cyclically ordered if and only if $c(X,Y,Z)=0$. We are particularly interested in the cyclically ordered sets $(\cX,c)=S^1=\RR/2\pi\ZZ$ and $S^1_\pi=\RR/\pi\ZZ$.
%The cyclic poset $\cX_\pi$ is not cyclically ordered since its uniquely determined cocycle is bounded by $2$.

Finally, let $\cX_0$ be a subset of $\cX$ which is closed under isomorphism so that the additive subcategory $\cA_0$ generated by $\cX_0$ is an approximation subcategory of $\cA$. By Definition \ref{defn: approximation subcategory} this means that $\cA$ admits a (uniquely determined) structure of a Frobenius category so that $\cA_0$ is the full subcategory of projective-injective objects. 

\begin{defn}
The pair $(\cX,\cX_0)$ consisting of a cyclic poset $\cX$ and subset $\cX_0$ will be called a \emph{Frobenius cyclic poset} and $\cX_0$ will be called \emph{approximation subset} of $\cX$ if, for any choice of $(R,u)$, $add\,\cP(\cX)$ admits the structure of a Frobenius category with $\cX_0$ being the set of indecomposable projective-injective objects. This Frobenius category will be denoted $\cF(\cX,\cX_0)$. (As a category it depends only on $\cX$, but its exact structure, depending only on $\cX_0$, is given by Proposition \ref{exact structure of an approximation category}.) Let $\cF(\cX_0)=add\,\cP(\cX_0)$ denote full subcategory of projective-injective objects of $\cF(\cX,\cX_0)$.
\end{defn}

%we say that $\cX_0$ is an \emph{approximation subset} of the cyclic poset $\cX$

The particular cases that we are interested in are when $\cA=\cF_\pi$ is the continuous Frobenius category and also the full subcategories $\cF_b$ for $0<b\le\pi$ constructed in \cite{IT09}. We will recall the definitions when we restrict to these special cases. First, we develop the theory more generally using the easy case $\cX=\cX_0=S^1$ as an example.

We take the objects of $\cF(\cX,\cX_0)=add\,\cP(\cX)$ to be ordered direct sum of elements of $\cX$. Such sums form a full subcategory equivalent to the entire category, so there is no loss of generality, only a simplification of the topology of the category. With this assumption, the Frobenius category $\cF(\cX,\cX_0)$ is completely determined by $\cX$ with cocycle $c$, subset $\cX_0$ and $(R,u)$.

%\begin{defn} Given any cyclic poset $(\cX,c)$ and $(R,u)$ as above, we use the notation $\cP(\cX,c,R)$ (with $u$ understood, or simply $\cP(\cX)$ if $R, u$ and $c$ are understood) for the $R$-category whose object set is $\cX$ with all morphism sets equal to $R$ and composition given by the rule \eqref{composition rule for tN categories} with $n=c(X,Y,Z)$. Then the Frobenius category $\cF(\cX,\cX_0,c,R)$ (sometimes abbreviated to $\cF(\cX,\cX_0)$) is equal to the additive category $add\,\cP(\cX)$ generated by $\cP(\cX)=\cP(\cX,c,R)$ with subcategory of projective-injective objects equal to $add\,\cP(\cX_0)$. One needs to verify that $\cF(\cX,\cX_0)$ admits the structure of a Frobenius category. If it does then Corollary \ref{cor: cB has a unique exact structure} implies that this structure is unique.\end{defn}

\begin{eg}
One easy but important example is the case $\cX=S^1=\RR/2\pi\ZZ$ with cocycle $c$ given by the cyclic ordering of $S^1$, namely, $c(X,Y,Z)=0$ if $X,Y,Z\in S^1$ are represented by real numbers $x\le y\le z\le x+2\pi$ and $c(X,Y,Z)=1$ otherwise. In that case we define $\cF(S^1)$ to be the trivial Frobenius category $\cF(S^1,S^1,R_0)$ where $R_0$ is a discrete valuation ring with uniformizer $t$. (This is related to $(R,u)$ by the equation $t=u^2$. I.e., $R\cong R_0[\sqrt t]$.) ``Trivial'' means that a sequence in $\cF(S^1)$ is exact if and only if it is split exact. We use $P_x$ to denote the unique indecomposable object of $\cF(S^1)$ corresponding to the point $x+2\pi\ZZ\in S^1$. (Thus $P_x=P_{x+2\pi n}$ for any integer $n$.)
\end{eg}

%\newpage

\subsection{Action of $G$} Suppose that $(\cX,\cX_0)$ is a Frobenius cyclic poset with cocycle $c$.

\begin{defn}\label{def of group action}
By an \emph{action} of $G$ on $(\cX,\cX_0,c)$ we mean an action of $G$ on $\cX$ which preserves $c$, leaves $\cX_0$ invariant. I.e.:
\begin{enumerate}
\item $
	c(X,Y,Z)=c(\g X,\g Y,\g Z)
$
for all $X,Y,Z\in \cX, \g\in G$,
\item $\g \cX_0=\cX_0$ for all $\g\in G$. % This section does not require H_X te be abelian!
\end{enumerate}
\end{defn}

An action of $G$ on $\cX$ induces an $R$-linear action of $G$ on the category $\cF=\cF(\cX,\cX_0)$ where each $\g\in G$ acts by sending $\bigoplus X_i$ to $\bigoplus \g X_i$ and sending basic morphisms to basic morphisms: $\g f_{XY}=f_{\g X,\g Y}$ and extending $R$-linearly.

\begin{lem}
The action $\g:\cF\to\cF$ of any $\g\in G$ is an exact functor. 
\end{lem}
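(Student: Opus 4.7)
The plan is to reduce the claim directly to Corollary \ref{cor: psi is automatically exact}, which already says that any additive automorphism of a Frobenius category preserving the subcategory of projective-injective objects is automatically exact. So the only real work is verifying that the action of $\g \in G$ on $\cF = \cF(\cX,\cX_0)$ satisfies these three properties: it is a functor, it is additive, and it preserves $\cF(\cX_0)$.

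First I would check functoriality. On objects, $\g$ sends $\bigoplus X_i$ to $\bigoplus \g X_i$. On a basic morphism, $\g f_{XY} = f_{\g X,\g Y}$, and this is extended $R$-linearly. To check that composition is preserved, I would use the composition rule \eqref{composition rule for tN categories} together with the fact that $G$ preserves the cocycle $c$ (condition (1) of Definition \ref{def of group action}): for $X,Y,Z\in\cX$,
\[
\g(f_{YZ})\circ \g(f_{XY}) = f_{\g Y,\g Z}\circ f_{\g X,\g Y} = u^{c(\g X,\g Y,\g Z)}f_{\g X,\g Z} = u^{c(X,Y,Z)}f_{\g X,\g Z} = \g(f_{YZ}\circ f_{XY}),
\]
which extends $R$-linearly to arbitrary morphisms and direct sums. $R$-linearity also immediately gives additivity.

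Next I would check that $\g$ is an automorphism of $\cF$ as an additive category. Its inverse is the action of $\g^{-1}\in G$; the calculation above shows $\g^{-1}$ is also a functor, and $\g^{-1}\circ\g = \g\circ\g^{-1}$ is the identity on every generator (both on objects $X\in\cX$ and on basic morphisms $f_{XY}$), hence on all of $\cF$. Finally, condition (2) of Definition \ref{def of group action} says $\g\cX_0 = \cX_0$, and since $\cF(\cX_0) = \add\,\cP(\cX_0)$ is generated by $\cX_0$ under direct sums, it follows that $\g\cF(\cX_0) = \cF(\cX_0)$.

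With these verifications in hand, Corollary \ref{cor: psi is automatically exact} applies verbatim and yields that $\g$ is exact. There is no real obstacle here; the statement is essentially an unpacking of the definitions in light of the corollary proved just above. The reason the lemma is worth stating separately is that it isolates the group-action hypotheses (preservation of the cocycle and of $\cX_0$) as precisely what is needed for exactness, setting up the later orbit category constructions.
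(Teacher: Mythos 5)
Your proof is correct and takes exactly the route the paper does: the paper's proof is the single line ``This is an example of Corollary \ref{cor: psi is automatically exact}.'' Your verifications that $\g$ is an additive automorphism preserving $\cF(\cX_0)$ --- via the cocycle invariance $c(\g X,\g Y,\g Z)=c(X,Y,Z)$ and the condition $\g\cX_0=\cX_0$ --- are exactly the details the paper leaves implicit, and they are carried out correctly.
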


\begin{proof}
This is an example of Corollary \ref{cor: psi is automatically exact}.
\end{proof}

\begin{defn}\label{defn: definition of completed orbit category}
Given a Frobenius category $\cF=\cF(\cX,\cX_0)$ with an action of a finite group $G$ induced by an action of $G$ on the Frobenius cyclic poset $(\cX,\cX_0)$, let $\cF^G=\cF^G(\cX,\cX_0)$ be the exact category given as follows.
\begin{enumerate}
\item The \emph{objects} of $\cF^G$ are pairs $(X,\xi)$ where $X\in\cF$ and $\xi$ is a family of isomorphisms \[
\xi_\g :\g X\xrarrow\approx  X
\]
for all $\g\in G$ with the property that
\[
	\xi_{\a\b}=\xi_\a\circ \a\xi_\b:\a\b X\to \a X\to X
\]
for all $\a,\b\in G$.
\item A \emph{morphism} $f:(X,\xi^X)\to (Y,\xi^Y)$ in $\cF^G$ is a morphism $f:X\to Y$ in $\cF$ so that $f\circ \xi^X_\g=\xi^Y_\g\circ \g f$ for all $\g\in G$:
\[
%\xymatrixrowsep{10pt}\xymatrixcolsep{10pt}
\xymatrix{%begin xy matrix
X\ar[r]^f &
	Y\\
\g X \ar[u]^{\xi^X_\g}\ar[r]^{\g f}& 
	\g Y\ar[u]_{\xi^Y_\g}
	}%end xy matrix
\]
\item An \emph{exact sequence} in $\cF^G$ is a sequence of morphisms in $\cF^G$:
\[
	(X,\xi^X)\xrarrow f(Y,\xi^Y)\xrarrow g(Z,\xi^Z)
\]
so that $X\cofib_f Y\onto_g Z$ is exact in $\cF$.
\end{enumerate}
\end{defn}
We will see that $\cF^G$ is the idempotent completion of the orbit category $\cF(\cX,\cX_0)^G$ which, in our notation, is the full subcategory of $\cF^G$ with objects $SX$ for all $X\in\cF(\cX,\cX_0)$.

It is straightforward to show that $\cF^G$ is an exact category. For example, given an exact sequence as in the definition above and a morphism $f:(X,\xi^X)\to (W,\xi^W)$, let $W\cofib P\onto Z$ be the pushout of $X\cofib Y\onto Z$ in $\cF$. Then $\g W\cofib \g P\onto \g Z$ is the pushout of $\g X\cofib \g Y\onto \g Z$ along $\g f:\g X\to \g W$ and, therefore, there is a unique induced map $\xi_\g:\g P\to P$:
\[
%\xymatrixrowsep{10pt}\xymatrixcolsep{10pt}
\xymatrix{%begin xy matrix
X\ar[r] &
	Y\oplus W\ar[r] &
	P\\
\g X\ar[u]^{\xi^X_\g} \ar[r]& 
	\g Y\oplus \g W \ar[u]^{\xi^Y_\g\oplus \xi^W_\g} \ar[r]&
	\g P\ar@{-->}[u]_{\xi_\g} 
	}%end xy matrix
\]
By uniqueness of induced maps on cokernels, we have $\xi^P_{\a\b}=\xi^P_\a\circ \a\xi^P_\b$ since the corresponding operations on $X,Y,W$ satisfy this formula. So, we have the pushout: $(W,\xi^W)\cofib (P,\xi^P)\onto (Z,\xi^Z)$ in $\cF^G$ and it is exact.

%The following observation follows directly from the definitions.

\begin{prop}
The $G$-action on $\cF$ extends to an exact $G$-action on $\cF^G$ given on objects by $\g(X,\xi)=(\g X, \xi^\g)$ where $(\xi^\g)_\b=(\xi_\g)^{-1}\circ \xi_{\b\g}:\b\g X\to X\to \g X$ and sending the morphism $ f:(X,\xi)\to (Y,\xi)$ to the morphism $\g f:(\g X, \xi^\g)\to(\g Y, \xi^\g)$.% I.e., $\psi$ commutes with the forgetful functor $\cF^\psi\to\cF$.
\end{prop}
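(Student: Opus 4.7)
The plan is to verify, in turn, the five conditions needed for an exact $G$-action on $\cF^G$: (i) $(\g X,\xi^\g)$ is a well-defined object, i.e., $\xi^\g$ satisfies $(\xi^\g)_{\a\b}=(\xi^\g)_\a\circ\a(\xi^\g)_\b$; (ii) $\g f$ is a morphism in $\cF^G$; (iii) the assignments are functorial in $\g$; (iv) each $\g$ is additive; (v) each $\g$ is exact. For (i), I would substitute the definition $(\xi^\g)_\b=\xi_\g^{-1}\circ \xi_{\b\g}$ into both sides and use the cocycle relation $\xi_{\a\g}=\xi_\a\circ \a\xi_\g$ for $\xi$ itself: the factor $(\a\xi_\g)^{-1}$ appearing on the right-hand side cancels against $\a\xi_\g$ inside $\xi_{\a\g}$, leaving $\xi_\a$, and a second application of the cocycle relation (for the product $\a(\b\g)$) finishes the equality. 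For (ii), I would unfold $(\xi^{Y,\g})_\b\circ \b(\g f)$ using the definition and then apply the morphism condition $f\circ \xi^X_\delta=\xi^Y_\delta\circ \delta f$ for $\delta=\b\g$ and for $\delta=\g$ in turn to convert it to $\g f\circ (\xi^{X,\g})_\b$.

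For (iii), the associativity identity $\delta(\g(X,\xi))=(\delta\g)(X,\xi)$ reduces to the short calculation $((\xi^\g)^\delta)_\b=\xi_{\delta\g}^{-1}\circ\xi_\g\circ\xi_\g^{-1}\circ\xi_{\b\delta\g}=\xi_{\delta\g}^{-1}\circ\xi_{\b\delta\g}=(\xi^{\delta\g})_\b$; for the identity element $e\in G$, specializing the cocycle relation to $\a=\b=e$ forces $\xi_e=\xi_e\circ\xi_e$, hence $\xi_e=\mathrm{id}$, and then $(\xi^e)_\b=\xi_\b$, so $e$ acts trivially. Additivity (iv) is immediate since $\g$ already acts $R$-linearly on $\cF$ and the structure families on direct sums are formed componentwise. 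Finally, exactness (v) is automatic: a sequence in $\cF^G$ is exact by definition iff the underlying sequence in $\cF$ is exact, and the action of $\g$ on $\cF$ is exact by the preceding lemma (an instance of Corollary \ref{cor: psi is automatically exact}).

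The only real obstacle is careful bookkeeping in step (i): one must distinguish $\a\xi_\g$, the isomorphism $\a\g X\to \a X$ obtained by applying the functor $\a$ to $\xi_\g:\g X\to X$, from $\xi_{\a\g}$, the given structure component $\a\g X\to X$. Once sources and targets are written out unambiguously, every identity in the argument reduces to one of the two incarnations of the cocycle relation $\xi_{\a\b}=\xi_\a\circ\a\xi_\b$.
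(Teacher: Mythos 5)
Your proposal is correct and follows the same route as the paper, which simply says the result ``follows from the definitions'' and displays only the commuting diagram for your step (ii), i.e., that $\g f$ is a morphism in $\cF^G$. Your verifications of (i), (iii)--(v) --- in particular the cancellation $\xi_{\a\g}\circ(\a\xi_\g)^{-1}=\xi_\a$ via the cocycle relation, the computation $((\xi^\g)^\delta)_\b=(\xi^{\delta\g})_\b$, the observation that $\xi_e=\mathrm{id}$, and the remark that exactness is automatic since exactness in $\cF^G$ is detected in $\cF$ where each $\g$ acts exactly by the preceding lemma --- are exactly the routine checks the paper leaves to the reader, carried out correctly.
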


\begin{proof}
This follows from the definitions. For example, the following diagram commutes for any $f:(X,\xi)\to (Y,\xi)$ showing that $\g  f:(\g  X, \xi^\g )\to(\g  Y, \xi^\g )$ is a morphism in $\cF^G$.
\[
%\xymatrixrowsep{10pt}\xymatrixcolsep{5pt}
\xymatrix{%begin xy matrix
\g X\ar[dr]^{\xi_\g }\ar[rr]^{\g f} &&
	\g Y\ar[dr]^{\xi_\g }\\
	& X\ar[rr]^(.3)f &&Y\\
\b\g X\ar[uu]^{\xi^\g _\b}\ar[ru]_{\xi_{\b\g }} \ar[rr]_{\b\g f}&& 
	\b\g Y \ar[uu]^(.3){\xi^\g _\b}\ar[ru]_{\xi_{\b\g }}
	}%end xy matrix
\]
\end{proof}

%\newpage
\subsection{Adjoint functors}

We use adjoint functors to show that $\cF^G$ has enough projective-injective objects.

\begin{defn}
For any object $X$ in $\cF$, let $SX=(\bigoplus \g X,\xi)$ where $\xi_\b:\b SX\to SX$ is the map which sends the summand $\b(\g X)$ of $\b SX$ to the summand $(\b\g)X$ of $SX$ by the identity map. I.e., $\xi$ is given by a permutation matrix.
\end{defn}

\begin{prop}\label{adjunction formula}
$S:\cF\to\cF^G$ is an exact functor which is both left and right adjoint to the forgetful functor $F:\cF^G\to\cF$. In other words, $\cF^G(SX,(Y,\xi))\cong \cF(X,Y)$ and $ \cF^G((X,\xi),SY)\cong \cF(X,Y) $.
\end{prop}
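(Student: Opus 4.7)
The plan is to exhibit both adjunctions by explicit inverse bijections and then deduce exactness of $S$ by pulling back to $\cF$, where the $G$-action is already known to be exact.

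For the left adjunction $\cF^G(SX,(Y,\xi^Y))\cong \cF(X,Y)$, I would send $g:X\to Y$ to the morphism $\hat g:SX\to (Y,\xi^Y)$ whose restriction to the summand $\g X$ of $SX$ is $\xi^Y_\g\circ \g g$, and invert by restricting $f:SX\to Y$ to the identity summand $X\subseteq SX$. Because the cocycle relation $\xi^Y_{e}=\xi^Y_e\circ \xi^Y_e$ forces $\xi^Y_e=\mathrm{id}$, these operations are mutually inverse. Checking that $\hat g$ commutes with the structure isomorphisms, i.e.\ $\hat g\circ \xi^{SX}_\b=\xi^Y_\b\circ \b\hat g$, reduces summand-by-summand to the cocycle identity $\xi^Y_{\b\g}=\xi^Y_\b\circ \b\xi^Y_\g$, once one uses that $\xi^{SX}_\b$ sends the $\g$-summand of $\b SX$ identically to the $(\b\g)$-summand of $SX$.

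For the right adjunction $\cF^G((X,\xi^X),SY)\cong \cF(X,Y)$, I would send $g:X\to Y$ to the morphism $\check g:X\to SY$ whose $\g$-component is $\g g\circ (\xi^X_\g)^{-1}:X\to \g X\to \g Y$, and invert by projecting onto the identity summand. Checking equivariance reduces, after chasing indices, to the cocycle identity in the form $\xi^X_{\b\g}\circ(\b\xi^X_\g)^{-1}=\xi^X_\b$. Naturality of both bijections in all variables is immediate from these formulas.

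For exactness of $S$, I would use that the forgetful functor $F:\cF^G\to \cF$ reflects and preserves exactness by Definition \ref{defn: definition of completed orbit category}(3), so it suffices to show that $FS:\cF\to \cF$ preserves exact sequences. Since $FSX=\bigoplus_{\g\in G}\g X$ and each functor $\g:\cF\to\cF$ is exact by the preceding lemma, applying $FS$ to an exact sequence $A\cof B\onto C$ yields $\bigoplus_\g \g A\cof \bigoplus_\g \g B\onto \bigoplus_\g \g C$, a finite direct sum of exact sequences in $\cF$, which is exact. The only real subtlety throughout is keeping the permutation-index conventions for the structure maps $\xi^{SX}_\b$ straight; once those are fixed, the cocycle identities fall into place and the remaining verifications are mechanical.
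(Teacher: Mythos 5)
Your proof is correct and takes essentially the same route as the paper: your componentwise formulas $\xi^Y_\g\circ\g g$ and $\g g\circ(\xi^X_\g)^{-1}$ are precisely the paper's adjunction maps $\sum\xi_\g\circ\g f$ and $\sum\g f\circ\xi_\g^{-1}$, and the paper likewise deduces exactness of $S$ from the fact that $F\circ S=\bigoplus\g_\ast$ is a direct sum of exact functors together with the definition of exactness in $\cF^G$. The additional verifications you spell out (mutual inverses via $\xi_e=\mathrm{id}$, and equivariance reducing to the cocycle identity $\xi_{\b\g}=\xi_\b\circ\b\xi_\g$) are details the paper leaves implicit.
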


\begin{proof} $S$ is an exact functor since $F\circ S=\bigoplus \g_\ast$ is exact, being a direct sum of exact functors $\g_\ast$ which give the exact action of $\g$ on $\cF^G$. The adjunction is given by sending the morphism $f:X\to Y$ to
\[
\sum_{\g\in G} \g f\circ \xi_\g^{-1}:(X,\xi_X)\xrarrow{\sum\xi_\g^{-1} }\bigoplus \g X\xrarrow{\bigoplus \g f} \bigoplus \g Y=SY
\]
and to $\sum \xi_\g\circ \g f:SX=\bigoplus \g X\xrarrow{\bigoplus \g f} \bigoplus \g Y\xrarrow{\sum\xi_\g}  (Y,\xi_Y)$.% and to 
 % where $g=\psi f\circ\xi_X$ and $f=\psi g\circ\xi_X$.
\end{proof}

\begin{cor} Let $X$ be an object of $\cF$. Then the following are equivalent.
\begin{enumerate}
\item $SX$ is projective in $\cF^\psi$.
\item $SX$ is injective in $\cF^\psi$.
\item $X$ is projective-injective in $\cF$ (i.e., $X$ is a direct sum of elements of $\cX_0$).
\end{enumerate}
\end{cor}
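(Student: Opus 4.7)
The plan is to derive the equivalences (1)$\Leftrightarrow$(3) and (2)$\Leftrightarrow$(3) directly from the two-sided adjunction of Proposition~\ref{adjunction formula}, together with two bookkeeping facts: (a) $S$ is exact, and by Definition~\ref{defn: definition of completed orbit category} a sequence in $\cF^G$ is exact iff its image under $F$ is exact in $\cF$; and (b) $\cF$ is Frobenius, so in $\cF$ the notions ``projective,'' ``injective,'' and ``projective-injective'' all coincide, making condition~(3) equivalent to ``$X$ is projective in $\cF$'' and equally to ``$X$ is injective in $\cF$.''

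For (3)$\Rightarrow$(1): assume $X$ is projective in $\cF$. Given any exact sequence $(A,\xi^A)\cof(B,\xi^B)\onto(C,\xi^C)$ in $\cF^G$, the underlying sequence $A\cof B\onto C$ is exact in $\cF$, so $\cF(X,A)\to\cF(X,B)\to\cF(X,C)\to 0$ is exact. The natural adjunction isomorphism $\cF^G(SX,(Y,\xi))\cong\cF(X,Y)$ then identifies this with the sequence of $\cF^G$-Hom groups from $SX$, showing $SX$ is projective in $\cF^G$.

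For (1)$\Rightarrow$(3): assume $SX$ is projective in $\cF^G$. Given an exact sequence $A\cof B\onto C$ in $\cF$, apply $S$ to obtain $SA\cof SB\onto SC$, exact in $\cF^G$ since $S$ is exact. Applying $\cF^G(SX,-)$ gives a short exact sequence, which under the adjunction becomes
\[
\bigoplus_{\g\in G}\cF(X,\g A)\to\bigoplus_{\g\in G}\cF(X,\g B)\to\bigoplus_{\g\in G}\cF(X,\g C)\to 0.
\]
This is the direct sum over $\g\in G$ of the sequences $\cF(X,\g A)\to\cF(X,\g B)\to\cF(X,\g C)\to 0$, and exactness of a direct sum of sequences of abelian groups is equivalent to exactness of each summand. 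Extracting the summand indexed by $\g=e$ shows that $\cF(X,A)\to\cF(X,B)\to\cF(X,C)\to 0$ is exact, so $X$ is projective in $\cF$.

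The equivalence (2)$\Leftrightarrow$(3) is entirely parallel, using the second half of Proposition~\ref{adjunction formula}, $\cF^G((Y,\xi),SX)\cong\cF(Y,X)$, in place of the first, together with the same exact-sequence decomposition. No step presents a genuine obstacle; the only mildly non-obvious maneuver is the direct-sum splitting in the implication (1)$\Rightarrow$(3), which is what allows the adjunction with the ``forgotten'' object $FSX=\bigoplus_\g\g X$ to still detect projectivity of the single summand $X$ itself.
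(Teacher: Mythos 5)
Your proof is correct and follows essentially the same route as the paper's: both directions rest on the two-sided adjunction $\cF^G(SX,-)\cong\cF(X,F(-))$ with exactness of $F$ giving $(3)\Rightarrow(1)$, and for $(1)\Rightarrow(3)$ the paper likewise applies the exact functor $S$ and uses that $\cF^G(SX,S(-))\cong\bigoplus_\g\cF(X,\g(-))$ decomposes as a direct sum of functors, so exactness of the whole forces exactness of each summand (your $\g=e$ extraction is the same observation). The dual argument for $(2)\Leftrightarrow(3)$ also matches the paper.
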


\begin{proof} $SX$ is projective in $\cF^G$ if $\cF^G(SX,-)$ is an exact functor on $\cF^G$. But $\cF^G(SX,-)=\cF(X,F(-))$ and the forgetful functor $F$ is exact by definition. So, $(3) \then (1)$. Conversely, assume (1). Then, $\bigoplus \cF(X,\g(-))=\cF^G(SX,S(-))$ is an exact functor implying that $ \cF(X,\g(-))$ is an exact functor on $\cF$ for all $\g\in G$. So, $X$ is projective in $\cF$ showing that $(1)\ifff(3)$. The dual argument shows $(2)\ifff(3)$.
% and similarly, $(3)\then (2)$.
%This follows from the fact that $S$ is left and right adjoint to the forgetful functor. For example, suppose that $(Y,\xi)\to (Z,\xi)$ is a proper epimorphism and $f:SX\to (Z,\xi)$ is any morphism. Then $f$ lifts to $(Y,\xi)$ if and only it its adjoint $f_0:X\to Z$ lifts to $Y$. Therefore, $SX$ is projective in $\cF^\psi$ if and only if $X$ is projective in $\cF$. So, (1) is equivalent to (3). Similarly, the fact that $S$ is right adjoint to the forgetful functor implies that (2) is equivalent to (3).
\end{proof}

\begin{cor}
Given an object $X$ in $\cF$ and an object $(Y,\xi)$ in $\cF^G$, a morphism $X\to Y$ in $\cF$ factors through a projective injective object $P$ if and only if its adjoint $SX\to (Y,\xi)$ factors through $SP$. Dually, a morphism $Y\to X$ in $\cF$ factors through $P$ if and only if its adjoint $(Y,\xi)\to SX$ factors through $SP$. Consequently, in the stable categories $\ul\cF$ and $\cF^G$ we have the adjunction:
\[
	\ul\cF^G(SX,(Y,\xi))\cong\ul\cF(X,Y)
\]
\[
	\ul\cF^G((Y,\xi),SX)\cong\ul\cF(Y,X).
\]\qed
\end{cor}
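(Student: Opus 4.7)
The plan is to leverage the explicit adjunction formula from Proposition \ref{adjunction formula} together with the previous corollary, which tells us that $SP$ is projective-injective in $\cF^G$ precisely when $P$ is projective-injective in $\cF$. Both the forward direction and the derivation of the stable-category isomorphism then reduce to straightforward functoriality, while the backward direction uses the unit/counit of the adjunction. I will handle the covariant statement and note that the contravariant (dual) version is entirely parallel.

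For the forward direction, suppose $f : X \to Y$ factors as $X \xrightarrow{g} P \xrightarrow{h} Y$ with $P \in \cF_0$. Applying $S$ gives a morphism $Sg : SX \to SP$ in $\cF^G$, and the adjoint of $h$ gives $\tilde h \in \cF^G(SP, (Y,\xi))$. Using the explicit formula $\tilde f = \sum_\g \xi_\g \circ \g f$ from Proposition \ref{adjunction formula}, one checks directly that $\tilde f = \tilde h \circ Sg$, so $\tilde f$ factors through $SP$, which is projective-injective in $\cF^G$ by the previous corollary.

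For the backward direction, the explicit formula also shows that the inverse adjunction sends any $\tilde f : SX \to (Y,\xi)$ to the restriction of its underlying morphism in $\cF$ to the identity summand $X \hookrightarrow SX = \bigoplus_\g \g X$; that is, $f = \tilde f \circ \iota_X$ where $\iota_X$ is the inclusion of $X$ as the $e$-summand. So if $\tilde f = \beta \circ \alpha$ with $\alpha : SX \to SP$ and $\beta : SP \to (Y,\xi)$ in $\cF^G$, then after applying the exact forgetful functor $F$ we obtain a factorization $f = F\beta \circ F\alpha \circ \iota_X$ in $\cF$ through $FSP = \bigoplus_\g \g P$. Since the action of $G$ preserves $\cX_0$, each $\g P$ lies in $\cF_0$, hence $FSP$ is projective-injective in $\cF$. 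The dual assertion follows by the symmetric argument, using instead the counit $\pi_X : SX \twoheadrightarrow X$ (projection onto the identity summand) and the other adjunction $\cF^G((Y,\xi), SX) \cong \cF(Y,X)$.

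The stable-category isomorphism then follows by quotienting both sides of the adjunction bijection by the corresponding ideals. The forward direction shows that morphisms in $\cF(X,Y)$ factoring through projective-injectives are carried to morphisms in $\cF^G(SX,(Y,\xi))$ factoring through projective-injectives of the form $SP$; conversely, since $F$ preserves projective-injectives (having both adjoints exact), if $\tilde f$ factors through any projective-injective $Q$ in $\cF^G$ then $f = \tilde f \circ \iota_X$ factors through the projective-injective $FQ$ in $\cF$. Thus the two ideals correspond exactly, and passing to quotients gives $\ul\cF^G(SX,(Y,\xi)) \cong \ul\cF(X,Y)$ together with its dual. There is no substantive obstacle; the only care needed is to track how the unit and counit of the adjunction intertwine morphisms in $\cF$ with morphisms in $\cF^G$.
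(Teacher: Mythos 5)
Your proposal is correct and fills in exactly the argument the paper intends but leaves implicit (the corollary is stated with an omitted proof, to follow from Proposition \ref{adjunction formula} and the preceding corollary): the explicit formulas $f\mapsto\sum_\g \xi_\g\circ\g f$ and $\tilde f\mapsto \tilde f\circ\iota_X$ show that the ideals of morphisms factoring through projective-injectives correspond under the adjunction bijection, using that $SP$ is projective-injective in $\cF^G$ and that $F$ (or $G$-invariance of $\cX_0$) carries projective-injectives back to projective-injectives. Your one reasonable interpretive move --- reading the backward direction as producing a factorization through the projective-injective $FSP=\bigoplus_\g \g P$ rather than through $P$ itself --- is exactly what the stable-category isomorphism requires, so the proof is complete.
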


\begin{cor}\label{formula for F(SX,SY)}
For any two indecomposable objects $X,Y$ in $\cF$,
\[
	\cF^G(SX,SY)\cong \bigoplus_{\g\in G}\cF(\g X,Y)\cong R^G
\]
where $(r_\g:\g \in G)\in R^G$ corresponds to
\[
	\left( r_{\b^{-1}\a}f_{\a X,\b Y} \right): \bigoplus \a X=SX\to SY=\bigoplus \b Y
\]
where $f_{AB}:A\to B$ denotes the basic map from $A$ to $B$.
Furthermore, the $\a X\to \g Z$ component of the composition of $(s_\g):SX\to SY$ and $(r_\g):SY\to SZ$ is given by
\[
	\sum_{\b\in G} r_{\g^{-1}\b}s_{\b^{-1}\a} u^n f_{\a X,\g Z}
\]
where $n=c(\a X,\b Y,\g Z)$.\qed
\end{cor}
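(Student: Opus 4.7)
The plan is to derive both the abstract isomorphism and the explicit matrix formulas from Proposition \ref{adjunction formula} combined with the composition rule \eqref{composition rule for tN categories}.

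First, apply the right-adjoint form of Proposition \ref{adjunction formula}, namely $\cF^G((X,\xi),SY)\cong \cF(X,Y)$, with the $\cF^G$-object $(X,\xi)$ taken to be $SX$. Since the forgetful functor sends $SX$ to $\bigoplus_\g \g X$, this gives
\[
\cF^G(SX, SY) \;\cong\; \cF\!\left(\bigoplus_\g \g X,\, Y\right) \;=\; \bigoplus_{\g \in G} \cF(\g X, Y).
\]
Because $X$ and $Y$ are indecomposable (so each $\g X$ is also indecomposable), the hypothesis on $\cA$ in Section 1.2 gives $\cF(\g X, Y) \cong R$ with basic generator $f_{\g X, Y}$, and hence $\cF^G(SX, SY) \cong R^G$.

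Next, to identify this isomorphism with the stated block-matrix description, observe that any morphism $\phi\colon SX \to SY$ in $\cF$ is a $G\times G$ block matrix whose $(\a,\b)$-entry has the form $t_{\a,\b} f_{\a X, \b Y}$ for some $t_{\a,\b} \in R$. The structure maps $\xi^{SX}_\da$ and $\xi^{SY}_\da$ are permutation matrices: the $\a$-indexed summand of $\da SX$ (which is $\da\a X$) is sent to the $\da\a$-indexed summand of $SX$ by the identity. Comparing $(\a,\b')$-entries in $\phi \circ \xi^{SX}_\da = \xi^{SY}_\da \circ \da\phi$ translates the equivariance condition into $t_{\da\a,\, \b'} = t_{\a,\, \da^{-1}\b'}$ for all $\a,\b',\da$; equivalently, $t_{\a,\b}$ depends only on the quotient $\b^{-1}\a$. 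Setting $r_{\b^{-1}\a} := t_{\a,\b}$ yields the parametrization in the statement, and the $|G|$ free parameters match the rank of $R^G$ predicted by the adjunction.

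Finally, the composition formula is obtained by ordinary block-matrix multiplication. The $(\a,\g)$-entry of the composite $SX \xrightarrow{(s_\g)} SY \xrightarrow{(r_\g)} SZ$ is
\[
\sum_{\b \in G}\bigl(r_{\g^{-1}\b} f_{\b Y, \g Z}\bigr) \circ \bigl(s_{\b^{-1}\a} f_{\a X, \b Y}\bigr) \;=\; \sum_{\b \in G} r_{\g^{-1}\b}\, s_{\b^{-1}\a}\, u^{n}\, f_{\a X, \g Z},
\]
with $n = c(\a X, \b Y, \g Z)$, by the composition rule \eqref{composition rule for tN categories}. The main obstacle is the equivariance bookkeeping in the middle step: one must carefully track the permutation action of the structure maps on block-indexed summands to arrive at the symmetry $t_{\a,\b} = t_{\da\a,\da\b}$. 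The first and last steps are direct applications of the adjunction and the composition rule, respectively.
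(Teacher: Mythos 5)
Your proposal is correct and is essentially the paper's intended argument: the paper states this corollary with no written proof (just \qed) immediately after Proposition \ref{adjunction formula}, expecting the reader to apply the adjunction $\cF^G(SX,SY)\cong\cF(\bigoplus_\g \g X,\,Y)$ and unwind it, which is exactly what you do. Your explicit equivariance bookkeeping (the permutation structure maps forcing $t_{\da\a,\b'}=t_{\a,\da^{-1}\b'}$, hence dependence only on $\b^{-1}\a$) and your entrywise use of the composition rule \eqref{composition rule for tN categories} for the $u^{c(\a X,\b Y,\g Z)}$ factors are both correct and supply the details the paper leaves implicit.
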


\begin{thm}\label{FG is a Frobenius category} Given an action of a finite group $G$ on a Frobenius cyclic poset $(\cX,\cX_0)$, the construction above gives a Frobenius category $\cF^G=\cF^G(\cX,\cX_0)$ whose projective-injective objects are the components of $SP$ for some $P\in\cF_0$.
\end{thm}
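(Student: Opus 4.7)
The plan is to verify the three axioms of a Frobenius category in turn: (i) exactness of $\cF^G$, (ii) existence of enough projectives and enough injectives, and (iii) coincidence of the two classes. Exactness (i) was already established in the discussion following Definition \ref{defn: definition of completed orbit category}, so the work is in (ii) and (iii). The key tool will be the double adjunction $\cF^G(SX,(Y,\xi))\cong\cF(X,Y)\cong\cF^G((Y,\xi),SX)$ from Proposition \ref{adjunction formula} together with the characterisation of when $SX$ is projective-injective given by the preceding corollary.

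First I would note that the cocycle identity $\xi_{\alpha\beta}=\xi_\alpha\circ \alpha\xi_\beta$ with $\alpha=\beta=e$ forces $\xi_e=\xi_e^2$, and hence $\xi_e=\mathrm{id}$. For each $(X,\xi)\in\cF^G$ I will then consider the unit $\eta:(X,\xi)\to SX$ (the adjoint of $\mathrm{id}_X$ under $\cF^G((X,\xi),SX)\cong\cF(X,X)$); by Proposition \ref{adjunction formula} its underlying morphism in $\cF$ is $\sum_\gamma \xi_\gamma^{-1}:X\to\bigoplus_\gamma \gamma X$, whose $e$-th component is the identity. So $\eta$ is split mono in $\cF$. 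Because $\eta$ is itself a morphism of $\cF^G$, its image is $G$-invariant, so the cokernel in $\cF$ inherits a canonical $G$-equivariant structure from $SX$; thus $\eta$ is an inflation in $\cF^G$. Dually, the counit $\epsilon:SX\to(X,\xi)$ is split epi in $\cF$ via inclusion of the $e$-summand, and hence a deflation in $\cF^G$.

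Next, since $\cF$ is Frobenius I pick an inflation $X\cof P$ and a deflation $Q\onto X$ in $\cF$ with $P,Q\in\cF_0$. The functor $S$ is exact (Proposition \ref{adjunction formula}), so these produce an inflation $SX\cof SP$ and a deflation $SQ\onto SX$ in $\cF^G$. Composing with $\eta$ and $\epsilon$ (inflations and deflations are closed under composition in an exact category) yields
\[
(X,\xi)\ \cof\ SX\ \cof\ SP,\qquad SQ\ \onto\ SX\ \onto\ (X,\xi),
\]
and by the preceding corollary $SP$ and $SQ$ are projective-injective in $\cF^G$. This proves (ii).

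Finally, for (iii), if $(Y,\zeta)\in\cF^G$ is projective then the deflation $SQ\onto (Y,\zeta)$ above splits, realising $(Y,\zeta)$ as a summand of $SQ$ with $Q\in\cF_0$. But $SQ$ is also injective, hence so is its summand $(Y,\zeta)$. The dual argument shows every injective of $\cF^G$ is a summand of some $SP$ with $P\in\cF_0$, and is also projective. Thus projectives and injectives coincide and are exactly the summands of the objects $SP$ for $P\in\cF_0$, which is what the theorem asserts. The subtle point throughout will be checking that the split mono/epi maps $\eta,\epsilon$ from $\cF$ are genuine inflations/deflations in $\cF^G$; this is precisely where $\xi_e=\mathrm{id}$ and the automatic $G$-invariance of the image of a $\cF^G$-morphism are needed.
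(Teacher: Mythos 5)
Your proposal is correct and takes essentially the same route as the paper: the paper likewise deduces enough projectives from the adjunction of Proposition \ref{adjunction formula} (the adjoint of a deflation $P\onto X$ in $\cF$, which is exactly your composite $SP\onto SX\xrarrow{\epsilon}(X,\xi)$, is asserted there to be a proper epimorphism) and then identifies the projective and injective objects as the summands of the $SP$ by splitting that deflation, just as in your step (iii). Your extra care in checking that $\eta$ and $\epsilon$ are genuine inflations/deflations --- via $\xi_e=\mathrm{id}$ and the induced $G$-structures on the (co)kernels, the same mechanism the paper uses for pushouts after Definition \ref{defn: definition of completed orbit category} --- simply makes explicit what the paper's proof leaves unstated.
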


\begin{proof}
We have seen that $\cF^G$ is an exact category and that the objects $SP$ are projective-injective. It follows that all components of $SP$ are also projective-injective. It remains to show that there are enough projectives and that all projective and injective objects are components of objects of the form $SP$ for some $P\in \cF_0$.

To show that there are enough projectives, let $(X,\xi)\in \cF^G$. Let $P\to X$ be a projective cover in $\cF$. Then the adjoint map $SP\to (X,\xi)$ is a proper epimorphism. 

To show that all projective objects are components of objects of the form $SP$, let $(Q,\xi)$ be any projective object of $\cF^G$. Choose a projective cover $P\to Q$ for $Q$ in $\cF$. Then, by adjunction, we have a proper epimorphism $SP\to (Q,\xi)$. Since $(Q,\xi)$ is projective, this epimorphism splits and $(Q,\xi)$ is a direct summand of $SP$ as claimed.\end{proof}

%\newpage

\subsection{Krull-Schmidt Theorem} Suppose now that the \emph{effective stabilizer}
\[
	H_X:=\{\b\in G\,|\, \b X\cong X\}
\]
of every $X\in \cX$ is abelian. Then we will show that every object of $\cF^G$ is a direct sum of indecomposable objects of the form $S_\ll X$ which we now define. (For completeness the case of nonabelian $H_X$ is treated in the appendix. The extension to infinite groups $G$ will be explained in the next paper.)

%\subsubsection{Singular orbits}

\begin{defn} Let $X\in \cX$ and let $\ll:H\to R^\times$ be any homomorphism from $H=H_X$ to the group of units of $R$. Then we define the object $S_\ll X$ of $\cF^G$ as follows. First choose representatives $\s_i$ from the left cosets of $H$ in $G$ so that $G=\coprod \s_i H$. Then
\[
	S_\ll X=\left(
	\bigoplus \s_i X,\xi^\ll
	\right)
\]
where, for each $\g\in G$, $\xi^\ll_\g:\g\bigoplus \s_iX\to \bigoplus \s_jX$ is given on each component by
\[
	\xi^\ll_\g=\ll(\eta_j):\g \s_i X\to \s_jX
\]
which denotes $\ll(\eta_j)$ times the unique basic isomorphism $\g \s_i X\to \s_jX$ where $\eta_j\in H$ and $\s_j\in G$ are uniquely determined by the equation $\g \s_i=\s_j\eta_j$. Note that $S_1X=SX$ when $X$ is regular (so that $H_X$ is the trivial group).
\end{defn}

We will verify that $S_\ll X$ is an object of $\cF^G$ using the notation $\eta_j=\eta_j(\g)$. Let $\g,\b\in G$. Then %we have $\b \s_i=\s_j \eta_j(\b)$ and $\g \s_j=\s_k \eta_k(\g)$. So, 
\[
	\g\b\s_i=\g\s_j \eta_j(\b)=\s_k \eta_k(\g)\eta_j(\b)=\s_k\eta_k(\g\b)
\]
and we conclude that $\eta_k(\g\b)=\eta_k(\g)\eta_j(\b)$. So,
\[
	\xi^\ll_{\g\b}=\ll(\eta_k(\g\b)):\g\b\s_i X\to  \s_kX
\]
is equal to
\[
	\xi^\ll_\g\circ \g\xi^\ll_\b=\ll(\eta_k(\g))\ll(\eta_j(\b)):\g\b\s_i X\to \g \s_jX\to \s_kX
\]
as required.

\begin{eg}\label{eg:singular objects for cyclic groups}
Suppose that $H_X=G\cong\ZZ/n$ is a cyclic group of order $n$ generated by $\g$. Then $\ll(\g)=z\in R$ is an $n$th root of unity and $S_\ll X=(X,\xi)$ with $\xi_{\g^k}:\g^k X\to X$ equal to $z^k$ times the unique basic morphism $\g^kX\to X$ which is an isomorphism. Since $S_\ll X$ depends only on $\ll(\g)=z$, we denote it by $Z_z(X)$.
\end{eg}

\begin{lem}
$S_\ll X$ is independent of the choice of representatives $\s_i$ up to isomorphism in $\cF^G$.
\end{lem}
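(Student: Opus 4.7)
The plan is to construct an explicit isomorphism in $\cF^G$ between $S_\ll X$ defined using one system of coset representatives $\{\s_i\}$ and $S_\ll X$ defined using another system $\{\s_i'\}$, indexed compatibly so that $\s_iH=\s_i'H$ for each $i$. Writing $\s_i'=\s_i h_i$ for the unique $h_i\in H_X$, each summand $\s_i'X=\s_i h_iX$ is canonically identified with $\s_iX$ by the basic isomorphism $f_i$ (which exists precisely because $h_iX\cong X$). So we define $\phi:(Y',\xi')\to(Y,\xi)$ componentwise by $\phi_i=\ll(h_i)\cdot f_i$. This is automatically an isomorphism in $\cF$ with inverse $\ll(h_i)^{-1}\cdot f_i^{-1}$, so the entire content of the lemma reduces to showing that $\phi$ respects the $G$-structures.

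The required identity is $\phi\circ\xi'_\g=\xi_\g\circ\g\phi$ for each $\g\in G$. To check it on the $i$-th summand, write $\g\s_i=\s_j\eta_j(\g)$ and $\g\s_i'=\s_j'\eta_j'(\g)$; substituting $\s_i'=\s_i h_i$ and $\s_j'=\s_j h_j$ yields the identity $\eta_j'(\g)=h_j^{-1}\eta_j(\g)h_i$ in $H$. Both sides of the desired equation end up in $\s_jX$ as scalar multiples of the same basic isomorphism $\g\s_i'X\to\s_jX$, since the four objects $\g\s_i'X,\g\s_iX,\s_j'X,\s_jX$ are all isomorphic and the cocycle values appearing in the relevant compositions therefore vanish. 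A direct computation gives the two scalars as $\ll(h_i)\ll(\eta_j(\g))$ on the $\xi_\g\circ\g\phi$ side and $\ll(h_j)\ll(\eta_j'(\g))$ on the $\phi\circ\xi'_\g$ side.

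The only step with real content is comparing these scalars. Expanding $\ll(\eta_j'(\g))=\ll(h_j^{-1})\ll(\eta_j(\g))\ll(h_i)$ and using that $R^\times$ is an abelian group makes the equality $\ll(h_j)\ll(\eta_j'(\g))=\ll(h_i)\ll(\eta_j(\g))$ immediate. I do not anticipate an obstacle: the factor $\ll(h_i)$ was inserted into the definition of $\phi_i$ precisely so that commutativity of $R^\times$ balances the calculation, and this is the unique place a choice is made. (It is worth noting that the argument uses commutativity of $R^\times$ rather than of $H_X$ itself, so the lemma actually goes through for any $\ll$ factoring through the abelianization of $H_X$.)
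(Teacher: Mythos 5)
Your proof is correct and is essentially the paper's own argument: the paper defines exactly the same map, namely $\ll(\a_i)$ times the unique basic isomorphism $\s_i'X\to\s_iX$ on each component (your $h_i$ is its $\a_i$), and simply leaves to the reader the equivariance check that you carry out explicitly via the identity $\eta_j'(\g)=h_j^{-1}\eta_j(\g)h_i$. Your verification, including the observation that only commutativity of $R^\times$ (not of $H_X$) is used, fills in the omitted details faithfully.
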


\begin{proof}
Suppose that $\s_i'$ is another choice of representatives of the left cosets of $H$ in $G$. Then $\s_i'=\s_i \a_i$ for some $\a_i\in H$. Then an isomorphism $(\bigoplus \s_i' X,\xi')\to (\bigoplus \s_i X,\xi)$ is given on each component $\s_i'X\to \s_iX$ by $\ll(\a_i)$ times the unique basic isomorphism.
\end{proof}

%\begin{rem}The restriction of the cocycle $c$ to the orbit $GX$ of a regular object $X$ gives a $G$-equivariant cocycle on $G$. So, it corresponds to a central extension $\ZZ\to U\to G$. For $G=\ZZ/2=\{1,\psi\}$, we have $U\cong \ZZ$ if and only if the number $n=c(X,\psi X,X)$ is odd. In the examples below we will have $n=1$ for all regular objects in $\cF^\psi=\cF^\psi(S^1)$ and $n=2$ for all regular objects in $\cF_b^\psi$ for all $0<b\le\pi$.\end{rem}

%An element $X\in \cX$ will be called \emph{singular} if it is not regular, i.e., $H_X$ is nontrivial.

%\begin{defn}We define a \emph{regular object} of $\cF^G$ to be one of the form $SX$ where $X$ is a regular object of $\cF$, i.e., $X$ is indecomposable and $X\not\cong \g X$ for all $\g\neq1$ in $G$. A \emph{singular object} of $\cF^G$ is one of the form $S_\ll Y$ where $Y$ is a singular object of $\cF$ and $\ll:H_X\to R^\times$ is a one-dimensional representation of the effective stabilizer $H_X$ of $X$.\end{defn}

%We will show that each $S_\ll X$ is indecomposable (including the case $S_1X=SX$ for regular $X$) and that every object of $\cF^G$ is isomorphic to a direct sum of regular objects $SX$ and singular objects $S_\ll Y$.

\begin{lem}\label{decomposition of singular SX}
$SX$ is isomorphic to the direct sum of $S_{\ll_j}X$ for all homomorphisms $\ll_j:H_X\to R^\times$. In the special case when $H_X=G=\ZZ/n$ we get $SX=\bigoplus_{z^n=1} Z_z(X)$.
\end{lem}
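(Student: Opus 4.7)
The plan is to use character theory of the abelian group $H = H_X$. Since $|H|$ divides $|G| = n$ and $R$ contains all $n$-th roots of unity, there are exactly $|H|$ distinct characters $\ll_j : H \to R^\times$, and the Schur orthogonality relations
\[
    \tfrac{1}{|H|}\sum_{\eta \in H} \ll(\eta) \ll'(\eta^{-1}) = \delta_{\ll, \ll'}, \qquad \sum_\ll \ll(\eta) = |H|\, \delta_{\eta, 1}
\]
hold in $R$ because $|H|$ is invertible in $R$. My goal is to construct explicit mutually inverse maps $\phi = (\phi_\ll) : \bigoplus_\ll S_\ll X \to SX$ and $\pi = (\pi_\ll) : SX \to \bigoplus_\ll S_\ll X$ in $\cF^G$ built from these characters.

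Fix coset representatives $\s_1, \dots, \s_m$ of $G/H$, so $S_\ll X$ has underlying object $\bigoplus_i \s_i X$, and reindex $SX = \bigoplus_{\g \in G} \g X$ as $\bigoplus_i \bigoplus_{\eta \in H} \s_i \eta X$. Every summand $\s_i \eta X$ is isomorphic to $\s_i X$ in $\cF$ because $\eta \in H$ stabilizes $X$, and compositions of basic maps among mutually isomorphic objects are again basic maps with no extra powers of the uniformizer $u$ (if $A, B, C$ are mutually isomorphic then $c(A,B,C) = 0$, since otherwise $f_{BC} f_{AB} = u^{c(A,B,C)} f_{AC}$ would fail to be an isomorphism). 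Define $\phi_\ll$ and $\pi_\ll$ by declaring their only nonzero matrix entries, between the $(i)$-summand of $S_\ll X$ and the $(i, \eta)$-summand of $SX$, to be $\ll(\eta^{-1})$ and $\tfrac{1}{|H|}\ll(\eta)$ times the basic isomorphism, respectively.

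The crux of the argument is checking that $\phi_\ll$ and $\pi_\ll$ are morphisms in $\cF^G$, i.e., that they intertwine $\xi^\ll$ and $\xi$. Writing $\g \s_i = \s_j \eta_j$ as in the definition of $S_\ll X$, both $\phi_\ll \circ \xi^\ll_\g$ and $\xi_\g \circ \g \phi_\ll$ should send the $(i)$-summand of $\g S_\ll X$ into the $(j,\eta')$-summand of $SX$ with coefficient $\ll(\eta_j) \ll(\eta'^{-1})$ times the unique basic isomorphism, once one reindexes via $\eta' = \eta_j \eta$ and uses that $H$ is abelian. The dual computation handles $\pi_\ll$. I expect this equivariance check to be the main obstacle: it is not deep, but it requires careful bookkeeping of the coset indices $(i,\eta) \leftrightarrow (j, \eta_j\eta)$ under the $G$-action.

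Once equivariance is established, the orthogonality relations immediately yield $\pi_\ll \phi_{\ll'} = \delta_{\ll,\ll'} \, \text{id}_{S_\ll X}$ and $\sum_\ll \phi_\ll \pi_\ll = \text{id}_{SX}$, so $\phi$ and $\pi$ are inverse isomorphisms and $SX \cong \bigoplus_\ll S_\ll X$ in $\cF^G$. The special case $H_X = G = \ZZ/n$ is then immediate: there is a single coset representative $\s_1 = e$, each $S_\ll X$ has underlying object $X$ with $\xi^\ll_{\g^k} = \ll(\g)^k$ times the basic isomorphism, so by Example~\ref{eg:singular objects for cyclic groups} we have $S_\ll X = Z_{\ll(\g)}(X)$, and as $\ll$ ranges over the characters of $\ZZ/n$, the value $z = \ll(\g)$ ranges over the $n$-th roots of unity in $R$.
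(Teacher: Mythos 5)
Your argument is correct and is in essence the paper's own proof: both hinge on the orthogonality relations for the characters of the abelian group $H_X$ (valid in $R$ because $|H_X|$ divides $n$ and is invertible there), and your maps $\pi_\ll$ and $\phi_\ll$ coincide, up to the normalization $\tfrac{1}{|H|}$, with the paper's morphism $f\colon SX\to\bigoplus S_{\ll_j}X$ whose matrix is the block-diagonal character table $T\oplus\cdots\oplus T$, together with its inverse. The only organizational difference is that the paper obtains $G$-equivariance for free by defining $f$ as the adjoint (Proposition \ref{adjunction formula}) of the diagonal morphism $X\to\bigoplus_\ll X$ and then checks invertibility of $T$ modulo $(u)$ over the local ring, whereas you verify equivariance by direct coset bookkeeping (which does go through exactly as you sketch, your observation that $c(A,B,C)=0$ for mutually equivalent $A,B,C$ included) and exhibit an exact two-sided inverse via orthogonality.
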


\begin{proof}
We recall that, by assumption, $H=H_X$ is abelian and there are $m$ distinct representations $\ll_j:H\to R^\times$ where $m=|H|$. Therefore, $SX\cong\bigoplus S_{\ll_j}X$ as objects of $\cF$ since both contain $m$ direct summands isomorphic to $\s_i X$ for each coset $\s_i H$ of $H$ in $G$.

We note that the $m\times m$ matrix $T$ with entries $\ll_j(\eta_i), \eta_i\in H,\ll_j:H\to R^\times$ is invertible since it becomes the character table of $H$ with inverse given by $(\frac1m\ll_i(\eta_j)^{-1})$ when its entries are reduced modulo $(u)$. This implies that the $\cF^G$-morphism $f:SX\to \bigoplus S_\ll X$ which is adjoint to the diagonal morphism $X\to \oplus_\ll X\into \bigoplus_\ll \bigoplus \s_k X$ is an isomorphism since the matrix of $f$ is the block diagonal matrix $T\oplus T\oplus \cdots \oplus T$. (The $(\s_k \eta_i,\ll_j \s_k)$-entry of $f$ is $\ll_j(\eta_i)$ times the identity morphism $\s_k X\to \s_k X$ and the other entries are zero.)
\end{proof}

The proof of the above lemma can be modified to prove the following lemma which will imply that the components $S_\ll X$ of $SX$ are indecomposable and nonisomorphic.

\begin{lem}
An $\cF^G$ morphism $f:SX\to \bigoplus S_{\ll_j}X$ is an isomorphism if and only if the $m$ components $r_j\in \End_\cF(X)=R$ of the adjoint map composed with the projection of each $S_{\ll_j}X$ to the factor $\s_1X=X$:
\[
	X\to \bigoplus S_{\ll_j}X\to \bigoplus_j X 
\]
are all invertible.
\end{lem}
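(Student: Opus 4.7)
The plan is to decompose $f = f_{\text{diag}} + f_{\text{off}}$ parallel to the splitting of its adjoint $\phi$ into its $r_j$-part and its $a_{j,i}$-part ($i>1$), and then to reduce invertibility of $f$ to invertibility of $f_{\text{diag}}$ via a Jacobson-radical argument inside $A = \End_\cF(\bigoplus_j S_{\ll_j}X)$. First I would use Proposition \ref{adjunction formula} to write $f$ as an explicit matrix: writing $\phi_{j,1}=r_j\cdot\mathrm{id}_X$ and $\phi_{j,i}=a_{j,i}f_{X,\s_iX}$ for $i>1$, a short computation shows that the $(\g;\,j,i)$-entry of $f$ is controlled by the unique index $i''=i''(\g,i)$ determined by $\g\s_{i''}\in\s_iH$: for $i''=1$ it equals $r_j\ll_j(\s_i^{-1}\g)$ times the basic isomorphism $\g X\to\s_iX$, while for $i''>1$ it equals $a_{j,i''}\ll_j(\eta)u^{c(\g X,\g\s_{i''}X,\s_iX)}$ times the basic morphism $f_{\g X,\s_iX}$ (with $\eta\in H$ determined by $\g\s_{i''}=\s_i\eta$). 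I collect the $i''=1$ contributions into $f_{\text{diag}}$ and the $i''>1$ contributions into $f_{\text{off}}$.

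For $f_{\text{diag}}$, the condition $i''=1$ is $\g\in\s_iH$, so grouping the rows of $SX$ by coset makes $f_{\text{diag}}$ block-diagonal with $[G:H]$ identical $m\times m$ blocks, each equal to the character-table matrix $T=(\ll_j(\eta))_{\eta\in H,\,j}$ of $H$ right-multiplied by $\mathrm{diag}(r_1,\ldots,r_m)$. Since $T$ was shown to be invertible over $R$ in the proof of Lemma \ref{decomposition of singular SX} (using that $\mathrm{char}\,K\nmid |H|$), $f_{\text{diag}}$ is invertible iff every $r_j$ is a unit of $R$.

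For $f_{\text{off}}$, the condition $i''>1$ forces $\g\notin\s_iH$, so $\g X\not\cong\s_iX$: every nonzero entry is a morphism between two non-isomorphic indecomposable summands of $\bigoplus_j S_{\ll_j}X$. The hard part will be to identify the Jacobson radical $J$ of $A$: I expect, using the cocycle rule \eqref{composition rule for tN categories} (which gives $f_{YX}\circ f_{XY}\in\mm\cdot\mathrm{id}_X$ whenever $X\not\cong Y$), that $J$ consists of matrices whose intra-coset blocks lie in $M_m(\mm)$ and whose inter-coset blocks are arbitrary, yielding $A/J\cong\prod_{i=1}^{[G:H]}M_m(K)$. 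Granted this, $f_{\text{off}}\in J$, so $f$ and $f_{\text{diag}}$ have the same image in $A/J$; hence $f$ is a unit in $A$ iff all $r_j$ are units in $R$, and since an $\cF$-iso between $G$-equivariant objects is automatically $G$-equivariant, this is the same as $f$ being an iso in $\cF^G$.
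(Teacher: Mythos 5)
Your proposal is correct and is essentially the paper's argument with the details filled in: the paper likewise reduces invertibility of $f$ to the $|G:H|$ coset-diagonal $m\times m$ blocks, whose entries modulo $(u)$ are $r_j\ll_j(\eta_i)$ (your $T\cdot\mathrm{diag}(r_1,\dots,r_m)$), and it simply asserts the block reduction that you justify by identifying the radical of $A=\End_\cF\bigl(\bigoplus_j S_{\ll_j}X\bigr)$ (your expected description of $J$ is indeed correct, and needs no completeness of $R$: $\mm A\subseteq J$ by a Nakayama argument, and the inter-coset part is nilpotent modulo $\mm A$ because any composite of basic morphisms that leaves and returns to an isomorphism class acquires a factor $u^{c}$ with $c\ge 1$ by the cocycle rule). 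One small repair to your last step: an $\cF$-isomorphism between objects of $\cF^G$ need not be equivariant as you state; what you actually need, and what holds trivially, is that an $\cF^G$-morphism which is invertible in $\cF$ has equivariant inverse and is therefore an isomorphism in $\cF^G$.
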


\begin{proof}
To determine if $f$ is an isomorphism, it suffices to examine, for each $\s_k$, the $m\times m$ block of $f$ which represents the induced endomorphism of $\bigoplus_m\s_k X$. $f$ is invertible if and only if these blocks are invertible for all $k$. However, modulo $(u)$ the $m^2$ entries of the $k$-th block are $r_j \ll_j(\eta_i)$. This gives an invertible matrix if and only if $r_j$ is invertible.
\end{proof}

We say that an object of any additive category is \emph{strongly indecomposable} if its endomorphism ring is a local ring (i.e., the complement of the group of units is a two-sided ideal). It follows easily that such objects are indecomposable since the equation $e(1-e)=0$ implies that $e=0$ or 1.% This lemma says that $SX$ is strongly indecomposable for regular objects $X$.

\begin{lem}\label{lem: all objects are strongly indecomposable}
$S_{\ll_j}X$ are strongly indecomposable nonisomorphic objects in $\cF^G$.% with local endomorphism ring.
\end{lem}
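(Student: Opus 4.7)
The plan is to exploit the $(1,1)$-block of any morphism $g:S_\ll X\to S_\mu X$ in $\cF^G$ --- meaning its $\s_1X\to \s_1X$ component $g_{11}$ with respect to the direct sum decomposition of the underlying objects $\bigoplus_i\s_i X$, where we take $\s_1=1$ as the coset representative of $H$ itself. Since $\cF(X,X)=R$, this $g_{11}$ is a scalar in $R$, and both strong indecomposability and nonisomorphism will follow from tracking it.

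First I would prove that $\End_{\cF^G}(S_\ll X)$ is local by constructing a surjective ring homomorphism $\pi:\End_{\cF^G}(S_\ll X)\to K:=R/\mm$, $g\mapsto \bar g_{11}$, whose kernel is exactly the set of non-units. Additivity is clear, and the key multiplicativity $(gh)_{11}\equiv g_{11}h_{11}\pmod{\mm}$ follows by expanding the block product: the off-diagonal contributions $g_{1k}\circ h_{k1}:\s_1X\to \s_kX\to \s_1X$ carry a factor of $u^{c(\s_1X,\s_kX,\s_1X)}$ from the composition rule \eqref{composition rule for tN categories}, and since $\s_k\notin H$ for $k\neq 1$ the elements $\s_k X$ and $X$ are inequivalent, forcing this exponent to be positive. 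To identify $\ker\pi$ with the non-units, I would extend $g$ to $\tilde g=g\oplus \mathrm{id}$ on the decomposition $SX=\bigoplus_{\ll_j} S_{\ll_j}X$ of Lemma \ref{decomposition of singular SX} and apply the previous lemma: the diagonal form of the canonical inclusion $X\hookrightarrow SX$ (the adjunction unit) forces the components $r_{\ll_j}$ to equal $1$ for $\ll_j\neq \ll$ and to equal $g_{11}$ for $\ll_j=\ll$, so $\tilde g$ --- and therefore $g$ --- is invertible if and only if $g_{11}$ is a unit in $R$. Hence $\ker\pi$ coincides with the non-units and is a two-sided ideal, so $\End_{\cF^G}(S_\ll X)$ is local.

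Next I would show $S_\ll X\not\cong S_\mu X$ for $\ll\neq \mu$. Given a hypothetical isomorphism $f:S_\ll X\to S_\mu X$, the self-inverse swap extension $\tilde f:SX\to SX$ (acting by $f$ on $S_\ll X$, by $f^{-1}$ on $S_\mu X$, and by the identity elsewhere) is again an isomorphism, and by the same computation as above its $r_\mu$-component is $f_{11}$; hence $f_{11}$ is a unit in $R$. The crucial input is then the $\cF^G$-morphism condition $f\circ \xi^\ll_\eta=\xi^\mu_\eta\circ \eta f$ applied to each $\eta\in H$. Because $\s_1=1$ is the unique coset representative lying in $H$, both column $1$ of $\xi^\ll_\eta$ and row $1$ of $\xi^\mu_\eta$ have only their $(1,1)$-entries nonzero, equal respectively to $\ll(\eta)$ and $\mu(\eta)$ times the basic isomorphism $\eta X\to X$. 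Reading off the $(1,1)$-entry of both sides yields the scalar equation $f_{11}\ll(\eta)=\mu(\eta)f_{11}$ in $R$; cancelling the unit $f_{11}$ gives $\ll(\eta)=\mu(\eta)$ for every $\eta\in H$, contradicting $\ll\neq \mu$.

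The main obstacle is verifying multiplicativity of $\pi$, which depends on the positivity of $c(\s_1X,\s_kX,\s_1X)$ for $k\neq 1$ --- not automatic from the cocycle axioms, but following from the fact that distinct coset representatives of $H$ correspond to inequivalent points of the cyclic poset. A secondary technical point is the identification $r_\ll=g_{11}$ in the application of the previous lemma, which requires unpacking the diagonal description of the decomposition isomorphism $SX\cong \bigoplus_{\ll_j}S_{\ll_j}X$ constructed in Lemma \ref{decomposition of singular SX}.
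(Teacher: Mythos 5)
Your proof is correct and takes essentially the same approach as the paper: the ring homomorphism $\pi(g)=\overline{g_{11}}$ to $K=R/\mm$, invertibility detected by applying the preceding lemma to $g\oplus\mathrm{id}$ under the decomposition $SX\cong\bigoplus S_{\ll_j}X$, and the scalar equation $\ll(\eta)g_{11}=\mu(\eta)g_{11}$ from $\psi$-equivariance for the nonisomorphism claim. Your only (harmless) variations are that you verify multiplicativity of $\pi$ explicitly via the positivity of $c(\s_1X,\s_kX,\s_1X)$ for $k\neq1$, which the paper leaves implicit, and that you finish the nonisomorphism step by first certifying $f_{11}$ is a unit (via the swap extension) and cancelling in $R$, where the paper instead concludes $\overline{g_{11}}=0$ and notes that distinct character values remain distinct modulo $(u)$.
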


\begin{proof} If $h\in \End_\cF(X)=R$, let $\ov h\in K=R/(u)$ be the reduction of $h$ modulo the maximal ideal $(u)$. Consider the ring homomorphism $\pi:E=\End_{\cF^G}(S_\ll X)\to K$ given by sending the endomorphism $f$ of $S_\ll X\cong \bigoplus \s_k X$ to $\ov f_{11}$, the reduction modulo $(u)$ of the $X=\s_1X\to \s_1X=X$ component $f_{11}$ of $f$. The kernel of $\pi$ is a 2-sided ideal in $E$. To show that $S_\ll X$ is strongly indecomposable, it suffices to show that $f\in E$ is invertible if and only if $\pi(f)\neq0$. This condition is clearly necessary. To show that it is sufficient, let $f$ be an endomorphism of $S_\ll X$ so that $\pi(f)\neq0$. If we compose any isomorphism $SX\cong \bigoplus S_{\ll_j}X$ with the endomorphism of $S_{\ll_j}X$ which is the endomorphism $f$ on the component $S_\ll X$ and the identity on all other components, then the result will be an isomorphism by the previous lemma. Therefore $f$ is an isomorphism.

To see that the $S_{\ll_j}X$ are nonisomorphic, take any $\cF^G$ morphism $g:S_\ll X\to S_{\ll'}X$. If $\ll\neq\ll'$ then there must be an element $\eta\in H$ so that $\ll(\eta)\neq\ll'(\eta)$ and they remain nonequal modulo $(u)$. But this implies that $\ov g_{11}=0$ since $\ll(\eta)g_{11}=\ll'(\eta)g_{11}$ by the assumption that $g$ is a morphism in $\cF^G$. But then, $g$ is not an isomorphism in $\cF$.
\end{proof}

The strongly indecomposable objects $S_\ll X$ can be isomorphic. 

\begin{lem}\label{isomorphism between singular objects} Suppose $\a,\b\in G$ and $h:\a X\cong \b Y$. Then we must have $\a H_X\a^{-1}=\b H_Y\b^{-1}$. For any homomorphism $\ll:H_X\to R^\times$, let $\ll':H_Y\to R^\times$ be the homomorphism given by $\ll'(\eta)=\ll(\b\a^{-1}\eta\a\b^{-1})$. Then we have an isomorphism $g:S_\ll X\cong S_{\ll'}Y$ whose $\a X,\b Y$ component is equal to $h$. Furthermore, any $\cF^G$ morphism $f:S_\ll X\to S_{\ll'} Y$ so that $f$ induces an isomorphism on the $ij$ components: $\s_i X\cong \t_jY$ is an isomorphism.
\end{lem}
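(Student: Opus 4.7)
The plan is to prove the three assertions in sequence, each by transport along the isomorphism $h$.

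For the equality $\a H_X\a^{-1}=\b H_Y\b^{-1}$, first I would apply $\a^{-1}$ to $h$ to obtain an isomorphism $X\cong\a^{-1}\b Y$ in $\cF$. For any $\eta\in H_Y$, composing this with $\a^{-1}\b\eta Y\cong\a^{-1}\b Y$ (induced by $\eta Y\cong Y$) shows $(\a^{-1}\b\eta\b^{-1}\a)X\cong X$, so $\a^{-1}\b\eta\b^{-1}\a\in H_X$. This gives $\b H_Y\b^{-1}\subseteq\a H_X\a^{-1}$; applying the same argument to $h^{-1}$ yields the reverse inclusion. Hence $\ll'(\eta):=\ll(\a^{-1}\b\eta\b^{-1}\a)$ (matching the formula in the statement, up to the direction in which conjugation is written) is a well-defined group homomorphism $H_Y\to R^\times$.

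To build $g$, I would choose coset representatives of $H_X$ and $H_Y$ in $G$ that include $\a$ and $\b$ respectively; this is permitted by the preceding lemma on independence from choice of representatives. For each $\s_i$ there exist unique $\t_{j(i)}$ and $\eta_i\in H_Y$ with $\s_i\a^{-1}\b=\t_{j(i)}\eta_i$, and the assignment $i\mapsto j(i)$ is a bijection by the first assertion. Let $h_i:=(\s_i\a^{-1})_* h:\s_i X\cong\t_{j(i)} Y$ be the transported isomorphism, so that $h_{i_0}=h$ when $\s_{i_0}=\a$. Define $g$ componentwise by $g_{i,j(i)}:=\ll'(\eta_i)\,h_i$ and $g_{i,j}:=0$ for $j\neq j(i)$. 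By construction, the $(\a,\b)$-component is $\ll'(1)\,h=h$ as required.

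The main technical step is verifying the equivariance condition $g\circ\xi^\ll_\g=\xi^{\ll'}_\g\circ\g g$ for each $\g\in G$. Writing $\g\s_i=\s_k\mu'$ with $\mu'\in H_X$ and $\g\t_{j(i)}=\t_{j(k)}\mu$ with $\mu\in H_Y$, the two paths around the square contribute scalars $\ll(\mu')\,\ll'(\eta_k)$ and $\ll'(\mu)\,\ll'(\eta_i)$ respectively times the unique basic isomorphism $\g\s_i X\to\t_{j(k)} Y$. These coincide because the coset relations combine to $\mu'(\a^{-1}\b)\eta_i=(\a^{-1}\b)\eta_k\mu$ in $G$, conjugation by $\a^{-1}\b$ carries $H_Y$ into $H_X$ by part one, and $\ll$ is a homomorphism. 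This bookkeeping across two distinct stabilizer subgroups is the main obstacle in the proof; once the conjugation formula for $\ll'$ is interpreted correctly, the identity is forced.

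For the remaining assertions, reducing the matrix of $g$ modulo $(u)$ gives exactly one unit entry in each row and each column, so $g$ is invertible in $\cF$ and therefore in $\cF^G$. For the final clause, given any $f:S_\ll X\to S_{\ll'}Y$ whose $(\s_i,\t_j)$-component is an isomorphism, I would form $g^{-1}\circ f$ using a $g$ built from representatives that include $\s_i$ and $\t_j$; the result is an endomorphism of $S_\ll X$ whose $(\s_i,\s_i)$-component is a unit modulo $(u)$. Lemma \ref{lem: all objects are strongly indecomposable} then identifies $g^{-1}\circ f$ with an automorphism, so $f$ itself is an isomorphism.
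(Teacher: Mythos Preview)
Your proof is correct and follows the same underlying strategy as the paper: transport $h$ along the $G$-action to build $g$ componentwise, then deduce the final clause by composing with $g^{-1}$ and invoking strong indecomposability (Lemma~\ref{lem: all objects are strongly indecomposable}).

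The one notable difference is in the choice of coset representatives. You pick representatives $\s_i$ of $H_X$ and $\t_j$ of $H_Y$ independently (arranging only that $\a$ and $\b$ appear among them), which forces you to track the bijection $i\mapsto j(i)$ and the correction factors $\eta_i\in H_Y$ via $\s_i\a^{-1}\b=\t_{j(i)}\eta_i$, and then to verify equivariance by a direct scalar computation. The paper instead exploits the equality $\a H_X\a^{-1}=\b H_Y\b^{-1}$ by choosing a single set of representatives $\s_i$ for the left cosets of this common subgroup; then $\{\s_i\a\}$ and $\{\s_i\b\}$ are simultaneously coset representatives for $H_X$ and $H_Y$, the bijection is the identity, all $\eta_i$ vanish, and $g=\bigoplus_i \s_i h$ is manifestly equivariant with no bookkeeping. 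Your route works but is heavier; the paper's choice of representatives is the trick that makes the verification disappear. Your observation about the conjugation direction in the formula for $\ll'$ is also correct.
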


\begin{rem}
When $G$ is abelian we have $H_X=H_Y$ and $\g=\g':H_X\to R^\times$.
\end{rem}

\begin{proof} If $\s_i$ are representatives for the left cosets of $\a H_X\a^{-1}=\b H_Y\b^{-1}$, then $S_\ll X=\bigoplus \s_i\a X$ and $S_{\ll'}Y=\bigoplus \s_i\b Y$ and an isomorphism $S_\ll X=\bigoplus \s_i\a X\cong S_{\ll'}Y=\bigoplus \s_i\b Y$ is given by the direct sum of the isomorphisms $\s_i h:\s_i\a X\cong \s_i\b Y$. 

For the second statement we note that the composition $g^{-1}f:S_\ll X\to S_\ll X$ must be an automorphism since it is an automorphism on the $\cF$ component $\s_i X$ and $S_\ll X$ is indecomposable.
\end{proof}

\begin{thm}
If the effective stabilizer of every $X\in\cX$ is abelian then every object in $\cF^G$ is isomorphic to a direct sum of strongly indecomposable objects of the form $S_\ll X$.
\end{thm}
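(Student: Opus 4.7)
The plan is to prove this in three steps: first reduce to the case where all indecomposable summands of $Y$ lie in a single $G$-orbit of iso classes; second, in the single-orbit case, show the underlying $\cF$-object has equal multiplicities across cosets and extract a representation of $H_X$ from $\xi$; third, diagonalize that representation using the central idempotents of the group algebra $R[H_X]$.

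Given $(Y,\xi)\in\cF^G$, apply Krull-Schmidt in the underlying category $\cF=\add\,\cP(\cX)$ to write $Y=\bigoplus_\alpha Y_\alpha$ with each $Y_\alpha$ indecomposable. Each iso $\xi_\g\colon\g Y\xrarrow\cong Y$ must, by Krull-Schmidt uniqueness, permute the iso classes $[Y_\alpha]$, and this permutation agrees with the $G$-action on iso classes in $\cX$. Grouping summands by $G$-orbit of iso classes yields a decomposition $Y=\bigoplus_O Y_O$ into $\xi$-stable subobjects, and hence a decomposition $(Y,\xi)=\bigoplus_O(Y_O,\xi|_{Y_O})$ in $\cF^G$, reducing to the single-orbit case. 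In that case fix $X$ in the orbit with $H=H_X$ and coset representatives $\sigma_i$ so that $G=\coprod\sigma_i H$. Write $Y=\bigoplus_i(\sigma_i X)^{n_i}$; applying $\xi_{\sigma_j\sigma_i^{-1}}$ and tracking the $\sigma_jX$-isotypic component (to which only the $k=i$ source summand contributes, since $\sigma_j\sigma_i^{-1}\sigma_k\in\sigma_jH$ forces $k=i$) gives $n_i=n_j=:n$.

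Now restrict $\xi$ to $H$: for each $\eta\in H$, projecting $\xi_\eta\colon\eta Y\to Y$ onto the $X=\sigma_1 X$-isotypic component and using the basic isomorphism $\eta X\cong X$ to identify $\eta X^n$ with $X^n$ gives an element of $GL_n(R)$, and the cocycle identity $\xi_{\eta_1\eta_2}=\xi_{\eta_1}\circ\eta_1\xi_{\eta_2}$ makes $\eta\mapsto\rho(\eta)$ a homomorphism $\rho\colon H\to GL_n(R)$. Since $H$ is abelian, $|H|$ is invertible in $R$, and $R$ contains all $|H|$-th roots of unity, the group algebra $R[H]$ splits as $\prod_\ll R$ over the characters $\ll\colon H\to R^\times$, with central orthogonal idempotents $e_\ll=\frac{1}{|H|}\sum_{\eta\in H}\ll(\eta^{-1})\eta$. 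Applying $\rho(e_\ll)$ to $R^n$ yields $R^n=\bigoplus_\ll V_\ll$ with each $V_\ll$ free of some rank $m_\ll\ge0$ and $\sum m_\ll=n$. This gives an $H$-equivariant decomposition $X^n=\bigoplus_\ll X^{m_\ll}$ of the $X$-isotypic component of $Y$, which transports via the isomorphisms $\xi_{\sigma_j}$ to a $\xi$-equivariant decomposition $Y=\bigoplus_\ll W_\ll$; direct comparison with the $\xi^\ll$ of the definition of $S_\ll X$ identifies $(W_\ll,\xi|_{W_\ll})\cong S_\ll X^{m_\ll}$.

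The main technical obstacle is checking that the $H$-character decomposition of the $X$-isotypic component lifts to a genuine $G$-equivariant decomposition of all of $Y$: that the subobjects obtained by transporting $V_\ll$ via $\xi_{\sigma_j}$ are independent of the choice of coset representatives and compatible with every $\xi_\g$ simultaneously. This reduces, via the cocycle identity applied to the factorization $\g=\sigma_j\eta$, to the stability of each $W_\ll$ under every $\xi_\g$; the character appearing on the $\sigma_j X$-piece of $W_\ll$ is the twist $\ll'(\eta)=\ll(\sigma_j^{-1}\eta\sigma_j)$ (equal to $\ll$ itself when $G$ is abelian), in exact agreement with Lemma~\ref{isomorphism between singular objects}.
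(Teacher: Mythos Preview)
There is a real gap: you tacitly assume that each $\xi_\g$ is block-diagonal with respect to your decompositions of $Y$, but in $\cF=\add\,\cP(\cX)$ every hom set between indecomposables is free of rank one over $R$, so the off-diagonal blocks of $\xi_\g$ are not forced to vanish. In Step~1 the subobject $Y_O$ need not be $\xi$-stable: $\xi_\g$ can have a nonzero component $\g Y_O\to Y_{O'}$ for distinct orbits $O\neq O'$, so the asserted splitting $(Y,\xi)=\bigoplus_O(Y_O,\xi|_{Y_O})$ in $\cF^G$ is unjustified. The same issue recurs in Step~2: projecting $\xi_\eta$ to the $X$-isotypic piece and applying the cocycle identity yields
\[
\rho(\eta_1\eta_2)=\rho(\eta_1)\rho(\eta_2)+\sum_{j\neq 1}(\xi_{\eta_1})_{1j}\circ(\eta_1\xi_{\eta_2})_{j1},
\]
and the correction terms need not be zero. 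They \emph{do} vanish modulo $u$, since each involves a round trip $X\to\sigma_jX\to X$ through a non-isomorphic indecomposable and hence picks up $u^{c(X,\sigma_jX,X)}$ with positive exponent; so your $\rho$ is only a homomorphism $H\to GL_n(K)$, not to $GL_n(R)$. Diagonalizing over $K$ and then lifting idempotents along $R\onto K$ would repair the argument, but that is substantive extra work you have not supplied. The ``main technical obstacle'' you flag in your last paragraph is downstream of this more basic one.

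The paper's proof sidesteps the problem by never attempting to diagonalize $\xi$. It takes a minimal counterexample $Z$ (necessarily indecomposable in $\cF^G$), forms the adjunction maps $SZ\to Z\to SZ$, decomposes $SZ$ via the already-established Lemma~\ref{decomposition of singular SX} as a sum of objects $S_\ll X$, and then locates a single composite $S_\ll X\to Z\to S_{\ll'}Y$ which is an isomorphism on one $\cF$-summand; Lemma~\ref{isomorphism between singular objects} then forces it to be an isomorphism in $\cF^G$, contradicting minimality.
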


\begin{proof}
Let $Z$ be an object of $\cF^G$ of minimal length as an object of $\cF$ so that $Z$ is not a direct sum of objects of the form $S_\ll X$. Then $Z$ must be indecomposable in $\cF^G$. Consider the $\cF^G$ morphism $f:SZ\to Z$ which is adjoint to the identity map $Z\to Z$. Then $f$ is a split epimorphism as a morphism in $\cF$. Similarly, we have a $\cF^G$ morphism $g:Z\to SZ$ which is a split monomorphism in $\cF$. The composition $SZ\to Z\to SZ$ is an isomorphism on certain components as a morphism in $\cF$. When we decompose $SZ$ into indecomposable summands of the form $S_\ll X$, there must be some component of the composition: $S_\ll X\to Z\to S_{\ll'}Y$ which, when considered as a morphism in $\cF$, becomes an isomorphism on some component. By Lemma \ref{isomorphism between singular objects} above, this must be an isomorphism $S_\ll X\cong S_{\ll'}Y$ showing that $Z\cong S_\ll X\cong S_{\ll'}Y$ as claimed.
\end{proof}

\subsection{The $G=\ZZ/p$ case} Suppose that $G=\ZZ/p=\<\psi|\psi^p\>$ where $p$ is a prime not equal to the characteristic of the field $R/\mm$ and $R$ contains all $p$-th roots of unity. Then $X\in\cX$ is singular if and only if $\psi X\cong X$. Recall from Example \ref{eg:singular objects for cyclic groups} the notation $Z_z(X):=S_{\ll_z} X$ where $z^p=1$ and $\ll_z:G\to R^\times$ is the character given by $\ll_z(\psi^n)=z^n$.

\begin{cor}\label{Frobenius category for cyclic groups}
When $G=\ZZ/p$ every object of $\cF^G$ is isomorphic to a direct sum of strongly indecomposable elements of the form $SX$ for regular $X$ and $Z_z(Y)$ with $z^p=1$ for singular $Y$. Furthermore, 
\begin{enumerate}
\item $Z_x(X)\cong Z_y(Y)$ if and only if $x=y$ and $Y\cong \g X$ in $\cF$ for some $\g\in G$. 
\item There is no nonzero morphism $Z_x(X)\to Z_y(Y)$ for any singular $X,Y$ when $x\neq y$.
\item $\cF^G(Z_z(X),Z_z(Y))=\cF(X,Y)\cong R$ for any singular $X,Y$ and any $z$ with $z^p=1$.
\item For singular $X$ we have
\[
	SX\cong\bigoplus_{z^p=1}Z_z(X).
\]
\end{enumerate}
\end{cor}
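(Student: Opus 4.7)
The corollary will follow by specializing the preceding Theorem (every object of $\cF^G$ is a direct sum of strongly indecomposable $S_\ll X$) together with the structural lemmas to the case $G = \ZZ/p$. My first task is to observe that since $p$ is prime, each effective stabilizer $H_X$ is either trivial (the regular case, where the only character is trivial and $S_\ll X = SX$) or equal to all of $G$ (the singular case). In the singular case, characters $\ll\colon G\to R^\times$ biject with $p$-th roots of unity $z\in R$ via $\ll(\psi)=z$, and these are $p$ distinct characters because $R$ contains $p$ distinct $p$-th roots of unity by hypothesis; by Example \ref{eg:singular objects for cyclic groups} the corresponding object is $S_{\ll_z}X = Z_z(X)$. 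Combined with Lemma \ref{lem: all objects are strongly indecomposable} this gives the main statement, and statement (4) is the direct application of Lemma \ref{decomposition of singular SX} in the case $H_X = G$.

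For (1), the ``if'' direction comes from Lemma \ref{isomorphism between singular objects}: given an isomorphism $h\colon\g X\cong Y$ in $\cF$ (so $\a=\g$, $\b=1$ in the lemma's notation), the lemma produces $S_{\ll_x}X\cong S_{\ll_x'}Y$ where $\ll_x'(\eta)=\ll_x(\g^{-1}\eta\g)=\ll_x(\eta)$ because $G$ is abelian, hence $\ll_x'=\ll_x$ and $Z_x(X)\cong Z_x(Y)=Z_y(Y)$. For the ``only if'' direction I appeal to the uniqueness of decompositions into strongly indecomposables (Lemma \ref{lem: all objects are strongly indecomposable}): any isomorphism $Z_x(X)\cong Z_y(Y)$ must, by the second half of Lemma \ref{isomorphism between singular objects}, come from a basic isomorphism $\s_i X\cong \t_j Y$ in $\cF$ for some coset representatives, which places $X$ and $Y$ in the same $G$-orbit, while the character-matching in that same lemma (abelian case) forces $x=y$.

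Statements (2) and (3) will be obtained by direct computation of the morphism space. Since $H_X=H_Y=G$, the underlying $\cF$-object of each $Z_z(X)$ is just $X$, so any morphism $Z_x(X)\to Z_y(Y)$ in $\cF^G$ is an $\cF$-morphism $f=rf_{XY}\colon X\to Y$ subject to the single equivariance condition $f\circ \xi^{\ll_x}_\psi = \xi^{\ll_y}_\psi\circ \psi f$ (equivariance for all $\psi^n$ then follows inductively from $\xi_{\a\b}=\xi_\a\circ\a\xi_\b$). Expanding both sides using \eqref{composition rule for tN categories}, the left side becomes $rx\,u^{c(\psi X,X,Y)}f_{\psi X,Y}$ and the right side becomes $ry\,u^{c(\psi X,\psi Y,Y)}f_{\psi X,Y}$. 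Using $G$-invariance of $c$ together with the singular identities $\psi X\cong X$ and $\psi Y\cong Y$, the two cocycle exponents coincide, reducing the equation to $rx=ry$. Since $x,y$ are units in $R$, this forces $r=0$ whenever $x\ne y$ (giving (2)) and is automatic when $x=y$ (giving (3), since every $r\in R$ then defines a morphism). The main obstacle is this last cocycle bookkeeping step, namely verifying $c(\psi X,X,Y)=c(\psi X,\psi Y,Y)$ for singular $X,Y$; this will require combining the $G$-invariance axiom in Definition~\ref{def of group action}(1) with the cocycle identities of Definition~\ref{defn of reduced cocycle} and the fact that $\psi X$ is equivalent to $X$ (and $\psi Y$ to $Y$) in the cyclic poset.
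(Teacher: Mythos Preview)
Your proposal is correct and follows essentially the same route as the paper. The paper's proof of (2) and (3) is the same direct equivariance computation you describe: write $f=rf_{XY}$, impose $f\circ\xi^{\ll_x}_\psi=\xi^{\ll_y}_\psi\circ\psi f$, and read off $rx=ry$. The paper simply draws the commuting square with arrows labeled by the scalars $x,y,r$ and asserts commutativity is equivalent to $yr=rx$, suppressing the cocycle bookkeeping you flag. Your instinct that this step needs justification is correct; it resolves easily: since $\psi X$ is equivalent to $X$ in the cyclic poset (i.e.\ $c(\psi X,X,\psi X)=0$), the cocycle identity and nonnegativity force $c(\psi X,X,Z)=0$ for every $Z$, and symmetrically $c(W,\psi Y,Y)=0$ for every $W$, so both exponents vanish. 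One small point on your argument for the ``only if'' direction of (1): you do not need the second half of Lemma~\ref{isomorphism between singular objects} here. In the singular case the underlying $\cF$-objects of $Z_x(X)$ and $Z_y(Y)$ are just $X$ and $Y$, so an $\cF^G$-isomorphism is already an $\cF$-isomorphism $X\cong Y$, and then $x=y$ drops out of the same equivariance computation you use for (2) (with $r$ a unit).
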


\begin{proof}
By the theorem, objects in $\cF^G$ have components $S_\ll X$. Example \ref{eg:singular objects for cyclic groups} shows that the singular objects have the form $Z_z(Y)$. (1) follows from Lemma \ref{isomorphism between singular objects} and the remark that follows it.  

To prove (2) and (3), suppose that $f:Z_x(X)\to Z_y(Y)$ is a morphism in $\cF^G$. Considered as a morphism in $\cF$, $f$ is a scalar, say $r$ times the basic morphism $X\to Y$. Then $\g f:\g X\to \g Y$ is the same scalar $r$ times the basic morphism $\g X\to \g Y$. For $f$ to be a morphism in $\cF^G$ the following diagram must commute:
\[
\xymatrixrowsep{15pt}\xymatrixcolsep{15pt}
\xymatrix{%begin xy matrix
\g X\ar[d]_x\ar[r]^r &
	\g Y\ar[d]^y\\
X \ar[r]^r& 
	Y
	}%end xy matrix
\]
I.e., we must have $yr=rx\in R$. Therefore, when $x\neq y$ we must have $r=0$. This proves (2). When $x=y$, there is no restriction on $r$. So, all $\cF$ morphisms $f:X\to Y$ are also $\cF^G$ morphisms, proving (3). Finally, (4) is a special case of Lemma \ref{decomposition of singular SX}.
\end{proof}

%\newpage

\subsection{The $G=\ZZ/2$ case}

We now restrict to the case $G=\ZZ/2$. Then, the only restriction on $R$ is that $K=R/\mm$ has characteristic different from $2$. Then $R$ will automatically contain two distinct square roots of unity: $\pm1$. When $G$ has order $2$, we will denote the nontrivial element of $G$ by $\psi$ and write $\cF^\psi$ instead of $\cF^G$. The general results of the last few sections, when restricted to the case $G=\ZZ/2$ give the following.

\begin{thm}\label{thm: indecomposable in Fpsi}
$\cF^\psi$ is a Krull-Schmidt $R$ category with indecomposable objects given by
\begin{enumerate}
\item $SX=(X\oplus\psi X,\xi)$ for every regular object $X$ in $\cF$, i.e., $X$ is indecomposable and not isomorphic to $\psi X$, where $\xi=\mat{0&1\\1&0}:\psi X\oplus X\to X\oplus\psi X$.
\item $Z_+(Y)=(Y,\z_Y)$ and $Z_-(Y)=(Y,-\z_Y)$ for every singular object $Y\cong \psi Y$ in $\cF$ where $\z_Y:\psi Y\cong Y$ is the basic morphism.
\end{enumerate}
Furthermore, $\cF^\psi$ is a Frobenius category with indecomposable projective-injective objects defined to be those of the form $SX,Z_\pm (Y)$ where $X,Y$ are regular or singular projective-injective objects of $\cF$.
\end{thm}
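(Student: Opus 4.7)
The plan is to specialize the general results of the preceding subsections to $G=\ZZ/2$, using the hypothesis that $\mathrm{char}\,K\neq 2$ so that $R$ contains the two square roots of unity $\pm 1$. Since $G$ is abelian, every effective stabilizer $H_X$ is automatically abelian, so Corollary \ref{Frobenius category for cyclic groups} applies verbatim with $p=2$: every object of $\cF^\psi$ is a direct sum of strongly indecomposables of the form $SX$ for regular $X$ and $Z_z(Y)$ for singular $Y$ with $z^2=1$, i.e., $z=\pm 1$.

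The next step is to make the objects in (1) and (2) explicit by unwinding the definitions. For a regular $X$ the stabilizer $H_X$ is trivial, so the two left cosets of $H_X$ in $G$ have representatives $\s_1=1,\s_2=\psi$, and the general formula for $S_\ll X$ with $\ll=1$ yields $SX=(X\oplus\psi X,\xi)$ where $\xi_\psi:\psi X\oplus X\to X\oplus\psi X$ sends each summand to the other by the identity basic isomorphism; in matrix form this is exactly the swap matrix stated in (1). For a singular $Y$ we have $H_Y=G=\ZZ/2$, and Example \ref{eg:singular objects for cyclic groups} identifies $Z_z(Y)=(Y,\xi)$ with $\xi_\psi=z\cdot\z_Y$, so $Z_\pm(Y)=(Y,\pm\z_Y)$ as in (2).

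Lemma \ref{lem: all objects are strongly indecomposable} then shows that each $SX$ (regular case) and each $Z_\pm(Y)$ (singular case) has local endomorphism ring, which combined with the existence of a decomposition into such summands yields the Krull--Schmidt property; non-isomorphism between $Z_+(Y)$ and $Z_-(Y)$, needed to see that the list is irredundant, is Corollary \ref{Frobenius category for cyclic groups}(1)--(2).

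For the Frobenius claim the plan is to invoke Theorem \ref{FG is a Frobenius category}, which states that $\cF^\psi$ is Frobenius and its indecomposable projective-injective objects are the indecomposable summands of $SP$ as $P$ ranges over indecomposable projective-injectives of $\cF$. If $P$ is regular then $SP$ is already indecomposable and appears in the list in (1); if $P$ is singular then Corollary \ref{Frobenius category for cyclic groups}(4) (a special case of Lemma \ref{decomposition of singular SX}) gives $SP\cong Z_+(P)\oplus Z_-(P)$, accounting for (2). No serious obstacle is expected, since the theorem is essentially the $p=2$ specialization of results already established; the only point requiring real care is the explicit matrix identification of $\xi$ in the regular case, which is routine bookkeeping from the definition of $S_\ll$.
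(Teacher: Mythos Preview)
Your proposal is correct and follows essentially the same approach as the paper: the theorem is stated as the $G=\ZZ/2$ specialization of the general results (Corollary \ref{Frobenius category for cyclic groups}, Lemma \ref{lem: all objects are strongly indecomposable}, Lemma \ref{decomposition of singular SX}, and Theorem \ref{FG is a Frobenius category}), with no independent argument given. Your write-up is in fact more explicit than the paper's, which simply asserts that the preceding results restrict to yield the statement.
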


\begin{defn}\label{Z2 singular objects}
Indecomposable objects of $\cF^\psi$ of the form $SX$ will be called \emph{regular} objects. $Z_+(X)$ and $Z_-(X)$ will be called \emph{positive} and \emph{negative singular objects}. We call $Z_+(X),Z_-(X)$ a \emph{pair of singular objects} or simply a \emph{singular pair} because, as a special case of Corollary \ref{Frobenius category for cyclic groups}, we have:
\[
	SX\cong Z_+(X)\oplus Z_-(X)
\]
for all singular $X$ in $\cF$.
\end{defn}

\begin{rem}\label{char 2 case}
We note that if $char\,K=2$ then $SX$ is indecomposable in $\cF^\psi$ for all indecomposable objects $X$ of $\cF$. This is certainly true when $X$ is regular. When $X$ is singular this holds because the matrix $\xi=\mat{0&1\\1&0}$, when reduced modulo $\mm$ is conjugate to the matrix $\mat{1&1\\0&1}$ which is a Jordan block. This implies that there is a canonical nonsplit exact sequence
\[
	Z(X)\cof SX\onto Z(X)
\]
where $Z(X)=Z_+(X)=Z_-(X)$ for all singular $X$.
%the endomorphism ring of $SX$ in the stable category $\ul\cF^\psi$ of $\cF^\psi$ is the ring of dual numbers $K[\e]/(\e^2)$.
%When $char\,K\neq 2$ the matrix $\xi=\mat{0&1\\1&0}$ is conjugate to the diagonal matrix $\mat{1&0\\0&-1}$ which implies that
\end{rem}

%\newpage

\subsection{Trivial example}

We need the trivial case of Theorem \ref{thm: indecomposable in Fpsi} when $\cX_0=\cX$, i.e., the case when all exact sequence in $\cF$ split. Assume also that $\psi X\not\cong X$ for all $X\in\cX_0=\cX$. Then all objects of $\cF^\psi$ are regular. An important example of this is $\cF(S^1)=\cF(S^1,S^1,R_0)$ with involution given by rotation by $\pi$: $\psi(P_x)=P_{x+\pi}$ where $P_x$ is the object of $\cF(S^1)$ corresponding to $[x]=x+2\pi\ZZ\in S^1=\RR/2\pi\ZZ$. Then all indecomposable objects of $\cF^\psi(S^1)$ are regular and given by
\[
	SP_x=(P_x\oplus P_{x+\pi},\xi),\quad \xi=\mat{0&1\\1&0}:P_x\oplus P_{x+\pi}\xrarrow{\cong} P_{x+\pi}\oplus P_x
\]
\begin{lem}
The endomorphism ring of each regular object $SP_x$ of $\cF^\psi(S^1)$ is equal to $R=R_0[u]$ where $u^2=t$. This is a discrete valuation ring with uniformizer $u$ and the same residue field as $R_0$. Furthermore $\cF^\psi(S^1)(SP_x,SP_y)$ is a free $R$-module with one generator.
\end{lem}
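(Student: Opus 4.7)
The plan is to apply Corollary~\ref{formula for F(SX,SY)} directly with $G=\ZZ/2$ and base ring $R_0$. That corollary gives an $R_0$-module isomorphism
\[
	\cF^\psi(SP_x,SP_y)\cong R_0^{\ZZ/2}=R_0\oplus R_0,
\]
so after the additive part is recorded, all that remains is to identify the ring structure on the endomorphism side and, for general $y$, the module structure.

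I would first treat the endomorphism ring. Let $a$ correspond to the basis vector $(1,0)$ (the identity $SP_x\to SP_x$) and let $u$ correspond to $(0,1)$; as $2\times 2$ matrices with respect to the decomposition $SP_x=P_x\oplus P_{x+\pi}$,
\[
	u=\mat{0 & f_{P_{x+\pi},P_x}\\ f_{P_x,P_{x+\pi}} & 0}.
\]
Using the composition formula from Corollary~\ref{formula for F(SX,SY)} with $\a=\g=1$, the only non-vanishing term in $u\circ u$ is the $\b=\psi$ term, which contributes $t^{c(P_x,P_{x+\pi},P_x)}f_{P_x,P_x}$ on the diagonal, and the same on the $(2,2)$-entry by the cocycle symmetry $c(X,Y,X)=c(Y,X,Y)$ established in Definition~\ref{defn of reduced cocycle}. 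The key input is then the cocycle evaluation $c(P_x,P_{x+\pi},P_x)=1$ on $S^1$, giving $u^2=t\cdot\mathrm{id}$. The composition of $(a+bu)$ with $(a'+b'u)$ collapses to $(aa'+bb't)+(ab'+ba')u$, producing a ring isomorphism $\End_{\cF^\psi(S^1)}(SP_x)\cong R_0[u]/(u^2-t)=R$.

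Next I would verify the DVR claim. Since $R$ is a free $R_0$-module of rank $2$, it is a Noetherian domain (a torsion-free finite extension of $R_0$ inside the field of fractions of the integral closure). The ideal $(u)\subset R$ satisfies $(u)^2=(t)$, so modulo $(u)$ we recover $R_0/\mm=K$, showing $(u)$ is maximal with residue field $K$; since $(u)$ is principal and every nonzero element of $R$ factors as $u^k\cdot(\text{unit})$ using the $R_0$-expansion and $u^2=t$, $R$ is a DVR with uniformizer $u$ and residue field $K$.

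Finally, for arbitrary $y$ the $R_0$-module $\cF^\psi(S^1)(SP_x,SP_y)\cong R_0\oplus R_0$ carries a natural $R$-action by precomposition. Letting $g\in\cF^\psi(SP_x,SP_y)$ be the morphism adjoint to the identity on the $\a=1$ component (so $g$ corresponds to $(1,0)$ under the identification of the corollary), a direct application of the same composition formula shows that $g\circ u$ corresponds to $(0,1)$, up to a factor coming from $c(P_x,P_{x+\pi},P_y)$ which is a unit in $R_0$. Hence $R\cdot g$ already spans both $R_0$-basis vectors, and the $R$-annihilator of $g$ is zero because the natural map $R\to R_0\oplus R_0$, $r\mapsto r\cdot g$, is an $R_0$-module isomorphism between two free rank-$2$ modules. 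Thus $\cF^\psi(SP_x,SP_y)$ is a free $R$-module of rank one.

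The main obstacle is the cocycle computation $c(P_x,P_{x+\pi},P_x)=1$: under the naive reading of the example defining $\cF(S^1)$, the degenerate triple $(P_x,P_{x+\pi},P_x)$ admits weak-inequality representatives $0\le\pi\le 2\pi\le 2\pi$, which would force $u^2=1$ and contradict indecomposability from Theorem~\ref{thm: indecomposable in Fpsi}. So the argument needs a careful reading of the $\cF(S^1)$ convention (requiring strict cyclic ordering on distinct objects and using the cocycle identities on degenerate triples) to pin down $c=1$ here. Once this evaluation is in hand, the rest is routine matrix computation.
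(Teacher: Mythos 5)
Your proposal is correct in outline and follows essentially the same route as the paper: the paper computes $\cF^\psi(S^1)(SP_x,SP_x)=\cF(S^1)(P_x,P_x\oplus P_{x+\pi})=R_0^2$ via the adjunction, identifies the second generator $u$ with rotation by $\pi$, and obtains $u^2=t$ from the statement that rotation by $2\pi$ is multiplication by $t$ --- which is exactly your matrix computation through Corollary \ref{formula for F(SX,SY)}. You were also right to distrust the naive reading of the cocycle on the degenerate triple: the displayed criterion ``$x\le y\le z\le x+2\pi$'' is too weak for triples of the form $(X,Y,X)$, and the correct value comes from the covering-poset definition of $c$ (the one the paper spells out for $\cX_b$, with $i,j,k$ chosen \emph{minimal}): for lifts $x,\,x+\pi,\,x$ one gets $i=0$, $j=1$, $k=0$, so $c(P_x,P_{x+\pi},P_x)=1$. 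Your indirect consistency argument (that $c=0$ would make $P_x\cong P_{x+\pi}$, contradicting regularity of $SP_x$) is a valid sanity check, but the covering-poset computation is the clean justification; the paper encodes all of this in the single phrase ``rotation by $2\pi$ is multiplication by $t$.'' You could also note, as the paper does, that $u$ is central, so the left and right $R$-module structures on hom sets agree; for the lemma itself your precomposition action by $\End(SP_x)=R$ suffices.

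There is one step that fails as written: in the rank-one argument you assert that $g\circ u$ corresponds to $(0,1)$ up to a factor $t^{c(P_{x+\pi},P_x,P_y)}$ ``which is a unit in $R_0$.'' That exponent is $0$ only when $y\in[x,x+\pi]$ modulo $2\pi$; if $y\in(x+\pi,x+2\pi)$ the exponent is $1$, so $g\circ u=t\cdot g_2$ and $R\cdot g=R_0\,g\oplus R_0\,t\,g_2$ is a \emph{proper} $R$-submodule --- your chosen generator $g$ (adjoint to $f_{P_xP_y}$) does not generate in that range. The fix is the normalization the paper makes explicit in the proof of the very next proposition: replace $SP_y$ by the isomorphic object $SP_{y+\pi}$ so that $x\le y<x+\pi$, or equivalently take as generator the morphism adjoint to the \emph{short} basic map (rotation angle at most $\pi$); then the factor is indeed $t^0=1$, your map $R\to R_0^2$ has matrix $\mathrm{diag}(1,\text{unit})$, and the freeness of rank one follows. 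With that one-line normalization inserted, your argument is complete and matches the paper's.
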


\begin{proof}
Using the adjunction formula we have
\[
	\cF^\psi(S^1)(SP_x,SP_x)=\cF(S^1)(P_x,P_x\oplus P_{x+\pi})=R_0^2
\]
where the two generators of $R_0^2$ are the identity map on $P_x$ and the basic map $P_x\to P_{x+\pi}$ which is ``rotation by $\pi$''. Since rotation by $2\pi$ is multiplication by $t$, this ring is $R=R_0[X]/(X^2-t)=R_0[\sqrt t]$. We let $u=\sqrt t$ be the operator which rotates every point by $\pi$. This is a central operator, i.e., an element of the center of the category $\cF^\psi(S^1)$ and therefore $\cF^\psi(S^1)$ is an $R$-category.

For any two regular $SP_x,SP_y$ of $\cF^\psi(S^1)$, we have $\cF^\psi(S^1)(SP_x,SP_y)=R_0^2$ with one generator equal to $u$ times the other generator. Therefore, this is a free $R$ module of rank 1 as claimed.
\end{proof}

\begin{prop}
$\cF^\psi(S^1)=\cF^\psi(S^1,S^1,R_0)$ is a trivial Frobenius category equivalent to the Frobenius category $\cF(S^1_\pi)=\cF(S^1_\pi,S^1_\pi,R)$ where $S^1_\pi$ is the cyclically ordered set $S^1_\pi=\RR/\pi \ZZ$ and $R=R_0[\sqrt u]$.
\end{prop}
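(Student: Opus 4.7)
The plan is to construct an explicit $R$-linear equivalence of additive categories $\Phi\colon \cF(S^1_\pi) \to \cF^\psi(S^1)$ and to observe that both sides are trivial Frobenius categories, so the match of exact structures then follows automatically.

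First I would verify that $\cF^\psi(S^1)$ is trivial. The base $\cF(S^1) = \cF(S^1,S^1,R_0)$ is trivial by construction, so every indecomposable $P_x$ is projective-injective. By Theorem \ref{FG is a Frobenius category} every $SP_x$ is then projective-injective in $\cF^\psi(S^1)$. Since $\psi P_x = P_{x+\pi}\not\cong P_x$ in $\cF(S^1)$, no singular objects arise, so by Theorem \ref{thm: indecomposable in Fpsi} the objects $SP_x$ exhaust the indecomposables of $\cF^\psi(S^1)$. Hence every indecomposable of $\cF^\psi(S^1)$ is projective-injective, so all exact sequences split. The triviality of $\cF(S^1_\pi)=\cF(S^1_\pi,S^1_\pi,R)$ holds by construction.

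Next I would define $\Phi$ on objects by $\Phi(P_{[x]})= SP_{\tilde x}$ for any lift $\tilde x\in \RR$ of $[x]\in S^1_\pi=\RR/\pi\ZZ$; since $SP_{\tilde x}\cong SP_{\tilde x+\pi}$ this is well-defined up to isomorphism, and every indecomposable of $\cF^\psi(S^1)$ has this form, so $\Phi$ is essentially surjective. Between indecomposables, both $\cF(S^1_\pi)(P_{[x]},P_{[y]})$ and $\cF^\psi(S^1)(SP_{\tilde x},SP_{\tilde y})$ are free $R$-modules of rank one, the first by construction and the second by the preceding lemma. Choosing lifts with $\tilde y-\tilde x\in [0,\pi)$, the distinguished generator on the $\cF^\psi$-side is the morphism whose adjoint $P_{\tilde x}\to SP_{\tilde y}=P_{\tilde y}\oplus P_{\tilde y+\pi}$ is $(f_{\tilde x,\tilde y},0)$. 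Sending basic to basic defines $\Phi$ on Homs, extended $R$-linearly and then additively to direct sums; it is fully faithful by Lemma above.

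The main obstacle is functoriality, which amounts to a cocycle comparison: composition in $\cF(S^1_\pi)$ is governed by the cyclic cocycle of $S^1_\pi$, while composition in $\cF^\psi(S^1)$ of basic morphisms must be extracted from Corollary \ref{formula for F(SX,SY)}, whose composition formula $\sum_\b r_{\g^{-1}\b}s_{\b^{-1}\a} u^n f_{\a X,\g Z}$ involves the $S^1$-cocycle $n=c(\a X,\b Y,\g Z)$. Taking three classes with lifts satisfying $\tilde y-\tilde x,\tilde z-\tilde y\in[0,\pi)$, I would split into two subcases. If $\tilde z-\tilde x\in[0,\pi)$, the three classes are cyclically ordered in $S^1_\pi$; all relevant $S^1$-cocycle values vanish and the composition equals the basic $SP_{\tilde x}\to SP_{\tilde z}$, matching $c_{S^1_\pi}=0$. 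If $\tilde z-\tilde x\in[\pi,2\pi)$, the classes are not cyclically ordered in $S^1_\pi$; the naive composition produces a morphism into $SP_{\tilde z}$ whose adjoint coordinates are $(f_{\tilde x,\tilde z},0)$, but the basic morphism into $SP_{\tilde z-\pi}$ (the fundamental-domain representative of $[z]$) corresponds under the canonical isomorphism $SP_{\tilde z}\cong SP_{\tilde z-\pi}$ to the adjoint coordinates $(0,f_{\tilde x,\tilde z+\pi})$, and one checks directly that $(f_{\tilde x,\tilde z},0)=u\cdot (0,f_{\tilde x,\tilde z+\pi})$ in $\cF^\psi(SP_{\tilde x},SP_{\tilde z-\pi})$, matching $c_{S^1_\pi}=1$. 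The delicate point is this last bookkeeping: the uniformizer $u=\sqrt{t}$ arises precisely as the cross-summand morphism $SP\to SP$ from the preceding lemma, and tracking how the identification $SP_{\tilde z}\cong SP_{\tilde z-\pi}$ interacts with the adjoint coordinates is what produces the expected factor of $u$.
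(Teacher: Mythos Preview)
Your proposal is correct and follows essentially the same route as the paper for the equivalence: both match basic morphisms and do the same two-case composition check (whether $\tilde z-\tilde x<\pi$ or $\tilde z-\tilde x\ge\pi$), with the factor of $u$ appearing exactly when the composition wraps past $\pi$. The one genuine difference is in the triviality argument: the paper gives a direct, self-contained construction of a retraction in $\cF^\psi(S^1)$ from a retraction in $\cF(S^1)$, whereas you deduce triviality by invoking Theorems \ref{FG is a Frobenius category} and \ref{thm: indecomposable in Fpsi} to conclude that every indecomposable is projective-injective. Your argument is shorter and perfectly valid once those theorems are in hand; the paper's argument has the virtue of being independent of the general Krull--Schmidt machinery. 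One small notational wrinkle: in your second case you write $f_{\tilde x,\tilde z+\pi}$ for the second adjoint coordinate, but since $\tilde z+\pi-\tilde x\ge 2\pi$ this is really $f_{\tilde x,\tilde z-\pi}$ (using $P_{\tilde z+\pi}=P_{\tilde z-\pi}$); the computation is correct once this is read properly.
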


%We will write $\cF(S^1_\pi)=\cF(S^1_\pi,S^1_\pi,R)$.

\begin{proof}
We first show that $\cF^\psi(S^1)$ is a trivial Frobenius category, i.e., that every exact sequence $A\to_f B\to_g C$ in $\cF^\psi(S^1)$ splits. Since all objects are regular, we may assume that $A=SX$ and $B=(Y,\xi)$ where $\xi:\psi Y\to  Y$ is an isomorphism so that $\psi(\xi)=\xi^{-1}$.

By definition of the exact structure of $\cF^\psi(S^1)$, $f:SX\to Y$ is a split monomorphism in $\cF(S^1)$. Since $f$ is a morphism in $\cF^\psi(S^1)$, we have $f=(f_0,f_1):X\oplus \psi X\to Y$ with $f_1=\xi\circ \psi f_0$. By assumption there is a retraction $g=(g_0,g_1):Y\to X\oplus\psi X$ so that $g_1\circ f_0=0$ and $g_1\circ f_1=id_{\psi X}$. This implies that 
\[
	id_X=\psi(g_1\circ f_1)=\psi g_1\circ \psi f_1=\psi g_1\circ \xi^{-1}\circ f_0
\] 
\[
0=\psi(g_1\circ f_0)=\psi g_1\circ \psi f_0=\psi g_1\circ \xi^{-1}\circ f_1
\]
Therefore, $(\psi g_1\circ\xi^{-1},g_1):Y\to X\oplus\psi X$ which is a morphism in $\cF^\psi(S^1)$ is a retraction of $f=(f_0,\psi \xi\circ\psi f_0)$. So, every exact sequence in $\cF^\psi(S^1)$ splits.

To show that $\cF^\psi(S^1)\cong \cF(S^1_\pi,S^1_\pi,R)$, we consider the generators $f_{xy},f_{yz},f_{xz}$ of \[
\cF^\psi(S^1)(SP_x,SP_y)\cong \cF^\psi(S^1)(SP_y,SP_z)\cong \cF^\psi(S^1)(SP_x,SP_z)\cong R\]
Replacing $SP_y$ with the isomorphic $SP_{y+\pi}$ if necessary and doing the same for $SP_z$, we may assume that $x\le y<x+\pi$ and $y\le z<y+\pi$. Then $f_{xy}(P_x)\subseteq P_y$ and $f_{xy}(P_{x+\pi})\subseteq P_{y+\pi}$ and similarly for $f_{yz}$. If $x\le z<x+\pi$ then $f_{yz}\circ f_{xy}=f_{xz}$ since it it the shortest morphism $P_x\to P_z$. If $z\ge x+\pi$ than the composition $P_x\to P_y\to P_z$ factors through $P_{z+\pi}$ and we get $f_{yz}\circ f_{xy}=uf_{xz}$. Therefore, the morphism sets for $\cF^\psi(S^1)$ and its composition rule agree with that of $\cF(S^1_\pi)$ up to isomorphism. (There are two object in each isomorphism class of indecomposable objects in $\cF^\psi(S^1)$ but only one in $\cF(S^1_\pi)$.) Therefore, $\cF^\psi(S^1)$ is equivalent to $\cF(S^1_\pi)$.
\end{proof}

%\newpage

\section{Clusters of continuous type $D$}

We now specialize to the case when the Frobenius category $\cF$ is the continuous Frobenius category $\cF_\pi$. Recall \cite{IT09} that the indecomposable objects are indexed by ordered pairs of points on the circle: $E(x,y)$ and that reversing the order gives an isomorphic but not equal object: $E(x,y)\cong E(y,x+2\pi)$ We also use the notation $M(x,y)=E(x,y+\pi)$ so that $M(x,x)$ is a diameter of the circle. (See definition below.) The involution $\psi$ is given by rotation by $\pi$, or equivalently, reflection through the center. More generally, the generator of $\ZZ/p$ acts by rotation by $2\pi/p$. Singular objects are $M(x,x)$ which are isomorphic to their reflections. And there are no singular objects when $p$ is an odd prime.

As in the case of the continuous cluster category of type $A$, clusters in $\ul\cF^\psi_\pi$ are defined to be discrete laminations of the punctured disk where a \emph{lamination} of the punctured disk is defined to be a maximal compatible subset of $Ind\,\ul\cF^\psi_\pi$.

%We recall that two standard objects $X$ and $Y$ are compatible if there is a sequence of standard objects $X_i$ converging to $X$ so that $\cD(X_i,Y)=0=\cD(Y,X_i)$ for all $i$. 

In keeping with the idea that continuous cluster categories are limits of cluster categories of finite type, we define compatibility in terms of limits. Thus, two indecomposable objects $X,Y$ are \emph{compatible} if there is a sequence of objects $Y_n$ converging to $Y$ so that $\Ext^1(X,Y_n)=\Ext^1(Y_n,X)=0$ for all $n$. For this to make complete sense we first need a topology on the set of indecomposable objects of the category. However, we postpone the technicalities of the topology for another paper. In this paper, we use only a heuristic description to derive a sensible definition of compatibility in the limiting case and the expected homotopy type of the space of objects of the stable category of $\cF_b^\psi$.

\subsection{Continuous Frobenius category} We recall the definition of the continuous Frobenius category $\cF_b$ for any positive $b\le\pi$. This is defined to be the category $\cF(\cX_b,\d\cX_b,R)$ where $\cX_b$ is the cyclic poset given below. (See \cite{IT11} for the general theory. This particular example is fully explained below.)%  whose covering poset $\tilde\cX_\pi$ is given as follows.

We start with what is called the \emph{covering poset} $\tilde\cX_\pi$. This is two copies of a closed strip:
\[
	\tilde\cX_b=\{(x,y,\e)\in \RR\times\RR\times\{+,-\}\ |\ |x-y|\le b\}
\]
We take the partial ordering $(x,y,\e)\le(x',y',\e')$ if and only if $x\le x'$ and $y\le y'$. In particular $(x,y,+)\approx(x,y,-)$ are equivalent in the partial ordering. Define an automorphism $\s$ of $\tilde\cX_b$ by
\[
	\s(x,y,\e)=(y+\pi,x+\pi,-\e)
\]
This has the property that
\begin{enumerate}
\item $X\le \s X$ for all $X\in\tilde\cX_b$.
\item For all $X,Y\in\tilde\cX_b$ there is an $m$ so that $X\le \s^mY$.
\end{enumerate}
Next, we define $\cX_b$ to be the set of $\s$ orbits of elements of $\tilde\cX_b$. We denote the elements of $\cX_b$ by $M(x,y)$ and $M(x,y)'$ for the orbits of $(x,y,+)$ and $(x,y,-)$ respectively. In particular, $M(x,y)'=M(y+\pi,x+\pi)$. In the $E(x,y)$ notation we have:
\[\begin{array}{rcl}
	M(x,y)&=&E(x,y+\pi)\\
	M(x,y)'&=&E(y+\pi,x+2\pi).
\end{array}
\]

Let $c:\cX_b^3\to\NN$ be the mapping which takes any triple of elements $(X,Y,Z)$ in $\cX_b$ to the following number. Choose representatives $\tilde X,\tilde Y,\tilde Z$ of $X,Y,Z$ in $\tilde\cX_\pi$. Then define
\[
	c(X,Y,Z)=i+j-k
\]
where $i,j,k$ are minimal so that $\tilde X\le\s^i\tilde Y,\tilde Y\le\s^j\tilde Y$ and $\tilde X\le \s^k\tilde Z$. It is not too hard to see that this is well-defined and that it is a reduced cocycle (Def. \ref{defn of reduced cocycle}). Finally, we define $\d\cX_b$ to be the subset of $\cX_b$ consisting of all $M(x,y)$ where $|x-y|=b$. (The notation comes from the fact that $\cX_b$ has a natural topology of a compact surface with boundary $\d\cX_b$.)

\begin{thm}\cite{IT09} For any $0<b\le\pi$, $\d\cX_b$ is an approximation subset of $\cX_b$, i.e., the category $\cF(\cX_b,\d\cX_b,R)$ is a Frobenius category with uniquely determined exact structure given by Proposition \ref{exact structure of an approximation category}.
\end{thm}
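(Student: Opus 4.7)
The plan is to reduce the theorem to Proposition \ref{exact structure of an approximation category} by exhibiting enough two-way $\d\cX_b$-approximation sequences in the additive category $\cA=\add\,\cP(\cX_b,c,R)$. Uniqueness of the resulting exact structure is automatic from Corollary \ref{cor: cB has a unique exact structure}, so only existence is at stake.

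First I would unpack what an indecomposable object of $\cA$ looks like under the parametrization $M(x,y)$, $M(x,y)'$ described before the theorem, and record the geometric meaning of the cocycle: $c(M(x,y),M(x',y'),M(x'',y''))$ counts, in terms of the shift $\s(x,y,\e)=(y+\pi,x+\pi,-\e)$, how many times one has to apply $\s$ to the representatives in the covering poset $\tilde\cX_b$ to obtain the relations $\tilde X\le\s^i\tilde Y\le\s^j\tilde Z$ (and similarly $\tilde X\le\s^k\tilde Z$). The boundary $\d\cX_b$ consists of the objects with $|x-y|=b$; these will be the projective-injectives.

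Next I would construct, for each indecomposable $M=M(x,y)$ with $|x-y|<b$, two explicit two-way approximation sequences. Writing $\a=b-(y-x)\ge 0$ (say with $x\le y$, the other case being symmetric), the candidate injective hull of $M(x,y)$ is
\[
M(x,y)\xrarrow{\binom{f_1}{f_2}} M(x,x+b)\oplus M(y-b,y)\xrarrow{(g_1,-g_2)} M(y-b,x+b),
\]
where the four basic maps come from Proposition \ref{exact structure of an approximation category}'s ambient category $\cP(\cX_b)$ and have exponents equal to the relevant values of $c$. I would verify (i) that $M(x,y)$ is the kernel and $M(y-b,x+b)$ is the cokernel at the level of $R$-modules $\cA(-,T)$, $T\in\cX_b$, by a direct calculation from \eqref{composition rule for tN categories} and the cocycle definition; and (ii) that the middle object is both a right $\d\cX_b$-approximation of $M(y-b,x+b)$ and a left $\d\cX_b$-approximation of $M(x,y)$. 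For (ii) the point is that any basic map from a boundary object $M(a,a+b)$ into $M(x,y)$ must ``enter through'' one of the two closest boundary arcs $M(x,x+b)$ or $M(y-b,y)$; more formally, the cocycle identity forces
\[
f_{M(a,a+b),M(x,y)}=u^{c(M(a,a+b),M(x,x+b),M(x,y))}\,f_{M(x,x+b),M(x,y)}\circ f_{M(a,a+b),M(x,x+b)}
\]
on a canonically determined one of the two sides, which provides the required factorization. The mirror sequence $M(y-b,x+b)\to \cdots\to M(x,y)$ shifted by $\s$ gives the other approximation sequence needed, so that every indecomposable outside $\d\cX_b$ is simultaneously a kernel and a cokernel in minimal two-way sequences.

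Having exhibited enough two-way $\d\cX_b$-approximation sequences, I would then invoke Proposition \ref{exact structure of an approximation category}: declare a sequence $A\to B\to C$ exact precisely when $\cA(P,-)$ and $\cA(-,P)$ carry it to short exact sequences of $R$-modules for every $P\in\cA_0=\add\,\cP(\d\cX_b)$. Standard verifications (closure of this class under isomorphism, under the pushout/pullback constructions of Proposition \ref{exact structure of an approximation category}, and under direct sum) show that this defines an exact structure, and the two-way approximation sequences constructed above furnish the projective resolutions and injective coresolutions of every indecomposable. Since projective and injective are both characterized by membership in $\d\cX_b$, the resulting exact category is Frobenius, and the uniqueness clause follows from Corollary \ref{cor: cB has a unique exact structure}.

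The main obstacle will be the left/right approximation property in step two: one must show that the cocycle values actually force \emph{every} morphism from an arbitrary sum of boundary objects into (or out of) $M(x,y)$ to factor through the two distinguished boundary arcs $M(x,x+b)$ and $M(y-b,y)$. This is a finite but delicate calculation with the cocycle $c$ and the composition rule $f_{YZ}\circ f_{XY}=u^{c(X,Y,Z)}f_{XZ}$, and is the only place where the specific geometry of the strip $|x-y|\le b$ and the shift $\s$ is essentially used; once this is in hand, the remainder of the argument is the formal machinery of approximation categories developed in Subsection 1.1.
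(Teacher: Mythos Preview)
The paper does not actually prove this theorem; it is quoted from \cite{IT09} and the explicit two-way approximation sequences are recorded later in the paper as \eqref{2-way approximation sequence for Fb}. So there is no paper-proof to compare with, only the cited construction.

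Your overall strategy is the right one and is essentially the one used in \cite{IT09}: write down explicit two-way $\d\cX_b$-approximation sequences for every interior $M(x,y)$ and then verify the exact-category axioms for the class of sequences described in Proposition~\ref{exact structure of an approximation category}. Two concrete points need correction, however.

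First, your candidate sequence is wrong. With $|x-y|<b$ the object $M(y-b,x+b)$ has $|(y-b)-(x+b)|=|y-x-2b|>b$, so it does not even lie in $\cX_b$; likewise $(y-b,y)\le(x,y)$ in the covering poset, so $M(y-b,y)$ receives a basic map from $M(x,y)$ rather than the reverse. The correct sequence is the one the paper records in \eqref{2-way approximation sequence for Fb}:
\[
M(x,y)\xrightarrow{\binom{1}{-1}} M(y+b,y)\oplus M(x,x+b)\xrightarrow{(1,1)} M(y+b,x+b),
\]
in which both coordinates move \emph{forward} (consistent with the partial order on $\tilde\cX_b$), and the cokernel stays in the strip.

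Second, be careful with the logic around Proposition~\ref{exact structure of an approximation category}. That proposition presupposes a Frobenius structure and deduces that the exact sequences are exactly those satisfying the two $\Hom$-exactness conditions; it does not assert that having enough two-way approximation sequences suffices to produce a Frobenius structure. You correctly note that one must separately check Quillen's exact-category axioms for the proposed class, but this is real work (closure under pushout/pullback in $\add\,\cP(\cX_b)$ is not automatic from the cocycle formalism) and is precisely what \cite{IT09} carries out; it should not be labelled ``standard verifications''. Once that is done, your final paragraph is accurate: the approximation sequences supply enough projectives and injectives, and Corollary~\ref{cor: cB has a unique exact structure} gives uniqueness.
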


We will denote this Frobenius category as $\cF_b$. We need the following proposition proved in \cite{IT09}. Recall that $R_0$ is a DVR with uniformizer $t$ and $R=R_0[\sqrt t]$. Thus $R_0\subset R$. We use the notation $E(x,y):=M(x,y-\pi)$.

\begin{prop}\label{prop: exactness is equivalent to split exactness on ends}
There is an exact $R_0$-linear functor
\[
	F:\cF_b=\cF(\cX_b,\d\cX_b,R)\to \cF(S^1)=\cF(S^1,S^1,R_0)
\]
which sends $E(x,y)$ to $P_x\oplus P_y$ and basic morphisms to direct sums of basic morphisms. Furthermore, a sequence of morphisms $A\to B\to C$ in $\cF_b$ is exact if and only if its image $FA\to FB\to FC$ is split exact in $\cF(S^1)$.
\end{prop}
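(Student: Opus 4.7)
The plan is to construct $F$ explicitly on objects and basic morphisms, verify functoriality by comparing the two cocycles, and then deduce the biconditional on exact sequences by reducing to two-way approximation sequences via Proposition~\ref{exact structure of an approximation category}. On objects set $F(E(x,y)):=P_x\oplus P_y$ and extend additively. To define $F$ on morphisms, lift to the covering $\tilde\cX_b$: each basic morphism $f_{XY}:E(x,y)\to E(x',y')$ arises from a choice of lifts $\tilde X,\tilde Y\in\tilde\cX_b$, and $F(f_{XY})$ is declared to be the direct sum of the two basic morphisms $P_x\to P_{x'}$ and $P_y\to P_{y'}$ in $\cF(S^1)$ associated to the two endpoint coordinates of the lifts; extend $R_0$-linearly. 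Since $R=R_0[\sqrt t]$ with $u^2=t$, multiplication by $u$ corresponds to a single $\sigma$-shift of one lift, and because $\sigma$ swaps the two endpoint coordinates (up to $+\pi$), $F(u\cdot f_{XY})$ is the pair of basic morphisms for the swapped endpoint pairing. Thus $F$ sends the $R_0$-basis $\{1,u\}$ of $\cF_b(X,Y)\cong R$ to two independent rank-two $R_0$-summands (``direct'' and ``swapped'') of the rank-four $R_0$-module $\cF(S^1)(FX,FY)$, so $F$ is injective on Hom-sets. Functoriality is then a coordinatewise check: the cocycle $c(X,Y,Z)$ on $\cX_b$ counts total $\sigma$-steps and decomposes as $c_x+c_y$ by endpoint, while each endpoint coordinate contributes $t^{c_{S^1}}=u^{2c_{S^1}}$ to the composition in $\cF(S^1)$, and these match factor for factor.

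For the exactness biconditional, invoke Proposition~\ref{exact structure of an approximation category}: every exact sequence in $\cF_b$ is a pushout of a two-way $\d\cX_b$-approximation sequence $X\cofib J\onto C$ along some $X\to A$. It therefore suffices to check that $F$ sends each two-way approximation sequence to a split exact sequence in $\cF(S^1)$, which follows by inspection on each endpoint coordinate, where one obtains a split triangle of the form $P_x\to P_a\to P_{x''}$ in the trivial Frobenius category $\cF(S^1)$. Since pushouts of split sequences in a trivial Frobenius category remain split, $F$ preserves exactness. Conversely, if $FA\to FB\to FC$ is split exact in $\cF(S^1)$, then by the injectivity of $F$ on Hom-sets and the explicit endpoint-configuration description of two-way approximation sequences in $\cF_b$, one reconstructs a two-way approximation sequence in $\cF_b$ whose pushout is $A\to B\to C$; the latter is then exact by Proposition~\ref{exact structure of an approximation category}.

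The main obstacle is the bookkeeping: because of the identification $E(x,y)\cong E(y,x+2\pi)$ and because $\sigma$ couples the two endpoint coordinates, one must fix a coherent convention for pairing endpoints and for the lifts used to define basic morphisms so that $u$-factors redistribute correctly between the two $\cF(S^1)$-coordinates. Once this convention is in place, both functoriality and the reverse direction of the exactness biconditional follow routinely from the cocycle computation and the pushout description of exact sequences.
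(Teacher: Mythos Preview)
The paper does not prove this proposition; it is quoted from \cite{IT09} with the sentence ``We need the following proposition proved in \cite{IT09}.'' So there is no in-paper argument to compare against, and your proposal must stand on its own.

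Your construction of $F$ and the functoriality check via the cocycle are along the right lines, and your observation that $F$ is faithful (rank $2$ into rank $4$ over $R_0$) is correct. The gaps are in the exactness biconditional.

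\textbf{Forward direction.} You argue: (i) $F$ sends each two-way $\partial\cX_b$-approximation sequence to a split exact sequence in $\cF(S^1)$; (ii) every exact sequence in $\cF_b$ is a pushout of such a sequence; (iii) pushouts of split sequences in the trivial Frobenius category $\cF(S^1)$ remain split. Steps (i) and (ii) are fine. The problem is that (iii) is a statement about pushouts \emph{computed in $\cF(S^1)$}, whereas the pushout defining $B$ is computed in $\cF_b$. You have not shown that $F$ takes the $\cF_b$-pushout $B$ to the $\cF(S^1)$-pushout of $FX\to FJ$ along $FX\to FA$; an additive faithful functor need not preserve cokernels. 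Without this, (i)--(iii) do not combine to give that $FA\to FB\to FC$ is split exact. One way to close this gap is to verify directly, on the level of endpoint $P$-summands, that $FB$ has the required decomposition (equivalently, that the map $FJ\oplus FA\to FB$ is a split epimorphism with kernel $FX$); this is where the concrete description of morphisms in $\cF_b$ as pairs of $R_0$-morphisms has to be used, not just the abstract pushout property.

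\textbf{Reverse direction.} Saying that from split exactness of $FA\to FB\to FC$ ``one reconstructs a two-way approximation sequence in $\cF_b$ whose pushout is $A\to B\to C$'' is not yet an argument. Faithfulness of $F$ alone does not let you pull back a splitting or a kernel--cokernel identification from $\cF(S^1)$ to $\cF_b$, because $F$ is not full. What is actually needed is to verify conditions (1) and (2) of Proposition~\ref{exact structure of an approximation category} for each indecomposable projective-injective $P\in\partial\cX_b$; this requires an explicit comparison between $\cF_b(P,-)$ and suitable summands of $\cF(S^1)(FP,F(-))$, which your outline does not supply.
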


%\newpage

\subsection{Triangulation and involution}

We recall that the stable category of a Frobenius category is triangulable and a specific triangulation is given by a choice of two-way approximation sequences for every object in the category. For the Frobenius category $\cF_b$, we choose the two-way approximation sequences:
\begin{equation}\label{2-way approximation sequence for Fb}
	M(x,y)\xrarrow{\binom{1}{-1}}M(y+b,y)\oplus M(x,x+b)\xrarrow{(1,1)}M(y+b,x+b)
\end{equation}
Since $M(y,x)'=M(x+\pi,y+\pi)$, the consistent choice of two-way approximations for $M(y,x)'$ is given by switching coordinates and putting $'$:
\begin{equation}\label{2-way approximation sequence for Fb prime}
	M(y,x)'\xrarrow{\binom{1}{-1}}M(y,y+b)'\oplus M(x+b,x)'\xrarrow{(1,1)}M(x+b,y+b)'
\end{equation}
Then we have $M(x,y)[1]=M(y+b,x+b)$ and $M(x,y)'[1]=M(y+b,x+b)'=M(x,y)[1]'$. In the case $b=\pi$, $M(x,y)[1]=M(x,y)'$ is isomorphic but not equal to $M(x,y)$.

The key point about the sequence \eqref{2-way approximation sequence for Fb} is that it is invariant under addition of a constant to all coordinates in the sense that, if we add $a$ to both $x$ and $y$ then we get another sequence of the same kind (with $x$ replaced by $x+a$ and $y$ replaced by $y+a$). But, if we switch the two coordinates, the two summands in the middle will switch roles and the sign of the first map will change. This is a subtle point when $x=y$.

Given a morphism $\ov f:X=\bigoplus X_i\to Y$ in the stable category $\ul\cF_b$ of $\cF_b$ represented by a morphism $f$ in $\cF_b$, the distinguished triangle $X\xrarrow {\ov f} Y\xrarrow {\ov g} Z\xrarrow {\ov h} X[1]$ is given by taking the pushout along $f:X\to Y$ of the direct sum of all approximation sequences \eqref{2-way approximation sequence for Fb} starting at each $X_i$:
\[
%\xymatrixrowsep{10pt}\xymatrixcolsep{10pt}
\xymatrix{%begin xy matrix
X=\bigoplus M(x_i,y_i)\ar[d]_{f}\ar[r] &
	\bigoplus M(y_i+b,y_i)\oplus M(x_i,x_i+b)\ar[d]\ar[r] &
	\bigoplus M(y_i+b,x_i+b)\ar[d]^=\\
Y \ar[r]^g& 
	Z \ar[r]^h&
	X[1]
	}%end xy matrix
\]

\subsubsection{Involution}

The involution $\psi$ is defined on $\cX_b$ by $\psi M(x,y)=M(x+\pi,y+\pi)=M(y,x)'$ or, equivalently, $\psi E(x,y)=E(x+\pi,y+\pi)$. Then $\psi^2$ is the identity since $M(x+2\pi,y+2\pi)=M(x,y)$. We extend to the $t^\NN$ category $\cP(\cX_b)$ by letting $\psi$ take the basic morphism $f_{XY}$ to the basic morphism $f_{\psi X,\psi Y}$ and extending $R$-linearly to all morphisms in $\cP(\cX_b)$. Then we extend $\psi$ additively to obtain an involution on all of $\cF_b=add\,\cP(\cX_b)$.

We can take the two-way approximation sequence for $SX=X\oplus \psi X$ to be the direct sum of the two-way approximation sequences for $X$ and $\psi X$. For $X=M(x,y),\psi X=M(y,x)'$, this is the direct sum of the sequences \eqref{2-way approximation sequence for Fb} and \eqref{2-way approximation sequence for Fb prime}:
\begin{equation}\label{2way approx sequence for SX}
\xymatrixrowsep{0pt}\xymatrixcolsep{40pt}
\xymatrix{%begin xy matrix
&\quad M(x,x+b)\ar[rdd]\\
&\oplus\\
M(x,y)\ar[ruu]^(.5){-1}\ar[ddr] &\quad M(x+b,x)'\ar[rdd] & \quad M(y+b,x+b)\\
\oplus &\oplus&\oplus\\
M(y,x)'\ar[ruu]^(.6){-1}\ar[ddr] &\quad M(y+b,y)\ar[ruu] & \quad M(x+b,y+b)'\\
&\oplus\\
&\quad M(y,y+b)'\ar[ruu]
	}%end xy matrix
\end{equation}
where the arrows represent basic morphisms except for the two labeled with $-1$ which are negative basic morphisms. The involution $\psi$ switches the two summands. We have the standard isomorphism
\[
	\xi=\mat{0&1\\1&0}:  \psi( X\oplus\psi X)=\psi X\oplus X\to X\oplus \psi X
\]
and this isomorphism extends to a isomorphism of two-way approximation sequences. For example, on the middle term, the isomorphism is $\mat{\xi&0\\0&\xi}$ since it switches the first two summands and it switches the last two summands. 

When $x\neq y$ we have $X\not\cong \psi X$. Then $SX=(X\oplus \psi X,\xi)$ is a regular indecomposable object of $\cF_b$ and \eqref{2way approx sequence for SX} gives a two-way approximation sequence for $SX$ which is compatible with $\xi$. This implies the following where $\cC_b^\psi$ is the stable category of $\cF_b^\psi$.

\begin{prop}
In the triangulated category $\cC_b^\psi$, the shift functor $[1]$ applied to the object $SM(x,y)$ is
\[
	(SM(x,y))[1]=(M(y+b,x+b)\oplus \psi M(y+b,x+b),\xi)=SM(y+b,x+b)
\]
In other words, $(SX)[1]=S(X[1])$. On morphisms, $[1]$ takes a morphism $f:SX\to SY$ to the morphism $f[1]:SX[1]\to SY[1]$ which has the same matrix as $f$. (See below.)
\end{prop}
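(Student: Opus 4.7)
The shift in the stable category of a Frobenius category can be read off any conflation $A \cof I \onto B$ with $I$ projective-injective: then $B$ represents $A[1]$, and this representative is well-defined once we fix a choice of conflation for each object. To prove $(SM(x,y))[1] = SM(y+b,x+b)$ with the standard $\xi$, it therefore suffices to exhibit such a conflation in $\cF_b^\psi$ starting at $SM(x,y)$ with cokernel $SM(y+b,x+b)$. My candidate is \eqref{2way approx sequence for SX}, which was built precisely as the direct sum of the approximation sequence \eqref{2-way approximation sequence for Fb} for $X = M(x,y)$ with its $\psi$-image \eqref{2-way approximation sequence for Fb prime} for $\psi X = M(y,x)'$. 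In other words, \eqref{2way approx sequence for SX} is literally $S$ applied to \eqref{2-way approximation sequence for Fb}, which makes the identity $(SX)[1] = S(X[1])$ natural from the structural viewpoint.

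\textbf{Verification of the conflation and cokernel.} The middle term of \eqref{2way approx sequence for SX} is the direct sum of four objects of $\d\cX_b$ and coincides with $SP$ for $P = M(y+b,y) \oplus M(x,x+b)$; since $P$ lies in the projective-injective subcategory $\cF_0$, the corollary to Proposition \ref{adjunction formula} shows that $SP$ is projective-injective in $\cF_b^\psi$. Exactness of \eqref{2way approx sequence for SX} as a sequence in $\cF_b$ is immediate from exactness of \eqref{2-way approximation sequence for Fb} and \eqref{2-way approximation sequence for Fb prime}, and by Definition \ref{defn: definition of completed orbit category}(3) this is precisely the definition of exactness in $\cF_b^\psi$. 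The underlying $\cF_b$-cokernel equals $M(y+b,x+b) \oplus M(x+b,y+b)' = M(y+b,x+b) \oplus \psi M(y+b,x+b)$. For the induced $\xi$-structure on the cokernel: the $\xi$ on the middle term $SP$ is the unsigned permutation matrix, and $\psi$ interchanges \eqref{2-way approximation sequence for Fb} with \eqref{2-way approximation sequence for Fb prime} entry-by-entry (the $-1$ signs are preserved because $\psi$ is $R$-linear). By uniqueness of induced maps on cokernels, the $\xi$ on $M(y+b,x+b) \oplus \psi M(y+b,x+b)$ is then again the unsigned permutation matrix, which is exactly the data defining $SM(y+b,x+b)$.

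\textbf{Morphisms and the main obstacle.} For a morphism $f : SX \to SY$, its shift $f[1]$ is obtained by lifting $f$ to a chain map between the chosen two-way approximation sequences and taking the induced map on cokernels. Because both conflations split as direct sums under $SX = X \oplus \psi X$ and $SY = Y \oplus \psi Y$, one can lift $f$ block-by-block through the components of the middle term, so the resulting middle-term map, and hence the induced cokernel map $f[1]$, carries exactly the same matrix as $f$ in the $X \oplus \psi X$ decomposition. The principal subtlety in the whole argument is tracking the signs and the basic-morphism identifications in $\xi$ through the cokernel construction; this is resolved by the structural observation that \eqref{2way approx sequence for SX} is a $\psi$-swap of two identical-up-to-renaming copies of \eqref{2-way approximation sequence for Fb}, which forces the induced $\xi$ on the cokernel to be the plain permutation matrix by the universal property.
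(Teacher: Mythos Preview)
Your proof is correct and follows exactly the approach the paper intends: the paragraph immediately preceding the proposition already observes that \eqref{2way approx sequence for SX} is a two-way approximation sequence for $SX$ compatible with $\xi$, and the proposition is stated as a direct consequence (``This implies the following\ldots''). You have simply made explicit the steps the paper leaves to the reader---that the middle term is $SP$ with $P$ projective-injective, that exactness in $\cF_b^\psi$ reduces to exactness in $\cF_b$ via Definition~\ref{defn: definition of completed orbit category}(3), and that the induced $\xi$ on the cokernel is again the permutation matrix---so there is no substantive difference.
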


Since $\cC_b^\psi(SX,SY)\cong \cC_b(X,Y\oplus \psi Y)$, morphisms $f:SX\to SY$ are given by a pair of scalars $r,s\in R$ so that $f=r f_{XY}+sf_{X,\psi Y}$ where $f_{XY}$ is the basic morphism $X\to Y$ in $\cC_b$ (or $f_{XY}=0$ if there is no nonzero morphism $X\to Y$. Then $f$ is given by the $2\times 2$ matrix:
\[
	f=\mat{r&s\\s&r}:X\oplus \psi X\to Y\oplus\psi Y
\]

\begin{cor}\label{cor: S takes triangles to triangles}
If $X\xrarrow f Y\xrarrow g Z\xrarrow h X[1]$ is a distinguished triangle in $\cC_b$ then $SX\xrarrow{Sf} SY\xrarrow{Sg} SZ\xrarrow{Sh} SX[1]$ is a distinguished triangle in $\cC_b^\psi$.
\end{cor}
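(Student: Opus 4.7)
The plan is to reduce the corollary to the preceding proposition together with exactness of the functor $S$. Recall that a distinguished triangle in $\cC_b$ based on a morphism $\ov f:X\to Y$ is constructed by pushing out the chosen two-way approximation sequence \eqref{2-way approximation sequence for Fb} for $X$ along a representative $f:X\to Y$ in $\cF_b$; the third term $Z$ is the resulting cokernel and $\ov h:Z\to X[1]$ is the map induced on cokernels. So I would begin by writing down this defining pushout diagram for the triangle $X\to Y\to Z\to X[1]$.

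Next, I would apply $S:\cF_b\to \cF_b^\psi$ term-by-term. By Proposition \ref{adjunction formula}, $S$ is exact, so it preserves the short exact sequences in both rows of the pushout diagram and therefore preserves the pushout itself. This produces a pushout diagram in $\cF_b^\psi$
\[
\xymatrix{
SX \ar[d]_{Sf}\ \ar@{>->}[r] & SP \ar[d]\ar@{->>}[r] & S(X[1])\ar[d]^= \\
SY\ \ar@{>->}[r] & SZ \ar@{->>}[r] & S(X[1])
}
\]
whose top row is $S$ applied to the chosen two-way approximation sequence for $X$. The key input here is the observation, already used in the preceding proposition, that the direct sum \eqref{2way approx sequence for SX} of the two-way approximation sequences \eqref{2-way approximation sequence for Fb} and \eqref{2-way approximation sequence for Fb prime} together with the standard swap isomorphism $\xi$ is precisely $S$ applied to \eqref{2-way approximation sequence for Fb}, and is the chosen two-way approximation sequence for $SX=SM(x,y)$ used to define the shift functor on $\cC_b^\psi$.

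Consequently, the lower row of the diagram above is, by construction, the pushout that \emph{defines} the distinguished triangle in $\cC_b^\psi$ associated to the morphism $\ov{Sf}=S\ov f:SX\to SY$, and the connecting map in that triangle is $Sh$ by naturality of induced maps on cokernels. Combining this with the identification $S(X[1])=(SX)[1]$ from the preceding proposition, we conclude that $SX\xrarrow{Sf}SY\xrarrow{Sg}SZ\xrarrow{Sh}SX[1]$ is a distinguished triangle in $\cC_b^\psi$.

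The only nontrivial point to check carefully is the compatibility of $S$ with the specific choices of two-way approximation sequences used to fix the triangulations on $\cC_b$ and $\cC_b^\psi$; once this matching is verified (and the preceding proposition records exactly this compatibility), everything else is formal from exactness of $S$ and the universal property of the pushout.
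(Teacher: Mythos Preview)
Your proof is correct and is exactly the argument the paper has in mind; the corollary is stated without proof because it follows immediately from the preceding proposition (the identification $S(X[1])=(SX)[1]$ via the chosen two-way approximation sequence \eqref{2way approx sequence for SX}) together with exactness of $S$.
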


\subsubsection{Singular objects}

When $x=y$, $X=M(x,x)\cong M(x,x)'$ and, by Corollary \ref{Frobenius category for cyclic groups}, $SX$ decomposes as a direct sum of two singular objects of $\cF_b^\psi$, namely, $SX\cong (X,\z)\oplus (X,-\z)$.

\begin{prop}\label{shift of singular objects in Cbpsi} In $\cC_b^\psi$, the shift of a singular object is another singular object with the opposite sign, i.e., for $\e=+$ or $-$, we have
\[
(M(x,x),\e\z)[1]\cong (M(x+b,x+b),-\e\z).
\]
%Furthermore, there are no nonzero morphisms between singular objects of opposite sign.
\end{prop}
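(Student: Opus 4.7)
The plan is to lift the standard two-way approximation sequence of $M(x,x)$ in $\cF_b$ to $\cF_b^\psi$ and track how the equivariant structure propagates from left to right. From \eqref{2-way approximation sequence for Fb} we have the approximation sequence $M(x,x) \xrightarrow{\binom{1}{-1}} J \xrightarrow{(1,1)} M(x+b, x+b)$ in $\cF_b$, where $J = M(x+b, x) \oplus M(x, x+b)$ is regular in $\cF$ (its two summands are non-isomorphic), but the basic isomorphisms $\eta_L \colon \psi M(x+b, x) \to M(x, x+b)$ and $\eta_R \colon \psi M(x, x+b) \to M(x+b, x)$ show that $\psi$ swaps the summands of $J$ up to isomorphism. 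Thus $J$ carries a $\psi$-equivariant structure $\sigma \colon \psi J \to J$ of off-diagonal form presenting $(J, \sigma)$ as the regular indecomposable $SM(x, x+b)$, which by Theorem \ref{thm: indecomposable in Fpsi} is projective-injective in $\cF_b^\psi$.

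Second, the sign of $\sigma$ is pinned down by equivariance of the leftmost map. Using the basic isomorphism $\z \colon \psi M(x,x) \to M(x,x)$, the condition $\sigma \circ \psi\binom{1}{-1} = \binom{1}{-1} \circ \epsilon\z$ is an equation between two column vectors of morphisms $\psi M(x,x) \to J$. Comparing components, the sign asymmetry of $\binom{1}{-1}$ together with the swap nature of $\sigma$ forces exactly one of the two off-diagonal entries of $\sigma$ to carry a relative minus sign, so that $\sigma$ must have the form $\bigl(\begin{smallmatrix} 0 & \eta_R\\ -\eta_L & 0 \end{smallmatrix}\bigr)$ (up to an overall normalization that can be absorbed into the choice of $\eta_L, \eta_R$). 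This step depends on the fact that the cocycle values on the relevant triples vanish, so that no extraneous power of the uniformizer $u$ appears in the comparison of basic morphisms.

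Finally, equivariance of the rightmost map forces the identification on the right endpoint. From $\z' \circ \psi(1,1) = (1,1) \circ \sigma$ and the off-diagonal form of $\sigma$ computed above, one gets $\z' \circ \psi(1,1) = (1, 1) \cdot \bigl(\begin{smallmatrix} 0 & \eta_R\\ -\eta_L & 0 \end{smallmatrix}\bigr) = (-\eta_L, \eta_R)$, which, after composing with the basic morphisms $M(x, x+b) \to M(x+b, x+b)$ and $M(x+b, x) \to M(x+b, x+b)$ (both equal to $+1$ by \eqref{2-way approximation sequence for Fb}), identifies $\z'$ as $-\epsilon\z_{x+b}$. The lifted two-way approximation sequence in $\cF_b^\psi$ therefore reads
\[
	(M(x,x), \epsilon\z) \to SM(x, x+b) \to (M(x+b, x+b), -\epsilon\z),
\]
and passing to $\cC_b^\psi$ yields the claimed isomorphism $(M(x,x), \epsilon\z)[1] \cong (M(x+b, x+b), -\epsilon\z)$. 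The main obstacle is the careful bookkeeping in the second step: one must verify the cocycle vanishes on the triples involving $M(x,x), \psi M(x,x), M(x+b,x), M(x,x+b)$ and their $\psi$-images, so that the sign flip is unambiguously forced by the structure of the approximation sequence rather than being absorbed into accidental factors of $u$.
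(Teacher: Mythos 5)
Your overall strategy---lifting the two-way approximation sequence $M(x,x)\xrarrow{\binom{1}{-1}} M(x+b,x)\oplus M(x,x+b)\xrarrow{(1,1)} M(x+b,x+b)$ to $\cF_b^\psi$, recognizing the middle term as the projective-injective regular object $SM(x,x+b)$, and chasing the equivariant structure from left to right---is exactly the paper's proof. But your sign bookkeeping in the second step, which is the entire content of this proposition, is wrong. With $\eta_L,\eta_R$ normalized to basic isomorphisms, as you introduce them, your matrix $\sigma=\mat{0&\eta_R\\-\eta_L&0}=\mat{0&1\\-1&0}$ fails on two counts. First, it is not an admissible structure map: Definition \ref{defn: definition of completed orbit category} forces $\sigma\circ\psi\sigma=\mathrm{id}_J$, and your antisymmetric matrix gives $\sigma\circ\psi\sigma=-\mathrm{id}_J$ (the product of the two off-diagonal entries must be $+1$). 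Second, it is inconsistent with your own third step: $(1,1)\circ\sigma=(-\eta_L,\eta_R)=(-1,1)$ can never equal $\z'\circ\psi(1,1)=(\z',\z')$ for a single scalar $\z'$. Your hedge that the signs can be ``absorbed into the choice of $\eta_L,\eta_R$ up to an overall normalization'' does not rescue this: rescaling $\sigma$ by $c$ gives off-diagonal product $-c^2$, which equals $+1$ only if $c^2=-1$, and even then the left square fails.

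The correct computation runs as follows. Since $\psi$ takes basic morphisms to basic morphisms, $\psi\binom{1}{-1}=\binom{1}{-1}$, and equivariance of the left-hand map, $\sigma\circ\psi\binom{1}{-1}=\binom{1}{-1}\circ\e\z$, forces \emph{both} off-diagonal entries of $\sigma$ to equal $-\e$, i.e.\ $\sigma=\mat{0&-\e\\-\e&0}$, which is precisely the middle vertical map in the paper's diagram. This satisfies $\sigma\circ\psi\sigma=\e^2\,\mathrm{id}=\mathrm{id}$, still presents $(J,\sigma)\cong SM(x,x+b)$ (rescale one summand by $-\e$), and then $(1,1)\circ\sigma=(-\e,-\e)=-\e\cdot(1,1)$ coherently identifies $\z'=-\e\z$, giving $(M(x,x),\e\z)[1]\cong(M(x+b,x+b),-\e\z)$. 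Note in particular that the correct $\sigma$ depends on $\e$, whereas yours does not---already a red flag, since the lifts of the same underlying sequence for $\e=+$ and $\e=-$ must carry different structure maps on the middle term. Your cocycle-vanishing concern is legitimate but unproblematic (all the triples involved lie within one fundamental domain of $\s$, so no powers of $u$ appear); the genuine gap is the sign error at the middle term, in a proposition whose only content is the sign.
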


As a special case of Definition \ref{Z2 singular objects}, we will use the notation 
\[
Z_\e(x):=(M(x,x),\e\z)
\]
for $\e=\pm$. Then the proposition says $Z_\e(x)[1]\cong Z_{-\e}(x+b)$. By definition each singular object $Z_\e(x)$ is isomorphic but not equal to $Z_\e(x)'=(M(x,x)',\e\z)=Z_\e(x+\pi)$.

\begin{proof}
Let $\e=+$ or $-$. Then we have the following commuting diagram in which each row is a standard 2-way approximation sequence in $\cF_b$ and the automorphism $\psi$ takes the top row to the bottom row.
\[
\xymatrixrowsep{40pt}\xymatrixcolsep{30pt}
\xymatrix{%begin xy matrix
M(x,x)\ar[r]^(.3){\tiny\mat{1\\-1}} &
	 M(x+b,x)\oplus M(x,x+b)\ar[r]^(.58){[1,1]} &
	M(x+b,x+b)\\
M(x,x)' \ar[u]_{\e}\ar[r]^(.3){\tiny\mat{1\\-1}}& 
	M(x,x+b)'\oplus M(x+b,x)' \ar[u]^{\tiny\mat{0&-\e\\-\e&0}}\ar[r]^(.58){[1,1]} &
	M(x+b,x+b)'\ar[u]^{-\e}
	}%end xy matrix
\]
$X=M(x,x)$ together with the isomorphism from $\psi X$ with sign $\e$ gives the singular object $(M(x,x),\e\z)$. On the right we have $X[1]=M(x+b,x+b)$ together with the vertical isomorphism from $\psi X[1]$ with sign $-\e$ giving the object $(M(x+b,x+b),-\e\z)$. Thus $(M(x,x),\e\z)[1]=(M(x+b,x+b),-\e\z)$ as claimed.
\end{proof}

We need one more observation relating the 2-way approximation sequences for $SX$ and for singular $X$.

\begin{prop}\label{prop: decomposition of SE(x,x+pi)}
For $X=M(x,x)$, the isomorphism $SX\cong (X,\z)\oplus (\psi X,-\z)$ is given by the matrix
\[
	\mat{1&\z\\-\z&1}:X\oplus \psi X\to X\oplus \psi X
\]
sending the 2-way approximation sequence for $SX=SM(x,x)$ isomorphically onto the direct sum of the 2-way approximation sequences for $(X,\z)=Z_+(x)$ and $(\psi X,-\z)=Z_-(x+\pi)\cong Z_-(x)$.
\end{prop}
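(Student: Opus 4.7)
The plan is to verify the stated formula by direct matrix computation. There are three things to check about the matrix $\phi := \mat{1 & \zeta \\ -\zeta & 1} : X \oplus \psi X \to X \oplus \psi X$: (a)~that $\phi$ is a morphism in $\cF^\psi$ from $SX$ to $(X,\zeta) \oplus (\psi X, -\zeta)$; (b)~that $\phi$ is an isomorphism; (c)~that $\phi$ extends to an isomorphism of the full 2-way approximation sequence \eqref{2way approx sequence for SX} onto the direct sum of the 2-way approximation sequences \eqref{2-way approximation sequence for Fb} and \eqref{2-way approximation sequence for Fb prime} for $(X,\zeta)$ and $(\psi X, -\zeta)$.

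For (a), the structure map of $SX$ is the swap $\xi_{SX} = \mat{0 & 1 \\ 1 & 0}$ and the target structure map is the diagonal $\mat{\zeta & 0 \\ 0 & -\zeta}$. Applying $\psi$ to $\phi$ fixes the identity diagonal entries and sends each off-diagonal basic morphism $\psi X \to X$ to the opposite basic morphism $X \to \psi X$. The intertwining identity $\phi \circ \xi_{SX} = \xi_{\mathrm{target}} \circ \psi\phi$ then reduces, entry by entry, to the statement that the two basic morphisms $X \rightleftarrows \psi X$ compose to the identity on each side. This is precisely the fact that $X \cong \psi X$ via the basic morphism $\zeta$, which holds by the singularity of $X = M(x,x)$ (so the relevant cocycle value $c(X, \psi X, X)$ vanishes).

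For (b), using the relation $\zeta \circ \zeta = \mathrm{id}$ from step (a) together with the standing assumption $\mathrm{char}\,K \neq 2$ (so that $\tfrac12 \in R$), direct multiplication shows $\phi \cdot \tfrac12 \mat{1 & -\zeta \\ \zeta & 1} = \mathrm{id}$, exhibiting $\phi$ as an isomorphism in $\cF$ and hence, by (a), an isomorphism in $\cF^\psi$. For (c), the middle and right terms of \eqref{2way approx sequence for SX} have underlying $\cF$-objects that agree, after reordering summands, with those of the direct sum of \eqref{2-way approximation sequence for Fb} and \eqref{2-way approximation sequence for Fb prime}; the two ambient $\cF^\psi$-objects differ only in their structure maps. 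One therefore defines $\phi_{\mathrm{mid}}$ and $\phi_{\mathrm{right}}$ on the middle and right terms by matrices of the same $\mat{1 & \zeta \\ -\zeta & 1}$ shape on each singular pair of summands, using the appropriate basic isomorphisms, and checks commutativity of the two resulting squares by the composition rule for basic morphisms, reducing once again to the cocycle identity used in (a).

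The main obstacle is step (a): carefully tracking the direction of each basic morphism that appears in each matrix entry and deducing, from the singularity of $X$, that the two basic morphisms $X \rightleftarrows \psi X$ are mutually inverse. Once this cocycle vanishing is secured, the remaining verifications are all routine $2\times 2$ matrix manipulations.
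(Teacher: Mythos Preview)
Your approach is correct and is precisely the direct verification that the paper has in mind; the paper states this proposition without proof, treating it as an observation to be checked. Your identification of the key point---that singularity of $X=M(x,x)$ forces the cocycle value $c(X,\psi X,X)$ to vanish, so the basic morphisms $\zeta:\psi X\to X$ and $\zeta:X\to\psi X$ are mutually inverse---is exactly what makes both the intertwining identity in~(a) and the invertibility in~(b) go through, and the extension in~(c) is indeed the same check repeated on the singular pair $M(x+b,x+b),\,M(x+b,x+b)'$ at the right end (the middle term requires only a reordering of the four projective summands, as you note).
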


%\newpage

\subsection{Compatibility}

We will define clusters in the category $\cC_b^\psi$ to be maximal sets of indecomposable objects satisfying certain conditions, the first of which is compatibility.

\begin{defn} For $0<b<\pi$, two objects $X,Y$ of $\cC_b^\psi$ are defined to be \emph{compatible} if $\Ext^1(X,Y)=0=\Ext^1(Y,X)$. 
\end{defn}

\begin{prop}
$\Ext^1(Z,SM(x,y))\neq0$ in $\cC_b^\psi$  if and only if $\Ext^1(Z,M(x,y))\neq0$ in $\cC_b$. This in turn occurs if and only if either
\begin{enumerate}
\item $Z\cong SM(z,w)\cong SM(w,z)$ where $x<z\le y+b$ and $y<w\le x+b$ or
\item $Z\cong Z_\pm(z)$, with either sign, where $\max(x,y)<z\le \min(x+b,y+b)$.
\end{enumerate}
Furthermore, $\Ext^1(Z,Z_\e(x))\neq0$ if and only if either
\begin{enumerate}
\item[(3)] $Z\cong SM(z,w)$ where $x<z,w\le x+b$ or
\item[(4)] $Z\cong Z_{-\e}(z)$ where $x<z\le x+b$.
\end{enumerate}
\end{prop}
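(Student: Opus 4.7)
The plan is to compute each $\Ext^1$ in $\cC_b^\psi$ as $\cC_b^\psi(-,-[1])$ in the triangulated stable category, apply the shift formulas $(SM(x,y))[1]=SM(y+b,x+b)$ (from Corollary \ref{cor: S takes triangles to triangles}) and $Z_\e(x)[1]=Z_{-\e}(x+b)$ (from Proposition \ref{shift of singular objects in Cbpsi}), reduce each resulting Hom in $\cC_b^\psi$ to a Hom in $\cC_b$ via the adjunctions of Proposition \ref{adjunction formula} and the singular-morphism classification of Corollary \ref{Frobenius category for cyclic groups}, and finally invoke the already known crossing criterion for Ext in the continuous type $A$ cluster category $\cC_b$ of \cite{IT09}.

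In the regular target case, setting $W=M(y+b,x+b)$, if $Z=SM(z,w)$ the left adjoint formula gives
\[
\cC_b^\psi(SM(z,w),SW)\cong \cC_b(M(z,w),W)\oplus\cC_b(M(z,w),\psi W),
\]
which after un-shifting is $\Ext^1_{\cC_b}(M(z,w),M(x,y))\oplus\Ext^1_{\cC_b}(M(z,w),M(y,x))$; this produces condition (1), and the clause $SM(z,w)\cong SM(w,z)$ reflects the symmetry between the two summands under swapping $z$ and $w$. If $Z=Z_\e(z)$ the right adjunction $\cC_b^\psi((X,\xi),SW)\cong\cC_b(X,W)$ collapses everything to the single group $\Ext^1_{\cC_b}(M(z,z),M(x,y))$, independent of $\e$; this gives condition (2). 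In the singular target case, with $Z_\e(x)[1]=Z_{-\e}(x+b)$, taking $Z=SM(z,w)$ and applying the other adjunction $\cC_b^\psi(SX,(Y,\xi))\cong\cC_b(X,Y)$ gives $\Ext^1_{\cC_b}(M(z,w),M(x,x))$, which yields (3); taking $Z$ singular we invoke Corollary \ref{Frobenius category for cyclic groups}(2)(3), which says $\cC_b^\psi(Z_\delta(z),Z_{-\e}(x+b))=0$ unless $\delta=-\e$, in which case it equals $\Ext^1_{\cC_b}(M(z,z),M(x,x))$, giving (4) and simultaneously showing that $\Ext^1(Z_\e(z),Z_\e(x))=0$.

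At this point every nonvanishing condition has been reduced to an Ext in the type $A$ category $\cC_b$, which by \cite{IT09} is nonzero exactly when the underlying arcs cross in the strip $|a-b|\le b$; reading off the corresponding inequalities produces precisely the ranges written in (1)--(4). The main obstacle will be the careful bookkeeping of parameters: tracking that $\psi$ swaps the two coordinates (so $\psi W=M(x+b,y+b)$ gives the transposed crossing condition in the second summand of (1)), handling the strict-versus-non-strict boundary cases at $|x-y|=b$ that distinguish genuine crossings from tangential ones, and verifying that the shift-induced sign flip in (4) is $-\e$ rather than $\e$, which comes directly from $Z_\e(x)[1]=Z_{-\e}(x+b)$.
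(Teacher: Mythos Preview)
Your proposal is correct and follows essentially the same approach as the paper: reduce $\Ext^1$ to $\Hom(-,-[1])$, apply the shift formulas for $SM(x,y)$ and $Z_\e(x)$, use the adjunctions $\ul\cF^G(SX,(Y,\xi))\cong\ul\cF(X,Y)$ and $\ul\cF^G((X,\xi),SY)\cong\ul\cF(X,Y)$ together with Corollary~\ref{Frobenius category for cyclic groups} to descend to $\cC_b$, and then read off the crossing inequalities from \cite{IT09}. The paper's own proof is much terser---it essentially records only the final inequality $\cC_b(M(z,w),M(y+b,x+b))\neq0\Leftrightarrow x<z\le y+b,\ y<w\le x+b$ and the sign flip $Z_\e(x)[1]\cong Z_{-\e}(x+b)$---but the underlying logic is identical to yours.

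One small point worth making explicit: Corollary~\ref{Frobenius category for cyclic groups}(3) is stated at the level of $\cF^G$, not the stable category. Part~(2) (vanishing between opposite signs) passes trivially to the quotient, and then the stable version of~(3) follows by combining the decomposition $SM(z,z)\cong Z_+(z)\oplus Z_-(z)$ with the stable adjunction $\ul\cF^G(SX,(Y,\xi))\cong\ul\cF(X,Y)$ and the vanishing from~(2). You use this implicitly in case~(4); it would be worth a sentence to justify it.
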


\begin{proof}
(1) and (2) follow from the fact that, in $\cC_b$, $M(x,y)[1]\cong M(y+b,x+b)$ and $\cC_b(M(z,w),M(y+b,x+b))\neq0$ if and only $x<z\le y+b$ and $y<w\le x+b$. 

When $x=y$, this condition is $x<z,w\le x+b$. For (4), we also use Proposition \ref{shift of singular objects in Cbpsi}: There are no morphisms from $Z_\e(z)$ to $Z_\e(x)[1]\cong Z_{-\e}(x+b)$.
\end{proof}

For $b=\pi$, this is not the right definition of compatibility since, e.g., $X\cong X[1]$ for all regular objects in $\cC_\pi^\psi$. Using the idea that $\cC_\pi^\psi$ is a limit of cluster categories of type $D_n$ as $n$ goes to $\infty$, we define compatibility in $\cC_\pi^\psi$ in terms of compatibility in $\cC_b^\psi$ for $b$ arbitrarily close to $\pi$. 

First, note that any nonzero object $SM(x,y)$ or $Z_\pm(x)$ in $\cC_\pi^\psi$ is represented by an object of $\cF_\pi^\psi$ with the same name. And, for any $b$ sufficiently close to $\pi$, there will also be another nonzero object with the same name in $\cC_b^\psi$.

\begin{defn}
Two indecomposable objects of $\cC_\pi^\psi$ are defined to be \emph{compatible} if the objects in $\cC_b^\psi$ with the same name are compatible for all $b$ sufficiently close to $\pi$.
\end{defn}

To give a better description of the compatibility relation we need some notation. %First, let $S^1_\pi=\RR/\pi\ZZ$ be a circle with circumference $\pi$ instead of $2\pi$.

\begin{defn}
For each indecomposable object $Z$ of $\cC_\pi^\psi$, let
\[
	J(Z)\subseteq S^1_\pi=\RR/\pi\ZZ
\]
be the subset of $S^1_\pi$, the circle with circumference $\pi$, defined as follows.
\begin{enumerate}
\item For a regular object $Z=SM(x,y)=SE(x,y+\pi)$ with $x<y+\pi<x+2\pi$ (and $x\neq y$ since $Z$ is regular), let $J(Z)$ be the open interval in $S^1_\pi$ given by
\[
	J(Z)=\begin{cases}  (x,y+\pi)+\pi \ZZ& \text{if } y<x\\
  (y-\pi,x)+\pi\ZZ  & \text{if } y>x
    \end{cases}
\]
In both cases, the length of the interval is $\pi-|x-y|<\pi$. Note that $J(Z')=J(Z)$ by symmetry since $Z'=SE(y-\pi,x)$.  In the $SE$-notation we have
\[
	J(SE(x,y))=\begin{cases}  (x,y)+\pi\ZZ& \text{if } y<x+\pi\\
  (y,x+2\pi)+\pi\ZZ  & \text{if } y>x+\pi
    \end{cases}
\]
%if $y-x<\pi$ and $J(SE(x,y))=(y,x+2\pi)+\pi\ZZ$
Thus the $SE$ notation is more convenient if we choose $x,y$ so that $x<y<x+\pi$.
\item For the singular object $Z_\e(x)$ with either sign $\e$, let $J(Z_\e(x))$ be the point $x+\pi\ZZ\in S^1_\pi$. Note that $J(Z_\e(x)')=J(Z_\e(x))$ since $x+\pi+\pi\ZZ=x+\pi\ZZ$.
\end{enumerate}
\end{defn}

\begin{prop}
Two regular objects $X,Y$ are isomorphic if and only if $J(X)=J(Y)$. Two singular objects $Z,W$ are isomorphic if and only if $J(Z)=J(W)$ and $Z,W$ have the same sign.
\end{prop}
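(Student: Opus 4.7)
The plan is to show that $J$ (together with the sign in the singular case) is a complete invariant of the isomorphism class, treating the regular and singular cases separately using Theorem~\ref{thm: indecomposable in Fpsi} and Corollary~\ref{Frobenius category for cyclic groups}.

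For regular objects, I would first recall that $SE(x,y) \cong SE(x',y')$ in $\cC_\pi^\psi$ if and only if $E(x,y)$ is isomorphic in $\cF_\pi$ either to $E(x',y')$ or to $\psi E(x',y') = E(x'+\pi, y'+\pi)$. Thus the identifications among representatives $(x,y)$ are generated by three moves: the shift $(x,y) \mapsto (x+2\pi, y+2\pi)$, the swap coming from $E(x,y) \cong E(y, x+2\pi)$, and the $\psi$-shift $(x,y) \mapsto (x+\pi, y+\pi)$. I would then verify $J$ is invariant under each of these moves: the shift is immediate since $J$ is a subset of $S^1_\pi = \RR/\pi\ZZ$, while the swap and $\psi$-shift toggle between Cases~1 and~2 in the definition of $J$ and require short substitutions to see that the resulting open interval in $S^1_\pi$ coincides with the original. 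For the converse, given an open arc $J \subset S^1_\pi$ of length $\ell \in (0,\pi)$, lift it to a real interval $(a,b)$ with $b - a = \ell$; then $SE(a,b)$ falls in Case~1 of the definition and realizes $J(SE(a,b)) = (a,b)+\pi\ZZ$. So $J(X)=J(Y)$ forces $X$ and $Y$ both to be isomorphic to the unique $SE(a,b)$ built from this common lift.

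For singular objects, $J(Z_\e(x)) = \{\bar x\}$ records only the point $\bar x \in S^1_\pi$. Corollary~\ref{Frobenius category for cyclic groups}(1) applied to $G = \ZZ/2$ yields $Z_\e(x) \cong Z_{\e'}(x')$ if and only if $\e = \e'$ and $M(x',x') \cong \g M(x,x)$ for some $\g \in \ZZ/2$. The second condition unwinds to $E(x', x'+\pi)$ being isomorphic in $\cF_\pi$ to either $E(x, x+\pi)$ or $E(x+\pi, x+2\pi)$, which is exactly $x \equiv x' \pmod{\pi}$, i.e.\ $\bar x = \bar{x'}$ in $S^1_\pi$. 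Combining these shows $(J(Z), \e)$ is a complete invariant of the singular isomorphism class.

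The main obstacle is the two-case bookkeeping in the regular well-definedness check: one must carefully verify that the swap and $\psi$-shift correctly toggle Cases~1 and~2 of the definition while preserving the resulting open arc in $S^1_\pi$. This is purely computational and is the only place direct arithmetic is required.
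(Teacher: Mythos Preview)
Your proposal is correct and follows essentially the same approach as the paper, though spelled out in considerably more detail. The paper's proof is a two-line argument: given $J(X)=(x,y)+\pi\ZZ$, the definition forces $X$ to be either $SE(x,y)$ or $SE(y-2\pi,x)$, and these are isomorphic via $E(y-2\pi,x)\cong E(x,y)$; the singular case is declared clear. Your version makes explicit the well-definedness check (invariance under the three generating moves) that the paper leaves to the reader, and your treatment of the singular case via Corollary~\ref{Frobenius category for cyclic groups}(1) is exactly what the paper's ``clear'' is pointing to.
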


\begin{proof}
If $J(X)=(x,y)+\pi\ZZ$ then, by definition, either $X=SE(x,y)$ or $X=SE(y-2\pi,x)$. But these are isomorphic. The singular case is clear.
\end{proof}

\begin{prop}\label{criteria for compatibility in C-pi-psi}\label{nonC}
Two indecomposable objects $X,Y$ in $\cC_\pi^\psi$ are compatible if and only if they satisfy one of the following conditions.
\begin{enumerate}
\item $X,Y$ are regular objects and $J(X),J(Y)$ are either disjoint or one contains the other.
\item $X,Y$ are singular objects with the same sign.
\item $X,Y$ are singular objects with opposite sign and $J(X)=J(Y)$.
\item One of the objects, say $X$, is regular and the other object $Y$ is singular and $J(X),J(Y)$ are disjoint.
\end{enumerate}
\end{prop}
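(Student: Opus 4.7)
The plan is to reduce the statement to the preceding $\Ext^1$-criterion in $\cC_b^\psi$ for $b$ strictly less than $\pi$. By definition, $X$ and $Y$ in $\cC_\pi^\psi$ are compatible iff the same-named objects in $\cC_b^\psi$ are compatible for all $b$ sufficiently close to $\pi$, i.e., both $\Ext^1(X,Y)$ and $\Ext^1(Y,X)$ vanish in $\cC_b^\psi$ for all such $b$. The preceding proposition gives explicit coordinate inequalities for when these $\Ext^1$-groups are nonzero, so the task reduces to letting $b\to\pi$ and translating the resulting limiting strict inequalities into the geometry of the arcs $J(X), J(Y)\subseteq S^1_\pi$.

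For the regular-regular case (1), write $X=SM(x_1,y_1)$ and $Y=SM(x_2,y_2)$. Part (1) of the preceding proposition states that $\Ext^1(Y,X)\neq 0$ in $\cC_b^\psi$ iff $x_1<x_2\le y_1+b$ and $y_1<y_2\le x_1+b$. Hence the vanishing of both $\Ext^1(X,Y)$ and $\Ext^1(Y,X)$ for all $b$ close to $\pi$ amounts to a list of four strict inequalities (with $b$ replaced by $\pi$) that must all fail to hold simultaneously in pairs. Passing to the $SE$-notation with representatives chosen so that $x<y<x+\pi$, in which $J(SE(x,y))=(x,y)+\pi\ZZ$ is an open arc of length less than $\pi$ in $S^1_\pi$, one checks directly that this failure is precisely the geometric statement that the two open arcs $J(X), J(Y)$ are either disjoint or one contains the other---equivalently, that they do not interleave on $S^1_\pi$.

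For the singular cases, apply part (4) of the preceding proposition: $\Ext^1(Z_\e(z),Z_{\e'}(x))\neq 0$ iff $\e'=-\e$ and $x<z\le x+b$. Same-sign singular pairs therefore have vanishing $\Ext^1$ automatically, establishing case (2). For opposite signs, both $\Ext^1$-groups vanish for all $b$ close to $\pi$ iff neither $x<z<x+\pi$ nor $z<x<z+\pi$ holds; using the identification $Z_\e(x)\cong Z_\e(x+\pi)$, this says $z\equiv x\pmod{\pi}$, which is exactly $J(Z_+(z))=J(Z_-(x))$---giving case (3). For the mixed case, parts (2) and (3) of the preceding proposition applied to $X=SM(x_1,y_1)$ regular and $Y=Z_\e(x_2)$ singular yield, after letting $b\to\pi$, exactly the condition that the point $x_2+\pi\ZZ$ lies outside the open arc $J(X)$; that is, $J(X)\cap J(Y)=\emptyset$.

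The main technical obstacle is the bookkeeping for case (1): one must track the two possible representatives $SE(x,y)$ and $SE(y,x+2\pi)$ of a single regular object, carefully reduce the coordinate inequalities modulo $\pi$, and verify that the compound condition on the four coordinates really matches the geometric picture of non-interleaving open arcs. Once this dictionary between coordinate inequalities in $\cC_b^\psi$ and arc arrangements on $S^1_\pi$ is in place, the remaining verifications are essentially immediate.
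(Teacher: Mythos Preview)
Your proposal is correct and follows essentially the same approach as the paper: both reduce compatibility in $\cC_\pi^\psi$ to the explicit $\Ext^1$-vanishing criteria in $\cC_b^\psi$ from the preceding proposition, then let $b\to\pi$ and translate the resulting strict inequalities into the arc picture on $S^1_\pi$. The paper carries out the case~(1) bookkeeping a bit more explicitly---writing $X=SM(x,y)$, $Y=SM(z,w)$ and reducing the ``crossing'' condition to the concrete inequality $y<w<x+\pi<z+\pi$, then checking both directions---whereas you leave this as ``one checks directly''; but you correctly identify this as the main technical step and your outline matches the paper's argument.
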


We say that two regular objects $X,Y$ are \emph{noncrossing} (or\emph{crossing}) if they satisfy (or don't satisfy) Condition (1), respectively.

\begin{proof}
Case (1). First, consider two regular objects $X=SM(x,y)$ and $Y=SM(z,w)$. If $X,Y$ are crossing then, by symmetry, we have either
\begin{enumerate}
\item[(a)] $y<w<x+\pi<z+\pi$ or
\item[(b)] $x<y<z<w<x+\pi$
\end{enumerate}
However, (b) implies (a). And (a) implies that $y<w\le x+b$ for all $b$ sufficiently close to $\pi$. We also have $x<z<w<x+\pi<y+\pi$. So, $x<z\le y+b$ for $b$ sufficiently close to $\pi$. Therefore, $SM(x,y)$ and $SM(z,w)$ are not compatible in $\cC_\pi^\psi$. 

Conversely, suppose that $\Ext^1(SM(z,w),SM(x,y))\neq0$ in $\cC_b^\psi$ for $b$ close to $\pi$. Then either $y<w\le x+b$ and $x<z\le y+b$ which implies $y<w<x+\pi$ and $x+\pi<z+\pi$ making $X,Y$ crossing or $y<z\le x+b$ and $x<w\le y+b$ which also imply that $X,Y$ are crossing. 

Case (2). Singular objects of the same sign are compatible in $\cC_b^\psi$ for all $b$ by the previous proposition. For two singular objects of opposite sign, such as $X=Z_+(x)$ and $Y=Z_-(y)$, We have $\Ext^1(X,Y)\neq 0$ in $\cC_b^\psi$ when $y<x\le y+b$. This will hold for $b$ arbitrarily close to $\pi$ whenever $x\neq y$. So, $Z_+(x),Z_-(y)$ are not compatible if $x\neq y$. When $x=y$, $Z_+(x),Z_-(x)$ are compatible in any $\cC_b^\psi$. This shows Case (3).

Case (4). Let $X=SM(x,y)$ and $Y=Z_\pm(z)$. Then $\Ext^1(Y,X)\neq0$ in $\cC_b^\psi$ iff $y<z\le x+b$. This holds for all $b$ sufficiently close to $\pi$ iff $y<z<x+\pi$, i.e., $z\in J(X)$. And $\Ext^1(X,Y)\neq0$ in $\cC_b^\psi$ iff $z<x,y\le z+b$. This holds for all $b$ close to $\pi$ iff $z<x,y<z+\pi$ which is equivalent to saying $z+\pi\in J(X)$. So, $X,Y$ are incompatible iff $J(Y)$ is one point in $J(X)$. They are compatible iff $J(X),J(Y)$ are disjoint.
\end{proof}

This proposition justifies the standard visualization of these objects as the geodesic on the orbifold given by modding out the action of $\ZZ/2$ on the Poincare disk by a rotation of $\pi$ around the center. We draw a standard object $X=SE(x,y)$ as the image in this orbifold of the geodesic connecting the ideal points $x,y$ on the circle at infinity. Drawn on the disk of radius $\frac12$ in the plane, this becomes an embedded path connecting boundary points $x$ and $y$ in the complement of the center point $\ast$ so that the path is homotopic to $J(X)$ fixing the endpoints and so that the homotopy avoids the center.
\begin{figure}[htbp]
\begin{center}
%
%\vs5
{
\setlength{\unitlength}{.8in}
%\centerline
{\mbox{
\begin{picture}(3,2.2)
      \thicklines
%    \thinlines
  %
    \put(1.5,1,1){
    \qbezier(1,0)(1,.4142)(.7071,.7071)
    \qbezier(.7071,.7071)(.4142,1)(0,1)
    \qbezier(-1,0)(-1,.4142)(-.7071,.7071)
    \qbezier(-.7071,.7071)(-.4142,1)(0,1)
    \qbezier(1,-0)(1,-.4142)(.7071,-.7071)
    \qbezier(.7071,-.7071)(.4142,-1)(0,-1)
    \qbezier(-1,-0)(-1,-.4142)(-.7071,-.7071)
    \qbezier(-.7071,-.7071)(-.4142,-1)(0,-1)
    }
    \put(1.5,1){$\ast$}
\put(-.02,0){     \put(2.47,1){$\bullet\ x$}
}
\put(-.04,0){     \put(.4,.7){$y\ \bullet$}
}
    \qbezier(.57,.75)(1.4,2)(2.5,1.05)
   \put(1.4,1.58){$X$}
\put(-.2,0){     \put(.6,2){$J(X)$}
}
    \qbezier(.57,.75)(1.6,.5)(2.5,1.05)
    \put(1.8,.55){$Y$}
    \put(2.2,0.2){$J(Y)$}
\end{picture}}
}}
%\vs5
\caption{If $x<y<x+\pi$ then $X=SE(x,y)$ and $Y=SE(y,x+\pi)$ have complementary intervals $J(X),J(Y)$ in $S^1_\pi$.}
\label{circle}
\end{center}
\end{figure}
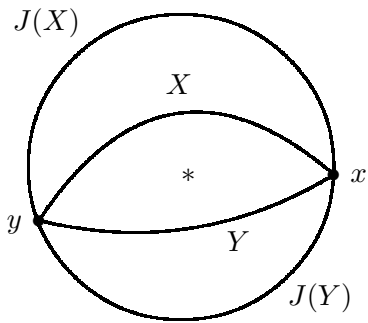
%

%\newpage

\subsection{Laminations}

We can now give a complete description of all maximal pairwise compatible sets of indecomposable objects in $\cC_\pi^\psi$. %We call these \emph{laminations}. Topologically, these correspond to maximal sets of nonintersecting geodesics in the punctured hyperbolic plane. First, we describe all possible configurations of singular objects in a lamination.

\begin{defn} A maximal set of pairwise compatible nonisomorphic indecomposable objects in $\cC_\pi^\psi$ will be called a \emph{lamination} in $\cC_\pi^\psi$. If $\cL=\{X\}$ is any lamination in $\cC_\pi^\psi$, we define $S(\cL)\subseteq S^1_\pi$ to be the union of all $J(X)$ where $X\in\cL$ is singular and we define $R(\cL)\subseteq S^1_\pi$ to be the union of all $J(X)$ where $X\in\cL$ is regular.
\end{defn}

\begin{lem}\label{R(L) cup S(L)}
For any lamination $\cL$ in $\cC_\pi^\psi$, the set $R(\cL)$ is a proper open subset of $S^1_\pi$ and $S(\cL)$ is a nonempty closed subset of the complement of $R(\cL)$ in $S^1_\pi$. Furthermore, either
\begin{enumerate}
\item $R(\cL)\cup S(\cL)=S^1_\pi$ or
\item $S^1_\pi\backslash R(\cL)$ consists of exactly two point and $S(\cL)$ is one of those two points.
\end{enumerate}
\end{lem}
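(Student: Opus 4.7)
The plan is to deduce these claims in order: $R(\cL)$ is open and proper, $S(\cL)$ is nonempty and closed, and then the dichotomy. Openness of $R(\cL)$ is immediate since it is a union of open intervals. To see $R(\cL) \neq S^1_\pi$, if it were equal, compactness would give a finite subcover of $S^1_\pi$ by non-crossing open arcs (Proposition~\ref{nonC}(1)) of length $< \pi$; passing to the nesting-maximal elements in each intersection-connected class yields a finite disjoint union of proper open arcs, which cannot exhaust $S^1_\pi$. Nonemptiness of $S(\cL)$: if $\cL$ had no singular objects, any $x \notin R(\cL)$ would give $Z_+(x)$ compatible with all of $\cL$ by Proposition~\ref{nonC}(4), violating maximality. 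The containment $S(\cL) \subseteq K := S^1_\pi \setminus R(\cL)$ is just the compatibility of singulars with regulars.

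For $S(\cL)$ closed, I would take $x_n \to x$ with $x_n \in S(\cL)$ and assume $x \notin S(\cL)$. Maximality blocks both $Z_+(x)$ and $Z_-(x)$ by some $Y^+, Y^- \in \cL$. Each $Y^\e$ cannot be regular (else $x \in J(Y^\e) \subseteq R(\cL)$, forcing $x_n \in R(\cL)$ for large $n$ and contradicting $x_n \in S(\cL) \subseteq K$), so by Proposition~\ref{nonC}(3) one has $Y^\e = Z_{-\e}(y^\e)$ with $y^\e \neq x$. Compatibility of $Y^+, Y^- \in \cL$ then forces $y^+ = y^- =: a$. But then, for each $n$, the element $Z_{\e_n}(x_n) \in \cL$ must be compatible with both $Z_\pm(a)$, which forces $x_n = a$; this gives $x = a \neq x$, a contradiction.

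I then split into two cases based on the singular structure of $\cL$. Case~(i): some point $a$ has $Z_+(a), Z_-(a) \in \cL$; Proposition~\ref{nonC}(3) immediately forces $S(\cL) = \{a\}$. Case~(ii): each point of $S(\cL)$ carries a unique sign in $\cL$, all equal (compatibility across distinct points via Proposition~\ref{nonC}(2)--(3)), and $|S(\cL)| \geq 2$ (otherwise maximality adds the missing sign at the unique point, moving us to Case~(i)). In Case~(ii), for any $x \in K \setminus S(\cL)$ the object $Z_+(x)$ (matching the common sign) is compatible with every element of $\cL$, contradicting maximality; hence $K = S(\cL)$ and conclusion~(1) holds.

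The main obstacle is Case~(i), where I must show $|K| \leq 2$. Suppose for contradiction that $a, b_1, b_2$ are three distinct points of $K$ in cyclic order, and let $X$ be the regular object with $J(X)$ equal to the open arc from $a$ to $b_2$ passing through $b_1$ (length strictly less than $\pi$, hence a valid regular object). This $X$ is compatible with $Z_\pm(a)$ since $a \notin J(X)$. For compatibility with any regular $Y \in \cL$, the crucial observation is that $J(Y) \subseteq R(\cL)$ cannot contain the $K$-points $a$ or $b_2$ in its interior, so $J(Y)$ lies entirely in one of the two arcs $(a,b_2)$ or $(b_2,a)$, yielding either $J(Y) \subseteq J(X)$ (nested) or $J(Y) \cap J(X) = \emptyset$ (disjoint). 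Maximality thus forces $X \in \cL$, but then $b_1 \in J(X) \subseteq R(\cL)$, contradicting $b_1 \in K$. So $|K| \leq 2$: when $|K| = 1$ we get conclusion~(1), and when $|K| = 2$ the point $a$ is one of the two, giving conclusion~(2).
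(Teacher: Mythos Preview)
Your proof is correct and follows essentially the same route as the paper: both establish properness of $R(\cL)$ via a compactness argument on non-crossing arcs, then split on whether $\cL$ contains a singular pair, and in the singular-pair case rule out a third point of $K$ by exhibiting a compatible regular object whose support would force that point into $R(\cL)$. Your separate direct argument for closedness of $S(\cL)$ is an extra step the paper avoids (closedness falls out of the case analysis, since $S(\cL)$ ends up being either a single point or all of $K$), and your observation $|S(\cL)|\ge 2$ in Case~(ii) is correct but not needed for the conclusion.
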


\begin{proof}
It is clear that $R(\cL)$ is an open set since $J(X)\subseteq S^1_\pi$ is open for all regular $X$ and any union of open sets is open. We need to show that it is a proper open set in $S^1_\pi$.

For each point $x\in R(\cL)$ consider the collection of all open neighborhoods of $x$ in $R(\cL)$ of the form $J(X)$ where $X\in\cL$. By Proposition \ref{criteria for compatibility in C-pi-psi}, Case (1), the collection of these open sets is totally ordered by inclusion. Since $S^1_\pi$ is compact, the union of these neighborhoods cannot be all of $S^1_\pi$: otherwise a finite subset of intervals would cover the circle and, being totally ordered by inclusion, this would imply that $S^1_\pi$ was equal to one of these intervals, which is not possible. Therefore, the union of all $J(X),X\in\cL$ containing $x$ is a proper subset of $S^1_\pi$. Being connected and open, this proper subset must be an interval, say $I_x=(v,w)$, where $v<x<w\le v+\pi$. Note that each point $x\in R(\cL)$ is contained in such an interval $I_x$. Furthermore, these intervals are either disjoint or equal: if $z\in I_x\cap I_y$ then we must have $I_x\subseteq I_z$ and $I_y\subseteq I_z$ forcing $I_x=I_y=I_z$. Therefore, $R(\cL)=\coprod I_{x_i}$ is a disjoint union of such maximal open intervals $I_{x_i}$.

By Proposition \ref{criteria for compatibility in C-pi-psi}, $S(\cL)$ and $R(\cL)$ are disjoint. Furthermore, $\cL$ contains at least one singular object because, if not, we can add the object $Z_+(z)$ for any point $z\in S^1_\pi$ which is not in the set $R(\cL)$ contradicting the maximality of $\cL$. Therefore, $S(\cL)$ is nonempty. We consider two cases. Either $\cL$ contains two singular objects of opposite sign or it does not.

\ul{Case 1}. Suppose that $\cL$ contains two singular objects $X,Y$ of opposite sign. Then, by Proposition \ref{criteria for compatibility in C-pi-psi}, Case (3), $J(X)=J(Y)$ must consist of one point $z\in S^1_\pi$ and there are no other singular objects in $\cL$. In this case, either $R(\cL)$ is the complement of $z$ in $S^1_\pi$ or $S^1\backslash R(\cL)$ contains more than the point $z$. We are claiming that, in the second case, $S^1\backslash R(\cL)$ has exactly two elements. To see this suppose that $S^1\backslash R(\cL)$ contains more than two elements, say $x<y<z<x+\pi\in S^1\backslash R(\cL)$. In that case, $SE(x,z)$ would be compatible with all elements of $\cL$ and $SE(x,z)\notin\cL$ since $y\in J(SE(x,z))=(x,z)$. This contradicts the maximality of $\cL$. Therefore, $R(\cL)$ is missing at most two elements of $S^1$.

\ul{Case 2}. Suppose that all singular objects of $\cL$ have the same sign, say positive. In that case the singular object $Z_+(z)$ is compatible with all objects in $\cL$ for all $z\in S^1_\pi\backslash R(\cL)$. Therefore, $S(\cL)\cup R(\cL)=S^1_\pi$ in Case 2.

We conclude that $R(\cL)$ and $S(\cL)$ are disjoint and the only case in which $R(\cL)\cup S(\cL)$ is not equal to all of $S^1_\pi$ is when $\cL$ contains exactly two singular objects of opposite sign at the same point $z\in S^1_\pi$ and the complement of $R(\cL)$ contains $z$ and one more point.
\end{proof}

In the proof of the lemma above, we examined the connected components of the open set $R(\cL)$. We extract the conclusions together with some additional observations in a separate lemma.

\begin{lem}
For any lamination $\cL$ of $\cC_\pi^\psi$, the open set $R(\cL)$ is a (possibly empty) disjoint union of open intervals $R(\cL)=\coprod I_\a$ where each $I_\a$ is described as follows.
\begin{enumerate}
\item Each interval $I_\a\subset S^1_\pi$ is represented by an interval $(x,y)\subset\RR$ where $x<y\le x+\pi$. 
\item If $I_\a=(x,y)$ has length $y-x<\pi$ then the lamination $\cL$ contains an object isomorphic to $SE(x,y)$.
\item For any point $z\in I_\a$, the set $I_\a$ is equal to $I_z$, the union of all $J(X)$ for which $z\in J(X)$ and $X$ is a regular object in $\cL$.
\end{enumerate}
\end{lem}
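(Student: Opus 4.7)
The plan is to read off parts (1) and (3) directly from the proof of Lemma \ref{R(L) cup S(L)}, and to prove (2) by a single application of the maximality of $\cL$.

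For (1) and (3): that earlier proof already constructed, for each $x\in R(\cL)$, the interval
\[
    I_x \;=\; \bigcup\{J(X) : X\in\cL\ \text{regular},\ x\in J(X)\},
\]
verified (using Case~(1) of Proposition \ref{criteria for compatibility in C-pi-psi} for the totally ordered property and compactness of $S^1_\pi$ for properness) that $I_x$ is a proper open subarc of $S^1_\pi$ liftable to $(v,w)\subset\RR$ with $v<x<w\le v+\pi$, and showed that two such intervals are either equal or disjoint. The decomposition $R(\cL)=\coprod I_\a$, the bound $y\le x+\pi$ in (1), and the equality $I_\a=I_z$ for any $z\in I_\a$ in (3) are immediate.

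For (2), suppose $I_\a=(x,y)$ with $y-x<\pi$. Then $Z:=SE(x,y)$ is a well-defined regular object of $\cC_\pi^\psi$ (the borderline $y-x=\pi$ is excluded precisely because it would collapse to the singular $SM(x,x)$), and by definition $J(Z)=(x,y)=I_\a$. The plan is to verify that $Z$ is compatible with every $W\in\cL$; then by maximality of $\cL$, either $Z\in\cL$ already or $\cL\cup\{Z\}$ would be a strictly larger pairwise compatible set, so $\cL$ contains an object isomorphic to $Z$. Compatibility splits into two cases via Proposition \ref{criteria for compatibility in C-pi-psi}. If $W$ is singular, then $J(W)$ is a single point of $S^1_\pi\setminus R(\cL)$, hence disjoint from $I_\a=J(Z)$, so case~(4) applies. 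If $W$ is regular, then $J(W)$ is itself one of the building-block intervals feeding into some component $I_\b$; by the disjoint-or-equal property of the $I_\b$'s one has either $J(W)\subset I_\a=J(Z)$ or $J(W)\cap I_\a=\emptyset$, and case~(1) applies.

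The main subtle point is ruling out a regular $W\in\cL$ whose $J(W)$ partially overlaps $I_\a$ without being contained in it. This is where one uses that each $I_\b$ was constructed as a union of $J(X)$'s along a chain totally ordered by inclusion, so every single $J(W)$ is automatically confined to one component of $R(\cL)$. Once that observation is recorded, the rest of the argument is purely a bookkeeping of the compatibility cases.
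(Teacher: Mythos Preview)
Your proof is correct and follows essentially the same approach as the paper: both read off (1) and (3) from the construction of the intervals $I_z$ in the proof of Lemma~\ref{R(L) cup S(L)}, and both obtain (2) by noting that $SE(x,y)$ is regular and compatible with every object of $\cL$, hence lies in $\cL$ by maximality. The only difference is that the paper simply asserts the compatibility in (2), whereas you spell out the case analysis (singular $W$ via disjointness of $J(W)$ from $R(\cL)$, regular $W$ via connectedness of $J(W)$ forcing it into a single component $I_\b$); this extra care is appropriate but does not constitute a different route.
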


\begin{proof} We showed in the proof of the previous lemma that the intervals $I_z$ described in (3) are either disjoint or equal and that their union is equal to $\cR(\cL)$. Therefore $R(\cL)$ is a disjoint union of intervals satisfying (3).

Statement (1) is obvious since this describes all possible connected proper open subsets of $S^1_\pi=\RR/\pi\ZZ$. Statement (2) is also clear since, if $x+\pi<y$, then $SE(x,y)$ is a regular object of $\cC_\pi^\psi$ which is compatible with every object in $\cL$ and thus belongs to $\cL$ by maximality.
\end{proof}

The next thing we need to show is that any interval $I=(x,y)$ with $x<y\le x+\pi$ can occur as one of the components of $R(\cL)$. We do this by constructing a ``sublamination'' for each such interval.

\begin{defn} An \emph{interval} $I\subset S^1_\pi$ is any connected open proper subset. (Equivalently, $I=(x,y)$ for some $x<y\le x+\pi$.) A \emph{sublamination supported by an interval} $I$ is a maximal collection of pairwise compatible nonisomorphic indecomposable regular objects $X$ in $\cC_\pi^\psi$ so that $J(X)$ is contained in $I$.
\end{defn}

\begin{lem}\label{all intervals support sublaminations}
Any interval $I\subset S^1_\pi$ supports a sublamination. Furthermore, this sublamination contains an object $X$ with $J(X)=I$ if and only if $I$ has length less than $\pi$.
\end{lem}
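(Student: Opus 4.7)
The plan is to produce the sublamination by a standard Zorn's lemma argument, and then to split the iff statement into its two easy directions, using the fact that $J(X)$ has length strictly less than $\pi$ for every regular indecomposable $X$.

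For existence, I would consider the collection $\mathcal{P}$ of all sets of pairwise compatible nonisomorphic indecomposable regular objects $Y$ of $\cC_\pi^\psi$ with $J(Y)\subseteq I$, partially ordered by inclusion. This poset is nonempty since it contains the empty collection, and any chain in $\mathcal{P}$ has an upper bound given by its union: pairwise compatibility is a binary condition, so any two elements of a union lie in a common member of the chain and are therefore compatible. Zorn's lemma then yields a maximal element of $\mathcal{P}$, which by definition is a sublamination supported by $I$.

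For the forward direction of the second claim, if the sublamination contains an $X$ with $J(X)=I$, then $X\cong SM(x',y')$ is regular with $x'\neq y'$, and by the definition of $J$ the interval $J(X)=I$ has length $\pi-|x'-y'|<\pi$. For the reverse direction, I would write $I=(x,y)$ with $y-x<\pi$ and set $X_0:=SE(x,y)$, which is a regular object with $J(X_0)=I$ by the first clause of the $SE$-formula for $J$. Given any sublamination $\mathcal{L}$ supported by $I$, each $Y\in\mathcal{L}$ satisfies $J(Y)\subseteq I=J(X_0)$, so the intervals are nested and Proposition \ref{criteria for compatibility in C-pi-psi}(1) yields compatibility of $X_0$ with $Y$. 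Maximality of $\mathcal{L}$ then forces $X_0\in\mathcal{L}$.

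No step strikes me as a real obstacle; the only delicate point is excluding the endpoint case $|I|=\pi$, which is handled immediately by the strict inequality in the length formula $\pi-|x'-y'|<\pi$ for regular $X$.
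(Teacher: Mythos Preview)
Your proof is correct. The second part, in both directions, matches the paper's argument essentially verbatim: the paper also observes that for $|I|<\pi$ the object $SE(x,y)$ is compatible with every regular object supported in $I$ and hence lies in any sublamination by maximality, and that for $|I|=\pi$ no regular object has $J(X)=I$.

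The one genuine difference is in the existence step. You invoke Zorn's lemma on the poset of compatible families supported in $I$, whereas the paper writes down an explicit sublamination: for $I=(x,y)$ it takes $\{SE(x,z):x<z\le y\}$ (dropping the endpoint $z=y$ when $y=x+\pi$), checks compatibility since the intervals $(x,z)$ are nested, and argues maximality directly because any interval $(a,b)\subseteq(x,y)$ meeting some $(x,z)$ must already have $a=x$. Your Zorn argument is shorter and requires no ad hoc verification of maximality; the paper's explicit family has the advantage of giving a concrete model one can picture, and it makes the failure at length $\pi$ visible as the exclusion of the single non-regular endpoint object $SE(x,x+\pi)$. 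Either approach is entirely adequate here.
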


\begin{proof}
If $I=(x,y)$ with $x<y<x+\pi$ then one example of a sublamination supported by $I$ is the set of all regular object $SE(x,z)$ where $x<z\le y$. Since $J(SE(x,z))=(x,z)$ and $(x,z)\subseteq (x,z')$ for all $z\le z'$, this is a compatible set of objects. It is clearly maximal since any interval $(a,b)$ contained in $(x,y)$ will contain some point $z$ and therefore, by compatibility, must contain $(x,z)$ making $a\equiv x$ modulo $\pi\ZZ$. Furthermore, any sublamination supported by $I$ will contain an object isomorphic to $SE(x,y)$ since this object is compatible with all objects with support in $I$.

In the case $I=(x,x+\pi)$, one example of a sublamination is given by taking all $SE(x,z)$ where $x<z<x+\pi$. The extremal case $z=x+\pi$ must be excluded since $SE(x,x+\pi)$ is not a regular object.
\end{proof}

%This Lemma and its proof prove the following theorem describing the set of singular objects in any lamination.

%Recall that the topological definition of a lamination is a closed set of nonintersecting geodesics.

\begin{thm}\label{description of all laminations}
All laminations $\cL$ in $\cC_\pi^\psi$ are described as follows. First, the set $U=R(\cL)$ can be any (possibly empty) proper open subset of $S_\pi^1$. Next, given $U=R(\cL)=\coprod I_\a$, the objects of $\cL$ supported in each $I_\a$ is a sublamination which can be chosen arbitrarily and independently for different $I_\a$ and the set of singular objects of $\cL$ is as follows.

Case 1: Suppose the complement of $U$ in $S^1_\pi$ consists of a single point $x$. Then the lamination contains two singular objects at $x$ of opposite sign and no other singular objects. We call this a \emph{singular pair} at $x$.

Case 2: When the complement of $U$ consists of exactly two points, say $x,y$, the lamination $\cL$ contains exactly two singular objects which are either (a) of the same sign at each of the points $x,y$ or (b) a singular pair at one of these two points.

Case 3: When the complement of $U$ in $S^1$ contains at least three points, $\cL$ contains exactly one singular object at each of these points and all of them have the same sign.
\end{thm}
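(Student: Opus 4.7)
The plan is to prove the two directions separately: \emph{necessity}, that any lamination $\cL$ has the described form, and \emph{sufficiency}, that every data choice of that form yields a lamination. For necessity, I would combine the two lemmas immediately preceding the theorem: Lemma \ref{R(L) cup S(L)} already splits off the structure of $R(\cL)$ and $S(\cL)$, and the component lemma refines $R(\cL)=\coprod I_\alpha$. The sublamination structure on each $I_\alpha$ is then forced: the regular objects $X\in\cL$ with $J(X)\subseteq I_\alpha$ form a pairwise compatible family by Case (1) of Proposition \ref{criteria for compatibility in C-pi-psi}, and any failure of maximality would contradict the maximality of $\cL$, using Lemma \ref{all intervals support sublaminations} to exhibit a missing object. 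The classification of singular objects then reduces to a case analysis on $|S^1_\pi\setminus R(\cL)|$ using Cases (2) and (3) of Proposition \ref{criteria for compatibility in C-pi-psi}: at an isolated complement point $x$, both $Z_+(x)$ and $Z_-(x)$ remain compatible with everything in $\cL$ and with each other (equal $J$), so maximality forces a singular pair; when the complement is two points $\{x,y\}$, a singular pair at one point blocks any singular at the other (opposite-sign/different-point pairs are incompatible), while in the absence of a pair, maximality forces exactly one singular at each point and the sign condition forces them equal; when there are at least three complement points, a singular pair is impossible for the same sign-propagation reason, and maximality forces exactly one singular at each point, all with a common sign.

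For sufficiency, given a proper open $U=\coprod I_\alpha\subseteq S^1_\pi$, a sublamination $\cL_\alpha$ for each $I_\alpha$ (guaranteed to exist by Lemma \ref{all intervals support sublaminations}), and one of the listed singular configurations, I would form $\cL$ as the union and verify: (i) pairwise compatibility \emph{within} each $\cL_\alpha$ is built in; (ii) \emph{across} distinct intervals the two $J$-sets lie in disjoint components of $U$, so Case (1) of Proposition \ref{criteria for compatibility in C-pi-psi} applies; (iii) regular-singular compatibility holds since each singular $J$ is a point in the complement of $R(\cL)\supseteq U$, hence disjoint from every regular $J$; (iv) the singular-singular compatibility matches the case restrictions by construction. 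Maximality is then checked by examining hypothetical additions: a new regular object $Y$ with $J(Y)$ meeting some $I_\alpha$ would have to sit inside $I_\alpha$ by Case (1) of the compatibility proposition (one $J$ cannot cross the boundary of another without crossing), contradicting maximality of $\cL_\alpha$; if $J(Y)$ meets $S^1_\pi\setminus U$ it must contain a singular point of $\cL$, giving incompatibility by Case (4). A new singular $Z_\e(z)$ at $z\in U$ fails compatibility with any regular $X$ whose $J(X)$ contains $z$, and at $z\notin U$ it is either already in the prescribed configuration or blocked by an existing sign mismatch.

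The main obstacle I expect is handling maximality cleanly in Case 2, where the two complement points support several superficially similar configurations but only two are actually maximal. The delicate verification is that in configuration~(b) the ``unused'' complement point $y$ truly cannot host any singular object: the obstruction is that adding $Z_{\pm}(y)$ with either sign breaks compatibility with the pair $\{Z_+(x),Z_-(x)\}$ because one element of the pair will have opposite sign and different $J$. Once this bookkeeping of sign-plus-location incompatibilities is made rigorous via Proposition \ref{criteria for compatibility in C-pi-psi}, the other cases follow by the same pattern, and assembling everything gives the stated classification.
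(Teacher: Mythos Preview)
Your overall strategy matches the paper's: necessity via Lemma~\ref{R(L) cup S(L)} and the component lemma, sufficiency by assembling sublaminations plus a singular configuration and checking compatibility and maximality. Your identification of Case~2 as the delicate one is also correct.

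There is, however, a genuine gap in your maximality argument for \emph{regular} objects in Case~2(b). You write that a new regular $Y$ with $J(Y)$ meeting some $I_\alpha$ ``would have to sit inside $I_\alpha$'' by noncrossing. This does not follow: noncrossing with the elements of $\cL_\alpha$ allows $J(Y)\supseteq I_\alpha$ as well as $J(Y)\subseteq I_\alpha$. In Cases~1, 2(a), and~3 your fallback clause saves you, since $J(Y)\supseteq I_\alpha$ forces $J(Y)$ to contain an endpoint of $I_\alpha$, and every complement point carries a singular object. But in Case~2(b) the point $y$ carries no singular, so a hypothetical regular $Y$ with $y\in J(Y)$ and $x\notin J(Y)$ is not excluded by anything you have written; your final paragraph addresses only the \emph{singular} side of the Case~2(b) obstruction.

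The paper closes this gap exactly here, and the missing ingredient is Lemma~\ref{all intervals support sublaminations}: since in Case~2(b) both intervals $I_\alpha,I_\beta$ have length strictly less than $\pi$, each sublamination contains an object $X_\alpha,X_\beta$ with $J(X_\alpha)=I_\alpha$ and $J(X_\beta)=I_\beta$. Now if $J(Y)$ meets $I_\alpha$ and is not contained in it, compatibility with $X_\alpha$ forces $J(Y)\supseteq I_\alpha$, hence $y\in J(Y)$; but then $J(Y)$ also meets $I_\beta$, and compatibility with $X_\beta$ forces $J(Y)\supseteq I_\beta$ as well, so $J(Y)\supseteq S^1_\pi\setminus\{x\}$, contradicting $|J(Y)|<\pi$. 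Once you insert this step, your argument is complete and agrees with the paper's.
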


\begin{proof}
Suppose that $U$ is any proper open subset of $S^1_\pi$. Then $U$ is a disjoint union of intervals $U=\coprod I_\a$. By the previous lemma, each interval $I_\a$ supports a sublamination $\cL_\a$. By Proposition \ref{criteria for compatibility in C-pi-psi}, the objects in different $\cL_\a$ are compatible.

Lemma \ref{R(L) cup S(L)}, together with the fact that compatible singular objects supported at different points must have the same sign, establishes the necessity of the conditions imposed on the singular objects and listed in Cases 1,2,3. Conversely, they are also sufficient since singular objects at all points in $S^1_\pi\backslash U$ will be compatible with all objects of all $\cL_\a$ and with each other if they are given as described in Cases 1,2,3. Maximality of this compatible set is clear in Cases 1 and 3 since $S(\cL)\cup R(\cL)=S^1_\pi$ in both cases. 

In Case 2, $U=R(\cL)$ is a union of two disjoint intervals $U=I_\a\coprod I_\b$ with endpoints $x,y$ and $S(\cL)$ is one of these points, say $x$. In that case, maximality is established as follows. Since we have singular objects at $x$ with both signs, no singular objects supported at any other point in $S^1_\pi$ will be compatible with these. Also, by Lemma \ref{all intervals support sublaminations}, $\cL_\a,\cL_\b$ will contain objects $X_\a,X_\b$ with $J(X_\a)=I_\a$ and $J(X_\b)=I_\b$, and any regular object compatible with both $X_\a$ and $X_\b$ must have support in either $I_\a$ or $I_\b$. So, $\cL_\a\cup \cL_\b$ together with a singular pair at $x$ or $y$ is a lamination of $\cC_\pi^\psi$ as claimed.
\end{proof}

%\newpage

\subsection{Topology of laminations}

Let $\cH$ be the (nonHausdorff!) topological space defined as a quotient space $\cH=\tilde\cH/\sim$ with the quotient topology where
\[
	\tilde\cH=\{(x,y,\e)\in \RR^2\times\{+,-\}\,|\, x< y\le x+\pi\}
\]
modulo the following relations.
\begin{enumerate}
\item $(x,y,+)\sim(x,y,-)$ if $y<x+\pi$.
\item $(x,y,\e)\sim (x+\pi,y+\pi,\e)$ for any $x,y,\e$.
\end{enumerate}
Let $[x,y]_\e\in\cH$ denote the equivalence class of $(x,y,\e)$. Because of Relation (1) we usually drop the $\e$ when $y<x+\pi$.

\begin{defn}
For any indecomposable object $X$ in $\cC_\pi^\psi$ let $h(X)$ denote the element of $\cH$ given as follows.
\begin{enumerate}
\item $h(SE(x,y))=[x,y]$ if $y<x+\pi$.
\item $h(SE(x,y))=[y,x+2\pi]$ if $y>x+\pi$ (so, $x+2\pi<y+\pi$).
\item $h(Z_\e(x))=[x,x+\pi]_\e$.% for any sign $\e$.
\end{enumerate}
\end{defn}
Thus $h$ gives a 1-1 correspondence between elements of $\cH$ and isomorphism classes of indecomposable objects of $\cC^\psi_\pi$.

\begin{lem}\label{lem: objects compatible with X}
For any indecomposable object $X$ in $\cC_\pi^\psi$, the set $C(X)$ of all points in $\cH$ of the form $h(Y)$ for some indecomposable object $Y$ compatible with $X$ is closed.
\end{lem}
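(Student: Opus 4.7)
The plan is to lift to the covering space $\tilde{\cH}$ and show that the preimage $\tilde{C}(X)$ of $C(X)$ under the quotient map is closed. Since $\cH$ carries the quotient topology and $\tilde{C}(X)$ is automatically saturated under $\sim$, this is equivalent to $C(X)$ being closed. Equivalently, I show the incompatibility set $\tilde{I}(X) = \tilde{\cH} \setminus \tilde{C}(X)$ is open in $\tilde{\cH} \subset \RR^2 \times \{+,-\}$ (with the subspace topology, discrete on $\{+,-\}$). Proposition \ref{nonC} characterizes incompatibility between indecomposables in terms of explicit geometric conditions on the $J$-intervals in $S^1_\pi$, and I will translate these into conditions on $(x, y, \epsilon)$ and check openness case by case.

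In the first case, $X = SE(a, b)$ is regular with $J(X) = (a,b) \bmod \pi$. Incompatibility occurs when $(x, y, \epsilon)$ either (a) has $y < x + \pi$ and $(x,y) \bmod \pi$ crosses $(a,b) \bmod \pi$, or (b) has $y = x + \pi$ and $x \in (a, b) \bmod \pi$. On the open stratum $\{y < x + \pi\}$, condition (a) is cut out by the strict inequalities ``exactly one of $a, b$ lies in $(x, y) \bmod \pi$'' and is manifestly open. Condition (b) alone is not open in $\tilde{\cH}$, so the crucial step is to verify that every incompatible singular point $(z, z+\pi, \pm)$ has a whole $\tilde{\cH}$-neighborhood of incompatible points. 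For a nearby regular point $(x', y', \epsilon')$ with $y' < x' + \pi$, the arc $J(SE(x', y'))$ has length close to $\pi$, so its complement in $S^1_\pi$ is a tiny closed arc around $z \bmod \pi$; since $z \in (a, b)$ is in the open interval, this complement sits strictly inside $(a, b)$ for $(x', y')$ sufficiently close to $(z, z+\pi)$, which forces $J(SE(x', y'))$ to cross $(a, b)$, landing the point in case (a).

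In the second case, $X = Z_\epsilon(a)$ is singular. Incompatibility occurs when $(x, y, \epsilon')$ either (c) has $y = x + \pi$, $\epsilon' = -\epsilon$, and $x \not\equiv a \bmod \pi$, or (d) has $y < x + \pi$ and $a \in (x, y) \bmod \pi$. Condition (d) is open by strict inequalities. For openness at a point $(z, z+\pi, -\epsilon)$ of type (c), the same wrap-around argument applies: nearby regular points have $J$-interval of length close to $\pi$, missing a tiny arc around $z \bmod \pi$; since $a \not\equiv z \bmod \pi$, this arc does not contain $a$, so $a \in J$ and the point lies in case (d). The $\pm$ sheet is not a concern on such a neighborhood, since $+$ and $-$ are identified in $\cH$ off the singular locus, so the question of membership in $\tilde{I}(X)$ depends only on the equivalence class.

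The main technical obstacle is precisely this wrap-around argument at the non-Hausdorff singular stratum $\{y = x+\pi\}$: the compatibility criteria involve point conditions on $S^1_\pi$ (``$z \in J(X)$'' or ``$a \in J(Y)$'') which by themselves do not define open subsets of $\tilde{\cH}$ at singular points, and one must exploit the geometric fact that $J$-intervals of length near $\pi$ wrap around $S^1_\pi$ and inevitably intersect any fixed open interval. Everything else reduces to routine manipulation of open inequalities, so the case analysis above covers all possibilities and yields openness of $\tilde{I}(X)$.
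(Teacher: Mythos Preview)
Your proof is correct and takes the complementary route to the paper's. The paper works directly with $C(X)$: for regular $X=SE(x,y)$ it lists four families (a)--(d) of compatible objects (two closed triangles (a) and (c), one half-open triangle (b), and the hypotenuse (d) of that triangle consisting of singular objects), observes that (a), (c), (d) are closed, and notes that the only limit points of (b) not already in (b) are exactly the singular points in (d); the singular case is handled analogously with regions (1), (2), (3). You instead show that the incompatible set is open after lifting to $\tilde\cH$, and your ``wrap-around'' argument at the singular stratum is precisely the dual of the paper's closure check: where the paper verifies that limits of compatible long arcs in (b) are the compatible singular objects in (d), you verify that regular arcs near an incompatible singular object are themselves incompatible. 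Both arguments hinge on the same geometric fact (an arc of length close to $\pi$ has small complement concentrated near a specified point), so neither is more elementary; the paper's version has the advantage of matching Figure~\ref{fig:compatible objects} directly.

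One minor imprecision worth flagging: your description of crossing as ``exactly one of $a,b$ lies in $(x,y)\bmod\pi$'' is not literally correct when endpoints coincide (e.g.\ $x=a$ and $b\in(a,y)$ gives nested, hence compatible, intervals but satisfies your test). The clean fix is simply to say that on the regular stratum compatibility is ``disjoint or nested'', a finite union of conditions cut out by non-strict inequalities in suitable lifts, hence closed---which is all you need. This does not affect the substance of your argument.
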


\begin{proof}
If $X=SE(x,y)$ where $x<y<x+\pi$ then the objects compatible with $X$ are isomorphic to objects in the following list. (See Figure \ref{fig:compatible objects}.)
\begin{enumerate}
\item[(a)] $Y=SE(a,b)$ where $x\le a<b\le y$ (i.e. $J(Y)\subseteq J(X)$),
\item[(b)] $Y=SE(z,w)$ where $z\le x<y\le w<z+\pi$ (i.e. $J(X)\subseteq J(Y)$),
\item[(c)] $Y=SE(c,d)$ where $y\le c<d\le x+\pi$ (i.e. $J(Y)\cap J(X)=\emptyset$),
\item[(d)] $Z_\e(z)$ for either sign $\e$ where $z\le x<y\le z+\pi$ (i.e. $J(Z_\e(z))=z\notin J(X)$).
\end{enumerate}
Since $(a),(c),(d)$ are closed conditions, the set of all $h(Y)$ with $Y$ in (a),(c),(d) is a union of three closed subsets of $\cH$. Condition (b) is not closed, but the point $h(Y)=[z,w]$ for $Y$ in (b) converge to the points $h(Z_\e(z))=[z,z+\pi]_\e$ in (d). So, the union of these four sets is closed in $\cH$.

The objects compatible with $X=Z_\e(z)$ are:
\begin{enumerate}
\item $SE(x,y)$ where $z\le x<y\le z+\pi$ but $(x,y)\neq(z,z+\pi)$.
\item $Y=Z_\e(x)$ for any $x\in S^1_\pi$ where $Y$ has the same sign as $X$.
\item $Y=Z_{-\e}(z)$ with sign opposite that of $X$.
\end{enumerate}
The set of all $h(Y)$ with $Y$ given in Cases (2) and (3) are closed sets and the points $[x,y]$ from Case (1) converge only to the two points $[z,z+\pi]_\pm$ which lie in Cases (2) and (3). Thus their union $C(X)$ is a closed subset of $\cH$.
\end{proof}

\begin{figure}[htbp] %{fig:maps to compatible}
\begin{center}
%
%\vs5
{
\setlength{\unitlength}{.25in}
%\centerline
{\mbox{
\begin{picture}(16,8)
%      \thicklines
    \thinlines
\put(-2,1){\line(1,1)8}
\put(2,0){\line(1,1)7}
\put(0,1){\line(1,0)3}
\put(5,8){\line(1,0)3}
\put(0,3){\line(1,0)5}
\put(3,6){\line(1,0)5}
\put(0,1){\line(0,1)2}
\put(8,6){\line(0,1)2}
\put(3,1){\line(0,1)5}
\put(5,3){\line(0,1)5}
\thicklines{\color{black}
\put(0,0){
\put(3,3){\line(1,0)2}
\put(3,3){\line(0,1)3}
\put(5,6){\line(1,0)3}
}} % end color blue
\put(7.85,7.85){$\bullet$}
\put(3.2,3.2){$X$}
\put(0,3){\line(1,1)3}
\put(8.2,8.2){$X'$}
\put(3.2,2.3){$(a)$}
\put(1.7,3.8){$(b)$}
\put(0.8,4.8){$(d)$}
\put(5.7,5){$(c)$}
%\put(1,1.8){$(e)$}
%\put(6,6.8){$(e)$}
%\put(4,6.4){$(g)$}
%\put(3.7,4.5){$(f)$}
\put(.1,-.3){
\put(3,1){$x$}
\put(5,3){$y$}   
\put(8,6){$x+\pi$}   
}
\put(-1.2,.2){
\put(2.5,6){$Z_\pm(x)$}
\put(-1.5,3.1){$Z_\pm(y-\pi)$}
}
{\color{black}
\put(3,3){\line(-1,0)3}
\put(3,3){\line(0,-1)2}
\put(5,6){\line(0,-1)3}
}
\put(2.85,2.85){$\bullet$}
\put(7,1){
\put(4,3){\line(1,1){2.5}}
\put(5,0){\line(1,1){5.5}}
\put(5.5,0.5){\line(0,1)4}
\put(5.5,4.5){\line(1,0)4}
\put(5.35,4.35){$\bullet$}
\put(4,4.8){$Z_\e(z)$}
\put(6.5,3){$(1)$}
\put(5.8,5.8){$(2)$}
}
\end{picture}}
}}
%\vs5
\caption{Objects compatible with $X=SE(x,y)\cong X'$ lie in Regions $(a),(b),(c)\subseteq \cH$ and $(d)$ which marks the hypotenuse of triangle $(b)$. Objects compatible with $Z_\e(z)$ lie in Region (1) or on the diagonal line (2).}
\label{fig:compatible objects}
\end{center}
\end{figure}
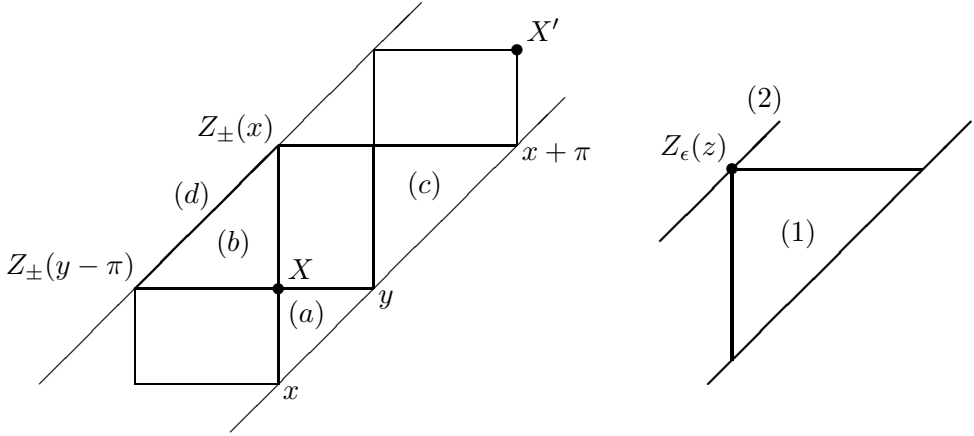
%

%%%%%%%%%%%%%%%%%%%%
%%%%%%%%  xxx123
%\newpage

\begin{lem}\label{sublaminations are closed}
Given any interval $I=(x,y)$ in $S^1_\pi$ of length $y-x<\pi$ and any sublamination $\cL_0$ of $I$, the set $h(\cL_0)$ of all $h(X)$ for $X\in\cL_0$ is a closed subset of $\cH$.
\end{lem}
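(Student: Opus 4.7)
The plan is to work upstairs in $\tilde\cH$: by the definition of the quotient topology, $h(\cL_0)$ is closed in $\cH$ if and only if its preimage $q^{-1}(h(\cL_0))$ under the quotient map $q\colon \tilde\cH \to \cH$ is closed in $\tilde\cH$. Since $\tilde\cH$ is a subspace of the metric space $\RR^2\times\{+,-\}$, closedness reduces to sequential closedness, which is much cleaner than arguing directly in the non-Hausdorff space $\cH$.

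First I would describe the preimage explicitly. The hypothesis $y - x < \pi$ guarantees that every representative $(a,b,\e)\in\tilde\cH$ of a point $h(SE(a,b))\in h(\cL_0)$ satisfies $b - a < \pi$, so by relation (1) of the definition of $\cH$ the sign $\e$ is immaterial. Setting
\[
A \;=\; \bigl\{(a,b,\e)\in\tilde\cH \;:\; x\le a<b\le y,\ SE(a,b)\in\cL_0,\ \e\in\{+,-\}\bigr\},
\]
one has $q^{-1}(h(\cL_0)) = \bigsqcup_{k\in\ZZ}\bigl(A + (k\pi,k\pi,0)\bigr)$. The translates are pairwise disjoint because $y - x < \pi$ forces the $a$-intervals $[x+k\pi,y+k\pi]$ to be disjoint, and any point of $\tilde\cH$ has a small neighbourhood meeting at most one translate, so the union is locally finite. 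It therefore suffices to show $A$ is closed in $\tilde\cH$.

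Next I would run the sequential closedness argument for $A$. Suppose $(a_n,b_n,\e_n)\in A$ converges in $\tilde\cH$ to $(a,b,\e)$. Then $\e_n = \e$ eventually, $a_n\to a$, $b_n\to b$, and the very fact that $(a,b,\e)\in\tilde\cH$ forces the strict inequality $a < b\le a+\pi$; combined with $x\le a_n<b_n\le y$ this yields $x\le a<b\le y$, so $SE(a,b)$ is regular with $J(SE(a,b))\subseteq I$. To invoke maximality of $\cL_0$ and conclude $SE(a,b)\in\cL_0$, I need only verify that $SE(a,b)$ is compatible with every $Y\in\cL_0$. Here Lemma~\ref{lem: objects compatible with X} does the work: the set $C(Y)\subseteq\cH$ of points $h(Z)$ with $Z$ compatible with $Y$ is closed, each $h(SE(a_n,b_n)) = q(a_n,b_n,\e_n)$ lies in $C(Y)$, and by continuity of $q$ the limit $h(SE(a,b))$ lies in $C(Y)$ as well. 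Then maximality of $\cL_0$ forces $SE(a,b)\in\cL_0$, so $(a,b,\e)\in A$.

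The main obstacle I anticipate is of a bookkeeping rather than conceptual nature: making sure that the diagonal $\{a = b\}$ never appears as a genuine limit in $\tilde\cH$ (which is where the strict inequality in the definition of $\tilde\cH$ saves us), and that the hypothesis $y - x < \pi$ is used to control both the disjointness of translates and the regularity of the limit object $SE(a,b)$. Without $y - x < \pi$, limits of the form $b_n - a_n \to \pi$ would produce singular objects at the endpoints of $I$, destroying both closedness of $A$ and the local finiteness needed to reassemble $q^{-1}(h(\cL_0))$ from its translates.
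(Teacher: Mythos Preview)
Your proof is correct, but it takes a more laborious route than the paper. You lift to $\tilde\cH$, decompose the preimage into $\pi\ZZ$-translates, and run a sequential-closedness argument on one translate, invoking Lemma~\ref{lem: objects compatible with X} and maximality of $\cL_0$ to push the limit back into $\cL_0$. The paper instead stays in $\cH$ and observes directly that
\[
h(\cL_0)\;=\;\Bigl(\bigcap_{X\in\cL_0} C(X)\Bigr)\;\cap\;\bigl\{[a,b]\in\cH:\ x\le a<b\le y\bigr\},
\]
both factors being closed (the first by Lemma~\ref{lem: objects compatible with X}, the second manifestly). The equality of this intersection with $h(\cL_0)$ is exactly the maximality argument you give at the end of your sequential step, so the two proofs rest on the same two ingredients; the paper simply packages them without passing through sequences or the quotient map. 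Your approach does make more explicit why the hypothesis $y-x<\pi$ matters (separating the translates and keeping limits regular), whereas in the paper that hypothesis is used only implicitly in asserting that the set $D=\{[a,b]:x\le a<b\le y\}$ is closed in $\cH$.
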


\begin{proof}
Let $C=\bigcap_{X\in\cL_0} C(X)$. This is a closed subset of $\cH$ since it is an intersection of closed subsets of $\cH$. Let $D$ be the set of all $[a,b]\in\cH$ so that $x\le a<b\le y$. Then $D$ is also a closed subset of $\cH$. We claim that $h(\cL_0)=C\cap D$. The lemma follows.

Since the objects of $\cL_0$ are pairwise compatible we have $h(\cL_0)\subseteq C$. Since $\cL_0$ is a sublamination of $I=(x,y)$, we have $h(\cL_0)\subseteq D$. So, $h(\cL_0)\subseteq C\cap D$. Conversely, suppose that $[a,b]\in C\cap D$. Then $X\cong SE(a,b)$ is compatible with all objects in $\cL_0$. Being in $D$ this object has support in $I$. Therefore, $X$ lies in $\cL_0$ (up to isomorphism). So, $h(\cL_0)=C\cap D$ is a closed subset of $\cH$.
\end{proof}

\begin{thm}
For any lamination $\cL$ in $\cC_\pi^\psi$, the set $h(\cL)$ of all $h(X)$ with $X\in\cL$ is a closed subset of $\cH$.
\end{thm}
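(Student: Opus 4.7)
The plan is to express $h(\cL)$ as an intersection of closed subsets of $\cH$ already identified in Lemma \ref{lem: objects compatible with X}. Specifically, I expect that
\[
	h(\cL) = \bigcap_{X\in \cL} C(X),
\]
where $C(X)\subseteq \cH$ is the closed set of $h(Y)$ for $Y$ compatible with $X$. Since each $C(X)$ is closed, an arbitrary intersection of such sets is closed, and this will give the theorem immediately.

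To prove the displayed equality, I would argue both inclusions. For $(\subseteq)$: each $Y\in\cL$ is pairwise compatible with every $X\in\cL$, so in particular $h(Y)\in C(X)$ for all $X\in\cL$. This uses the (easy) observation that every indecomposable object of $\cC_\pi^\psi$ is compatible with itself, which is immediate from Proposition \ref{criteria for compatibility in C-pi-psi}: a regular object $X$ satisfies $J(X)=J(X)$, and a singular object trivially has the same sign as itself. For $(\supseteq)$: suppose $p\in\bigcap_{X\in\cL} C(X)$. By the bijection between $\cH$ and isomorphism classes of indecomposable objects, we can write $p=h(Z)$ for some indecomposable $Z$. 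By definition of $C(X)$, then $Z$ is compatible with every $X\in\cL$. Now maximality of $\cL$ kicks in: $Z$ must already be isomorphic to some element of $\cL$, so $p=h(Z)\in h(\cL)$.

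I don't expect any serious obstacle here. The nontrivial work has already been done in Lemma \ref{lem: objects compatible with X} (showing each $C(X)$ is closed, which required a careful case analysis and, crucially, the observation that limits of type-(b) points $[z,w]$ with $z\to x$, $w\to x+\pi$ land on the singular pair $[x,x+\pi]_\pm$ already in $C(X)$) and in Theorem \ref{description of all laminations} (which gives the maximality characterization we invoke). The current statement is essentially a direct consequence: closedness is preserved under arbitrary intersection, so no additional topological argument beyond the previous lemma is needed. The only thing to double check is that maximality of $\cL$ as a \emph{set} of pairwise compatible \emph{nonisomorphic} indecomposables really does force any compatible $Z$ into $\cL$ up to isomorphism, but this is just the definition of a lamination.
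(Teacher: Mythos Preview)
Your proposal is correct and matches the paper's proof essentially line for line: the paper also sets $C=\bigcap_{X\in\cL}C(X)$, notes it is closed as an intersection of closed sets, and verifies $h(\cL)=C$ via the two inclusions you describe (pairwise compatibility for $\subseteq$, maximality of $\cL$ for $\supseteq$). Your added remark on self-compatibility is a harmless extra detail the paper leaves implicit.
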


\begin{proof}
Let $C=\bigcap_{X\in\cL} C(X)$. This is a closed subset of $\cH$ since it is an intersection of closed subsets of $\cH$. Since the elements of $\cL$ are pairwise compatible, $h(\cL)\subseteq C$. Also, $C\subseteq h(\cL)$ since any point in $C$ is compatible with all $X\in\cL$ and is therefore isomorphic to some object of $\cL$. Therefore $h(\cL)=C$ is closed.
\end{proof}

%\newpage

\subsection{Clusters in $\cC_\pi^\psi$}

We define a \emph{discrete lamination} in $\cC_\pi^\psi$ to be a lamination $\cL$ for which the set $h(\cL)$ is a discrete subset of $\cH$ (in the subspace topology). This is a necessary condition for the mutation process since, by the theorem above, no limit point of a lamination can be mutated. For any interval $I$ in $S^1_\pi$, we define a \emph{discrete sublamination} of $I$ to be a sublamination $\cL_0$ of $I$ with the property that $h(\cL_0)$ is a discrete subset of $\cH$. After proving Theorem \ref{discrete laminations are clusters} below, we will refer to a discrete lamination in $\cC_\pi^\psi$ as a \emph{cluster}.

Let $\el:\cH\to (0,\pi]$ be the continuous function given by $\el[x,y]_\e=y-x$ for any $\e$. For any indecomposable object $X$ in $\cC_\pi^\psi$ let $\el(X)=\el(h(X))$. Then $\el(X)=\pi$ if and only if $X$ is singular and $\el(X)$ is the length of the interval $J(X)$ when $X$ is regular.

\begin{lem}\label{finite number of X with el X ge delta}
For any discrete lamination $\cL$ and any $\delta>0$ there are only finitely many elements $X$ in $\cL$ with $\el(X)\ge\delta$. In particular (when $\delta=\pi$) $\cL$ contains only finitely many singular objects. The same holds for any discrete sublamination of any interval.
\end{lem}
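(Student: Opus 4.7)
The plan is to argue by contradiction, combining Bolzano--Weierstrass in the covering space $\tilde\cH$ with the closedness of $h(\cL)$ established in the theorem immediately preceding this subsection. Suppose there were infinitely many pairwise nonisomorphic $X_n \in \cL$ with $\el(X_n) \ge \delta$. I would lift each $h(X_n) \in \cH$ to a representative $(x_n, y_n, \e_n) \in \tilde\cH$, using relation (2) in the definition of $\cH$ to normalize so that $x_n \in [0,\pi]$. The bound $\delta \le y_n - x_n \le \pi$ then places $(x_n, y_n)$ in the compact rectangle $[0,\pi] \times [\delta, 2\pi] \subset \RR^2$, while $\e_n \in \{+,-\}$ is a finite choice.

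Passing to a subsequence, Bolzano--Weierstrass gives $(x_n, y_n) \to (x_\infty, y_\infty)$ with $\delta \le y_\infty - x_\infty \le \pi$, and $\e_n$ is eventually constant equal to some $\e$. The triple $(x_\infty, y_\infty, \e)$ lies in $\tilde\cH$, and continuity of the quotient map $\tilde\cH \to \cH$ yields $h(X_n) \to X_\infty := [x_\infty, y_\infty]_\e$ in $\cH$. By the preceding theorem, $h(\cL)$ is closed in $\cH$, hence contains the limit $X_\infty$. Since the $h(X_n)$ are pairwise distinct, at most one of them coincides with $X_\infty$, so every neighbourhood of $X_\infty$ in $\cH$ meets $h(\cL) \setminus \{X_\infty\}$. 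This contradicts the hypothesis that $h(\cL)$ is a discrete subset of $\cH$.

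For a discrete sublamination $\cL_0$ of an interval $I = (x,y)$ the same argument runs, invoking Lemma \ref{sublaminations are closed} in place of the preceding theorem and restricting the lifts so that $(x_n, y_n) \in [x,y]^2$ (in the case $y - x < \pi$); when $I$ has length exactly $\pi$, compactness of $[x,x+\pi]^2$ serves the same role and all sublamination objects being regular forces $y_n - x_n < \pi$ automatically. Specialising to $\delta = \pi$ yields the stated corollary that a discrete lamination contains only finitely many singular objects.

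The one delicate point is the non-Hausdorffness of $\cH$ at the pairs of points $[x,x+\pi]_{\pm}$: a sequence in $\cH$ may have more than one limit. I expect this to be a superficial obstacle rather than a real one, because the argument only uses that \emph{some} accumulation point of $\{h(X_n)\}$ lies in the closed set $h(\cL)$ and fails to be isolated there. Continuity of the quotient $\tilde\cH \to \cH$ guarantees that $X_\infty$ is such an accumulation point, and discreteness of $h(\cL)$ is a local condition at each of its points, so non-uniqueness of limits does not interfere with producing the required contradiction.
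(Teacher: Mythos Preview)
Your argument is correct and is essentially the same as the paper's, only more explicit: the paper compresses the whole thing into the single observation that $\{[x,y]\in\cH:\el[x,y]\ge\delta\}$ is compact (as the image of a compact box in $\tilde\cH$) and hence meets the closed discrete set $h(\cL)$ in finitely many points. Your Bolzano--Weierstrass extraction is exactly the standard proof of that fact, and your remarks on the non-Hausdorff points are sound but unnecessary once one argues via open covers rather than sequential limits.
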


\begin{proof}
The set of points $[x,y]\in\cH$ with $\el[x,y]=y-x\ge\delta $ is compact and therefore its intersection with any closed discrete subset is finite.
\end{proof}

\begin{lem}\label{lem: discrete sublaminations}
Any interval $I=(x,y)$ with length $y-x<\pi$ admits a discrete sublamination. However, intervals of length equal to $\pi$ have no discrete sublaminations.
\end{lem}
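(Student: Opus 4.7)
The plan for the first assertion is to construct an explicit discrete sublamination of $I=(x,y)$ (with $y-x<\pi$) by recursive dyadic bisection. Setting $m=(x+y)/2$, I would define
\[
T(x,y) = \{SE(x,y),\,SE(x,m),\,SE(m,y)\} \cup T(x,m) \cup T(m,y),
\]
which unfolds to the family of $2^n$ arcs $SE\bigl(x+k(y-x)/2^n,\,x+(k+1)(y-x)/2^n\bigr)$ at each level $n\ge 0$. Pairwise compatibility is immediate since dyadic sub-intervals are nested or have disjoint interiors. For maximality, any $SE(a,b)\subseteq(x,y)$ not itself a dyadic interval must have an endpoint strictly interior to some dyadic interval and therefore crosses one of its midpoint children, contradicting compatibility with $T(x,y)$. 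For discreteness of $h(T(x,y))$ in $\cH$, I would parametrise by $(a,r)=(a,b-a)$: the arcs lie at the discrete heights $r=(y-x)/2^n$ with only finitely many per level, so any convergent subsequence in $\cH$ is either eventually constant at a single level or has $r\to 0$, a limit that is outside $\cH$.

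For the second assertion, I would argue by contradiction. Assume $\cL_0$ is a discrete sublamination of $I=(x,x+\pi)$. The goal is to produce a sequence of arcs in $\cL_0$ whose $h$-images accumulate at a singular point of $\cH$. The key step is to use maximality to show that $\cL_0$ contains arcs of length arbitrarily close to $\pi$: for each small $\epsilon>0$, the test arc $SE(x+\epsilon,\,x+\pi-\epsilon)$ is either already in $\cL_0$ or is incompatible with some $Y\in\cL_0$; in the latter case $J(Y)$ necessarily straddles one endpoint of $(x+\epsilon,\,x+\pi-\epsilon)$, forcing $\ell(Y)\ge\pi-2\epsilon$ up to a boundary correction. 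Either way, letting $\epsilon\to 0$, I would extract a sequence $X_n=SE(a_n,b_n)\in\cL_0$ with $\ell(X_n)\to\pi$. Passing to a subsequence using compactness of $S^1_\pi$, I may assume $a_n\to a^{*}$ and $b_n\to a^{*}+\pi$, so that $h(X_n)\to[a^{*},a^{*}+\pi]_+$ (and also to $[a^{*},a^{*}+\pi]_-$) in the non-Hausdorff space $\cH$. These limit points are singular objects, hence not in $h(\cL_0)$, so $h(\cL_0)$ is not closed in $\cH$. Combined with each $h(X_n)$ being isolated in $h(\cL_0)$, this contradicts the closed-discrete reading of discreteness used in Lemma~\ref{finite number of X with el X ge delta}.

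The main obstacle will be the ``long arc'' extraction in the second part: maximality of $\cL_0$ does not obviously force the arcs to have length approaching $\pi$, since one could \emph{a priori} try to bisect $I$ at some interior point and triangulate each half dyadically as in the first part of the proof, producing only arcs of length $\le\pi/2$. Ruling out such configurations will require a geometric argument specific to $|I|=\pi$, exploiting that $I$ is the complement of a single point in $S^1_\pi$ together with the non-Hausdorff identification of $\cH$ at the boundary circles $r=\pi$; I expect this to be the subtle heart of the proof and to rely on a careful analysis of the set $I\setminus\bigcup_{X\in\cL_0}J(X)$ of uncovered points.
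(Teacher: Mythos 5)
Your first half is correct and is essentially the paper's own construction: the paper's proof of this lemma exhibits exactly the dyadic family $X_{k,n}$ with $J(X_{k,n})=\left(x+\tfrac{k-1}{2^n}(y-x),\,x+\tfrac{k}{2^n}(y-x)\right)$, and your nesting/disjointness, maximality, and ``$r\to 0$ leaves $\cH$'' observations fill in details the paper leaves as ``easy to see.''

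The second half has a genuine gap, and it sits exactly where you predicted. The forcing step is false: if the test arc $SE(x+\epsilon,x+\pi-\epsilon)$ is incompatible with some $Y\in\cL_0$, then $J(Y)$ straddles one endpoint of $(x+\epsilon,x+\pi-\epsilon)$, but nothing bounds $\ell(Y)$ from below --- $Y=SE(x+\epsilon/2,\,x+2\epsilon)$ straddles the left endpoint with $\ell(Y)=3\epsilon/2$. Worse, the configuration you flag as the ``main obstacle'' cannot be ruled out under the stated definitions: bisect $I=(x,x+\pi)$ at $m=x+\pi/2$, take dyadic sublaminations of $(x,m)$ and $(m,x+\pi)$ including the two complementary arcs $SE(x,m)$ and $SE(m,x+\pi)$. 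By the paper's own compatibility criterion (Proposition \ref{nonC}(1), and the boundary case $c=y$ in Lemma \ref{lem: objects compatible with X}; compare Figure \ref{circle}, which displays exactly such a complementary pair), this family is pairwise compatible; a short case check (any $(a,b)\subseteq I$ containing $m$ crosses $SE(x,m)$ or $SE(m,x+\pi)$, and anything else is governed by the maximality of the two halves) shows it is maximal among regular objects supported in $I$; and it is discrete, since all lengths are $\le\pi/2$. So no ``geometric argument specific to $|I|=\pi$'' can extract arcs of length tending to $\pi$ from an arbitrary maximal family, and the lemma as literally stated is only safe under the additional hypothesis --- implicit in how it is used --- that $\bigcup_{X\in\cL_0}J(X)=I$, i.e., that $I$ is a single component of $R(\cL)$. (Your bisection family has $\bigcup J(X)=I\setminus\{m\}$, so the corresponding lamination falls into Case 2 of Theorem \ref{description of all laminations} with $k=2$, not Case 1; this is precisely why Theorem \ref{description of all discrete laminations} is unaffected.) Under the covering hypothesis the argument is short and is what the paper's one-line proof intends: for a fixed $z\in I$, the intervals $J(X)$ containing $z$ are totally ordered by inclusion (compatibility), their union is a component of $\bigcup J(X)$ and hence all of $I$ by connectedness, and since each $\ell(X)<\pi$ while the supremum is $\pi$, one extracts $X_i$ with $\ell(X_i)\to\pi$, contradicting Lemma \ref{finite number of X with el X ge delta}. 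Note also that your final step invokes closedness of $h(\cL_0)$, which the paper proves only for intervals of length $<\pi$ (Lemma \ref{sublaminations are closed}); once $\ell(X_i)\to\pi$ is in hand, citing Lemma \ref{finite number of X with el X ge delta} directly avoids this.
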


\begin{proof}
Suppose that $I$ is an interval in $S^1_\pi$ of length $\pi$. Then any lamination $\cL_0$ of $I$ will have a sequence of points $X_i$ with $J(X_i)\subset I$ being intervals of length $\el(X_i)<\pi$ converging to $\pi$. By the above lemma, this is not possible if $\cL_0$ is discrete. Therefore an interval of length $\pi$ cannot have a discrete sublamination.

If $I=(x,y)$ with $x<y<x+\pi$ then a discrete sublamination for $I$ is given by $\cL_0=\{X_{k,n}\,|\,n,k\in\ZZ,n\ge0, 0<k\le 2^n \}$ where $X_{k,n}$ are regular objects with
\[
	J(X_{k,n})=\left(x+
	\frac{k-1}{2^n}(y-x),x+\frac{k}{2^n}(y-x)
	\right)
\]
These are the intervals given by taking the interval $I=(x,y)$ and cutting it up into $2^n$ disjoint subintervals of equal length. Clearly all such subintervals are noncrossing and thus the $X_{k,n}$ are compatible for all $n\ge0$. And it is easy to see that it is maximal.
\end{proof}

\begin{lem}\label{decomposition of discrete sublaminations}
Given an interval $I=(x,y)$ in $S^1_\pi$ of length $y-x<\pi$, and a discrete sublamination $\cL_0$ of $I$, there exists a unique $z\in I$ so that $\cL_0$ is the union of discrete sublaminations of $I_1=(x,z)$ and $I_2=(z,y)$ and an object isomorphic to $SE(x,y)$. Conversely, for any $z\in I$ and any discrete sublaminations $\cL_1,\cL_2$ of $(x,z)$ and $(z,y)$, the union of $\cL_1,\cL_2$ and $SE(x,y)$ is a discrete sublamination of $I=(x,y)$.
\end{lem}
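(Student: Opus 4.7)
The plan is to identify a unique ``split point'' $z\in I$ by examining the ``immediate descendants'' of $SE(x,y)$ in $\cL_0$, then recursively decompose. The principal obstacle is the maximality-and-compatibility bookkeeping that forces exactly two immediate descendants meeting at a single point.

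Since $y-x<\pi$, Lemma~\ref{all intervals support sublaminations} guarantees $SE(x,y)\in\cL_0$. I would call $Y\in\cL_0\setminus\{SE(x,y)\}$ an \emph{immediate descendant} if no $Y'\in\cL_0\setminus\{SE(x,y)\}$ satisfies $J(Y)\subsetneq J(Y')$. Discreteness combined with Lemma~\ref{finite number of X with el X ge delta} shows that any strictly ascending chain of $J$-values in $\cL_0\setminus\{SE(x,y)\}$ is finite (all lengths are $\ge\el(Y_1)>0$ and bounded above by $y-x$), so immediate descendants exist above every element of $\cL_0\setminus\{SE(x,y)\}$.

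The core step is to show that there are exactly two immediate descendants, with intervals $I_1=(x,z)$ and $I_2=(z,y)$ for a unique $z\in(x,y)$. If three immediate descendants $Y_1,Y_2,Y_3$ appeared in left-to-right order, the smallest interval $I'$ containing $J(Y_1)\cup J(Y_2)$ would be a proper sub-interval of $I$, disjoint from $J(Y_3)$, and nested with every other $J(Y)\in\cL_0$ (which lies inside some $J(Y_i)$ by immediacy); by maximality of $\cL_0$ one would get $Y''\in\cL_0$ with $J(Y'')=I'\supsetneq J(Y_1)$, contradicting the immediacy of $Y_1$. The same enlargement trick rules out a single immediate descendant and rules out gaps between $J(Y_1),J(Y_2)$ or between them and $\{x,y\}$: in each case one produces a strictly larger compatible interval still properly inside $I$. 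Setting $\cL_i=\{Y\in\cL_0:J(Y)\subseteq I_i\}$ then gives $\cL_0=\cL_1\sqcup\cL_2\sqcup\{SE(x,y)\}$; each $\cL_i$ is a discrete sublamination of $I_i$ (maximality by disjointness of supports, discreteness inherited from the subspace topology on $\cH$), and $z$ is uniquely determined as the common endpoint of the two immediate descendants.

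For the converse, given $z\in I$ and discrete sublaminations $\cL_1,\cL_2$ of $(x,z)$ and $(z,y)$, set $\cL=\cL_1\cup\cL_2\cup\{SE(x,y)\}$. Pairwise compatibility follows immediately from Proposition~\ref{nonC}, since the supports in $\cL_1$ and $\cL_2$ lie in disjoint sub-intervals of $(x,y)$. Maximality is a case analysis: any regular $Y$ with $J(Y)\subsetneq(x,y)$ compatible with both $SE(x,z)\in\cL_1$ and $SE(z,y)\in\cL_2$ must satisfy $J(Y)\subseteq(x,z)$ or $J(Y)\subseteq(z,y)$, forcing $Y\in\cL_i\subseteq\cL$. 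For discreteness, $h(\cL_1)$ and $h(\cL_2)$ lie in the disjoint closed subsets $\{[a,b]:b\le z\}$ and $\{[a,b]:a\ge z\}$ of $\cH$, while $[x,y]=h(SE(x,y))$ admits neighborhoods avoiding both (since $h(\cL_1)$ has right coordinate $\le z<y$ and $h(\cL_2)$ has left coordinate $\ge z>x$); together with the discreteness of each $h(\cL_i)$, this makes every point of $h(\cL)$ isolated.
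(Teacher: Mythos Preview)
Your proof is correct and uses the same underlying ideas as the paper---discreteness gives finitely many objects above any fixed length, and the ``enlargement'' trick (adding a compatible object with strictly larger interval) forces endpoint alignment via maximality. The organizing principle differs slightly: the paper picks a \emph{single} object $X_1\in\cL_0\setminus\{SE(x,y)\}$ of maximal length $\el(X_1)$ (which exists by Lemma~\ref{finite number of X with el X ge delta}) and shows directly that $J(X_1)$ must equal $(x,z)$ or $(z,y)$, whereas you work with \emph{all} maximal-by-inclusion elements (your ``immediate descendants'') and argue there are exactly two, tiling $(x,y)$. The paper's route is a bit shorter since one maximal element already determines $z$, avoiding the case analysis on the number of immediate descendants; on the other hand, your approach makes the partition structure more explicit and your converse is more thorough than the paper's (you verify discreteness of $h(\cL)$ explicitly, which the paper leaves implicit).
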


\begin{proof} The discrete sublamination $\cL_0$ of $I$ must contain an object $X_0\cong SE(x,y)$ with $\el(X_0)=y-x$. By Lemma \ref{finite number of X with el X ge delta}, there is an object $X_1$ in $\cL_0$ so that $\el(X_1)$ is maximal among all objects of $\cL_0$ not equal to $X_0$. Equivalently, $J(X_1)$ is maximal.

\ul{Claim}: $J(X_1)$ is equal to either $(x,z)$ or $(z,y)$ for some $x<z<y$.

Pf: Suppose not. Then $J(X_1)=(a,b)$ where $x<a<b<y$. Since $J(X_1)$ is maximal, $X_0$ is the only element of $\cL_0$ with $J(X_0)$ containing $J(X_1)=(a,b)$. Thus, for any other element $Y$ in $\cL_0$, $J(Y)$ is disjoint from $(a,b)$ and therefore contained in either $(x,a)$ or $(b,y)$. This implies that $Z=SE(x,b)$ is compatible with all objects of $\cL_0$. So, $Z\in\cL_0$ with $J(Z)\supset J(X_1)$ contradicting the maximality of $J( X_1)$.

Since $X_1$ is either $SE(x,z)$ or $SE(z,y)$, we may assume by symmetry that $X_1\cong SE(x,z)$. Then for all other $Y$ in $\cL_0$, either $J(Y)$ is contained in $(x,z)$ or it is disjoint from $(x,z)$ which means $J(Y)\subseteq (z,y)$. Thus $\cL_0$ minus $X_0$ consists of discrete sublaminations of $I_1=(x,z)$ and $I_2=(z,y)$. Uniqueness of $z$ is clear.

Conversely, given such sublaminations $\cL_1,\cL_2$ of $I_1,I_2$, the elements of $\cL_1$ and $\cL_2$ are compatible and the only other object compatible with both $\cL_1$ and $\cL_2$ with support in $(x,y)$ is $SE(x,y)$ up to isomorphism.
\end{proof}

\begin{thm}\label{description of all discrete laminations}
Let $\cL$ be a discrete lamination of $\cC_\pi^\psi$. Then there are $k\ge2$ points $x_1<x_2<\cdots<x_k<x_1+\pi$ in $S^1_\pi$ so that
\begin{enumerate}
\item $R(\cL)=S^1\backslash\{x_1,x_2,\cdots,x_k\}$ is a disjoint union of $k$ intervals.
\item $\cL$ has exactly $k$ singular objects and these objects are either 
\begin{enumerate}
\item singular objects of the same sign, supported at the points $x_1,\cdots,x_k$ or 
\item a singular pair supported at either $x_1$ or $x_2$ (when $k=2$).
\end{enumerate}
\end{enumerate}
Conversely, for any set $S$ of $k\ge2$ points in $S^1_\pi$, the union of any set of singular objects satisfying (2a) or (2b) and any choice of discrete sublaminations for the $k$ intervals making up $S^1_\pi\backslash S$ will give a discrete lamination for $\cC_\pi^\psi$.
\end{thm}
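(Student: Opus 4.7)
The strategy is to combine Theorem \ref{description of all laminations} (the classification of all laminations) with the discreteness constraint, using Lemma \ref{lem: discrete sublaminations} to restrict the allowed intervals in $R(\cL)$ and Lemma \ref{finite number of X with el X ge delta} to force finiteness of the singular part.

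For the forward direction, let $\cL$ be a discrete lamination. By Theorem \ref{description of all laminations}, write $R(\cL) = \coprod I_\a$ and let $\cL_\a = \{X \in \cL : X \text{ regular with } J(X) \subseteq I_\a\}$. A short maximality check (as in Lemma \ref{decomposition of discrete sublaminations}) shows each $\cL_\a$ is a sublamination of $I_\a$; being a subset of the discrete set $\cL$, it is automatically discrete. Lemma \ref{lem: discrete sublaminations} then forces the length of each $I_\a$ to be strictly less than $\pi$, so Case 1 of Theorem \ref{description of all laminations} (where the complement of $R(\cL)$ is a single point, making $R(\cL)$ an interval of length $\pi$) cannot occur; hence $K := S^1_\pi \setminus R(\cL)$ has at least two points. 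Next, Lemma \ref{finite number of X with el X ge delta} with $\delta = \pi$ shows that $\cL$ contains only finitely many singular objects. In Case 3 of the classification this number equals $|K|$, and in Case 2 the set $K$ already has only two points, so in either situation $K$ is finite. Labelling $K = \{x_1 < \cdots < x_k < x_1 + \pi\}$ with $k \ge 2$, Case 2 of Theorem \ref{description of all laminations} gives option (2a) or (2b) when $k = 2$, and Case 3 gives option (2a) when $k \ge 3$, as required.

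For the converse, given $S = \{x_1, \ldots, x_k\}$ with $k \ge 2$, singular data satisfying (2a) or (2b), and discrete sublaminations $\cL_\a$ of each of the $k$ components of $S^1_\pi \setminus S$ (which exist by Lemma \ref{lem: discrete sublaminations}, since every component has length less than $\pi$), Theorem \ref{description of all laminations} asserts that the union is a lamination. Discreteness is the point that requires care and is the main obstacle: one must rule out accumulation of a single $\cL_\a$ at a singular object, at an object of a neighbouring $\cL_\b$, or along the diagonal of $\cH$. Each $h(\cL_\a)$ is closed in $\cH$ by Lemma \ref{sublaminations are closed} and is contained in the region $\{[a,b] : x \le a < b \le y\}$ where $I_\a = (x,y)$ has length strictly less than $\pi$; a sequence in this region with a limit in $\cH$ therefore stays within $h(\cL_\a)$, which is discrete by hypothesis. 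Convergence to a singular object $[x_i, x_i + \pi]_\pm$ would require lengths tending to $\pi$, which is excluded by $\el(X) \le |I_\a| < \pi$ for $X \in \cL_\a$; and convergence to a point of some $\cL_\b$ with $\b \neq \a$ is impossible since the supports lie in disjoint open intervals. The singular objects, being finite in number, are isolated, so the union is a discrete lamination.
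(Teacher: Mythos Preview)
Your argument is correct and follows essentially the same route as the paper: rule out Case~1 of Theorem~\ref{description of all laminations} via Lemma~\ref{lem: discrete sublaminations}, invoke Lemma~\ref{finite number of X with el X ge delta} for finiteness of the singular part, and read off the structure from the remaining cases. You supply considerably more detail than the paper does for the converse direction---in particular, the explicit check that the assembled lamination is discrete (bounding lengths away from $\pi$ to exclude accumulation at singular points, and separating the closed regions carrying distinct $\cL_\a$)---which the paper leaves implicit.
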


\begin{proof} By Lemma \ref{finite number of X with el X ge delta}, $\cL$ has only finitely may singular objects. The rest follows from the description of all laminations given in Theorem \ref{description of all laminations} with the observation that Case 1 in Theorem \ref{description of all laminations} cannot occur for discrete laminations by Lemma \ref{lem: discrete sublaminations}.
\end{proof}

\begin{thm}\label{discrete laminations are clusters}\label{DL123}
Given any discrete lamination $\cL$ of $\cC_\pi^\psi$ and any object $T$ in $\cL$, there is, up to isomorphism, exactly one other object $T^\ast$ with the property that $\cL^\ast=\cL\backslash \{T\}\cup \{T^\ast\}$ is also a discrete lamination.
\end{thm}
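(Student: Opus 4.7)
The plan is to use Theorem~\ref{description of all discrete laminations} to describe $\cL$ in terms of its special points $x_1 < \cdots < x_k < x_1 + \pi$ in $S^1_\pi$, its singular structure (``Case~(a)'' with all singulars of common sign $\epsilon$, or ``Case~(b)'' with a singular pair at one of two points when $k = 2$), and a discrete sublamination of each interval $(x_i, x_{i+1})$. By Lemma~\ref{decomposition of discrete sublaminations}, each such sublamination carries a canonical infinite binary tree structure in which every node is an object with a unique pivot splitting it into two children. I then do case analysis on $T$, identify the candidate $T^\ast$ in each case, and verify existence and uniqueness via Proposition~\ref{criteria for compatibility in C-pi-psi}.

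There are three main cases. \emph{Case 1} ($T = SE(a, b)$ is a non-root regular in some interval's sublamination): $T$ has a unique parent $SE(a', b')$ in the tree (with pivot at $a$ or $b$) and a unique pivot $c$ of its own, and $T^\ast$ is the standard ``tree flip'': $T^\ast = SE(a', c)$ when $T$ is the right child of the parent, or $T^\ast = SE(c, b')$ when $T$ is the left child. \emph{Case 2} ($T = SE(x_i, x_{i+1})$ is a root regular with pivot $c$): in Case~(a) structure, $T^\ast = Z_\epsilon(c)$ adds a singular at the pivot with the common sign $\epsilon$, and the sub-sublaminations on $(x_i, c), (c, x_{i+1})$ already in $\cL$ become the new interval sublaminations; in Case~(b) structure with the pair at, say, $x_i$, any singular mutation would be incompatible with the pair, so instead $T^\ast = SE(c, x_i + \pi)$ is a regular arc extending from $c$ around past the pair (not containing $x_i$). \emph{Case 3} ($T = Z_\epsilon(x_j)$ is singular): in Case~(a) with $k \geq 3$, the merged interval $(x_{j-1}, x_{j+1})$ has length $< \pi$ and $T^\ast = SE(x_{j-1}, x_{j+1})$ becomes the root of the merged sublamination; in $k = 2$ (Case~(a) or (b)), the merged interval would have length $\pi$ and admits no discrete sublamination by Lemma~\ref{lem: discrete sublaminations}, so instead $T^\ast = Z_{-\epsilon}(x_{3-j})$ swaps Case~(a) structure into Case~(b) by forming a pair at the other special point, or conversely.

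Existence of $T^\ast$ in each case is a short compatibility check against $\cL \setminus \{T\}$. The main obstacle is uniqueness: one enumerates all possible singular and regular candidates $T^\ast$ and uses the compatibility constraints---(i) avoiding the remaining singular points in $J(T^\ast)$ and matching signs at different points, (ii) nesting, disjointness, or containment of $J(T^\ast)$ with the roots of the surviving sublaminations, and (iii) maximality of the resulting compatible set---to rule out every alternative. The trickiest uniqueness argument is Case~2(b): compatibility with the three adjacent roots $SE(x_i, c)$, $SE(c, x_{i+1})$, and $SE(x_{i+1}, x_i + \pi)$, together with the constraint that $J(T^\ast)$ omits $x_i$, forces the endpoints of $J(T^\ast)$ to be exactly $c$ and $x_i + \pi$.
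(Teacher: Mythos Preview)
Your proposal is correct and follows essentially the same case analysis as the paper's own proof. The paper organizes the cases as 1(a) (regular $T$ with $J(T)$ properly inside a component), 1(b)/1(c) (regular $T$ with $J(T)$ equal to a component, split by whether the singular objects have common sign or form a pair), and 2(a)/2(b)/2(c) (singular $T$, split by $k=2$ versus $k\ge 3$ and by the sign configuration); your Cases 1, 2, 3 are the same partition, merely regrouped and phrased in the binary-tree language of parent, pivot, and children that Lemma~\ref{decomposition of discrete sublaminations} makes available. Your treatment of uniqueness is, if anything, slightly more explicit than the paper's, which in several subcases simply asserts the value of $T^\ast$ and leaves the verification to the reader.
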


\begin{proof}
$T$ is either regular or singular. 

\ul{Case 1}. Suppose $T$ is regular. There are three subcases. 

(a) $J(T)$ is properly contained in one of the components $I$ of $R(\cL)$.

(b) $J(T)=I_1$ is one of the $k\ge2$ components of the open set $R(\cL)$ and the $k$ singular objects of $\cL$ have the same sign ($\cL$ has no singular pairs).

(c) $J(T)=I_1$ is one of $2$ components of the open set $R(\cL)$ and the two singular objects of $\cL$ have opposite sign (forming a singular pair).

\ul{Case 1(a)}. Suppose $J(T)$ is properly contained in one of the components $I$ of $R(\cL)$. Let $X,Y$ be objects in $\cL$ so that $J(X)\subset J(T)\subset J(Y)\subseteq I$ and so that $J(Y)=(w,y)$ is minimal and $J(X)$ is maximal. Then $J(T)$ is maximal among subintervals of $J(Y)$. By the proof of Lemma \ref{decomposition of discrete sublaminations}, either $J(T)=(w,x)$ or $J(T)=(x,y)$ for some $x\in (w,y)$. Suppose, for example, that $J(T)=(x,y)$. We also have either $J(X)=(x,z)$ or $J(X)=(z,y)$ for some $x<z<y$. Then $T^\ast\cong SE(w,z)$.

\ul{Case 1(b,c)}. Suppose that $J(T)=I_1=(x,y)$ is one of the $k\ge2$ components of $R(\cL)$ and $I_2,\cdots,I_k$ are the other components. Then $\cL=\cL_0\coprod \cL_1\coprod\cdots\coprod \cL_k$ where $\cL_i$ is the set of all $Y$ in $\cL$ so that $J(Y)\subseteq I_i$ for $i=1,\cdots,k$ and $\cL_0$ is the set of all singular objects in $\cL$. Then $\cL_i$ is a discrete sublamination of the interval $I_i$ for $i\ge1$. Therefore, by Lemma \ref{decomposition of discrete sublaminations}, there is a unique $z\in I_1$ so that $\cL_1$ minus $T$ is the union of discrete sublaminations $\cL_\a,\cL_\b$ of $I_\a=(x,z)$ and $I_\b=(z,y)$. 

In Case 1(b), when the singular objects of $\cL_0$ have the same sign $\e$ then they include $Z_\e(x),Z_\e(y)$ and we must have $T^\ast\cong Z_\e(z)$.

If we are in {Case 1(c)}, where $k=2$, $I_2=(y,x+\pi)$ and the $2$ singular objects of $\cL$ have opposite sign then, by symmetry, we may assume that this pair of singular objects has support at the point $x$. Then $T^\ast$ will be isomorphic to $SE(z,x+\pi)$.

\ul{Case 2}. Suppose $T=Z_\e(z)$ is singular. There are three subcases. The subcases when $k=2$ are easy to analyze since $\cL\backslash T$ has only one singular object. Since every cluster has at least two singular objects, $T^\ast$ must be a singular object and there are only two possibilities.

\ul{Case 2(a)}. $k=2$ and the unique singular object of $\cL\backslash T$ is $Z_{-\e}(z)$. Then there is one other point $y$ in $S^1_\pi\backslash R(\cL)$. In this case $T^\ast$ must be $Z_{-\e}(y)$.

\ul{Case 2(b)}. $k=2$ and the other singular object of $\cL$ is $Z_\e(y)$. Then $T^\ast=Z_{-\e}(y)$.

\ul{Case 2(c)}. $k\ge 3$. In this case let $I_1=(x,z)$, $I_2=(z,y)$ be the two components of $R(\cL)$ with $z$ as endpoints. Then $T^\ast$ must be isomorphic to $SE(x,y)$ as described in Case 1(b).
\end{proof}

\begin{rem}
Although there are six cases, there are only four different mutation pairs: $1(a)\leftrightarrow 1(a)$, $1(c)\leftrightarrow 1(c)$, $1(b)\leftrightarrow 2(c)$ and $2(a)\leftrightarrow 2(b)$. The last case $2(a)\leftrightarrow 2(b)$ is different from the other three and we will refer to it as the \emph{exceptional case}.
\end{rem}

%\newpage

%%%%%%%%%%%%%%%%%
\section{Cluster structure}

We will show that cluster mutation follows the general pattern described in \cite{BIRSc} using the triangulated structure of the cluster category $\cC_\pi^\psi$, namely:

\begin{thm}\label{thm:cluster mutation given by approximation}
Suppose that $\cT$ is any cluster in $\cC_\pi^\psi$ and $T$ is any object in $\cT$. Then the mutation $T^\ast$ of $T$ fits into two distinguished triangles:
\[
	T\to X\to T^\ast\to T[1]
\quad\text{and}\quad
	T[-1]\to T^\ast\to Y\to T
\]
where $T\to X$ is a minimal left $add(\cT\backslash T)$-approximation of $T$ and $Y\to T$ is a minimal right $add(\cT\backslash T)$-approximation of $T$. Furthermore, $X,Y$ are indecomposable if and only if $\cT$ has exactly two singular objects and $T$ is one of those objects.
\end{thm}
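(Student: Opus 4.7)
The plan is a case analysis along the four mutation pairs identified in the Remark following Theorem \ref{discrete laminations are clusters}: $1(a)\leftrightarrow 1(a)$, $1(c)\leftrightarrow 1(c)$, $1(b)\leftrightarrow 2(c)$, and the exceptional $2(a)\leftrightarrow 2(b)$. For each case I will name $T^\ast$ explicitly using Theorem \ref{discrete laminations are clusters}, write down a candidate middle term $X\in\add(\cT\setminus T)$ from the combinatorial data (the outer and inner ``nearest neighbors'' of $T$ in $\cT$), produce the distinguished triangle $T\to X\to T^\ast\to T[1]$, and then verify minimality of $T\to X$ as a left $\add(\cT\setminus T)$-approximation. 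The right-approximation triangle $T[-1]\to T^\ast\to Y\to T$ is handled in a completely parallel way, or equivalently by rotating the triangle just produced.

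The engine for producing the triangles is Corollary \ref{cor: S takes triangles to triangles}: $S:\cC_\pi\to\cC_\pi^\psi$ takes distinguished triangles to distinguished triangles. In the regular subcases (1(a) and the regular portion of 1(b), 1(c), 2(c)) I apply $S$ to the type-$A$ exchange triangle built from the 2-way approximation sequence \eqref{2-way approximation sequence for Fb} in $\cF_b$: namely, for $T=SE(x,y)$ with outer neighbor $SE(w,y)\in\cT$ and inner neighbor $SE(x,z)$ or $SE(z,y)\in\cT$ determined by the unique splitting $z\in(x,y)$ of Lemma \ref{decomposition of discrete sublaminations}, the triangle
\[
	E(x,y)\to E(z,y)\oplus E(w,y)\to E(w,z)\to E(x,y)[1]
\]
lifts under $S$ to the required exchange triangle in $\cC_\pi^\psi$. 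Minimality is read off from the geometric description of morphisms (Proposition \ref{criteria for compatibility in C-pi-psi} and its proof): every nonzero morphism from $T$ to an object of $\cT\setminus T$ is determined up to scalar by its underlying arc, and any such arc must factor through one of the two nearest neighbors before reaching any further object of $\cT$. For the mixed regular/singular pieces in the remaining cases, replacing a summand $SM(a,a)$ by a singular $Z_\pm(a)$ is controlled by Proposition \ref{prop: decomposition of SE(x,x+pi)}, which identifies the 2-way approximation sequence for $SM(a,a)$ with the direct sum of the singular sequences for $Z_+(a)$ and $Z_-(a)$; the sign flip in the shift of singular objects is tracked by Proposition \ref{shift of singular objects in Cbpsi}.

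The main obstacle is the exceptional case $2(a)\leftrightarrow 2(b)$, where $\cT$ has exactly two singular objects ($k=2$) and $T$ is one of them, and where the claim is that both $X$ and $Y$ are indecomposable. Writing $T=Z_\epsilon(z)$ and letting $z,y$ be the two points of $S^1_\pi\setminus R(\cT)$, the identity $Z_\epsilon(x)[1]\cong Z_{-\epsilon}(x)$ in $\cC_\pi^\psi$ collapses the would-be two-summand approximation to one involving a single indecomposable singular middle term: the left approximation $T\to X$ has $X$ equal to the unique other singular object of $\cT\setminus T$ (the ``far'' singular at the other endpoint), and the triangle itself is extracted from the 2-way approximation sequence of the single regular object $SM(z,z')$ in $\cF_b^\psi$ by splitting off the appropriate singular direct summand via Proposition \ref{prop: decomposition of SE(x,x+pi)}. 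The converse half of the indecomposability statement is then immediate: in every other case the construction above visibly produces two nonzero, nonisomorphic direct summands of $X$ coming from the two independent ``sides'' of $T$ in $\cT$, so indecomposability of $X$ (equivalently of $Y$) forces the exceptional geometry.
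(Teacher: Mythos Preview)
Your overall case analysis matches the paper's, and the non-exceptional cases are handled in essentially the same spirit (the paper packages the needed triangles as Triangles (a), (b), (c) and invokes the Octahedral Axiom rather than lifting a single type-$A$ exchange triangle through $S$, but this difference is cosmetic). However, your treatment of the exceptional case $2(a)\leftrightarrow 2(b)$ is wrong, and the error is not a technicality.

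You claim that when $T=Z_\epsilon(z)$ is one of the two singular objects, the minimal left $\add(\cT\setminus T)$-approximation is $T\to X$ with $X$ equal to ``the unique other singular object of $\cT\setminus T$.'' This is false. Suppose we are in Case 2(b), so the other singular object is $B=Z_\epsilon(y)$ at the other missing point $y$. The cluster $\cT$ also contains the regular object $A=SE(y,z+\pi)$ (one of the two maximal arcs spanning a component of $R(\cT)$). By Lemma~\ref{lem: maps from singular objects to compatible objects}, the only nonzero morphisms from $T$ to compatible indecomposables land in objects of type $A_w=SE(w,z+\pi)$ or $B_w=Z_\epsilon(w)$, and part (4) of that lemma says the map $T\to B$ \emph{factors through} $A$. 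Hence the minimal left approximation is $T\to A$, with $A$ regular, not singular. In Case 2(a) the other singular object is $Z_{-\epsilon}(z)$, and by Corollary~\ref{Frobenius category for cyclic groups}(2) there is no nonzero morphism $Z_\epsilon(z)\to Z_{-\epsilon}(z)$ at all, so your candidate cannot even receive a map from $T$. The distinguished triangle you need is an instance of Triangle (c) (Proposition~\ref{prop: precursor to Triangle (c)}), namely $T\to A\to B[1]\to T[1]$, with $T^\ast=B[1]\cong Z_{-\epsilon}(y)$; the middle term is the regular $A$, not a singular object.

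The mechanism you propose (``$Z_\epsilon(x)[1]\cong Z_{-\epsilon}(x)$ collapses the would-be two-summand approximation'') is not what is happening. The approximation is indecomposable because the morphism to the singular direction factors through the regular direction, not because of any sign flip under shift. Your final indecomposability claim happens to come out correct, but for the wrong reason and with the wrong object.
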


We also show that, outside the exceptional mutation case $2(a)\leftrightarrow 2(b)$ where $\cT$ has exactly two singular objects of which $T$, the object being mutated, is one, the Octahedral Axiom is used, as in \cite{IT10}. An examination of the statement of the Octahedral Axiom shows that cluster mutation can be given by this axiom only in the case when $T^\ast\cong T^\ast[1]$. Indeed we have the following familiar diagrams:
\begin{equation}\label{Octahedron}
\xymatrixrowsep{10pt}\xymatrixcolsep{10pt}
\xymatrix{
X\ar[dr]_f && A\ar[ll]_{h'c} &&&& X\ar[dd]_{bf} && A\ar[dl]_c\\
	& T\ar[ur]_a\ar[dl]_b &&&\longrightarrow&&& T^\ast\ar[ul]_{h'}\ar[dr]_g\\
	B\ar[rr]^{gd} && Y\ar[ul]_h&&&& B\ar[ur]_d&&Y\ar[uu]_{ah}
	}%end xymatrix
\end{equation}
where the arrows $h,h',ah,h'c$ are not morphisms but extensions. E.g., $h$ and $h'$ are actually morphisms $h:Y\to T[1]$ and $h':T^\ast\to X[1]$. The 3-cycles are distinguished triangles. As part of the Octahedral Axiom \cite{Neeman}, we have distinguished triangles 
\begin{equation}\label{approximations from Octahedron}
	T\to A\oplus B\to T^\ast\to  T[1]
\quad\text{and}\quad
%\] \[
	T^\ast \xrarrow{\tiny\mat{h'\\-g}} X[1]\oplus Y\xrarrow{[f[1],h]} T[1]\to  T^\ast[1].
\end{equation}
In order for these to be the approximation triangles described in Theorem \ref{thm:cluster mutation given by approximation} above, we must have either $T\cong T[1]$ or $T^\ast\cong T^\ast[1]$. In the exceptional mutation case, both $T$ and $T^\ast$ are singular and thus not isomorphism to their shifts. This is another reason why the Octahedral Axiom does not directly induce cluster mutation in the exceptional case. However, it is indirectly involved as we shall see.

\begin{thm}\label{thm:cluster mutation given by Octahedral Axiom}
Suppose that $\cT$ is a cluster in $\ul\cF_\pi^\psi$ and $T$ is an object in $\cT$ which is not of the exceptional kind. Then the mutation $T^\ast$ of $T$ is given by the Octahedral Axiom \eqref{Octahedron}. Furthermore, the distinguished triangles \eqref{approximations from Octahedron} which come as part of the Octahedral Axiom gives the approximation sequences described in Theorem \ref{thm:cluster mutation given by approximation} above. In keeping with this, we also have that either $T\cong T[1]$ and $X\cong X[1]$ or $T^\ast\cong T^\ast[1]$ and $Y\cong Y[1]$.
\end{thm}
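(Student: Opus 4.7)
My plan is to verify the theorem case-by-case using the Remark following Theorem \ref{DL123}, which isolates the three non-exceptional mutation cases $1(a)\leftrightarrow 1(a)$, $1(c)\leftrightarrow 1(c)$, and $1(b)\leftrightarrow 2(c)$. In each case I will use the geometric description of $T$ and $T^\ast$ in the proof of Theorem \ref{DL123} to identify the four objects appearing in the octahedron \eqref{Octahedron}, build its three basic distinguished triangles from 2-way approximation sequences in $\cF_\pi^\psi$, and then check that the two auxiliary triangles \eqref{approximations from Octahedron} coincide with the approximation triangles of Theorem \ref{thm:cluster mutation given by approximation}.

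The preliminary step is to record the shift structure in $\cC_\pi^\psi$: by Proposition \ref{shift of singular objects in Cbpsi} with $b=\pi$, every regular object satisfies $SM(x,y)[1]=SM(y+\pi,x+\pi)\cong SM(x,y)$, while every singular object satisfies $Z_\e(x)[1]\cong Z_{-\e}(x+\pi)\cong Z_{-\e}(x)\not\cong Z_\e(x)$. Hence regular objects are self-shift-invariant but singular ones are not. In cases 1(a) and 1(c) both $T$ and $T^\ast$ are regular, so both are self-shift-invariant and either alternative in the statement may be chosen. In case $1(b)\leftrightarrow 2(c)$ exactly one of $T,T^\ast$ is regular; labelling so that $T$ plays the role of the self-shift-invariant object when it is regular and of the non-invariant object when it is singular, the prescribed alternative in the statement will hold because the middle term $X$ (respectively $Y$) in the octahedron is a direct sum of regular cluster objects and therefore itself self-shift-invariant.

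For the octahedron in Case 1(a), take $T=SE(x,y)$, let $Y_1=SE(w,y)\in\cT$ be the minimal cluster member properly containing $T$ and $X_1=SE(x,z)\in\cT$ the maximal cluster member properly contained in $T$ (so $w<x<z<y$). Then $SE(w,x)$ and $SE(z,y)$ also belong to $\cT$ as adjacent pieces of its sublamination, and the octahedron is assembled from the standard 2-way approximation triangles involving the six objects $T$, $T^\ast=SE(w,z)$, $SE(w,x)$, $SE(x,z)$, $SE(z,y)$, $SE(w,y)$. Case 1(c) is formally identical, with one outer regular object replaced by a singular pair at the shared endpoint via Corollary \ref{Frobenius category for cyclic groups}. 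Case $1(b)\leftrightarrow 2(c)$ relies on Proposition \ref{prop: decomposition of SE(x,x+pi)}, whose $2\times 2$ isomorphism between $SM(x,x)$-type pieces and the singular pair $Z_\pm(x)$ provides the bridge needed to assemble the octahedron across the regular-singular divide. In all three cases, minimality of the approximations $A\oplus B$ and $X[1]\oplus Y$ as $\add(\cT\backslash T)$-approximations follows from the indecomposability of $T^\ast$ together with the morphism formula of Corollary \ref{formula for F(SX,SY)}. The main obstacle I expect is Case $1(b)\leftrightarrow 2(c)$, since the regular-singular asymmetry requires carefully tracking the matrix in Proposition \ref{prop: decomposition of SE(x,x+pi)} and verifying that exactly one of the two shift-invariance alternatives in the statement actually occurs.
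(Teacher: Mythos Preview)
Your overall strategy---case-by-case verification of the three non-exceptional mutation types, assembling the octahedron from basic distinguished triangles---is exactly what the paper does. However, there is a genuine gap in your identification of the key triangles, and your approximation argument is incomplete.

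In Case $1(b)\leftrightarrow 2(c)$ you invoke Proposition~\ref{prop: decomposition of SE(x,x+pi)}, whose content is the decomposition $SE(x,x+\pi)\cong Z_+(x)\oplus Z_-(x)$ into a singular \emph{pair}. But in Case~1(b) the cluster contains singular objects of only \emph{one} sign $\e$: the relevant objects are $T=SE(x,y)$, $A=SE(z,y)$, $B=Z_\e(x)$, $X=SE(x,z)$, $Y=Z_\e(y)$, and $T^\ast=Z_\e(z)$. No singular pair ever appears. The triangle you actually need is Triangle~(c) of Proposition~\ref{prop: precursor to Triangle (c)}, namely $Z_\e(x)\to Z_\e(y)\to SE(x,y)\to Z_\e(x)[1]$, which mixes a \emph{single} singular object with a regular one. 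In effect you have swapped the tools for Cases~1(c) and $1(b)\leftrightarrow 2(c)$: the singular-pair decomposition underlies Triangle~(b), which the paper uses in Case~1(c), while Triangle~(c) handles the single-sign situation of Case~$1(b)\leftrightarrow 2(c)$. The paper develops Triangles~(a), (b), (c) separately (Propositions~\ref{prop:triangle A}, \ref{prop:triangle B}, \ref{prop: precursor to Triangle (c)}) precisely because each case of the theorem uses a different combination of them; your proposal does not distinguish these.

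Second, your argument for the approximation property is incomplete. Indecomposability of $T^\ast$ does imply that $T\to A\oplus B$ is left \emph{minimal} (a trivial summand of the middle term would split off as a summand of the cone $T^\ast$), but it does not show that $T\to A\oplus B$ is a left $\add(\cT\backslash T)$-\emph{approximation}, i.e., that every morphism from $T$ to an object of $\cT\backslash T$ factors through it. The paper establishes this separately via Lemma~\ref{lem: maps to compatible objects}, Lemma~\ref{lem: maps from singular objects to compatible objects}, and Proposition~\ref{prop: add cT-T approximation of T}, which classify all nonzero morphisms from $T$ to compatible indecomposables and identify the universal targets. Corollary~\ref{formula for F(SX,SY)} is a Frobenius-level statement and does not by itself yield this stable-category information.
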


%(The $2a\leftrightarrow 2b$ mutation is the one which does not use the Octahedral Axiom.) We comment briefly on the well-known case of type $D_n$ which, in our setting, is a cluster in $\cC_b^\psi$ where $b=(n-1)\pi/n$.

\subsection{Morphisms between compatible objects} To find the approximation $T\to A\oplus B$, we list all possible maps from $T$ to a compatible object.

\begin{lem}\label{lem: maps between objects in C}
Let $T=SE(x,y)$ with $x<y<x+\pi$. Then $\cC_\pi^\psi(T,Y)\neq0$ if and only if one of the following holds.
\begin{enumerate}
\item $Y\cong SE(z,w)$ where $x\le z<y$  and $y\le w<z+\pi$,
\item $Y\cong SE(z,w)$ where $x\le w-\pi< z<x+\pi\le w<y+\pi$ or
\item $Y\cong Z_\pm(z)$ where $x\le z<y$.
\end{enumerate}
Furthermore, $\cC_\pi^\psi(T,Y)$ is two-dimensional if and only if $Y$ lies in both (1) and (2).
\end{lem}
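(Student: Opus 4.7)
The plan is to apply the adjunction of Proposition~\ref{adjunction formula} (in its stable form) to reduce computations of morphisms out of $T = SE(x,y) = SM(x, y-\pi)$ in $\cC_\pi^\psi$ to computations of morphism spaces in the type-$A$ continuous cluster category $\cC_\pi$, whose indecomposable Hom-spaces are described in \cite{IT09}. Concretely, for any $Y = (Y_0, \xi_Y) \in \cC_\pi^\psi$ we have
\[
\cC_\pi^\psi(T, Y) \;\cong\; \cC_\pi\bigl(E(x,y),\, F Y\bigr),
\]
where $F\colon \cC_\pi^\psi \to \cC_\pi$ is the forgetful functor. Since the conclusions of the lemma depend only on the isomorphism class of $Y$, we are free to choose the representative $(z,w)$ conveniently in each case.

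The proof then splits according to the type of $Y$. For a regular object $Y = SE(z,w)$, the forgetful image decomposes as $FY = E(z,w) \oplus \psi E(z,w) = E(z,w) \oplus E(z+\pi, w+\pi)$, and the adjunction gives
\[
\cC_\pi^\psi(T, Y) \;\cong\; \cC_\pi\bigl(E(x,y),\, E(z,w)\bigr) \;\oplus\; \cC_\pi\bigl(E(x,y),\, E(z+\pi, w+\pi)\bigr).
\]
Each summand is at most one-dimensional, and by the standard description of Hom-spaces between $E$-objects in $\cC_\pi$, the first summand is nonzero precisely under the arc-linking condition (1), while the second, a morphism into the $\psi$-translate, is nonzero precisely under (2). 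The two-dimensionality statement then follows immediately: $\cC_\pi^\psi(T,Y)$ is two-dimensional exactly when both summands contribute. For a singular object $Y = Z_\pm(z) = (M(z,z), \pm\z)$, we have $FY = M(z,z) = E(z, z+\pi)$, and condition (3), namely $x \le z < y$, is precisely the condition for $\cC_\pi(E(x,y), E(z, z+\pi))$ to be nonzero.

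The main technical point is pinning down the precise half-open versus closed inequalities in (1)--(3). The boundary cases (such as equality $x = z$ or $y = w$) are delicate because a basic morphism $E(x,y) \to E(z,w)$ of lifts in $\cP(\cX_\pi)$ may be nonzero in $\cF_\pi$ yet vanish in the stable category after factoring through a projective-injective object of the form $M(a, a\pm\pi)$. This bookkeeping is handled via the cocycle $c$ on $\cX_\pi$ together with the two-way approximation sequences~\eqref{2-way approximation sequence for Fb}, which determine exactly when a morphism factors through $\cF(\d\cX_\pi)$. The argument is essentially the computation of Hom-spaces already carried out for $\cC_\pi$ in \cite{IT09}, applied twice---once to the $Y$-summand and once to the $\psi Y$-summand of $FY$---with an analogous calculation for the singular case where $FY$ is the end-type object $M(z,z)$.
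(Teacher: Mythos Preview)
Your proposal is correct and follows essentially the same approach as the paper: both use the stable adjunction $\cC_\pi^\psi(SE(x,y),Y)\cong \cC_\pi(E(x,y),FY)$ to reduce to the known Hom-computation in $\cC_\pi$ from \cite{IT09}, splitting $FY$ into its two $\psi$-translate summands in the regular case and into the single summand $E(z,z+\pi)$ in the singular case. The paper's only additional remark is a pointer to Figure~\ref{fig:maps to compatible}, which visualizes the regions (1), (2), (3) and explains Case~(3) as ``half'' of the limiting intersection $w=z+\pi$.
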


\begin{proof} This is because $\cC_\pi^\psi(T,Y)\cong \cC_\pi(E(x,y),Y)=\cC_\pi(E(x,y),E(z,w))\oplus \cC_\pi(E(x,y),E(w-\pi,z+\pi))$. The first summand is nonzero in Case (1) and the second summand is nonzero in Case (2). Case (3) is ``half'' the intersection when $w=z+\pi$.
This is also evident from Figure \ref{fig:maps to compatible}. The union of Regions (f), (g) in Figure \ref{fig:maps to compatible} corresponds to the union of Cases (1) and (3) in the Lemma and the union of Regions (g), (e) in Figure \ref{fig:maps to compatible} correspond to the union of Cases (2) and (3) in the Lemma. The key point is that morphisms go right and up in the Figure, the objects on the bottom diagonal are all zero and the top diagonal is a ``reflecting wall'', so that $T$ maps to the points in Region (d) by bouncing off of this wall.
\end{proof}

\begin{lem}\label{lem: maps to compatible objects}\label{LtoC}
Let $T=SE(x,y)$ be a regular object with $x<y<x+\pi$. Then the nonzero morphisms from $T$ to other compatible indecomposable objects in $\cC_\pi^\psi$ are given as follows.
\begin{enumerate}
\item[(a)] $T\to A_z:=SE(z,y)$ where $x<z<y$
\item[(b1)] $T\to B_{w}:=SE(x,w)$ where $y<w<x+\pi$
\item[(b2)] $T\to B_{x+\pi}^\pm:=Z_{\pm}(x)$ (only two object in this case).
\item[(b3)] $T\to B_{s+\pi}:=SE(s,x+\pi)$ where $y\le s<x+\pi$
\end{enumerate}
Furthermore, 
\begin{enumerate}
\item $\cC_\pi^\psi(T,X)$ is one dimensional for $X=A_z,B_\ast$ in all the above cases.
\item There are no nonzero morphisms $A_z\to B_\ast$ for any $B_\ast$ in the above cases.
\item $\cC_\pi^\psi(A_z,A_w)\neq0$ if and only if $z\le w$. 
\item There is a nonzero morphism $B_i\to B_j$ under $T$ if and only if $i\le j$. 
\item Composition of any two nonzero morphisms between indecomposable compatible objects under $T$ is nonzero.
\end{enumerate}
\end{lem}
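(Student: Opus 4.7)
The plan is first to enumerate the indecomposable objects compatible with $T$ that admit a nonzero morphism from $T$, by intersecting the list from Lemma \ref{lem: maps between objects in C} with the compatibility criterion of Proposition \ref{criteria for compatibility in C-pi-psi}. In Case (1) of that lemma, $Y = SE(z,w)$ with $x \le z < y \le w < z+\pi$ has $J(Y) = (z,w)$; compatibility forces this open arc to either sit inside or contain $J(T) = (x,y)$, collapsing the two parameters to $w = y$ (giving case (a), $A_z$) or $z = x$ (giving case (b1), $B_w$). In Case (2), the arc $J(SE(z,w))$ wraps around $x \equiv x+\pi$ in $S^1_\pi$, and compatibility with $J(T)$ rules out every $w > x+\pi$; only the boundary $w = x+\pi$ survives, yielding case (b3). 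Case (3) with the condition $z \notin J(T)$ forces $z = x$, producing the singular pair (b2). This exhausts the four enumerated families.

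For property (1), the two-dimensionality clause of Lemma \ref{lem: maps between objects in C} holds exactly when $Y$ belongs to both Case (1) and Case (2). Inspection of the four families shows that $A_z$ and $B_w$ have second coordinate strictly less than $x+\pi$ (missing Case (2)) while $B_{s+\pi}$ has first coordinate $\ge y$ (missing Case (1)); hence each $\cC_\pi^\psi(T,X)$ is one-dimensional. For (2), I would use the formula $\cC_\pi^\psi(SU,SV) \cong \cC_\pi(U,V) \oplus \cC_\pi(U,\psi V)$ and check that both summands vanish: the endpoints of $A_z = SE(z,y)$ lie in the closed arc $[x,y]$, while those of every $B_\ast$ (and its $\psi$-image) lie in the complementary closed arc $[y,x+\pi]$, so the cyclic interleaving of four points on $S^1$ needed for a nonzero morphism in $\cC_\pi$ fails in each summand.

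Properties (3) and (4) follow from the standard description of morphisms between $SE$-arcs sharing an endpoint: a nonzero morphism $SE(a,y) \to SE(b,y)$ exists iff the free endpoint slides inward ($a \le b$), and the analogous sliding picture handles the $B_\ast$'s, which form a single chain parameterized linearly along the complementary arc from $y$ through $x+\pi$ back toward $y$. The singular pair $B_{x+\pi}^{\pm}$ sits in the middle of this chain, and only requires checking that their $\pm$ signs do not obstruct the sliding morphisms on either side, which is immediate from the Hom description at singular objects.

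Finally, property (5) rests on the composition rule \eqref{composition rule for tN categories}: a composition of basic morphisms is $u^n$ times another basic morphism, and so remains nonzero in $\cC_\pi^\psi$ provided the result does not factor through a boundary (projective-injective) object of $\cF_\pi$, which it does not for the interior families at hand. The main obstacle I anticipate is the bookkeeping for compositions passing through the singular intermediaries $B_{x+\pi}^{\pm} = Z_\pm(x)$: to ensure that $T \to Z_\pm(x) \to B_{s+\pi}$ is nonzero, one must invoke Proposition \ref{prop: decomposition of SE(x,x+pi)} to track the sign $\pm$ under the $\psi$-equivariance identifications $(M(x,x),\pm\zeta)$ and verify that neither choice of sign kills the composed morphism. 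Once this sign-tracking is settled, the remaining compositions $T \to A_z \to A_w$, $T \to B_w \to B_{w'}$, and so on follow directly from \eqref{composition rule for tN categories}.
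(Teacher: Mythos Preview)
Your approach matches the paper's: both intersect the compatibility regions (Proposition \ref{criteria for compatibility in C-pi-psi}) with the support of $\cC_\pi^\psi(T,-)$ (Lemma \ref{lem: maps between objects in C}); the paper organizes this geometrically via Figure \ref{fig:maps to compatible} (``morphisms go right and up, reflecting off the upper diagonal, absorbed at the lower'') while you run through the algebraic inequalities directly. One small correction: in your treatment of Case (2), the constraint $w=x+\pi$ alone still allows any $z\in(x,x+\pi)$, and for $z<y$ the object $SE(z,x+\pi)$ crosses $T$; you must also impose compatibility on $z$, which forces $z\ge y$ and recovers the range $s\in[y,x+\pi)$ in (b3).
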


By a \emph{nonzero morphism under $T$} we mean a morphism $B_i\to B_j$ which induces a nonzero morphism of hom sets: $\cC_\pi^\psi(T,B_i)\to \cC_\pi^\psi(T,B_j)$. There are nonzero morphisms $B_{s+\pi}\to B_w$ from objects in (b3) to objects in (b1) but any composition $T\to B_{s+\pi}\to B_w$ is zero.

\begin{proof}
The objects compatible with $T=SE(x,y)$ lie in the regions (a), (b), (c) of the set $\cH$ as shown in Figure \ref{fig:maps to compatible}. (See the proof of Lemma \ref{lem: objects compatible with X} for an explanation.) $\cC_\pi^\psi(T,X)$ is 2-dimensional only for $X$ in Region (g) but these are incompatible with $T$.
\begin{enumerate}
\item[(a)] These are the objects $SE(z,w)$ where $x\le z<w\le y$. But $T$ maps only to those points where $w=y$ since morphisms only go right and up in the figure.
\item[(b)] These are $SE(z,w)$ and $Z_\pm(s)$ where $y\le z< w\le x+\pi$ and $y\le s\le x+\pi$. But $T$ maps only to those points where $z=x$ (and $s=x$) since morphisms cannot go left in the figure.
\item[(c)] $SE(s,t)$ where $y\le s<t\le x+\pi$. But $T$ maps only to those where $t=x+\pi$ because the map from $T$ to the other points maps through the zero object at point $y$ in the figure. The nonzero morphisms $T\to SE(s,x+\pi)$ is ``reflected'' through $Z_\pm(x)$.
\end{enumerate}
The other statements follow from the fact that morphisms go right and up in the diagram and are reflected off of the upper diagonal line. (The lower diagonal line is an ``absorbing'' wall.)
\end{proof}

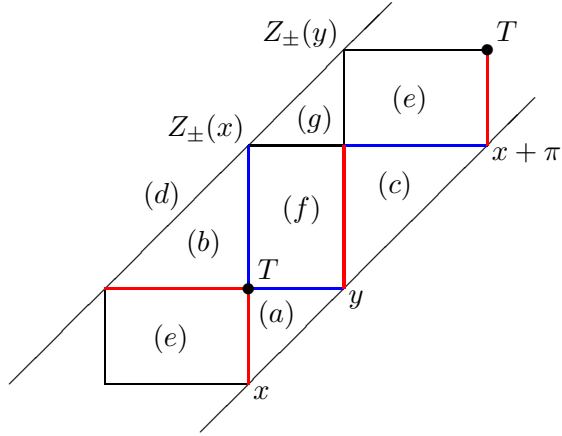
\begin{figure}[htbp] %{fig:maps to compatible}
\begin{center}
%
%\vs5
{
\setlength{\unitlength}{.25in}
%\centerline
{\mbox{
\begin{picture}(8,8)
%      \thicklines
    \thinlines
\put(-2,1){\line(1,1)8}
\put(2,0){\line(1,1)7}
\put(0,1){\line(1,0)3}
\put(5,8){\line(1,0)3}
\put(0,3){\line(1,0)5}
\put(3,6){\line(1,0)5}
\put(0,1){\line(0,1)2}
\put(3,1){\line(0,1)5}
\put(5,3){\line(0,1)5}
\thicklines
{\color{blue}
\put(0,0){
\put(3,3){\line(1,0)2}
\put(3,3){\line(0,1)3}
\put(5,6){\line(1,0)3}
}} % end color blue
\put(3.2,3.2){$T$}
\put(8.2,8.2){$T$}
\put(3.2,2.3){$(a)$}
\put(1.7,3.8){$(b)$}
\put(0.8,4.8){$(d)$}
\put(5.7,5){$(c)$}
\put(1,1.8){$(e)$}
\put(6,6.8){$(e)$}
\put(4,6.4){$(g)$}
\put(3.7,4.5){$(f)$}
\put(.1,-.3){
\put(3,1){$x$}
\put(5,3){$y$}   
\put(8,6){$x+\pi$}   
}
\put(-1.2,.2){
\put(2.5,6){$Z_\pm(x)$}
\put(4.5,8){$Z_\pm(y)$}
}
{\color{red}
\put(8,6){\line(0,1)2}
\put(3,3){\line(-1,0)3}
\put(3,3){\line(0,-1)2}
\put(5,6){\line(0,-1)3}
}
\put(2.85,2.85){$\bullet$}
\put(7.85,7.85){$\bullet$}
\end{picture}}
}}
%\vs5
\caption{Regions $(a),(b),(c)$ (with $(d)$ marking the hypotenuse of triangule $(b)$) hold objects compatible with $T=SE(x,y)$. Regions $(e),(f),(g)$ together with bottom and left boundaries form the support of $\cC_\pi^\psi(T,-)$. These intersect on the blue lines. The support of $\cC_\pi^\psi(T,-)$ is Regions $(e),(f),(g)$ together with top and right boundaries which meet the compatible region in the red lines.}
\label{fig:maps to compatible}
\end{center}
\end{figure}

Morphisms from compatible objects to $T$ are described by dual formulas and are displayed only in the figure. Lemma \ref{lem: maps to compatible objects} allow us to determine the compatible objects under $T$ which are universal in a family of such objects as long as the family is a closed set (so that it contains is limit points).

\begin{prop}\label{prop: add cT-T approximation of T}
Suppose that $T=SE(x,y)$ is a regular object in a cluster $\cT$ in $\cC_\pi^\psi$. Then there exist compatible objects $A_z,B_w$ in $\cT$ under $T$ and the left $add\,\cT\backslash\{T\}$ approximation of $T$ is $T\to A_z\oplus B_w$ where $z>x$ and $w>y$ are taken to be minimal. Dually, the right minimal $add\,\cT\backslash\{T\}$ approximation of $T$ has the form $A'_z\oplus B'_w\to T$ where $A'_z,B'_w$ are compatible objects in $\cT$ over $T$ similar to $A_z,B_w$.
\end{prop}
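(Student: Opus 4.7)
The plan is to combine Lemma~\ref{LtoC}'s classification of morphisms out of $T$ with the structural description of clusters given by Theorem~\ref{description of all discrete laminations} and Lemma~\ref{decomposition of discrete sublaminations}. Lemma~\ref{LtoC} shows that every indecomposable $Y\in\cT\setminus\{T\}$ receiving a nonzero morphism from $T$ lies either in the $A$-family $\{A_z=SE(z,y):x<z<y\}$ or in the $B$-family (types (b1), (b2), (b3) of that lemma); within each family $\cC_\pi^\psi(T,-)$ is one-dimensional, the family is linearly ordered by nonzero composed morphisms (parts (3), (4), (5)), and parts (1), (2) show the two families do not mix. Thus it suffices to produce minimal-parameter elements $A_{z_0},B_{w_0}\in\cT$: the morphism $T\to A_{z_0}\oplus B_{w_0}$ will then be a left $\add(\cT\setminus\{T\})$-approximation, because any $T\to A_z$ factors as $T\to A_{z_0}\to A_z$ (the composite is nonzero by Lemma~\ref{LtoC}(5) and equals the given morphism up to a unit by one-dimensionality), and analogously for the $B$-family; minimality is immediate since each summand is itself realized as a morphism from $T$.

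Existence of $A_{z_0}$ is direct. Let $\cT'\subseteq\cT$ be the discrete sublamination supported by $J(T)=(x,y)$; its outer object is $T$. Lemma~\ref{decomposition of discrete sublaminations} produces a unique $z_0\in(x,y)$ such that $\cT'\setminus\{T\}$ is the union of discrete sublaminations of $(x,z_0)$ and $(z_0,y)$; the latter contains its outer object $A_{z_0}=SE(z_0,y)\in\cT$. No $A_z$ with $x<z<z_0$ can lie in $\cT$, since $J(A_z)=(z,y)$ contains $z_0$ and hence fits in neither sub-sublamination, contradicting the decomposition.

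Existence of $B_{w_0}$ is the crux and is handled by tracing the chain of outer objects of nested sublaminations of $T$ inside the component $I_0\supseteq J(T)$ of $R(\cT)$. In \emph{Case A}, the innermost strict ancestor of $T$ shares the left endpoint $x$; this ancestor is the parent $SE(x,b')$ with $b'>y$ (and $b'<x+\pi$ because $I_0$ has length less than $\pi$), giving $B_{b'}\in\cT$. In \emph{Case B}, no strict ancestor of $T$ shares $x$; iterating Lemma~\ref{decomposition of discrete sublaminations} forces the parent's decomposition point to be $x$ itself, so the sibling sub-sublamination $(a_{n-1},x)$ has outer object $SE(a_{n-1},x)=SE(a_{n-1}+\pi,x+\pi)$ via the $\psi$-identification, which is the element $B_{a_{n-1}+2\pi}\in\cT$ of type (b3). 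In both cases the set of $w$ with $B_w\in\cT$ is nonempty; since $h(\cT)$ is a discrete subset of $\cH$ and the $B$-family traces a one-parameter family in $\cH$, this set admits a minimum $w_0$. The right-approximation statement follows dually, using the symmetric families $A'_z=SE(x,z)$ for $z\in(x,y)$ and a mirror $B'$-family of morphisms into $T$. The main technical obstacle is the Case B analysis, where $T$ is deeply buried in $I_0$ and the required $B$-element must be extracted from a sibling sub-sublamination through the $\psi$-equivalence rather than directly from an ancestor containing $T$.
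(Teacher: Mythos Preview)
The paper does not give a detailed proof: the proposition is stated immediately after the sentence ``Lemma~\ref{lem: maps to compatible objects} allow[s] us to determine the compatible objects under $T$ which are universal in a family of such objects as long as the family is a closed set (so that it contains its limit points),'' and is treated as a direct consequence of Lemma~\ref{LtoC} together with closedness of $h(\cT)$. Your argument is more constructive, and the reduction to the $A$/$B$ families via Lemma~\ref{LtoC} (parts (1)--(5)) is exactly right.

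There is, however, a genuine gap in your existence argument for $B_{w_0}$. Your Case~A/Case~B dichotomy presupposes that $T$ has a strict regular ancestor inside $I_0$, i.e.\ that $J(T)\subsetneq I_0$. When $J(T)=I_0$ is itself a full component of $R(\cT)$ (Cases 1(b) and 1(c) of Theorem~\ref{DL123}), there is no regular parent to trace. In that situation the endpoint $x$ lies in $S^1_\pi\setminus R(\cT)$, so by Theorem~\ref{description of all discrete laminations} the cluster $\cT$ contains a singular object $Z_\e(x)$ (or the pair $Z_+(x),Z_-(x)$), and this is the required $B$-element of type~(b2) in Lemma~\ref{LtoC}. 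You should add this as a third case. A minor point: your final appeal to discreteness of $h(\cT)$ to extract the minimum $w_0$ is not quite enough on its own; you also need that $h(\cT)$ is closed (the theorem preceding the definition of cluster), so that the infimum of the parameter set is actually attained.
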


\begin{lem}\label{lem: maps from singular objects to compatible objects}
Nonzero morphisms from $T=Z_\e(x)$ to other compatible indecomposable objects are as follows.
\begin{enumerate}
\item[(a)] $T\to A_w:=SE(w,x+\pi)$ for $x<w<x+\pi$
\item[(b)] $T\to B_z:=Z_\e(z)$ for $x<z<x+\pi$ (i.e., all other $Z_\e(z)$).
\end{enumerate}
Furthermore:
\begin{enumerate}
\item There are no nonzero morphisms $B_z\to A_w$ under $T$ for any $z,w$.
\item There is a nonzero morphism $A_z\to A_w$ under $T$ iff $z\le w$. 
\item There is a nonzero morphism $B_z\to B_w$ under $T$ iff $z\le w$. 
\item There is a nonzero morphism $A_z\to B_w$ under $T$ iff $z\le w$. 
\end{enumerate}
\end{lem}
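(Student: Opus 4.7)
I would follow the pattern of Lemma \ref{lem: maps to compatible objects}, using three tools: the adjunction formula of Proposition \ref{adjunction formula}, which gives $\cC_\pi^\psi((X,\xi),SY)\cong\cC_\pi(X,Y)$; the identification of Hom-spaces between singular objects in Corollary \ref{Frobenius category for cyclic groups}, which says $\cC_\pi^\psi(Z_\e(X),Z_{\e'}(Y))=0$ for $\e\neq\e'$ and equals $\cC_\pi(X,Y)$ otherwise; and the compatibility list of Proposition \ref{criteria for compatibility in C-pi-psi}.

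First, to enumerate the nonzero morphisms out of $T=Z_\e(x)$: for a regular target $SE(a,b)=SM(a,b-\pi)$, adjunction reduces the computation to $\cC_\pi(M(x,x),M(a,b-\pi))$, which is a free $R$-module of rank one exactly when the cyclic-order condition on $x,a,x+\pi,b$ is satisfied. Intersecting with the compatibility condition (Proposition \ref{criteria for compatibility in C-pi-psi} case (4), $x\notin J(SE(a,b))$) cuts the regular targets down to $A_w=SE(w,x+\pi)$ for $w\in(x,x+\pi)$. For a singular target $Z_{\e'}(z)$, Corollary \ref{Frobenius category for cyclic groups} forces $\e'=\e$ and gives $\cC_\pi^\psi(T,Z_\e(z))\cong R$; compatibility (case (2)) is automatic once signs agree, and $(x,x+\pi)$ is a fundamental domain for $z$ under $Z_\e(z)\cong Z_\e(z+\pi)$, yielding $B_z$. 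This also establishes the one-dimensionality of each Hom-space.

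For the compositional relations (2)--(4), I would compute using the composition formula in Corollary \ref{formula for F(SX,SY)}: each composition of two basic morphisms is a power of $u$ times a basic morphism in $\cC_\pi$ and is nonzero in $\cC_\pi^\psi$ precisely when no factor lies in the projective-injective subcategory indexed by $\d\cX_\pi$. Tracking cyclic positions and respecting the sign $\e\z$ yields the condition $z\le w$ in each of (2), (3) and (4); in the $A\to B$ case (4), one must also notice that the morphism $M(w,x)\to M(z,z)$ lands in the $Z_\e$-summand of $SM(w,x)$ rather than in its $\psi$-twin.

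The main obstacle is relation (1). The Hom-space $\cC_\pi^\psi(B_z,A_w)=\cC_\pi(M(z,z),M(w,x))$ is not identically zero: for $w\le z$ it is generated by a basic morphism coming from the rotated cyclic representative $M(w,x)=E(w,x+\pi)=E(x+\pi,w+2\pi)$. One therefore cannot simply appeal to the Hom. Instead, I would show that the composite $T\to B_z\to A_w$ factors in the covering poset $\tilde\cX_\pi$ through the boundary object $M(x,x+\pi)=E(x,x+2\pi)\in\d\cX_\pi$, which is projective-injective in $\cF_\pi$, so the composite is killed in the stable category $\cC_\pi^\psi=\ul{\cF}_\pi^\psi$. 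The delicate calculation is to carry out this factorization while checking that it is compatible with the $\psi$-equivariance imposed by $\e\z$; once this cancellation is verified, the remaining parts of the lemma are direct translations of Lemma \ref{lem: maps between objects in C} through the adjunction.
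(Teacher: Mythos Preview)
Your approach is essentially the same as the paper's: intersect the set of compatible indecomposables (Proposition~\ref{criteria for compatibility in C-pi-psi}) with the support of $\cC_\pi^\psi(T,-)$, computed via adjunction and Corollary~\ref{Frobenius category for cyclic groups}. The paper's own proof is extremely terse---it states the two conditions, takes their intersection for case~(a), disposes of~(b) in one line, and dismisses (1)--(4) as ``easy.''

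Your treatment of~(1) is in fact more careful than the paper's. You are right that $\cC_\pi^\psi(B_z,A_w)$ need not vanish (indeed it is nonzero for $w<z$, via the representative $E(z,z+\pi)\to E(x+\pi,w+2\pi)$), so one really must check that the composite $T\to B_z\to A_w$ dies in the stable category. Your proposed mechanism is correct: the underlying $\cF_\pi$-composite $E(x,x+\pi)\to E(z,z+\pi)\to E(x+\pi,w+2\pi)$ factors through the projective-injective $E(x,x+2\pi)=M(x,x+\pi)\in\partial\cX_\pi$, hence vanishes in $\cC_\pi$; passage to $\cC_\pi^\psi$ is automatic since a morphism that is zero on underlying objects is zero. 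No separate $\psi$-equivariance check is needed here, so you can drop that caveat. One small correction: the relevant range is $w<z$, not $w\le z$.
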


\begin{proof}
The regular objects compatible with $T$ are $SE(w,z)$ where $x\le w<z\le x+\pi$. (See the proof of Lemma \ref{lem: objects compatible with X}.) But $T$ maps to $SE(w,z)$ when $x\le w<x+\pi\le z$. The intersection is when $z=x+\pi$. Case (b) is easier since all $Z_\e(z)$ with the same sign $\e$ are pairwise compatible and map nontrivially to each other. Statements (1)-(4) are easy.
\end{proof}

There is a dual statement which we suppress.

\subsection{Basic distinguished triangles}

There are three basic distinguished triangles that we will use to show that the Octahedral Axiom is being used in the mutation process.

\subsubsection{Triangle (a)}

Suppose that $x<y<z<x+\pi$ and consider the following sequence of morphisms.
\begin{equation}\label{eq:triangle A}
	SE(x,y)\xrarrow{Sf} SE(x,z)\xrarrow{Sg} SE(y,z)\xrarrow{Sh} SE(x,y)[1]
\end{equation}
We note that each hom set is one dimensional, as pointed out in Lemma \ref{lem: maps to compatible objects} (1), and thus each morphism is in the image of the functor $S$ as indicated.

\begin{prop}\label{prop:triangle A}%[Basic triangle A]
\eqref{eq:triangle A} is a distinguished triangle in $\cC_\pi^\psi$ if and only if $f,g,h$ are scalar multiples of basic morphism so that the three scalars multiply to $1\in R$.
\end{prop}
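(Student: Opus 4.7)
The plan is to first produce one distinguished triangle of the claimed form and then use a ``rescaling by units'' argument to characterize all others. For existence, I would first construct, in $\cC_\pi$, a distinguished triangle $E(x,y)\xrarrow{a_0}E(x,z)\xrarrow{b_0}E(y,z)\xrarrow{c_0}E(x,y)[1]$ with $a_0,b_0,c_0$ basic morphisms. This can be obtained by pushing out the two-way approximation sequence \eqref{2-way approximation sequence for Fb} of $E(x,y)$ (with $b$ close to or equal to $\pi$) along a basic morphism $a_0:E(x,y)\to E(x,z)$: a direct computation with $M$-representatives identifies the cokernel with $E(y,z)$ and the connecting map with a basic morphism $c_0$, normalized so that the product of the three scalars equals $1$. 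Applying the exact functor $S:\cF_\pi\to\cF_\pi^\psi$ and invoking Corollary \ref{cor: S takes triangles to triangles}, we transport this to a distinguished triangle $SE(x,y)\xrarrow{Sa_0}SE(x,z)\xrarrow{Sb_0}SE(y,z)\xrarrow{Sc_0}SE(x,y)[1]$ in $\cC_\pi^\psi$ whose morphisms are basic.

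For the characterization, I would rely on the fact that each of the three relevant hom spaces is generated by a single basic morphism, so that any candidate sequence has the form $(f,g,h)=(\alpha a_0,\beta b_0,\gamma c_0)$ for scalars $\alpha,\beta,\gamma$ in the (local) endomorphism ring of $SE(x,y)$. I would then set up an isomorphism of candidate triangles $(a_0,b_0,c_0)\to(f,g,h)$ given by unit endomorphisms $\mu,\nu,\rho$ of the three objects; the commutativity of the three squares in this isomorphism forces
\[
\nu=\alpha\mu,\qquad \rho=\alpha\beta\mu,\qquad \mu[1]=\alpha\beta\gamma\,\mu.
\]
The shift functor $[1]$, being an additive auto-equivalence, induces the identity on the endomorphism ring of $SE(x,y)$ after the canonical identification of $\End(SE(x,y))$ with $\End(SE(x,y)[1])$, so $\mu[1]=\mu$. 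Hence $\alpha\beta\gamma=1$ is necessary, and by taking $\mu=1,\,\nu=\alpha,\,\rho=\alpha\beta$ it is also sufficient: the resulting diagram makes $(f,g,h)$ isomorphic to the distinguished triangle constructed above, and so itself distinguished.

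The main obstacle I anticipate is the first step: the pushout calculation must identify the third term unambiguously as $E(y,z)$ and verify that, with the chosen compatible normalizations of basic morphisms and the sign conventions of the two-way approximation sequence \eqref{2-way approximation sequence for Fb}, the three scalars really multiply to $1$ rather than to some other unit. This reduces to bookkeeping with the covering poset $\tilde\cX_\pi$, but the sign subtleties flagged by the authors (the $-1$ entries in \eqref{2way approx sequence for SX}) make it the place where care is needed; a secondary technical point is verifying that the relevant hom spaces really are cyclic (generated by one basic morphism), which for $SE(y,z)\to SE(x,y)[1]$ requires checking that the contribution from the ``$\psi$-twisted'' summand of the adjunction formula vanishes in the stable category.
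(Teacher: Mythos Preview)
Your proposal is correct and follows essentially the same strategy as the paper. The paper's proof is terser: for sufficiency it simply cites \cite{IT10} for the description of basic positive triangles in $\cC_\pi$ (rather than redoing the pushout) and then applies Corollary \ref{cor: S takes triangles to triangles}; for necessity it observes that $\cC_\pi^\psi(SE(y,z),SE(x,y)[1])$ is one-dimensional, so the choice of $h$ determines $f,g$ up to reciprocal scalars---which is exactly what your isomorphism-of-triangles argument unwinds explicitly. Your concern about the $\psi$-twisted summand vanishing is handled by the paper via Lemma \ref{lem: maps between objects in C}, and your check that $\mu[1]=\mu$ (since $[1]$ is $K$-linear and $\End(SE(x,y))=K$) is implicit in the paper's phrasing.
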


\begin{rem}\label{remark on orientation of objects and sign of triangle}
If $SE(x,y)$ is replaced by the isomorphic object $SE(y,x+2\pi)$ in \eqref{eq:triangle A}, the sequence is a distinguished triangle in $\cC_\pi^\psi$ if the scalars multiply to $-1$. We draw the two isomorphic objects as arcs in the punctured disk with orientation. Then the mnemonic rule is that the scalars multiply to 1 if the first object in the triangle is oriented counterclockwise and $-1$ otherwise. (See Figure \ref{Fig:Triangle (a)}.)
\end{rem}

\begin{proof}
(Sufficiency) $S$ takes distinguished triangles in $\cC_\pi$ to distinguished triangles in $\cC_\pi^\psi$ and the stated condition is sufficient by the description of basic positive triangles given in \cite{IT10}.

(Necessity) Since $\cC_\pi^\psi(SE(y,z), SE(x,y)[1])$ is one-dimensional, any morphism must have the form $Sh$ and the choice of $h$ determines $f,g$ up to multiplication of $f$ by a scalar and of $g$ by the inverse of that scalar. The statement follows.
\end{proof}

%INSERT FIGURE for TRIANGLE A \label{Fig:Triangle (a)}
%
\begin{figure}[htbp]
\begin{center}
%
%\vs5
{
\setlength{\unitlength}{.8in}
%\centerline
{\mbox{
\begin{picture}(3,2.2)
      \thicklines
%    \thinlines
  %
    \put(1.5,1){ % beginning of circle
    \qbezier(1,0)(1,.4142)(.7071,.7071)
    \qbezier(.7071,.7071)(.4142,1)(0,1)
    \qbezier(-1,0)(-1,.4142)(-.7071,.7071)
    \qbezier(-.7071,.7071)(-.4142,1)(0,1)
    \qbezier(1,-0)(1,-.4142)(.7071,-.7071)
    \qbezier(.7071,-.7071)(.4142,-1)(0,-1)
    \qbezier(-1,-0)(-1,-.4142)(-.7071,-.7071)
    \qbezier(-.7071,-.7071)(-.4142,-1)(0,-1)
    } % end of circle
    \put(1.5,1){$\ast$}  % center of circle
\put(-.02,0){     \put(2.47,1){$\bullet\ x$}
}
\put(-.08,0){     \put(.4,.7){$z\ \bullet$}
}
  \put(0.6,1.3){$SZ$}
     \qbezier(.53,.75)(1.4,1.7)(2.5,1.05) % arc z to x
    \qbezier(.53,.75)(1,1.5)(1.5,2)    % arc z to y
   \put(1,1){$SY$}
   \put(1.45,1.95){$\bullet$}
   \put(1.5,2.1){$y$}
   \put(1.6,1.5){$SX$}
   \qbezier(1.7,1.87)(1.85,1.78)(1.9,1.8)
   \qbezier(1.7,1.87)(1.8,1.75)(1.8,1.7)
   \qbezier(1.5,2)(2,1.7)(2.5,1.05)
\end{picture}}
}}
%\vs5
\caption{Triangle (a): $SX \xrarrow{Sf} SY \xrarrow{Sg} SZ\xrarrow{Sh} SX[1]$. This is distinguished if $Sf,Sg,Sh$ are scalar multiples of basic maps with scalars multiplying to $1$.}
\label{Fig:Triangle (a)}
\end{center}
\end{figure}
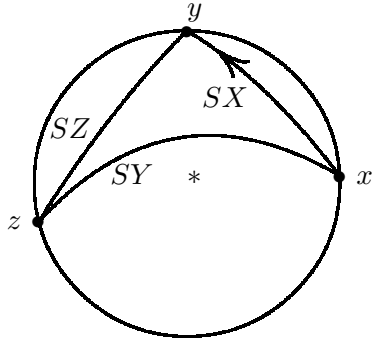

\subsubsection{Triangle (b)}

In the limiting case where $z=x+\pi$, the middle term decomposes as follows according to Proposition \ref{prop: decomposition of SE(x,x+pi)}.
\begin{equation}\label{eq:decomposition of SE middle}
	\mat{1&\z\\-\z&1}:SE(x,x+\pi)=E(x,x+\pi)\oplus E(x+\pi,x+2\pi)\cong Z_+(x)\oplus Z_-(x)
\end{equation}
The inverse of this isomorphism is given by
\[
	\mat{1&\z\\-\z&1}^{-1}=\frac12\mat{1&-\z\\ \z&1}
\]
We use the first column of the first matrix and the second row of the second matrix to get the following sequence.
\begin{equation}\label{eq:triangle B}
%\xymatrixrowsep{10pt}\xymatrixcolsep{10pt}
\xymatrix{%begin xy matrix
SE(x,y) \ar[r]^(.4){\tiny\mat{f_1\\-f_1}}& 
	Z_+(x)\oplus Z_-(x) \ar[r]^{[g_1,g_2]}&
	SE(y,x+\pi) \ar[r]^{Sh}& SE(x,y)[1]
	} % end xy matrix
\end{equation}
Here $g_1f_1=g_2f_2$ as elements of the one-dimensional hom set $\cC_\pi^\psi(SE(x,y),SE(y,x+\pi))$. The basic nonzero morphism is not in the image of the functor $S$ since $\cC_\pi(E(x,y),E(y,x+\pi))=0$. So, it is the counter-diagonal morphism which is given by the $(1,2)$ entry of the matrix. This is why we take the first column of the matrix in \eqref{eq:decomposition of SE middle} and the second row of its inverse. Note that the negative sign in \eqref{eq:triangle B} is essential in order for the composition of the first two maps to be zero.

\begin{prop}\label{prop:triangle B}%[Basic triangle B]
\eqref{eq:triangle B} is a distinguished triangle in $\cC_\pi^\psi$ if and only if $g_1f_1=g_2f_2$ is a scalar multiple, say $a$, times the basic counter-diagonal morphism from $SE(x,y)$ to $SE(y,x+\pi)$ and $h:E(y,x+\pi)\to E(x,y)[1]$ is equal to $b$ times the basic morphism and so that $ab=\frac12\in R$.
\end{prop}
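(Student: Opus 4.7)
The plan is to derive \eqref{eq:triangle B} from Triangle (a) (Proposition~\ref{prop:triangle A}) applied to the limiting case $z=x+\pi$, combined with the decomposition of the middle object given in Proposition~\ref{prop: decomposition of SE(x,x+pi)}. Concretely, I would first invoke the basic distinguished triangle in $\cC_\pi$
\[
E(x,y)\xrightarrow{f'} E(x,x+\pi)\xrightarrow{g'} E(y,x+\pi)\xrightarrow{h'} E(x,y)[1],
\]
obtained from the type-$A$ result of \cite{IT10} at the three points $x<y<z=x+\pi$; in the basic normalization the scalars multiply to $1$. Applying the exact functor $S$ (Corollary~\ref{cor: S takes triangles to triangles}) gives a distinguished triangle in $\cC_\pi^\psi$ with middle term $SE(x,x+\pi)$. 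Finally, conjugate this triangle by the isomorphism $T=\left(\begin{smallmatrix}1&\zeta\\-\zeta&1\end{smallmatrix}\right)\colon SE(x,x+\pi)\xrightarrow{\cong} Z_+(x)\oplus Z_-(x)$ of Proposition~\ref{prop: decomposition of SE(x,x+pi)}: the first map becomes $T\circ Sf'$, the second becomes $Sg'\circ T^{-1}$, the third is unchanged, and the resulting triangle is still distinguished and now has exactly the shape of \eqref{eq:triangle B}.

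Next I would read off the form of the morphisms. Since $\cC_\pi(E(x,y),E(x+\pi,x+2\pi))=0$, the morphism $Sf'$ sits in the ``diagonal'' summand, so composing with $T$ produces a column whose two entries are equal up to the sign coming from the $(2,1)$ entry of $T$; after identifying the underlying $\cF$-object of $Z_-(x)$ with $E(x,x+\pi)$ via the structural isomorphism $\zeta$ this becomes precisely $\left(\begin{smallmatrix}f_1\\-f_1\end{smallmatrix}\right)$. A dual computation for $Sg'\circ T^{-1}$ produces a row $[g_1,g_2]$, and the identity $g_1f_1=g_2f_2$ (in the $1$-dimensional hom set $\cC_\pi^\psi(SE(x,y),SE(y,x+\pi))$) is forced because the composition of two consecutive morphisms in a distinguished triangle is zero. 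The common value $g_1f_1=g_2f_2$ must land in the counter-diagonal component of the hom set, since the diagonal component is zero, and so equals $a\cdot(\text{basic counter-diagonal})$ for some $a\in R$.

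For the scalar condition, a direct computation starting from $f'=g'=h'=1$ (basic normalization) shows that the factor $\tfrac12$ from $T^{-1}=\tfrac12\bigl(\begin{smallmatrix}1&-\zeta\\ \zeta&1\end{smallmatrix}\bigr)$ is absorbed into $a$, yielding $a=\tfrac12$ and $b=1$ in the basic case, hence $ab=\tfrac12$. Any other distinguished triangle of the form \eqref{eq:triangle B} is obtained from this one by the rescalings $f_1\mapsto\lambda f_1$, $(g_1,g_2)\mapsto(\mu g_1,\mu g_2)$, $h\mapsto h/(\lambda\mu)$ permitted by the rank-one hom sets; these rescale $a\mapsto\lambda\mu a$ and $b\mapsto b/(\lambda\mu)$, preserving $ab=\tfrac12$. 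This gives both directions.

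The main obstacle is the bookkeeping in the second step: one has to track how the scalar $\zeta$ from the structural isomorphism of $Z_-(x)$ and the factor $\tfrac12$ in $T^{-1}$ interact with the matrix multiplications $T\circ Sf'$ and $Sg'\circ T^{-1}$, and then verify that after the identifications of $Z_\pm(x)$ with $E(x,x+\pi)$ as $\cF$-objects, the two compositions $g_1f_1$ and $g_2f_2$ actually coincide in the counter-diagonal summand (the diagonal summand being zero by Lemma~\ref{lem: maps between objects in C}). Getting the signs and the normalizing constant exactly right is what pins down the relation $ab=\tfrac12$ rather than some other scalar.
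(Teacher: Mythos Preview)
Your proposal is correct and follows essentially the same route as the paper: the paper does not give a formal proof after the proposition, but the discussion immediately preceding it is the argument---take the limiting case $z=x+\pi$ of Triangle~(a), apply $S$, and then replace the middle term $SE(x,x+\pi)$ by $Z_+(x)\oplus Z_-(x)$ via the isomorphism of Proposition~\ref{prop: decomposition of SE(x,x+pi)}, with the factor $\tfrac12$ arising from the inverse matrix $T^{-1}=\tfrac12\bigl(\begin{smallmatrix}1&-\zeta\\ \zeta&1\end{smallmatrix}\bigr)$. Your additional remarks on the rescaling freedom and the necessity direction make explicit what the paper leaves implicit.
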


%INSERT FIGURE for TRIANGLE B  \label{Fig:Triangle (b)}
%
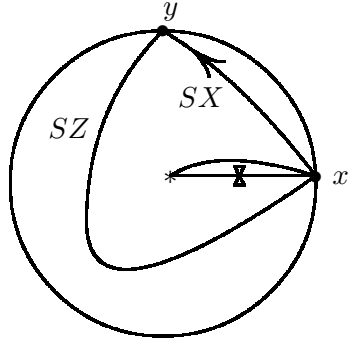
\begin{figure}[htbp]
\begin{center}
%
%\vs5
{
\setlength{\unitlength}{.8in}
%\centerline
{\mbox{
\begin{picture}(3,2.2)
      \thicklines
%    \thinlines
  %
    \put(1.5,1){ % beginning of circle
    \qbezier(1,0)(1,.4142)(.7071,.7071)
    \qbezier(.7071,.7071)(.4142,1)(0,1)
    \qbezier(-1,0)(-1,.4142)(-.7071,.7071)
    \qbezier(-.7071,.7071)(-.4142,1)(0,1)
    \qbezier(1,-0)(1,-.4142)(.7071,-.7071)
    \qbezier(.7071,-.7071)(.4142,-1)(0,-1)
    \qbezier(-1,-0)(-1,-.4142)(-.7071,-.7071)
    \qbezier(-.7071,-.7071)(-.4142,-1)(0,-1)
    } % end of circle
    \put(1.5,1){$\ast$}  % center of circle
\put(-.02,0){     \put(2.47,1){$\bullet\ x$}
}
  \put(0.75,1.3){$SZ$}
%     \qbezier(2.5,1.05)(.53,.75)(2.5,1.05) % arc z to x
    \qbezier(1,.75)(1,1.5)(1.5,2)    % y to middle of x to y arc
    \qbezier(1,.75)(1,0)(2.5,1.05)    % x to middle of x to y arc
%   \put(1,1){$SY$}
\put(1.55,1.05){\line(1,0){.95}} % line from x to center of circle
\put(2,1.05){\qbezier(-.03,-.06)(0,0)(.03,.06) % begin bowtie
\qbezier(-.03,.06)(0,0)(.03,-.06)
\put(0,-.06){\qbezier(-.03,0)(0,0)(.03,0)}
\put(0,.06){\qbezier(-.03,0)(0,0)(.03,0)}
} % end bowtie
\put(1.55,1.05){\qbezier(0,0)(.3,.2)(.95,0)} % curve from x to center of circle
   \put(1.45,1.95){$\bullet$} % bullet at y
   \put(1.5,2.1){$y$}
   \put(1.6,1.5){$SX$}
   \qbezier(1.7,1.87)(1.85,1.78)(1.9,1.8) % top of arrow head
   \qbezier(1.7,1.87)(1.8,1.75)(1.8,1.7) % bottom of arrow head
   \qbezier(1.5,2)(2,1.7)(2.5,1.05) % x to y arc
\end{picture}}
}}
%\vs5
\caption{Triangle (b): $SX \xrarrow{f} Z_+(x)\oplus Z_-(x) \xrarrow{g} SZ\xrarrow{Sh} SX[1]$. This is distinguished if corresponding scalars multiply to $\frac12$. $Z_-(x)$ is indicated with a bowtie.}
\label{Fig:Triangle (b)}
\end{center}
\end{figure}

\subsubsection{Triangle (c)}

In order to assign scalars to morphisms, we need to choose one basic morphism. For morphisms between compatible singular and regular objects we make the following choices.

Suppose $x<y<x+\pi$. Then

(1) The \emph{basic morphism} $Z_\e(y)=E(y,y+\pi)\to SE(x,y)$ is defined to be the one given by:
\[
	\small\mat{1\\ \e}:E(y,y+\pi)\to E(y,x+2\pi)\oplus  E(x+\pi,y+\pi)\cong E(x,y)\oplus E(x+\pi,y+\pi)=SE(x,y)
\]

(2) The \emph{basic morphism} $SE(x,y)\to Z_\e(x)$ is defined to be the one given by:
\[
	SE(x,y)=E(x,y)\oplus E(x+\pi,y+\pi)\cong E(x,y)\oplus E(y-\pi,x+\pi)\xrarrow{[\e,1]} E(x,x+\pi)=Z_\e(x)
\]

\begin{prop}\label{prop: precursor to Triangle (c)} Suppose that $x<y<x+\pi$ and $\e=+$ or $-$. Then the following is a distinguished triangle in $\cC_\pi$.
\begin{equation}\label{eq:precursor of Triangle (c)}
%\xymatrixrowsep{10pt}\xymatrixcolsep{10pt}
%\xymatrix{%begin xy matrix
	E(x,x+\pi) \xrarrow 1  E(y,y+\pi) \xrarrow{\tiny\mat{\e\\1}}
	E(x+\pi,y+\pi)\oplus E(y,x+2\pi) \xrarrow{[\e,-1]}
	E(x+\pi,x+2\pi)
%	} % end xy matrix
\end{equation}
Furthermore, these morphism are equivariant with respect to the automorphism $\psi$ in the sense that they give a distinguished triangle in $\cC_\pi^\psi$ as follows.
\begin{equation}\label{eq:Triangle (c)}
	Z_\e(x)\xrarrow f Z_\e(y) \xrarrow g SE(x,y) \xrarrow h Z_{\e}(x)[1]\cong Z_{-\e}(x)
\end{equation}
where $f,g,h$ are scalar multiples of the basic morphisms (defined above) in such a way that the scalars multiply to $-1$.
\end{prop}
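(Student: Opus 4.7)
My plan is to first establish \eqref{eq:precursor of Triangle (c)} as a distinguished triangle in $\cC_\pi$ by computing the mapping cone of the basic morphism $\phi:E(x,x+\pi)\to E(y,y+\pi)$, and then to lift the entire triangle to $\cC_\pi^\psi$ by exhibiting $\psi$-equivariant structures.

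I would begin with the canonical two-way approximation sequence \eqref{2-way approximation sequence for Fb} applied to $M(x,x)=E(x,x+\pi)$ with $b=\pi$,
\[
E(x,x+\pi)\xrarrow{\binom{1}{-1}} E(x+\pi,x+\pi)\oplus E(x,x+2\pi)\xrarrow{[1,1]} E(x+\pi,x+2\pi),
\]
noting that both summands in the middle are projective-injective, lying in $\d\cX_\pi$. The pushout of the left morphism along $\phi$ produces an exact sequence in $\cF_\pi$ whose cokernel represents, in the stable category, the cone of $\phi$. I would identify this cone as $E(x+\pi,y+\pi)\oplus E(y,x+2\pi)$ by applying the exact functor $F:\cF_\pi\to\cF(S^1)$ of Proposition \ref{prop: exactness is equivalent to split exactness on ends}: in $\cF(S^1)$ the image of the sequence is split exact with middle term $P_{x+\pi}\oplus P_{y+\pi}\oplus P_y\oplus P_x$, which coincides with $F(E(x+\pi,y+\pi)\oplus E(y,x+2\pi))$. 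Together with the fact that the connecting morphism $E(x+\pi,x+2\pi)\to E(x,x+\pi)[1]$ is the identity, this establishes \eqref{eq:precursor of Triangle (c)} as a distinguished triangle in $\cC_\pi$.

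Next I would exhibit a $\psi$-equivariant lift of the whole triangle. On each outer term, the swap isomorphism $E(x,x+\pi)\cong E(x+\pi,x+2\pi)=\psi E(x,x+\pi)$, scaled by $\e=\pm1$, gives precisely the singular structure $Z_\e(x)$ from Definition \ref{Z2 singular objects}, and similarly for $Z_\e(y)$. On the middle term, the identification $E(x+\pi,y+\pi)=\psi E(x,y)$ together with the swap $E(y,x+2\pi)\cong E(x,y)$ writes $E(x+\pi,y+\pi)\oplus E(y,x+2\pi)$ as $\psi E(x,y)\oplus E(x,y)$, equipped with the standard $\xi=\mat{0&1\\1&0}$ structure of $SE(x,y)$. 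That the maps $\binom{\e}{1}$ and $[\e,-1]$ in \eqref{eq:precursor of Triangle (c)} are $\psi$-equivariant against these structures is then a direct matrix computation. By Definition \ref{defn: definition of completed orbit category}, the triangle descends to \eqref{eq:Triangle (c)}, and after the identifications above $g$ is the basic morphism $Z_\e(y)\to SE(x,y)$ of the preamble, while $f,h$ are scalar multiples of the corresponding basic morphisms. The product of the three scalars reads off from the $\binom{1}{-1}$ of the two-way approximation sequence combined with the two $\e$'s, yielding $\e\cdot\e\cdot(-1)=-1$ as claimed, in keeping with Remark \ref{remark on orientation of objects and sign of triangle}.

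\textbf{Main obstacle.} The principal technical issue I anticipate is the sign bookkeeping at the end: one has to consistently use the specific basic morphisms fixed in the preamble (and the change-of-basis of Proposition \ref{prop: decomposition of SE(x,x+pi)}) rather than nearby natural choices that differ by signs, to ensure the product is exactly $-1$ rather than $+1$. In particular, verifying that the $-1$ in the row vector $[\e,-1]$ of \eqref{eq:precursor of Triangle (c)} is forced by the $\binom{1}{-1}$ in the canonical two-way approximation sequence \eqref{2-way approximation sequence for Fb}, rather than a sign that can be absorbed elsewhere, requires chasing the pushout carefully under the decomposition $SE(x,x+\pi)\cong Z_+(x)\oplus Z_-(x)$.
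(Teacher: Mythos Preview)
Your approach is essentially the paper's: form the pushout of the two-way approximation sequence for $E(x,x+\pi)$ along the basic map to $E(y,y+\pi)$, then verify $\psi$-equivariance term by term. The paper carries this out by writing down the explicit pushout diagram with the middle vertical arrow $\left(\begin{smallmatrix}\e&0\\0&-1\end{smallmatrix}\right)$ and checking directly that each map intertwines the $\psi$-structures (e.g.\ the identity $\left(\begin{smallmatrix}\e&0\\0&-1\end{smallmatrix}\right)\left(\begin{smallmatrix}0&-\e\\-\e&0\end{smallmatrix}\right)=\left(\begin{smallmatrix}0&1\\1&0\end{smallmatrix}\right)\left(\begin{smallmatrix}\e&0\\0&-1\end{smallmatrix}\right)$), rather than going through $F$.

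Two points where your sketch is looser than the paper. First, invoking $F$ only tells you that a \emph{candidate} sequence is exact (Proposition \ref{prop: exactness is equivalent to split exactness on ends}); it does not by itself identify the cone, since many nonisomorphic objects of $\cF_\pi$ have the same image under $F$. You still need to write down the explicit maps into $E(x+\pi,y+\pi)\oplus E(y,x+2\pi)$ and check commutativity of the pushout square, which is exactly the diagram the paper displays. Second, your scalar count ``$\e\cdot\e\cdot(-1)$'' is not the right bookkeeping: the $\e$'s appearing in $\binom{\e}{1}$ and $[\e,-1]$ are matrix entries, not the scalars of $f,g,h$ relative to the basic morphisms fixed in the preamble. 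The paper's accounting is that $g$ is basic (scalar $1$, once you note the summands of $SE(x,y)$ appear in reversed order), $h$ is $-1$ times basic (the sign coming from the identification $Z_\e(x)[1]\cong Z_{-\e}(x)$ of Proposition \ref{shift of singular objects in Cbpsi}), and $f$ is basic, giving $1\cdot1\cdot(-1)=-1$. You correctly flagged this as the delicate step.
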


\begin{proof}
To see that \eqref{eq:precursor of Triangle (c)} is a distinguished triangle in $\cC_\pi$, we use the definition given by Happel \cite{HappelBook}. The construction is to take the pushout of the two-way approximation sequence for $E(x,x+\pi)$ along the basic morphism $E(x,x+\pi)\to E(y,y+\pi)$ which we are denoting by $1$.
\begin{equation}\label{eq: sextuple generating Triangle (c)}
%\xymatrixrowsep{10pt}\xymatrixcolsep{10pt}
\xymatrix{%begin xy matrix
 E(x,x+\pi)\ar[d]^1\ar[r]^(.3){\tiny\mat{1\\-1}} &
	E(x+\pi,x+\pi)\oplus E(x,x+2\pi)\ar[d]^{\tiny\mat{\e&0\\0&-1}}\ar[r]^(.6){[1,1]} &
	E(x+\pi,x+2\pi)\ar[d]^= \\
E(y,y+\pi) \ar[r]^(.3){\tiny\mat{\e\\1}}& 
	E(x+\pi,y+\pi)\oplus E(y,x+2\pi) \ar[r]^(.6){[\e,-1]}&
	E(x+\pi,x+2\pi)
	}%end xy matrix
\end{equation}
We claim that this same diagram \eqref{eq: sextuple generating Triangle (c)} gives a distinguished triangle of the form \eqref{eq:Triangle (c)} in $\cC_\pi^\psi$. This comes from the fact that each morphism is $\psi$-equivariant, i.e., satisfies Definition \ref{defn: definition of completed orbit category} for a morphism in $\cC_\pi^\psi$. For example, the middle vertical arrow is $\psi$-equivariant since:
\[
	\mat{\e&0\\0&-1}\mat{0&-\e\\-\e&0}=\mat{0&1\\1&0}\mat{\e&0\\0&-1}
\]
where the matrix $\small\mat{0&-\e\\-\e&0}$ comes from the proof of Proposition \ref{shift of singular objects in Cbpsi}.

Finally, to see that the distinguished triangle \eqref{eq:precursor of Triangle (c)} in $\cC_\pi$ gives the distinguished triangle \eqref{eq:Triangle (c)}, with scalars multiplying to $-1$, note that the last morphism is $-1$ times a basic morphism since $Z_\e(x)[1]\cong Z_{-\e}(x)$ and the second to last morphism is basic since the summands $E(x+\pi,y+\pi)\oplus E(y,x+2\pi)$ are reversed. Therefore \eqref{eq:Triangle (c)} is a distringuished triangle in $\cC_\pi^\psi$.
\end{proof}

%INSERT Triangle (c). \label{Fig:Triangle (c)}
%
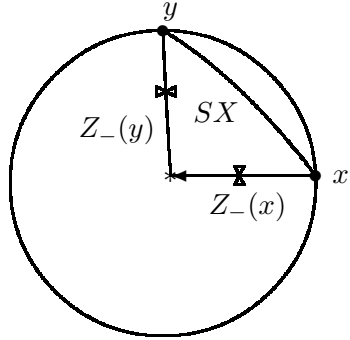
\begin{figure}[htbp]
\begin{center}
%
%\vs5
{
\setlength{\unitlength}{.8in}
%\centerline
{\mbox{
\begin{picture}(3,2.2)
      \thicklines
%    \thinlines
  %
    \put(1.5,1){ % beginning of circle
    \qbezier(1,0)(1,.4142)(.7071,.7071)
    \qbezier(.7071,.7071)(.4142,1)(0,1)
    \qbezier(-1,0)(-1,.4142)(-.7071,.7071)
    \qbezier(-.7071,.7071)(-.4142,1)(0,1)
    \qbezier(1,-0)(1,-.4142)(.7071,-.7071)
    \qbezier(.7071,-.7071)(.4142,-1)(0,-1)
    \qbezier(-1,-0)(-1,-.4142)(-.7071,-.7071)
    \qbezier(-.7071,-.7071)(-.4142,-1)(0,-1)
    } % end of circle
    \put(1.5,1){$\ast$}  % center of circle
\put(-.02,0){     \put(2.47,1){$\bullet\ x$}
}
  \put(.95,1.3){$Z_-(y)$}
%     \qbezier(2.5,1.05)(.53,.75)(2.5,1.05) % arc z to x
    \qbezier(1.55,1.05)(1.52,1.5)(1.5,2)    % y to center of circle
%    \qbezier(1,.75)(1,0)(2.5,1.05)    % x to middle of x to y arc
%   \put(1,1){$SY$}
%\put(1.55,1.05){\line(1,0){.95}} % line from x to center of circle
\put(2.5,1.05){\vector(-1,0){.95}} % arrow from x to center of circle
\put(2,1.05){\qbezier(-.03,-.06)(0,0)(.03,.06) % begin bowtie
\qbezier(-.03,.06)(0,0)(.03,-.06)
\put(0,-.06){\qbezier(-.03,0)(0,0)(.03,0)}
\put(0,.06){\qbezier(-.03,0)(0,0)(.03,0)}
} % end bowtie
\put(1.52,1.6){ % begin bowtie for Z_-(y)
\qbezier(-.06,-.03)(0,0)(.06,.03)
\qbezier(-.06,.03)(0,0)(.06,-.03)
\put(-.06,0){\qbezier(0,-.03)(0,0)(0,.03)}
\put(.06,0){\qbezier(0,-.03)(0,0)(0,.03)}
} % end bowtie for Z_-(y)
\put(1.8,.8){$Z_-(x)$}
%\put(1.55,1.05){\qbezier(0,0)(.3,.2)(.95,0)} % curve from x to center of circle
   \put(1.45,1.95){$\bullet$} % bullet at y
   \put(1.5,2.1){$y$}
   \put(1.7,1.4){$SX$}
%   \qbezier(1.7,1.87)(1.85,1.78)(1.9,1.8) % top of arrow head
%   \qbezier(1.7,1.87)(1.8,1.75)(1.8,1.7) % bottom of arrow head
   \qbezier(1.5,2)(2,1.7)(2.5,1.05) % x to y arc
\end{picture}}
}}
%\vs5
\caption{Triangle (c): $Z_-(x) \xrarrow{f} Z_-(y) \xrarrow{g} SX\xrarrow{h} Z_-(x)[1]\cong Z_+(x)$. This is distinguished if corresponding scalars multiply to $-1$ which we interpret to mean that $Z_-(x)$ is oriented clockwise (inward).}
\label{Fig:Triangle (c)}
\end{center}
\end{figure}

\subsection{Octahedral axiom}

\subsubsection{Case 1(a)-1(a)}

The figure illustrates the mutation described in Case 1(a) of Theorem \ref{DL123}. The objects $A=A_z,B=B_{w+\pi}$ are as described in Lemma \ref{LtoC}, Cases (a) and (b3). We use the well-known description, first described to us in detain by Thomas Brustle, that objects, when drawn as arcs on a surface will extend each other if and only if they cross and morphisms between compatible objects are given by counterclockwise rotation about one endpoint. That these descriptions hold on the continuous cluster category of type D is proved in Proposition \ref{nonC} and Lemma \ref{LtoC}.

Since $T\to A\to Z$, $T\to B\to Y$, $T^\ast\to Y\to A$, $T^\ast\to Z\to B$ are examples of distinguished Triangle (a) described above and all objects are regular (and thus isomorphic to their shifts), the Octahedral Axiom applies to show that
\[
	T\to A\oplus B\to T^\ast\to T[1],\qquad T[-1]\to T^\ast\to Y\oplus Z\to T
\]
are distinguished triangles. Also, $A\oplus B$ is the $add\,\cT\backslash T$-left approximation of $T$ since can be no other arcs in the cluster $\cT$ which have endpoint at $x$, since such an arc would cross $Y$. Similarly, $Y\oplus Z$ is the $add\,\cT\backslash T$-right approximation of $T$. This proves Theorems \ref{thm:cluster mutation given by approximation} and \ref{thm:cluster mutation given by Octahedral Axiom} for Case 1(a)-1(a) mutation.

%   Figure for Case 1(a) <--> 1(a) mutation
%
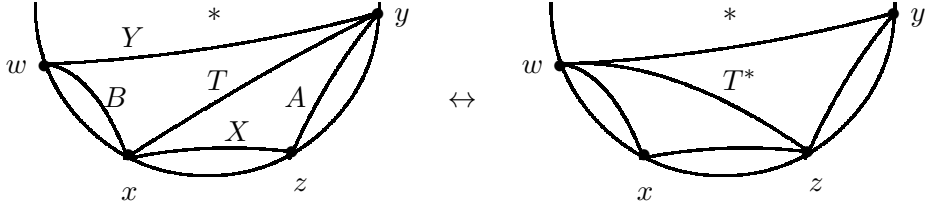
\begin{figure}[htbp]
\begin{center}
%
%\vs5
{
\setlength{\unitlength}{.9in}
%\centerline
{\mbox{
\begin{picture}(6,1.2)
      \thicklines
%    \thinlines
  %
 {   \put(1.5,1,1){ 
    % bottom of circle
    \qbezier(1,-0)(1,-.4142)(.7071,-.7071)
    \qbezier(.7071,-.7071)(.4142,-1)(0,-1)
    \qbezier(-1,-0)(-1,-.4142)(-.7071,-.7071)
    \qbezier(-.7071,-.7071)(-.4142,-1)(0,-1)
    }
    \put(1.5,1){$\ast$}
\put(-.02,0){     \put(2.47,1){$\bullet\ y$}
}
\put(-.07,0){     \put(.4,.7){$w\ \bullet$}
}
\put(1,.18){$\bullet$}
\put(1,-.04){$x$}
\put(1.95,0.2){$\bullet$}
\put(2,.015){$z$}
    \qbezier(.57,.75)(.85,.7)(1.04,.2) % w to x
    \qbezier(.57,.75)(1.6,.8)(2.5,1.05) % w to y
    \qbezier(1.98,.2)(2.2,.7)(2.5,1.05) % z to y
    \qbezier(1.04,.2)(1.5,.3)(2.0,0.24) % x to z
    \qbezier(1.02,.2)(1.8,.7)(2.5,1.05) % x to y
        \put(1,.85){$Y$}
    \put(1.5,.6){$T$}
    \put(.9,.5){$B$}
    \put(1.6,.3){$X$}
    \put(1.95,.5){$A$}
    }
    \put(2.9,.5){$\leftrightarrow$}
     \put(3,0){   \put(1.5,1,1){   % beginning of second picture
    % bottom of circle
    \qbezier(1,-0)(1,-.4142)(.7071,-.7071)
    \qbezier(.7071,-.7071)(.4142,-1)(0,-1)
    \qbezier(-1,-0)(-1,-.4142)(-.7071,-.7071)
    \qbezier(-.7071,-.7071)(-.4142,-1)(0,-1)
    }
    \put(1.5,1){$\ast$}
\put(-.02,0){     \put(2.47,1){$\bullet\ y$}
}
\put(-.07,0){     \put(.4,.7){$w\ \bullet$}
}
\put(1,.18){$\bullet$}
\put(1,-.04){$x$}
\put(1.95,0.2){$\bullet$}
\put(2,.015){$z$}
    \qbezier(.57,.75)(.85,.7)(1.04,.2) % w to x
    \qbezier(.57,.75)(1.6,.8)(2.5,1.05) % w to y
    \qbezier(1.98,.2)(2.2,.7)(2.5,1.05) % z to y
    \qbezier(.57,.75)(1.2,.8)(2.0,0.24) % w to z
    \qbezier(1.02,.2)(1.5,.3)(2.0,0.24) % x to z
    \put(1.5,.6){$T^\ast$}
    }
%/    \put(1.8,.55){$Y$}
\end{picture}}
}}
%\vs5
\caption{Case 1(a)$\leftrightarrow$1(a) mutation. $T$ maps to $A$, $B$ by pivoting at $y$ and $x$ respectively. And $Y,X$ map to $T$.}
\label{Figure Case 1a}
\end{center}
\end{figure}

\subsubsection{Case 1(b)-2(c)}

We recall Cases 1(b,c) of Theorem \ref{DL123}. These are the cases when $T=SE(x,y)$ is a regular object in a discrete cluster $\cL$ so that $J(T)$ is one of the regular intervals $I_j$ of $\cL$. There are two subcases as shown in Figures \ref{Figure Case 1b} and \ref{Figure Case 1c}. In Case 1(b), all of the singular objects in the cluster $\cT$ have the same sign, say $\e$. Then $\cT$ must contain the singular objects $Z_\e(x),Z_\e(y)$ which, together with the object $T=SE(x,y)$, form a triangle of type (a) as described above.

The objects $A=A_z=SE(z,y)$ and $B=Z_\e(x)$ in Figure \ref{Figure Case 1b} are as described in Lemma \ref{LtoC} Cases (a) and (b2). The object $T^\ast=Z_\e(z)$ is the only other object which is compatible with all other objects in $\cT$, as is apparent from the figure.

Since $X\to T\to A\to X[1]$ is an example of Triangle (a), $T\to B\to Y\to T[1]$, $X\to B\to T\to X[1]$ and $A\to T^\ast\to Y\to A[1]$ are examples of Triangle (c). Therefore, the Octahedral Axiom applies and, using the fact that $X,T$ are isomorphic to their shifts,
\[
	T\to A\oplus B\to T^\ast\to T[1],\qquad T[-1]\to T^\ast\to X\oplus Y\to T
\]
are distinguished triangles. As in the previous case, this implies Theorems \ref{thm:cluster mutation given by approximation} and \ref{thm:cluster mutation given by Octahedral Axiom} for Case 1(b)-2(c) mutation.

%   Figure for Case 1(b) <--> 2(c) mutation
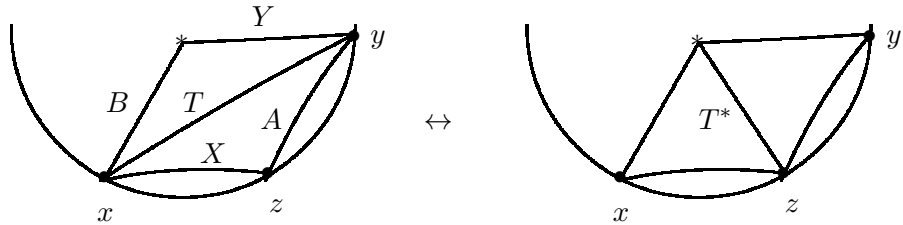
\begin{figure}[htbp]
\begin{center}
%
%\vs5
{
\setlength{\unitlength}{.9in}
%\centerline
{\mbox{
\begin{picture}(6,1.3)
      \thicklines
      {   \put(1.5,1,1){ 
    % bottom of circle
    \qbezier(1,-0)(1,-.4142)(.7071,-.7071)
    \qbezier(.7071,-.7071)(.4142,-1)(0,-1)
    \qbezier(-1,-0)(-1,-.4142)(-.7071,-.7071)
    \qbezier(-.7071,-.7071)(-.4142,-1)(0,-1)
    }
    \put(1.45,.97){$\ast$}  % center of circle
\put(-.02,0){     \put(2.47,1){$\bullet\ y$}
}
\put(1,.18){$\bullet$}
\put(1,-.04){$x$}
\put(1.95,0.2){$\bullet$}
\put(2,.015){$z$}
    \qbezier(1.5,1)(1.27,.6)(1.04,.2) % center to x
    \qbezier(1.5,1)(2,1.02)(2.5,1.05) % center to y
    \qbezier(1.98,.2)(2.2,.7)(2.5,1.05) % z to y
    \qbezier(1.04,.2)(1.5,.3)(2.0,0.24) % x to z
    \qbezier(1.02,.2)(1.8,.7)(2.5,1.05) % x to y
        \put(1.9,1.1){$Y$}
    \put(1.5,.6){$T$}
    \put(1.05,.6){$B$}
    \put(1.6,.3){$X$}
    \put(1.95,.5){$A$}
    }
        \put(2.9,.5){$\leftrightarrow$}
     \put(3,0){   \put(1.5,1,1){   %% beginning of second picture
    % bottom of circle
    \qbezier(1,-0)(1,-.4142)(.7071,-.7071)
    \qbezier(.7071,-.7071)(.4142,-1)(0,-1)
    \qbezier(-1,-0)(-1,-.4142)(-.7071,-.7071)
    \qbezier(-.7071,-.7071)(-.4142,-1)(0,-1)
    }
    \put(1.45,.97){$\ast$}  % center of circle
\put(-.02,0){     \put(2.47,1){$\bullet\ y$}
}
\put(1,.18){$\bullet$}
\put(1,-.04){$x$}
\put(1.95,0.2){$\bullet$}
\put(2,.015){$z$}
    \qbezier(1.5,1)(1.27,.6)(1.04,.2) % center to x
    \qbezier(1.5,1)(2,1.02)(2.5,1.05) % center to y
    \qbezier(1.98,.2)(2.2,.7)(2.5,1.05) % z to y
    \qbezier(1.04,.2)(1.5,.3)(2.0,0.24) % x to z
    \qbezier(1.5,1)(1.75,.62)(2.0,0.24) % center to z
 %   \qbezier(1.02,.2)(1.8,.7)(2.5,1.05) % x to y
%        \put(1.9,1.1){$Y$}
    \put(1.5,.5){$T^\ast$}
%    \put(1.05,.6){$B$}
%    \put(1.6,.3){$X$}
%    \put(1.95,.5){$A$}
    }
\end{picture}}
}}
%\vs5
\caption{Case 1(b)$\leftrightarrow$2(c) mutation.}
\label{Figure Case 1b}
\end{center}
\end{figure}

\subsubsection{Case 1(c)-1(c)}

Case 1(c) of Theorem \ref{DL123} is shown in Figure \ref{Figure Case 1c}. This is the case when $T$ is a regular object in a discrete cluster $\cL$ so that $J(T)$ is one of the regular intervals $I_j$ of $\cL$ and there are only two singular objects in the cluster of opposite signs at the same point. By symmetry we assume that $T=SE(x,y)$ and the singular objects in the cluster are $Z_+(x),Z_-(x)$. Then the objects $A=A_z=SE(z,y)$ and $B=Z_\e(x)$ are as described in Lemma \ref{LtoC} Cases (a) and (b2). The object $T^\ast=SE(z,x+\pi)$ is the only other object which is compatible with all other objects in $\cT$.

Since $T\to A\to X\to T[1]$, $T^\ast\to Y\to A\to T^\ast[1]$ are examples of Triangle (a) and $T\to Z_+(x)\oplus Z_-(x)\to Y\to T[1]$, $X\to Z_+(x)\oplus Z_-(x)\to T^\ast\to X[1]$ are examples of Triangle (b), the Octahedral Axiom applies and, using the fact that $X\cong X[1],T\cong T[1]$, we have distinguished triangles
\[
	T\to A\oplus Z_+(x)\oplus Z_-(x)\to T^\ast\to T[1],\qquad T[-1]\to T^\ast\to X\oplus Y\to T.
\]
As in the previous case, this implies Theorems \ref{thm:cluster mutation given by approximation} and \ref{thm:cluster mutation given by Octahedral Axiom} for Case 1(c)-1(c) mutation.

%   Figure for Case 1(c) <--> 1(c) mutation
\begin{figure}[htbp]
\begin{center}
%
%\vs5
{
\setlength{\unitlength}{.9in}
%\centerline
{\mbox{
\begin{picture}(6,2)
      \thicklines
{  \put(1.5,1){ % beginning of circle  %% begin second figure
    \qbezier(1,0)(1,.4142)(.7071,.7071)
    \qbezier(.7071,.7071)(.4142,1)(0,1)
    \qbezier(-1,0)(-1,.4142)(-.7071,.7071)
    \qbezier(-.7071,.7071)(-.4142,1)(0,1)
    \qbezier(1,-0)(1,-.4142)(.7071,-.7071)
    \qbezier(.7071,-.7071)(.4142,-1)(0,-1)
    \qbezier(-1,-0)(-1,-.4142)(-.7071,-.7071)
    \qbezier(-.7071,-.7071)(-.4142,-1)(0,-1)
    } % end of circle
    \put(1.5,1){$\ast$}  % center of circle
\put(-.02,0){     \put(2.47,1){$\bullet\ x$}
}
  \put(1.3,1.3){$T$}
%     \qbezier(2.5,1.05)(.53,.75)(2.5,1.05) % arc z to x
%    \qbezier(1,.75)(1,1.5)(1.5,2)    % y to middle of x to y arc
%    \qbezier(1,.75)(1,0)(2.5,1.05)    % x to middle of x to y arc
%   \put(1,1){$SY$}
\qbezier(.93,.14)(1.3,1.9)(2.45,1.08) % curved line from y to x
\qbezier(.92,.145)(1.7,.4)(2.5,1.07) % lower curved line from y to x
\qbezier(.93,.14)(.7,1)(1.5,2) % curved line from y to z 
\put(1.55,1.05){\line(1,0){.95}} % line from x to center of circle
\put(2,1.05){\qbezier(-.03,-.06)(0,0)(.03,.06) % begin bowtie
\qbezier(-.03,.06)(0,0)(.03,-.06)
\put(0,-.06){\qbezier(-.03,0)(0,0)(.03,0)}
\put(0,.06){\qbezier(-.03,0)(0,0)(.03,0)}
} % end bowtie
  \put(0.9,1.3){$Y$}
\put(1.55,1.05){\qbezier(0,0)(.3,.2)(.95,0)} % curve from x to center of circle
   \put(1.45,1.95){$\bullet$} % bullet at y
   \put(1.5,2.1){$z$}
%   \put(1.6,1.5){$SX$}
%   \qbezier(1.7,1.87)(1.85,1.78)(1.9,1.8) % top of arrow head
%   \qbezier(1.7,1.87)(1.8,1.75)(1.8,1.7) % bottom of arrow head
   \qbezier(1.5,2)(2,1.7)(2.5,1.05) % x to y arc
   \put(.9,.13){$\bullet$}
\put(.8,-.04){$y$}
\put(1.8,.4){$A$}
\put(1.8,.8){$B$}
   }  % end first figure
            \put(2.9,1){$\leftrightarrow$}
 \put(3,0){  \put(1.5,1){ % beginning of circle  %% begin second figure
    \qbezier(1,0)(1,.4142)(.7071,.7071)
    \qbezier(.7071,.7071)(.4142,1)(0,1)
    \qbezier(-1,0)(-1,.4142)(-.7071,.7071)
    \qbezier(-.7071,.7071)(-.4142,1)(0,1)
    \qbezier(1,-0)(1,-.4142)(.7071,-.7071)
    \qbezier(.7071,-.7071)(.4142,-1)(0,-1)
    \qbezier(-1,-0)(-1,-.4142)(-.7071,-.7071)
    \qbezier(-.7071,-.7071)(-.4142,-1)(0,-1)
    } % end of circle
    \put(1.5,1){$\ast$}  % center of circle
\put(-.02,0){     \put(2.47,1){$\bullet\ x$}
}
%     \qbezier(2.5,1.05)(.53,.75)(2.5,1.05) % arc z to x
    \qbezier(1.2,.95)(1.2,1.5)(1.5,2)    % y to middle of x to y arc
    \qbezier(1.2,.95)(1.2,.5)(2.5,1.05)    % x to middle of x to y arc
   \put(1.2,.5){$T^\ast$}
\put(1.55,1.05){\line(1,0){.95}} % line from x to center of circle
\put(2,1.05){\qbezier(-.03,-.06)(0,0)(.03,.06) % begin bowtie
\qbezier(-.03,.06)(0,0)(.03,-.06)
\put(0,-.06){\qbezier(-.03,0)(0,0)(.03,0)}
\put(0,.06){\qbezier(-.03,0)(0,0)(.03,0)}
} % end bowtie
\put(1.55,1.05){\qbezier(0,0)(.3,.2)(.95,0)} % curve from x to center of circle
   \put(1.45,1.95){$\bullet$} % bullet at y
   \put(1.5,2.1){$z$}
   \put(1.7,1.5){$X$}
\qbezier(.92,.145)(1.7,.4)(2.5,1.07) % lower curved line from y to x
\qbezier(.93,.14)(.7,1)(1.5,2) % curved line from y to z 
   \qbezier(1.5,2)(2,1.7)(2.5,1.05) % x to y arc
   \put(.9,.13){$\bullet$}
\put(.8,-.04){$y$}
    }
\end{picture}}
}}
%\vs5
\caption{Case 1(c)$\leftrightarrow$1(c) mutation. $B=B_+(x)\oplus B_-(x)$.}
\label{Figure Case 1c}
\end{center}
\end{figure}
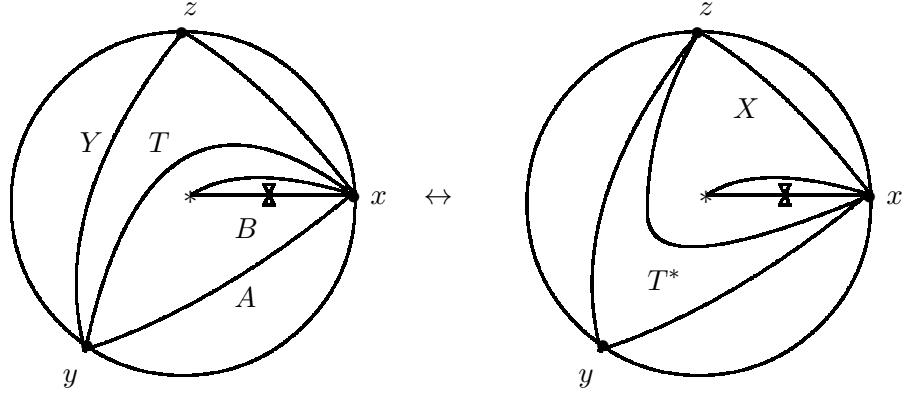

\subsubsection{Exceptional Case 2(a)-2(b)}

Suppose now that $\cT$ is a cluster in $\cC_\pi^\psi$ with exactly two singular objects and $T$ is one of those objects. Then we are in Case 2(a) or 2(b) of Theorem \ref{DL123}. Figure \ref{Figure Case 2a} illustrates both of these cases. On the left is Case 2(a). Here $T=Z_\e(z)$ maps nontrivially to $A=A_y$ and to $B=B_y=Z_\e(y)$ as examples of Lemma \ref{lem: maps from singular objects to compatible objects} Cases (a) and (b). By Lemma \ref{lem: maps from singular objects to compatible objects} (4), the morphism $T\to B$ factors through $A$. Therefore, $T\to A$ is the left $add\,\cT\backslash T$-approximation of $T$. Similarly, $X\to T$ is the right $add\,\cT\backslash T$-approximation of $T$. As examples of Triangle (c) we have the distinguished triangles:
\[
	T\to A\to B[1]\to T[1]\quad\text{and}\quad T[-1]\to B[-1]\to X\to T
\]
showing that the mutation $T^\ast=B[1]$ of $T$ as given by Theorem \ref{DL123} is given by $add\,\cT\backslash T$-approximations. This proves Theorem \ref{thm:cluster mutation given by approximation} for the exceptional Case 2(a)-2(b) mutation.

The Octahedral Axiom does in fact apply in the following trivial way. Using the fact that $B\to T\to A\to B[1]$ is an example of Triangle (c) with $A\to B[1]\to T[1]\to A[1]$ being a rotation of the same triangle, we have the following example of the Octahedral Axiom:
\begin{equation}\label{Octahedron Case 2(a,b)}
\xymatrixrowsep{10pt}\xymatrixcolsep{10pt}
\xymatrix{
B\ar[dr] && A\ar[ll] &&&& B\ar[dd] && A\ar[dl]\\
	& T\ar[ur]\ar[dl] &&&\longrightarrow&&& B[1]\ar[ul]_1\ar[dr]\\
	0\ar[rr] && T[1]\ar[ul]_1&&&& 0\ar[ur]&&T[1]\ar[uu]
	}%end xymatrix
\end{equation}
This gives the correct $T^\ast=B[1]$ and the correct left $add\,\cT\backslash T$-approximation sequence $T\to A\to B[1]\to T[1]$. However, it does not give the correct right $add\,\cT\backslash T$-approximation since $X$ does not appear in the diagram.

%   Figure for Case 2(a) <--> 2(b) mutation
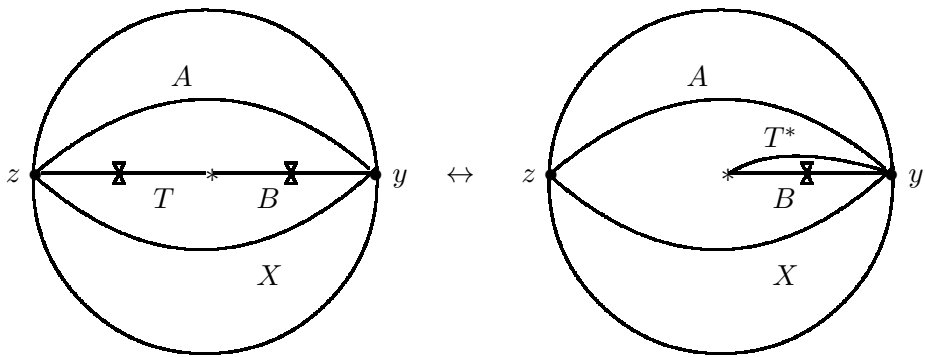
\begin{figure}[htbp]
\begin{center}
%
%\vs5
{
\setlength{\unitlength}{.9in}
%\centerline
{\mbox{
\begin{picture}(6,2)
      \thicklines
          % end first figure
{  \put(1.5,1){ % beginning of circle  %% begin first figure
    \qbezier(1,0)(1,.4142)(.7071,.7071)
    \qbezier(.7071,.7071)(.4142,1)(0,1)
    \qbezier(-1,0)(-1,.4142)(-.7071,.7071)
    \qbezier(-.7071,.7071)(-.4142,1)(0,1)
    \qbezier(1,-0)(1,-.4142)(.7071,-.7071)
    \qbezier(.7071,-.7071)(.4142,-1)(0,-1)
    \qbezier(-1,-0)(-1,-.4142)(-.7071,-.7071)
    \qbezier(-.7071,-.7071)(-.4142,-1)(0,-1)
    } % end of circle
    \put(1.5,1){$\ast$}  % center of circle
\put(-.02,0){     \put(2.47,1){$\bullet\ y$}
}
  \put(1.3,1.56){$A$}
   \put(0.47,1){$\bullet$} % bullet at z
   \put(.34,1){$z$}
\qbezier(.5,1.04)(1.5,1.9)(2.45,1.08) % curved line from z to y
\qbezier(.5,1.04)(1.5,.15)(2.5,1.08) % curved line from z to y
\put(1.55,1.05){\line(1,0){.95}} % line from y to center of circle
\put(.55,1.05){\line(1,0){.95}} % line from y to center of circle
%\put(1.5,1){\line(-1,0){.95}} % line from z to center of circle
\put(2,1.05){\qbezier(-.03,-.06)(0,0)(.03,.06) % begin bowtie
\qbezier(-.03,.06)(0,0)(.03,-.06)
\put(0,-.06){\qbezier(-.03,0)(0,0)(.03,0)}
\put(0,.06){\qbezier(-.03,0)(0,0)(.03,0)}
} % end bowtie
\put(1,1.05){\qbezier(-.03,-.06)(0,0)(.03,.06) % begin bowtie
\qbezier(-.03,.06)(0,0)(.03,-.06)
\put(0,-.06){\qbezier(-.03,0)(0,0)(.03,0)}
\put(0,.06){\qbezier(-.03,0)(0,0)(.03,0)}
} % end bowtie
%  \put(0.9,1.3){$Y$}
%\put(1.55,1.05){\qbezier(0,0)(.3,.2)(.95,0)} % curve from y to center of circle
\put(1.8,.4){$X$}
\put(1.8,.85){$B$}
\put(1.2,.85){$T$}
   }  % end first figure
            \put(2.9,1){$\leftrightarrow$}
 \put(3,0){  \put(1.5,1){ % beginning of circle  %% begin second figure
    \qbezier(1,0)(1,.4142)(.7071,.7071)
    \qbezier(.7071,.7071)(.4142,1)(0,1)
    \qbezier(-1,0)(-1,.4142)(-.7071,.7071)
    \qbezier(-.7071,.7071)(-.4142,1)(0,1)
    \qbezier(1,-0)(1,-.4142)(.7071,-.7071)
    \qbezier(.7071,-.7071)(.4142,-1)(0,-1)
    \qbezier(-1,-0)(-1,-.4142)(-.7071,-.7071)
    \qbezier(-.7071,-.7071)(-.4142,-1)(0,-1)
    } % end of circle
    \put(1.5,1){$\ast$}  % center of circle
\put(-.02,0){     \put(2.47,1){$\bullet\ y$}
}
  \put(1.3,1.56){$A$}
   \put(0.47,1){$\bullet$} % bullet at z
   \put(.34,1){$z$}
\qbezier(.5,1.04)(1.5,1.9)(2.45,1.08) % curved line from z to y
\qbezier(.5,1.04)(1.5,.15)(2.5,1.08) % curved line from z to y
\put(1.55,1.05){\line(1,0){.95}} % line from x to center of circle
\put(2,1.05){\qbezier(-.03,-.06)(0,0)(.03,.06) % begin bowtie
\qbezier(-.03,.06)(0,0)(.03,-.06)
\put(0,-.06){\qbezier(-.03,0)(0,0)(.03,0)}
\put(0,.06){\qbezier(-.03,0)(0,0)(.03,0)}
} % end bowtie
%  \put(0.9,1.3){$Y$}
\put(1.55,1.05){\qbezier(0,0)(.3,.2)(.95,0)} % curve from x to center of circle
\put(1.8,.4){$X$}
\put(1.8,.85){$B$}
\put(1.75,1.2){$T^\ast$}
   }  %% end second picture
\end{picture}}
}}
%\vs5
\caption{Exceptional Case 2(a)$\leftrightarrow$2(b) mutation. $T^\ast=B[1]$}
\label{Figure Case 2a}
\end{center}
\end{figure}
%

%\newpage

\subsection{Embedding finite $D_n$}

For any $n\ge 4$, the cluster category of type $D_n$ can be realized as a subquotient category of the continuous cluster category $\cC_\pi^\psi$ in two different ways. One method is to ``freeze'' all regular objects $SE(x,y)$ in $\cC_\pi^\psi$ where $|y-x|\le \pi/n$. We can start with any lamination $\cL$ with singular set $S(\cL)=\{k\pi/n\,|\, k\in\ZZ\}$. The lamination will include regular objects isomorphic to $X_k=SE((k-1)\pi/n,k\pi/n)$ and have sublaminations of the $n$ open intervals $J(X_k)=((k-1)\pi/n,k\pi/n)$ which are ``frozen'', i.e., fixed.

Now consider only laminations which include the objects $X_k$. The possible mutable objects are:
\begin{enumerate}
\item The $2n$ singular objects $Z_\pm(k\pi/n)$ and
\item The $n^2-2n$ regular objects $SE(j\pi/n,k\pi/n)$ where $1\le j\le n$ and $j+2\le k<j+n$.
\end{enumerate}
Readers familiar with cluster categories can readily check that clusters of this form mutate in the same way that clusters of type $D_n$ mutate. A more formal approach is given below.

Another method of embedding the cluster category of type $D_n$ into a quotient of $\cC_\pi^\psi$ is to take $b=\frac{(n-1)\pi}n,n\ge4$ and consider $\cC_b^\psi$. This is the stable category of the Frobenius category $\cF_b^\psi$ which is the idempotent completed orbit category of $\cF_b$. It was shown in \cite{IT10} that the stable category of $\cF_b$ contains the cluster category of type $A_{2n-3}$. The inclusion functor $\cC_{A_{2n-3}}\into \cC_b$ is equivariant with respect to the $\ZZ/2$ action given by rotation by $\pi$ and therefore induces an inclusion functor on orbit cluster categories $\cC_{D_n}\into \cC_b^\psi$.

\subsection{Comments on $\ZZ/p$ actions}

Assume that $p$ is an odd prime. Then the group $\ZZ/p$ acting by rotation by $2\pi/p$ acts freely on the set of unordered pairs of point on the circle. Therefore, all of the objects in $\cF_\pi^{\ZZ/p}$ are regular. However, as objects of the stable category $\cC^{\ZZ/p}=\ul\cF_\pi^{\ZZ/p}$, they are not all ``rigid''. Since all objects are isomorphic to their shifts, the identity map is a self-extension of any object in $\cC^{\ZZ/p}$. Therefore, we define an object $X$ to be \emph{rigid} is its endomorphism ring is $K$. We define $X$ to be \emph{almost rigid} if $X$ is the limit of a sequence of rigid objects. One can also take the following lemma as the definition of ``rigid'' and ``almost rigid''.

\begin{lem}
An indecomposable object $X$ in $\cC^{\ZZ/p}$ is rigid if and only if $X$ is isomorphic to $SE(x,y)$ where $x<y< x+2\pi/p$. $X$ is almost rigid if an only if $X\cong SE(x,y)$ where $x<y\le x+2\pi/p$. If $X$ is almost rigid but not rigid then its endomorphism ring is isomorphic to $K[\e]$, with ring of dual number, with $\e^2=0$.
\end{lem}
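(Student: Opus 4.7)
The plan is to compute $\End_{\cC^{\ZZ/p}}(SE(x,y))$ directly and classify indecomposable objects by this ring. The stable-category adjunction (Corollary~\ref{formula for F(SX,SY)} applied in $\ul\cF$) gives an $R$-module isomorphism
\[
\End_{\cC^{\ZZ/p}}(SE(x,y)) \;\cong\; \bigoplus_{k=0}^{p-1} \ul\cF_\pi\bigl(E(x,y),\,\g^k E(x,y)\bigr),
\]
where $\g$ denotes rotation by $2\pi/p$. Each summand is then controlled by the cocycle rule in $\cX_\pi$: the submodule of $\cF_\pi(X,Z)\cong R$ consisting of maps factoring through a projective-injective $Y = E(a,a)$ is generated by $u^{c(X,Y,Z)}$, and taking the minimum over $a$ gives $\ul\cF_\pi(X,Z)\cong R/u^{c_{\min}}R$.

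When $y-x < 2\pi/p$, each rotate $\g^k E(x,y)$ with $k\neq 0$ is disjoint from $E(x,y)$ and shares no endpoint with it; taking $Y=E(y,y)$ yields $c=0$, so every $k\neq 0$ summand vanishes. This gives $\End=K$, establishing rigidity in this range. On the boundary $y-x=2\pi/p$, exactly two rotates share an endpoint with $E(x,y)$: $\g E(x,y)=E(y,y+2\pi/p)$ at $y$, and $\g^{-1} E(x,y)=E(x-2\pi/p,x)$ at $x$; the remaining rotates stay disjoint and contribute nothing as before. A direct cocycle computation produces an asymmetry between the two end-sharing cases: for $k=1$, $Y=E(y,y)$ still gives $c=0$, killing this summand, but for $k=-1$ no choice of $Y=E(a,a)$ can realize $i+j=k=1$ (the two cases $(i,j)=(0,1)$ and $(1,0)$ each force incompatible constraints on $a$), so $c_{\min}=1$ and this summand equals $R/uR\cong K$. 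Thus $\End(SE(x,y))\cong K\oplus K\e$ as a $K$-vector space. I expect this asymmetry---the two end-sharing rotates look geometrically symmetric but the orientation of the cyclic poset breaks the symmetry---to be the main technical hurdle.

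To pin down the ring structure as $K[\e]/(\e^2)$, I would compute $\e^2$ using the composition rule of Corollary~\ref{formula for F(SX,SY)}: $\e^2$ has its only possibly nonzero coordinate in $\ul\cF_\pi(E(x,y),\g^{-2} E(x,y))$. For $p\geq 5$, the arc $\g^{-2} E(x,y)$ is disjoint from $E(x,y)$ (separated from it by $\g^{-1} E(x,y)$, hence by more than $y-x$), and the rigid-case argument gives $c=0$ and so kills the hom. For $p=3$, $\g^{-2}=\g$ and we have already shown $\cC_\pi(E(x,y),\g E(x,y))=0$. Either way $\e^2=0$.

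Finally, the almost-rigid characterisation follows from continuity. The length parameter $y-x$, standardised to $(0,\pi]$ using $E(x,y)\cong E(y,x+2\pi)$, is isomorphism-invariant and varies continuously with the object. The rigid locus $\{y-x<2\pi/p\}$ is open, and its closure is $\{y-x\leq 2\pi/p\}$, matching the stated characterisation. Any $SE(x,y)$ with $y-x>2\pi/p$ retains the $k=-1$ summand in its endomorphism ring and so has $\End\supsetneq K$, preventing it from being a limit of rigid objects.
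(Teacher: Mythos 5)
Your proposal is correct, and its skeleton is the paper's: both open with the stable adjunction formula $\End_{\cC^{\ZZ/p}}(SE(x,y))\cong\bigoplus_{k}\cC_\pi\bigl(E(x,y),E(x+k\th,y+k\th)\bigr)$, $\th=2\pi/p$, and then decide which summands vanish. Where you genuinely diverge is the middle step. The paper disposes of it in one line by quoting the half-open nonvanishing criterion for $\cC_\pi$ (nonzero iff $x\le x+k\th<y$ or $x\le y+k\th<y$, which happens for some $k\not\equiv 0$ iff $x\le y-\th$), a fact resting on the type-$A$ analysis of \cite{IT10}; you instead re-derive the vanishing from the cyclic-poset structure: maps factoring through a projective-injective $E(a,a)$ generate $u^{c(X,E(a,a),Z)}R\subseteq\cF_\pi(X,Z)\cong R$, so $\ul\cF_\pi(X,Z)\cong R/u^{c_{\min}}R$, and explicit cocycle arithmetic in the covering poset $\tilde\cX_\pi$ does the rest. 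I checked your two boundary computations in the covering poset and they are right, including the crucial orientation asymmetry (for $k=1$, the lift of $Y=E(y,y)$ sits between the lifts of $X$ and $Z$ with $i=j=k=0$, so $c=0$; for $k=-1$, neither $(i,j)=(0,1)$ nor $(1,0)$ is realizable for any $a$, so $c_{\min}=1$ and the summand is $K$); one caution is that the minimal exponents in $c=i+j-k$ must range over all integers, positive and negative --- with a badly chosen lift of $Z$ the naive minimum over nonnegative powers gives the wrong answer --- but your stated constraints are consistent with the correct convention. Your route buys self-containedness and makes explicit two points the paper compresses into ``The lemma follows'': the verification that $\e^2=0$ via the $k=-2$ graded piece (with the $p=3$ wrap-around $\g^{-2}=\g$ handled separately), and the identification of the boundary endomorphism ring as exactly $K\oplus K\e$ rather than merely strictly larger than $K$. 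Your closure/continuity argument for the almost-rigid statement is at the same heuristic level as the paper's one-sentence version, which is appropriate since the paper explicitly defers the topology on objects to a later paper.
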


\begin{proof} The second statement follows from the first since the set of objects described in the second statement is the closure of the set described in the first statement. To prove the first statement, we may assume by symmetry that $X=SE(x,y)$ where $x<y\le x+\pi$. Then we use the adjunction formula to give
\[
	\cC^{\ZZ/p}(SE(x,y),SE(x,y))=\bigoplus_k \cC(E(x,y),E(x+k\th,y+k\th))
\]
where $\th=2\pi/p$. But $\cC(E(x,y),E(x+k\th,y+k\th))\neq0$ if and only if either $x\le x+k\th<y$ or $x\le y+k\th <y$. This holds iff $x\le y-\th$. The lemma follows.
\end{proof}

By this lemma, all almost rigid objects are given, up to isomorphism, by $X=SE(x,y)$ where $x<y\le x+2\pi/p$. Let $J(X)$ be the open interval $(x,y)$ in the circle $S^1_{2\pi/p}=\RR/\frac{2\pi}p\ZZ$. We represent the object $SE(x,y)$ by a embedded arc in the disk $D^2_p=D^2/(\ZZ/p)$ connecting the points $x,y\in S^1_{2\pi/p}$, disjoint from the center point, and homotopic to $J(X)$ fixing the endpoints and avoiding the center. Then the two objects $SE(x,y)$, $SE(y,x+2\pi/p)$ are nonisomorphic but have the same endpoints. In the limiting case where $y=x+2\pi/p$, this embedded arc is a loop at the point $x$ which encloses the center of the disk $D^2_p$. We use the shorthand notation $SE(x)=SE(x,x+2\pi/p)$.

Almost rigid objects $X,Y$ are defined to be \emph{compatible} if there is a sequence of objects $Y_n$ converging to $Y$ so that $\cC_\pi^{\ZZ/p}(X,Y_n)=0$. 

\begin{lem}
Two rigid objects $X,Y$ are compatible if and only if the sets $J(X),J(Y)$ are either disjoint or one contains the other. In particular, two almost rigid objects $SE(x),SE(y)$ are compatible if and only if they are isomorphic (equivalently, $x-y$ is an integer multiple of $2\pi/p$). 
\end{lem}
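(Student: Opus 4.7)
\emph{Plan.} The plan is to use the adjunction of Proposition \ref{adjunction formula} (descended to stable categories) together with Corollary \ref{formula for F(SX,SY)} to write
\[
\cC^{\ZZ/p}(SE(x,y), SE(z,w)) \;\cong\; \bigoplus_{k=0}^{p-1} \cC_\pi\bigl(E(x,y),\, E(z+k\theta,\, w+k\theta)\bigr), \qquad \theta = \tfrac{2\pi}{p},
\]
and to compute each summand by the nonvanishing criterion read off from the proof of Lemma \ref{lem: maps between objects in C}: $\cC_\pi(E(a,b), E(c,d)) \neq 0$ precisely when one can choose real representatives with $a \le c < b \le d < c + \pi$.

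For the rigid case, write $X = SE(x, x+s)$ and $Y = SE(z, z+t)$ with $0 < s, t < \theta$. The $k$-th summand is nonzero iff $z + k\theta \in [x, x+s) \pmod{2\pi}$ and $z + t + k\theta \ge x + s$ (the bound $d < c + \pi$ is automatic since $t < \theta < \pi$). Because $s < \theta$, the window $[x, x+s)$ injects under $S^1 \to S^1_{2\pi/p}$, so existence of such a $k$ is equivalent to the geometric condition $\bar z \in [\bar x, \bar x + \bar s) \subset S^1_{2\pi/p}$ together with $\bar z + \bar t \notin J(X)$. This is one half of the crossing configuration of $J(X), J(Y)$; the symmetric computation for $\cC^{\ZZ/p}(Y, X)$ captures the other half, and their disjunction is precisely that $J(X), J(Y)$ cross in $S^1_{2\pi/p}$. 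Hence both hom spaces vanish if and only if $J(X), J(Y)$ are disjoint or one contains the other. For rigid $Y$ the constant sequence $Y_n = Y$ then witnesses compatibility in that case; in the crossing case the strict inequalities above are open conditions preserved under small perturbations of the endpoints, so $\cC^{\ZZ/p}(X, Y_n) \neq 0$ for every rigid $Y_n$ sufficiently close to $Y$ and no approximating sequence realizes compatibility.

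For the ``in particular'' claim, the isomorphic case $SE(x) \cong SE(y)$ is trivial via $Y_n = X$. Otherwise, after rotating by a multiple of $\theta$ we may write $y = x + s$ with $0 < s < \theta$; approximate $Y = SE(y, y + \theta)$ by any rigid $Y_n = SE(y_n, y_n + \theta - \epsilon_n)$ with $y_n \to y$ and $\epsilon_n \to 0^+$. Then $J(Y_n) \subset S^1_{2\pi/p}$ converges to $S^1_{2\pi/p} \setminus \{\bar y\}$, which crosses $J(X)$ (close to $S^1_{2\pi/p} \setminus \{\bar x\}$) since $\bar x \neq \bar y$; the rigid analysis then forces $\cC^{\ZZ/p}(X, Y_n) \neq 0$ for all large $n$, ruling out compatibility.

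The main technical obstacle is the reduction from the $p$-fold adjunction decomposition to a single geometric crossing condition on $J(X), J(Y) \subset S^1_{2\pi/p}$; this rests on the rigidity bounds $s, t < \theta$, which ensure that the half-open window $[x, x+s)$ maps injectively to $S^1_{2\pi/p}$ and that at most one of the $p$ summands can be nonzero. Boundary cases, where an endpoint of one arc sits exactly at an endpoint of the other after projection, require a small separate check that the dichotomy ``compatible iff disjoint or nested'' is still respected.
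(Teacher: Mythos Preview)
Your overall strategy---reduce $\cC^{\ZZ/p}(SX,SY)$ to a direct sum of $\cC_\pi$-homs via adjunction and translate each summand into an arc-crossing condition on $S^1_{2\pi/p}$---is exactly the right one, and it is essentially what the paper has in mind when it says the $p=2$ argument carries over (the paper gives no separate proof here). However, there is a genuine gap in your nonvanishing criterion for $\cC_\pi$.

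You assert that $\cC_\pi(E(a,b),E(c,d))\neq0$ precisely when one can choose representatives with $a\le c<b\le d<c+\pi$. This only records one of the two ways the chords $(a,b)$ and $(c,d)$ can cross on $S^1$: it forgets that $E(c,d)\cong E(d,c+2\pi)$, so the symmetric configuration with $d\in[a,b)$ and $c\notin[a,b)$ also makes the hom nonzero. (Compare the proof of the preceding rigidity lemma in the paper, which uses the disjunction ``$x\le x+k\theta<y$ \emph{or} $x\le y+k\theta<y$''.) Concretely, take $p=3$, $X=SE(0,\pi/3)$, $Y=SE(-\pi/6,\pi/6)$: your stated criterion gives $\cC^{\ZZ/p}(X,Y)=0$, yet the $k=0$ summand is nonzero via the orientation $E(\pi/6,-\pi/6+2\pi)$. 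This is why you are led to the false intermediate claim that $\cC^{\ZZ/p}(X,Y)$ and $\cC^{\ZZ/p}(Y,X)$ each detect only ``one half'' of the crossing; in fact each one alone already detects the full crossing (equivalently, $\Hom(X,Y)=0\Leftrightarrow\Hom(Y,X)=0$, as follows from the 2-Calabi--Yau property since $E(a,b)[1]\cong E(a,b)$ in $\cC_\pi$).

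The fix is minor: replace your one-sided criterion by the symmetric chord-crossing condition on $S^1$ (one endpoint of $\{c,d\}$ lies in $(a,b)$ and the other in $(b,a+2\pi)$). With that in hand, $\cC^{\ZZ/p}(X,Y)\neq0$ is directly equivalent to $J(X)$ and $J(Y)$ crossing in $S^1_{2\pi/p}$, your appeal to $\Hom(Y,X)$ becomes unnecessary, and the rest of your argument (openness for the incompatible direction, constant sequence for the compatible direction, and the limiting analysis for the almost-rigid ``in particular'' claim) goes through as written.
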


As before, this is equivalent to the statement that the corresponding arcs in the punctured disk do not cross (i.e., they intersect only at their endpoints). We define a \emph{cluster} to be a discrete maximal compatible sets of almost rigid objects. We will see that these form a cluster structure in the triangulated category $\cC_\pi^{\ZZ/p}$. We also refer to them as \emph{discrete laminations} of the disk $D^2_p$.

Following the same arguments and using the same visualization as in the case $p=2$, we get the following theorem.

\begin{thm}
Every discrete lamination $\cL$ of the disk $D^2_p$ is given by $\cL=\{SE(x)\}\coprod \cL_a\coprod \cL_b$ where, for some pair of points $x<y<x+2\pi/p$ in $S^1_{2\pi/p}$, $\cL_a$ is a discrete sublamination of $(x,y)$ and $\cL_b$ is a discrete sublamination of $(y,x+2\pi/p)$. In particular, every discrete lamination of $D^2_p$ has exactly one object which is not rigid.
\end{thm}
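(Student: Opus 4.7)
The plan is to mirror the argument of the $p=2$ structure theorem (Theorem \ref{description of all discrete laminations}) above, with the single non-rigid almost-rigid class $SE(x)$ at each boundary point playing the role that singular pairs played there. I would proceed in three stages: first show that any discrete lamination $\cL$ contains at least one loop; then show it contains at most one loop; and finally locate a splitting point $y\in (x, x+2\pi/p)$ decomposing the remaining rigid objects of $\cL$ into two interval sublaminations.

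For existence, suppose $\cL$ contains no loop and set $R(\cL)=\bigcup_{Y\in\cL} J(Y)\subseteq S^1_{2\pi/p}$. By the compatibility lemma the arcs $J(Y)$ are pairwise nested or disjoint, so if $R(\cL)=S^1_{2\pi/p}$ then compactness of the circle supplies a finite subcover whose inclusion-maximal members are pairwise disjoint open proper arcs covering the connected space $S^1_{2\pi/p}$, which is impossible. Hence some $x\in S^1_{2\pi/p}\setminus R(\cL)$ exists; since $x\notin J(Y)$ for every $Y\in\cL$, the loop $SE(x)$ is compatible with all of $\cL$ (loop-versus-rigid compatibility amounts to ``$x$ lies outside the interior of the arc''), contradicting maximality. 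Uniqueness is immediate from the compatibility lemma: two loops with distinct basepoints are incompatible. Let $SE(x)$ denote the unique loop; every other $Y\in\cL$ is rigid and forced to satisfy $J(Y)\subseteq (x,x+2\pi/p)$.

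To produce $y$, let $R'=\bigcup_{Y\in\cL\setminus\{SE(x)\}} J(Y)\subseteq (x,x+2\pi/p)$ and assume for contradiction that $R'$ equals the whole interval. For each $z\in (x,x+2\pi/p)$, set $I_z=\bigcup\{J(Y):Y\in\cL\setminus\{SE(x)\},\,z\in J(Y)\}$; by the nesting property each $I_z$ is an open sub-interval, and for distinct $z$ the sets $I_z$ are either equal or disjoint. Connectedness of $(x,x+2\pi/p)$ then forces $I_{z_0}=(x,x+2\pi/p)$ for some $z_0$, which yields a sequence $Y_n\in \cL$ with $J(Y_n)\nearrow (x,x+2\pi/p)$ and $\el(Y_n)\to 2\pi/p$. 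Then $h(Y_n)\to h(SE(x))$ in $\cH$, so the $h(Y_n)$ cluster and no $h(Y_n)$ is isolated in $h(\cL)$ for large $n$, contradicting discreteness. Consequently some $y\in (x,x+2\pi/p)$ lies outside every $J(Y)$ with $Y\in\cL$ rigid, and each such $J(Y)$ sits inside either $(x,y)$ or $(y,x+2\pi/p)$. Split $\cL\setminus\{SE(x)\}=\cL_a\coprod\cL_b$ accordingly. Maximality of $\cL_a$ as a sublamination of $(x,y)$ is then clear: any compatible rigid object with support in $(x,y)$ is automatically compatible with $SE(x)$ and with $\cL_b$ (disjoint supports), so it lies in $\cL$ and hence in $\cL_a$; discreteness of $h(\cL_a)$ passes from $h(\cL)$ in the subspace topology. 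The ``in particular'' clause follows from the first two stages.

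The main obstacle is supplying the loop-versus-rigid compatibility input used in Step 1: the paper's compatibility lemma is stated only for loop-loop and rigid-rigid pairs, so one must verify via the defining Hom-vanishing condition (together with the adjunction formula used to establish that lemma) that $SE(x)$ is compatible with $SE(z,w)$ exactly when $x\notin (z,w)$. This is the same type of \Hom-support computation as the one justifying the rigid-rigid case, and once it is in place the remaining steps are purely combinatorial and parallel the $p=2$ argument closely.
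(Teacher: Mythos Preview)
Your argument is correct and close in spirit to the paper's, but the organization differs enough to be worth a remark. You first pin down the unique loop $SE(x)$ and then look for a missing point $y$ inside $(x,x+2\pi/p)$ via a direct sequence argument against discreteness. The paper instead applies Zorn's lemma to the poset of intervals $J(X)$ for rigid $X\in\cL$ to extract a maximal interval $(x,y)$, shows the already-found uncovered point must be one of its endpoints, and then observes that $SE(y,x+2\pi/p)$ is forced into $\cL$ as well. Both routes use exactly the same two ingredients (the nested/disjoint covering obstruction on the circle, and discreteness to rule out arcs of length approaching $2\pi/p$); your version avoids Zorn at the cost of the connectedness-of-$I_z$ step, while the paper's Zorn argument makes the two maximal arcs $SE(x,y)$ and $SE(y,x+2\pi/p)$ visibly members of $\cL$.

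Your self-flagged ``obstacle'' about loop--rigid compatibility is not really a gap relative to the paper: the paper's proof uses the same unproven fact (``$SE(z)$ is compatible with every object in $\cL$'' and ``$z\notin(x,y)$ since $SE(z)$ is compatible with $SE(x,y)$'') and, as you say, it follows from the identical Hom computation that established the rigid--rigid case.
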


\begin{proof} First of all, the circle $S^1_{2\pi/p}$ cannot be covered with a collection of open intervals which are either disjoint or one contains the other. Therefore, there is at least one point $z$ which is not in $J(X)$ for any $X\in\cL$. Then $SE(z)$ is compatible with every object in $\cL$. So, $SE(z)\in\cL$ (up to isomorphism).

Consider the set of all intervals $J(X)=(x,y)$ for all rigid $X\in\cL$. This set is ordered by inclusion and contains the supremum of any ascending tower. This follows from the fact that, if $(x,y)$ is the union of any increasing sequence of intervals, then $SE(x,y)$ will be compatible with every object of $\cL$ and therefore must lie in $\cL$ by maximality of $\cL$. Also, the case $y=x+2\pi/p$ is not possible since $\cL$ is discrete and therefore any converging sequence is eventually stationary.

By Zorn's lemma, the set of rigid objects of $\cL$ contains an object $X$ so that $J(X)=(x,y)$ is maximal. Then $x<y<x+2\pi/p$ by the lemma. Then $SE(x,y)$ will be compatible with all objects of $\cL$. So, $SE(x,y)\in\cL$. Furthermore, $z\notin(x,y)$ since $SE(z)$ is compatible with $SE(x,y)$. But then $z$ must equal $x$ or $y$ since, otherwise $Y=SE(x,z)$ will be a rigid object compatible with all objects of $\cL$. So, $Y\in\cL$ contradicting the maximality of the interval $(x,y)$. From this it follows that $SE(y,x+2\pi/p)$ is compatible with all objects in $\cL$. So, it must be an object in $\cL$. Then, for any $Z\in \cL$, either $Z=SE(z)$ or $J(Z)$ is contained in one of the intervals $(x,y)$ or $(y,x+2\pi/p)$. The objects of the first kind form a sublamination of $(x,y)$ and the objects of the second kind form a sublamination of $(y,x+2\pi/p)$ proving the theorem.
\end{proof}

\begin{cor}
Any two clusters $\cL,\cL'$ in $\cC_\pi^{\ZZ/p}$ are equivalent in the sense that there is a strictly triangulated automorphism of $\cC_\pi^{\ZZ/p}$ induced by an homeomorphism of the circle which takes $\cL$ to $\cL'$.
\end{cor}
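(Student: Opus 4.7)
The plan is to exhibit, for any two clusters $\cL, \cL'$ in $\cC_\pi^{\ZZ/p}$, an orientation-preserving homeomorphism $\phi : S^1 \to S^1$ commuting with rotation by $2\pi/p$, whose induced assignment $E(a,b) \mapsto E(\phi(a), \phi(b))$ carries the endpoint data of $\cL$ to that of $\cL'$. Because $\phi$ preserves cyclic order on $S^1$ it preserves the cocycle $c$ on $\cX_\pi$, and because it commutes with the $\ZZ/p$-action it descends to an $R$-linear automorphism of $\cF_\pi^{\ZZ/p}$. By Corollary \ref{cor: psi is automatically exact} this automorphism is automatically exact, and hence passes to a strictly triangulated automorphism of the stable category $\cC_\pi^{\ZZ/p}$ that, by construction, carries $\cL$ to $\cL'$.

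By $\ZZ/p$-equivariance it suffices to construct $\phi$ on the fundamental domain $[x, x+2\pi/p]$ and extend by the rotation. Using the structure theorem just proved, write $\cL = \{SE(x)\} \cup \cL_a \cup \cL_b$ with splitting point $y \in (x, x+2\pi/p)$, and $\cL' = \{SE(x')\} \cup \cL_a' \cup \cL_b'$ with splitting point $y'$. The problem reduces to building an orientation-preserving homeomorphism $\phi_0:[x, x + 2\pi/p] \to [x', x' + 2\pi/p]$ that sends $x \mapsto x'$, $y \mapsto y'$, $x + 2\pi/p \mapsto x' + 2\pi/p$, and whose restrictions carry $\cL_a$ to $\cL_a'$ and $\cL_b$ to $\cL_b'$. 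Gluing the two sub-homeomorphisms along the common point $y\mapsto y'$ and then setting $\phi(t + 2k\pi/p) := \phi_0(t) + 2k\pi/p$ for $k=0,\ldots,p-1$ yields the desired $\ZZ/p$-equivariant homeomorphism of $S^1$.

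The core technical ingredient is therefore the following lemma: any two discrete sublaminations $\cL_0, \cL_0'$ of open intervals $I, I'$ of length less than $2\pi/p$ are matched by an orientation-preserving homeomorphism $I \to I'$ that extends continuously to the closed intervals sending endpoints to endpoints. By the analogue of Lemma \ref{decomposition of discrete sublaminations} for the $\ZZ/p$ setting, each discrete sublamination decomposes canonically as its boundary arc together with two discrete sublaminations of sub-intervals meeting at a unique splitting point. Iterating produces a rooted binary tree whose nodes encode all of the arcs; since a maximal sublamination of any non-degenerate open sub-interval must contain strictly more than the boundary arc of that sub-interval, no node of the tree is a leaf, so both trees are the complete infinite binary tree. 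The construction is now a straightforward back-and-forth: match roots, match splitting points, and recurse. This produces a bijection between the arcs of $\cL_0$ and $\cL_0'$ and, via the left and right endpoints of arcs, an order-preserving bijection between the two sets of endpoints in the closed intervals.

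The main obstacle lies in verifying that this order-preserving bijection extends to an actual homeomorphism of closed intervals $I \to I'$. Density of the set of splitting points in $[x,y]$ (which follows from the recursive decomposition, since every open sub-interval contains a splitting point at some finite depth of the tree) guarantees that the monotone extension is single-valued and bijective, while continuity follows from the fact that the back-and-forth sends splitting points at tree-depth $\le n$ to splitting points at depth $\le n$, so nested intervals of shrinking length in $I$ are carried to nested intervals of shrinking length in $I'$. The boundary conditions $x \mapsto x'$, $y \mapsto y'$ are built into the construction. With the key lemma in place, assembling $\phi_0$ on the fundamental domain and extending $\ZZ/p$-equivariantly as above completes the construction; the resulting triangulated automorphism of $\cC_\pi^{\ZZ/p}$ carries $\cL$ to $\cL'$, as required.
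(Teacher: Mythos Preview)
The paper states this corollary without proof, treating it as an immediate consequence of the preceding structure theorem; your argument supplies the details the paper omits and is essentially correct. The recursive tree decomposition of a discrete sublamination (Lemma~\ref{decomposition of discrete sublaminations}) does produce a complete infinite binary tree, and your density claim follows from Lemma~\ref{finite number of X with el X ge delta}: if some open subinterval $(a,b)$ contained no splitting point, it would sit inside a strictly nested sequence of sublamination intervals $I_n$, giving infinitely many objects with $\ell \ge b-a$, a contradiction. With density on both sides, the order-preserving bijection of splitting points extends to a homeomorphism of the closed intervals by the standard argument.

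One small point to tighten: your justification that an orientation-preserving homeomorphism $\phi$ of $S^1$ ``preserves the cocycle $c$ on $\cX_\pi$'' is slightly glib, since $c$ is defined via the covering poset $\tilde\cX_\pi$ and the automorphism $\sigma$, not directly via the cyclic order on $S^1$. The correct statement is that $\phi$ lifts to an increasing homeomorphism $\tilde\phi$ of $\RR$ with $\tilde\phi(x+2\pi)=\tilde\phi(x)+2\pi$, and one checks that this acts on isomorphism classes of objects and on basic morphisms compatibly with composition; this is the content of the construction in \cite{IT10}, which the paper is implicitly invoking when it speaks of automorphisms ``induced by a homeomorphism of the circle.'' With that reference in hand, your Corollary~\ref{cor: psi is automatically exact} argument and the descent to $\cF_\pi^{\ZZ/p}$ go through, and the induced functor on stable categories is triangulated as claimed.
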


\begin{thm}
Given any object $T$ in any cluster $\cL$, there is, up to isomorphism, exactly one object $T^\ast$ so that $\cL\backslash T\cup T^\ast$ is a cluster. Furthermore, the object $T^\ast$ can be obtained from $T$ by forming distinguished triangles:
\[
	T\to A\to T^\ast\to T[1],\quad T[-1]\to T^\ast\to B\to T
\]
where $T\to A$ and $B\to T$ are left and right $add\,\cL\backslash T$-approximations of $T$.
\end{thm}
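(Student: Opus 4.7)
The plan is to apply the structure theorem just established: any cluster decomposes as $\cL=\{SE(x)\}\cup \cL_a\cup \cL_b$ where $\cL_a,\cL_b$ are discrete sublaminations of $(x,y)$ and $(y,x+2\pi/p)$, with $SE(x)$ the unique non-rigid element. The proof splits into three cases according to the role of $T$ in this decomposition, mirroring (but simpler than, due to the absence of singular objects when $p$ is odd) the case analysis of Theorem \ref{DL123}. For the \emph{interior case}, when $T=SE(a,b)$ is a rigid object properly nested inside $\cL_a$ (or $\cL_b$), I would adapt the recursive splitting of Lemma \ref{decomposition of discrete sublaminations} to $\cL_a$ to identify the two adjacent pivots $c,d$ and take $T^\ast=SE(c,d)$; the two approximation triangles $T\to X\oplus Y\to T^\ast\to T[1]$ and $T[-1]\to T^\ast\to X'\oplus Y'\to T$ are then built from two instances of Triangle (a) (Proposition \ref{prop:triangle A}) via the Octahedral Axiom, exactly as in the Case 1(a)-1(a) argument for $p=2$.

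For the \emph{boundary case}, when $T=SE(x,y)$ is the top-level arc of $\cL_a$ (and symmetrically for $\cL_b$), applying Lemma \ref{decomposition of discrete sublaminations} to $\cL_a$ produces a unique pivot $z\in(x,y)$ splitting $\cL_a\backslash T$ into discrete sublaminations of $(x,z)$ and $(z,y)$. One then checks that $T^\ast=SE(z,x+2\pi/p)$ is the unique replacement restoring the cluster structure, with $y$ becoming an interior pivot in the new sublamination of $(z,x+2\pi/p)$; compatibility with each surviving object follows from the geometric criterion of the preceding lemma. The approximation triangles again arise by the Octahedral Axiom from Triangle (a) instances at the pivot $z$.

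The main obstacle is the \emph{exceptional case}, $T=SE(x)$. Uniqueness of $T^\ast$ is quick: the new cluster needs exactly one non-rigid object, and compatibility with the still-present top-level arcs $SE(x,y),SE(y,x+2\pi/p)$ forces that object to be supported at $x$ or $y$, leaving only $T^\ast=SE(y)$. The delicate point is the approximation triangles, because $T$ and $T^\ast$ both have endomorphism ring $K[\epsilon]$; this is the $p$-odd analogue of the exceptional mutation 2(a)$\leftrightarrow$2(b) in the $p=2$ theory, for which the Octahedral Axiom does not directly apply. Following the template of Proposition \ref{prop: precursor to Triangle (c)}, I would construct the distinguished triangle $SE(x)\to SE(x,y)\to SE(y)\to SE(x)[1]\cong SE(x)$ by pushing out the two-way approximation sequence for $E(x,x+2\pi/p)$ along the basic morphism $E(x,x+2\pi/p)\to E(y,y+2\pi/p)$, and symmetrically obtain the companion triangle through $SE(y,x+2\pi/p)$. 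To complete this case I would verify that $T\to SE(x,y)\oplus SE(y,x+2\pi/p)$ is a minimal left $add\,\cL\backslash T$-approximation: every nonzero $T\to Y$ with $Y\in\cL\backslash T$ has $Y$ with an endpoint at $x$ or $x+2\pi/p$ (by the pivoting description of morphisms between compatible arcs), and such a chain of arcs in $\cL_a$ or $\cL_b$ necessarily factors through one of these two top-level arcs.
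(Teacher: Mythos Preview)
Your treatment of the interior and boundary cases is correct and matches the paper's approach; those reduce to the Case 1(a)--1(a) and Case 1(b)/1(c) arguments via Triangle (a) and the Octahedral Axiom exactly as you outline.

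The exceptional case $T=SE(x)$ contains a genuine error. You claim that the minimal left $add\,\cL\backslash T$-approximation is $T\to SE(x,y)\oplus SE(y,x+2\pi/p)$, but in fact $\cC_\pi^{\ZZ/p}(SE(x),SE(x,y))=0$: computing the adjunction sum $\bigoplus_k\cC_\pi(E(x,x+\theta),E(x+k\theta,y+k\theta))$ with $\theta=2\pi/p$, every summand vanishes because $y<x+\theta$. Morphisms go the other way, $SE(x,y)\to SE(x)$, and this is what the arrow $T_3\xrightarrow{(1,2)}T_4$ in the paper's quiver records. Consequently your proposed triangle $SE(x)\to SE(x,y)\to SE(y)\to SE(x)[1]$ has zero first map and cannot have cone $SE(y)$. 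The pushout construction you describe (along $E(x,x+\theta)\to E(y,y+\theta)$) produces, after applying $S$, a triangle with $SE(y)$ in the \emph{second} slot, not $SE(x,y)$.

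The paper's computation shows instead that $\cC_\pi^{\ZZ/p}(SE(x),SE(y,x+\theta))\cong K^2$, so the minimal left approximation is $T\to SE(y,x+\theta)^{\oplus 2}$, giving the triangle
\[
SE(x)\to SE(y,x+\theta)^2\to SE(y)\to SE(x)[1],
\]
and dually the right approximation is $SE(x,y)^2\to T$. The doubling reflects that $\End(SE(x))\cong K[\epsilon]$ is two-dimensional while $\End(SE(y,x+\theta))\cong K$: the two-dimensional hom space needs two copies of the rigid object to be covered. This is exactly the phenomenon encoded by the valuation $(2,1)$ on the quiver arrow $T_4\to T_5$.
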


\begin{proof}
We works in the same way as before except in the case when $T$ is the unique nonrigid object of $\cL$. So, we examine only those cases.

Let $T_4=SE(x)$ be the unique nonrigid object of $\cL$. Then $\cL$ also contains two maximal rigid objects $T_3=SE(x,y)$ and $T_5=SE(y,x+2\pi/p)$. By the description of all discrete sublaminations of an interval in Lemma \ref{decomposition of discrete sublaminations}, there exists a point $x<z<y$ so that $T_1=SE(x,z),T_2=SE(z,y)$ are objects of $\cL$ (up to isomorphism). When $T=T_4$, it is easy to see that $T^\ast=SE(y)$ is the only possible mutation of $T$. Also, $T_3^\ast=SE(x,z)$ is the only possible mutation of $T_3$. $T_5$ is similar to $T_3$ and the other objects follow the same pattern as the 1(a)$\leftrightarrow$1(a) case for $\cC_\pi^\psi$ mutation. This proves the first part of the statement.

In the case $T=T_4$, the left $add\,\cL\backslash T_4$ approximation of $T_4$ is the direct sum of two copies of $T_5$ since (with notation $\th=2\pi/p$),
\[
	\cC_\pi^{\ZZ/p}(T_4,T_5)=\bigoplus \cC_\pi(E(x,x+\th),E(y+k\th,x+(k+1)\th))
\]
\[
	= \cC_\pi(E(x,x+\th),E(y,x+\th))\oplus \cC_\pi(E(x,x+\th),E(y-\th,x))=K^2
\]
and we get the distinguished triangle:
\[
	T_4\to T_5^2\to SE(y)\to T_4[1]\cong SE(x).
\]
By symmetry the right $add\,\cL\backslash T_4$ approximation of $T_4$ is $T_3^2$ and we get the approximation triangle:
\[
	SE(y)\to T_3^2\to T_4\to SE(y)[1]\cong SE(y).
\]
The other cases work as before.
\end{proof}

The mutation of the quiver also follows the Fomin-Zelevinsky formula:
\[
\xymatrixrowsep{10pt}\xymatrixcolsep{10pt}
\xymatrix{%begin xy matrix
&&&&&&&&&&\\
&&\ar[r]& T_2\ar[lu]\ar[ddll]&&&&T_4\ar[ddrr]^{(2,1)}\\
&&&&&&&&&&\\
\ \ar[r]&T_1\ar[lu]\ar[rrrr]&&&& T_3\ar[lluu]\ar[rruu]^{(1,2)} &&&& T_5\ar[llll]\ar[ru]&\ \ar[l]
	}%end xy matrix
	\quad B=\mat{0&-1&1 & 0&0\\
	1&0&-1 &0&0\\
	-1&1&0 &2&-1\\
	0&0&-1&0&1\\
	0&0&1&-2&0}
\]
The valuation $T_4\xrarrow{(2,1)}T_5$ comes from the fact that $\cC_\pi^{\ZZ/p}(T_4,T_5)=K^2$ is free with one generator as an $\End(T_4)$-module but free with 2 generators as an $\End(T_5)$-module. Similarly for $T_3\xrarrow{(1,2)}T_4$. Mutation at $T_4$ gives:
\[
\xymatrixrowsep{10pt}\xymatrixcolsep{10pt}
\xymatrix{%begin xy matrix
&&&&&&&&&&&\\
&&\ar[r]& T_2\ar[lu]\ar[ddll]&&&&T_4^\ast\ar[ddll]_{(1,2)}\\
&&&&&&&&&&&&&&&\\
\ \ar[r]&T_1\ar[lu]\ar[rrrr]&&&& T_3\ar[rrrr]\ar[lluu] &&&& T_5\ar[lluu]_{(2,1)}\ar[ru]&\ar[l]
	}%end xy matrix
	\quad B=\mat{0&-1&1 & 0&0\\
	1&0&-1 &0&0\\
	-1&1&0 &-2&1\\
	0&0&1&0&-1\\
	0&0&-1&2&0}
\]
Subsequent mutation at $T_3$ gives:
\[
\xymatrixrowsep{10pt}\xymatrixcolsep{10pt}
\xymatrix{%begin xy matrix
&&\ar[r]& T_2\ar[ld]\ar[ddrr]&&&&T_4^\ast\ar[llll]_{(1,2)}\\
&&&&&&&\\
\ar[r]&T_1\ar[lu]\ar@/_2pc/[rrrrrrrr]&&&& T_3^\ast\ar[llll]\ar[rruu]^{(1,2)} &&&& T_5\ar[llll]\ar[r]&\\
&&&&&&&&&&\ar[lu]
	}%end xy matrix
	\quad B=\mat{0&0&-1 & 0&1\\
	0&0&1 &-2&0\\
	1&-1&0 &2&-1\\
	0&1&-1&0&0\\
	-1&0&1&0&0}
\]

%   Figure for Zp cluster
\begin{figure}[htbp]
\begin{center}
%
%\vs5
{
\setlength{\unitlength}{.9in}
%\centerline
{\mbox{
\begin{picture}(3,2)
      \thicklines
          % end first figure
{  \put(1.5,1){ % beginning of circle  %% begin second figure
    \qbezier(1,0)(1,.4142)(.7071,.7071)
    \qbezier(.7071,.7071)(.4142,1)(0,1)
    \qbezier(-1,0)(-1,.4142)(-.7071,.7071)
    \qbezier(-.7071,.7071)(-.4142,1)(0,1)
    \qbezier(1,-0)(1,-.4142)(.7071,-.7071)
    \qbezier(.7071,-.7071)(.4142,-1)(0,-1)
    \qbezier(-1,-0)(-1,-.4142)(-.7071,-.7071)
    \qbezier(-.7071,-.7071)(-.4142,-1)(0,-1)
    } % end of circle
    \put(1.5,1){$\ast$}  % center of circle
\put(-.02,0){     \put(2.47,1){$\bullet\ x$}
}
  \put(1.7,1.6){$T_1$}
  \put(1.35,1.4){$T_3$}
%     \qbezier(2.5,1.05)(.53,.75)(2.5,1.05) % arc z to x
%    \qbezier(1,.75)(1,1.5)(1.5,2)    % y to middle of x to y arc
%    \qbezier(1,.75)(1,0)(2.5,1.05)    % x to middle of x to y arc
%   \put(1,1){$SY$}
\qbezier(.93,.14)(1.1,1.9)(2.45,1.08) % curved line from y to x
\qbezier(.92,.145)(1.7,.4)(2.5,1.07) % lower curved line from y to x
\qbezier(.93,.14)(.7,1)(1.5,2) % curved line from y to z 
%\put(1.55,1.05){\line(1,0){.95}} % line from x to center of circle
  \put(0.85,1.3){$T_2$}
\put(1.55,1.05){\qbezier(-.15,0)(-.15,.2)(.95,0)} % curve from x to center of circle
\put(1.55,1.05){\qbezier(-.15,0)(-.15,-.2)(.95,0)} % curve from x to center of circle
   \put(1.45,1.95){$\bullet$} % bullet at y
   \put(1.5,2.1){$z$}
%   \put(1.6,1.5){$SX$}
%   \qbezier(1.7,1.87)(1.85,1.78)(1.9,1.8) % top of arrow head
%   \qbezier(1.7,1.87)(1.8,1.75)(1.8,1.7) % bottom of arrow head
   \qbezier(1.5,2)(2,1.7)(2.5,1.05) % x to y arc
   \put(.9,.13){$\bullet$}
\put(.8,-.04){$y$}
\put(1.8,.4){$T_5$}
\put(1.8,.8){$T_4$}
   }  % end first figure
             %% end second picture
\end{picture}}
}}
%\vs5
\caption{Cluster in $\cC_\pi^{\ZZ/p}$.}
\label{Figure Zp cluster}
\end{center}
\end{figure}

\begin{rem} Note that the description of the clusters, their quivers and exchange matrices, partially indicated above, is independent of $p$. And in fact it also applies to the case $p=2$ when $K=R/\mathfrak m$ is a field of characteristic 2. In that case, the exceptional objects $SE(x,x+\pi)$ are indecomposable with endomorphism ring $K[\e]$ with $\e^2=0$ by Remark \ref{char 2 case} and the theorems and formulas for the $\ZZ/p$ case apply. In particular this implies that for any cluster $\cL$ in $\cC_\pi^\psi$ in the case when $K$ has characteristic 2, and any cluster $\cL'$ in $\cC_\pi^{\ZZ/p}$ for any odd prime $p$, there is a homeomorphism of the circle that takes $\cL$ as a subset of the set of order points on the circle to $\cL'$. This is not a functor since these categories are defined over fields of different characteristic.

We also note that this particular example of orbifold cluster structure is given as just one example: $\widetilde{IV}$ in a complete classification of such structures (triangulations of surface orbifolds) in \cite{FSTu}.
\end{rem}

%\newpage

\section{Appendix: nonabelian stablizers}

In this appendix we assume that $G$ is a finite group acting on a Frobenius cyclic poset $(\cX,\cX_0,c)$ but we allow effective stabilizers $H_X$ to be any (finite) group. We know by Theorem \ref{FG is a Frobenius category} that $\cF^G(\cX,\cX_0)$ is Frobenius whose projective-injective objects are components of $SP$ for $P\in\cF(\cX_0)$. We will show that $\cF^G$ is Krull-Schmidt and describe all of the indecomposable objects. To make this easy, we will assume that $R$ is the complete local ring $R=\CC[[t]]$. 

Let $n=|G|$. We fix an object $X\in \cX$ and let $H=H_X$ with order $|H_X|=m$ and index $|G:H_X|=\el=n/m$. Let $\s_i, i=1,\cdots, \el$ be representatives of the left cosets of $H$ in $G$ so that $G=\coprod \s_iH$. For any $\b\in H$, we have a basic isomorphism $\b X\cong X$ and, since $\cF(\b X,X)\cong R$, any unit $a\in R$ also gives an isomorphism $T_a:\b X\cong X$. Similarly any element of $A\in GL(d,R)$ gives an isomorphism $T_A:\b X^d\cong X^d$. More generally, we will use the notation $[f]$ to denote the matrix with entries $[f]_{ij}\in R$ for any morphism $f:\oplus X_j\to \oplus Y_i$. In particular $T_{[f]}=f$ for any $f:\b X^d\cong X^d$. 

\emph{Warning:} Composition is not given by matrix multiplication! For $f:Y\to Z, g:X\to Y$, we have
\[
	[fg]_{ik}=t^n[f]_{ij}[g]_{jk}
\]
where $n=c(Z_i,Y_j,X_k)$.

\begin{defn}
Suppose that $X\in \cX$ and $\r:H\to GL(d,\CC)\subset GL(d,R)$ is a representation over $\CC$ of the effective stabilizer $H=H_X$ of $X$. Then, for every $\b\in H$, the $d\times d$ matrix $\r(\b)$ gives an isomorphism $T_{\r(\b)}:\b X^d\to X^d$.  Then let \[
S_\r X:=(\smallcoprod\s_i X^d,\xi)\]
where $\xi$ is defined as follows.

For every $\g\in G$ and every $\s_i$, we have $\g\s_i=\s_j\eta_j(\g)$ where $\s_j$ and $\eta_j(\g)\in H$ are uniquely determined by $\g$ and $\s_i$. Let $\xi_\g:\coprod\g\s_i X^d\to \coprod \s_jX^d$ be the isomorphism whose matrix $[\xi_\g]$ is monomial with $(j,i)$ block the $d\times d$ matrix $[\xi_\g]_{ji}=\r(\eta_j(\g))$ with entries in $\CC$ so that
\[
	(\xi_\f)_{ji}=T_{\r(\eta_j(\g))}:\g\s_i X^d=\s_j\eta_j(\g) X^d\to \s_j X^d
\]
Since each $\r(\eta_j(\g))$ is an invertible matrix, $\xi_\g:\g S_\r X\to S_\r X$ is an isomorphism. 
\end{defn}

To check that this is an object of $\cF^G(\cX,\cX_0)$, take any $\b\in G$. Then $\b\s_j=\s_k\eta_k(\b)$. So $\b\g\s_i=\b\s_j\eta_j(\g)=\s_k\eta_k(\b)\eta_j(\g)=\s_k\eta_k(\b\g)$ and we have:
\[
	{\xi_{\b\g}}=T_{\r(\eta_k(\b\g))_{ki}
	}:\coprod \b\g\s_i X^d\to\coprod \s_k X^d
\]
\[
	={\xi_\b} \circ{\b\xi_\g} =T_{ \r(\eta_k(\b))_{kj}
	}
	T_{ \r(\eta_j(\g))_{ji}
	}:\coprod \b\g\s_i X^d\to\coprod \b\s_j X^d\to\coprod \s_k X^d.
\]

We have the following basic properties of these objects.

\begin{prop}
\begin{enumerate}
\item Two representations $\r,\r':H\to GL(d,\CC)$ are equivalent, i.e. give isomorphism $\CC H$-modules, if and only if $S_\r X\cong S_{\r'}X$ as objects of $\cF^G$.
\item If $\ll,\mu$ are two representations of $H$ then $S_{\ll\oplus \mu}X\cong S_\ll X\oplus S_\mu X$.
\item $SX\cong S_\r X$ where $\r:H\to GL(m,\CC)$ is the regular representation of $H$.
\end{enumerate}
\end{prop}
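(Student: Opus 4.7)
The plan is to derive all three parts by unpacking the definition of $S_\rho X$, with (1) the only one requiring nontrivial work. The underlying key observation is that $\sigma_i X \not\cong \sigma_j X$ for distinct coset representatives of $H = H_X$ in $G$, so any morphism $\sigma_i X \to \sigma_j X$ with $i \neq j$ in $\cF$ lies in $\mathfrak{m}\cdot\cF(\sigma_i X, \sigma_j X)$; hence after reduction modulo $\mathfrak{m}$, every morphism between direct sums $\bigoplus \sigma_i X^{d_i}$ becomes block-diagonal in the coset indexing. For (2), which is the easiest, I would split $\CC^{d_1+d_2} = \CC^{d_1}\oplus\CC^{d_2}$ and note that each $(\lambda\oplus\mu)(\beta)$ is block-diagonal, hence so is each $\xi^{\lambda\oplus\mu}_\gamma$ under the induced decomposition $\sigma_iX^{d_1+d_2} = \sigma_iX^{d_1}\oplus\sigma_iX^{d_2}$; this exhibits $S_{\lambda\oplus\mu}X \cong S_\lambda X\oplus S_\mu X$ in $\cF^G$.

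For (1), the ``if'' direction is a direct construction: given an intertwiner $A \in GL(d,\CC)$ with $A\rho(\beta) = \rho'(\beta)A$ for all $\beta\in H$, I would define $f\colon S_\rho X\to S_{\rho'}X$ as the block-diagonal morphism acting as $A$ on each $\sigma_iX^d\to\sigma_iX^d$ via the identity basic morphism (and $0$ off-diagonal). Unpacking $f\xi^\rho_\gamma = \xi^{\rho'}_\gamma\circ\gamma f$ block-by-block then reduces, as $(\gamma,i)$ varies, to the family of equations $A\rho(\eta_{\sigma_\gamma(i)}(\gamma)) = \rho'(\eta_{\sigma_\gamma(i)}(\gamma))A$, all of which follow from the hypothesis. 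For the converse, I would take an isomorphism $f\colon S_\rho X\to S_{\rho'}X$ in $\cF^G$ and reduce its matrix modulo $\mathfrak{m}$; by the observation above $\bar f$ is block-diagonal in $M_{\ell d}(\CC)$, and invertibility of $f$ forces each $\bar f_{ii}$ to be invertible. Specializing $f\xi^\rho_\gamma = \xi^{\rho'}_\gamma\circ\gamma f$ to $\gamma\in H$ and reading off the $(1,1)$-block (using $\sigma_1 = 1$, $\sigma_\gamma(1) = 1$, $\eta_1(\gamma)=\gamma$ when $\gamma\in H$) yields $f_{11}\rho(\gamma) = \rho'(\gamma)f_{11}$ in $M_d(R)$, and reducing modulo $\mathfrak{m}$ produces the desired intertwiner $\bar f_{11} \in GL(d,\CC)$.

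For (3), I would regroup $G = \coprod_i \sigma_i H$ and use the basic isomorphisms $\sigma_i\beta X\cong\sigma_i X$ (one for each pair $(i,\beta)\in\{1,\ldots,\ell\}\times H$) to produce an $\cF$-isomorphism
\[
SX \;=\; \bigoplus_{\gamma\in G}\gamma X \;\cong\; \bigoplus_{i,\beta}\sigma_i\beta X \;\cong\; \bigoplus_i \sigma_i X^m,
\]
with the $m$ copies in each $\sigma_iX^m$ indexed by $\beta\in H$. Tracking the permutation $\xi_\gamma$ of $SX$ through this identification shows it sends the $(i,\beta)$-summand to the $(\sigma_\gamma(i), \eta_{\sigma_\gamma(i)}(\gamma)\beta)$-summand, which is exactly the rule defining $\xi^{\rho_{\mathrm{reg}}}_\gamma$ for the left regular representation $\rho_{\mathrm{reg}}(\eta)\colon e_\beta\mapsto e_{\eta\beta}$. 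Hence $SX\cong S_{\rho_{\mathrm{reg}}}X$.

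The hard part will be the converse direction of (1): one must track carefully that the $(1,1)$-block of the reduced commutation relation honestly reads $\bar f_{11}\rho(\gamma) = \rho'(\gamma)\bar f_{11}$, with no stray contributions from off-diagonal blocks. This reduces to verifying that for $\gamma\in H$ only the $(1,1)$ entry of $\xi^\rho_\gamma$ is nonzero in its first column, which in turn uses $\gamma\sigma_1 = \sigma_1\gamma$. Everything else is combinatorial bookkeeping about the action of $\gamma$ on the cosets $G/H$.
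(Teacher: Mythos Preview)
The paper states this proposition without proof, so there is nothing to compare against directly. Your arguments for (2), (3), and the forward direction of (1) are correct.

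For the converse of (1) there is a genuine slip in your ``key observation.'' The hom set $\cF(\s_iX,\s_jX)$ is free of rank one over $R$, generated by the basic morphism $f_{\s_iX,\s_jX}$; this generator is \emph{not} in $\mm\cdot\cF(\s_iX,\s_jX)$, and hence the reduction $\bar f$ is not block-diagonal in general. What is true --- and what is actually needed --- is that for $i\neq j$ the cocycle $c(\s_iX,\s_jX,\s_iX)$ is strictly positive (since $\s_iX\not\cong\s_jX$), so the \emph{composition} of basic morphisms $\s_iX\to\s_jX\to\s_iX$ equals $t^{c}\cdot id_{\s_iX}$ and lies in $\mm$. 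This is precisely the content of the paper's warning that composition is not given by matrix multiplication, and it is the mechanism used in the proof of the subsequent lemma. With this corrected observation, from $gf=id$ one reads off $[g_{ii}][f_{ii}]\equiv I_d\pmod t$ on each diagonal block, and the invertibility of $\bar f_{11}$ follows.

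Your analysis of the $(1,1)$-block of the commutation relation $f\,\xi_\g^\r=\xi_\g^{\r'}\circ\g f$ for $\g\in H$ is unaffected by this slip: since $\xi_\g$ is monomial with its only nonzero entry in the first column sitting at position $(1,1)$, only $f_{11}$ appears in that block, and the identity $[f_{11}]\r(\g)=\r'(\g)[f_{11}]$ holds in $M_d(R)$ exactly (the relevant cocycles $c(X,X,\g X)$ and $c(X,\g X,\g X)$ vanish by the reduced cocycle axiom). So the argument is easily repaired once the observation is stated correctly.
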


\begin{lem}
If $\r$ is an irreducible representation of $H=H_X$ over $\CC$ then $S_\r X$ is strongly indecomposable in $\cF^G$, i.e., its endomorphism ring is local.
\end{lem}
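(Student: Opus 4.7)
Following the approach of Lemma \ref{lem: all objects are strongly indecomposable} for the abelian case, my plan is to construct a ring homomorphism $\pi\colon E \to \CC$ where $E := \End_{\cF^G}(S_\rho X)$, and to show that $\ker \pi$ is the unique maximal ideal. For $f \in E$, consider the $(1,1)$-block $f_{11}\colon X^d \to X^d$ as a matrix $A \in M_d(R)$. The equivariance condition $f \circ \xi_\eta = \xi_\eta \circ (\eta f)$ restricted to this block, for any $\eta \in H$, simplifies because $\sigma_1 = e$ makes only one summand of $\xi_\eta$ contribute, yielding $A\rho(\eta) = \rho(\eta)A$. Writing $A = \sum_{n \ge 0} A_n t^n$ and applying Schur's lemma to each coefficient $A_n$ (valid since $\rho$ is absolutely irreducible over $\CC$) forces $A = c(t) I$ for some $c(t) \in R$. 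The same computation for $\gamma = \sigma_j$ yields $f_{jj} = \sigma_j(f_{11}) = c(t) I$, so every diagonal block equals $c(t) I$. I then define $\pi(f) := c(0) \in \CC$.

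To check $\pi$ is multiplicative I will expand the $(1,1)$-block of a composition: $(fg)_{11} = \sum_k f_{1k} g_{k1}$. The $k=1$ term contributes $c(t)c'(t) I$. For $k \ne 1$, the composition factors through $\sigma_k X^d$, and the basic-morphism identity $f_{\sigma_k X, X} \circ f_{X, \sigma_k X} = u^{c(X, \sigma_k X, X)} \mathrm{id}_X$ carries exponent $\ge 1$ because $\sigma_k \notin H$ forces $\sigma_k X \not\sim X$ in the cyclic poset. Hence $f_{1k} g_{k1} \in \mm \cdot M_d(R)$, and $\pi(fg) = \pi(f)\pi(g)$ in $\CC$.

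The main obstacle is showing that $\pi(f) \ne 0$ implies $f$ is a unit. I plan to establish this by showing the induced map $\bar\pi\colon E/\mm E \to \CC$ has nilpotent kernel; then, since $E$ is a finitely generated $R$-submodule of the free $R$-module $\End_\cF(F(S_\rho X)) \cong M_{\ell d}(R)$ with $\ell = [G:H]$, and $R$ is complete Noetherian, $E$ is $\mm$-adically complete, so the local structure of $E/\mm E$ lifts to make $E$ local with maximal ideal $\ker \pi$. For the nilpotency, I expand
\[
   (f^\ell)_{ji} = \sum_{k_1, \dots, k_{\ell-1}} f_{j, k_{\ell-1}} f_{k_{\ell-1}, k_{\ell-2}} \cdots f_{k_1, i}
\]
as a sum over paths $i = k_0, k_1, \dots, k_\ell = j$ of $\ell+1$ indices in $\{1, \dots, \ell\}$. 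Pigeonhole gives a pair $s < s'$ with $k_s = k_{s'}$; choosing the pair with $s' - s$ minimal ensures $k_{s'-1} \ne k_s$. An adjacent repeat ($s' = s+1$) contributes the diagonal factor $f_{k_s k_s} \in \mm \cdot M_d(R)$, since $f \in \ker\pi$ means $c(t) \in \mm$. A non-adjacent repeat yields a sub-loop whose composition of basic morphisms equals $u^N \mathrm{id}$ with $N \ge c(\sigma_{k_s}X, \sigma_{k_{s'-1}}X, \sigma_{k_s}X) \ge 1$, the last inequality coming from $\sigma_{k_{s'-1}}X \not\sim \sigma_{k_s}X$. Either way every term lies in $\mm \cdot M_d(R)$, so $f^\ell$ has all entries in $\mm \cdot M_d(R)$; since $u$ is central and a non-zero-divisor, $f^\ell/u$ is well-defined and (by the same cancellation) remains $G$-equivariant, hence lies in $E$, giving $f^\ell \in \mm E$. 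This completes the nilpotency argument and hence the proof that $E$ is local.
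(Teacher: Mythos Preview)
Your argument is correct and follows essentially the same strategy as the paper's proof: use equivariance plus Schur's lemma to see that each diagonal block of $f$ is a scalar $c(t)I$, define the character $\pi(f)=c(0)$, check multiplicativity via $c(\sigma_iX,\sigma_kX,\sigma_iX)>0$ for $k\neq i$, and then use a pigeonhole argument on paths of length $\ell=[G:H]$ to show that any $f\in\ker\pi$ has $f^\ell$ with all block entries in $\mm$. The only differences are packaging: the paper relates $f_{ii}$ to $f_{jj}$ using $\gamma\in\sigma_jH\sigma_i^{-1}$ directly (rather than first $\gamma\in H$ and then $\gamma=\sigma_j$), and for invertibility the paper simply writes down the convergent geometric series $1-g+g^2-\cdots$ in $M_{\ell d}(R)$, whereas you route through the extra observation $f^\ell\in\mm E$ and invoke completeness of $E$ over $R$ to lift the local structure of $E/\mm E$. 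Both are valid; note only that in this appendix the uniformizer is $t$ (with $R=\CC[[t]]$), not $u$.
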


\begin{proof}
Let $f\in \End(S_\r X)$ with components $f_{ji}\in\cF(\s_i X^d,\s_j X^d)$. By definition of a morphism in $\cF^G$ we have the following commuting diagram (on the left) for any $\g\in G$. Since $\xi\g$ is monomial, this restricts to the commutative diagram on the right if $\g\in \s_jH\s_i^{-1}$.
\[
%\xymatrixrowsep{10pt}\xymatrixcolsep{10pt}
\xymatrix{%begin xy matrix
\g S_\r X\ar[r]^{\g f}\ar[d]_{\xi_\g}& S_\r X\ar[d]^{\xi_\g} &&\g \s_iX^d\ar[d]_{(\xi_\g)_{ji}}\ar[r]^{\g f_{ii}} &
	\g \s_iX^d\ar[d]^{(\xi_\g)_{ji}}\\
\g S_\r X\ar[r]^f& S_\r X && \s_jX^d \ar[r]^{f_{jj}}& 
	\s_jX^d
	}%end xy matrix
\]
By definition, the matrix of $(\xi_\g)_{ji}$ is $\r(\b)$ where $\g=\eta_j(\g))$. Since $\g f_{ii},f_{ii}$ have the same matrix $[\g f_{ii}]=[f_{ii}]$, we get: $[f_{jj}]\r(\b)=\r(\b)[f_{ii}]$. This holds for all $\b\in H$ and for all $i,j$. (Take $\g=\s_j \b\s_i^{-1}$.)

Take $i=j$. Then we conclude from Schur's lemma that $[f_{ii}]$ is a scalar, say $a_i\in R$ times the identity matrix $I_d$. So, $f_{ii}$ is $a_i$ times the identity map on $\s_i X^d$. Take $i\neq j$. Then we see that $a_i=a_j$. So, $a_i=a_1$ for all $i$. Let $\ov a_1$ be the image of $a_1$ in $\CC$.

\emph{Claim 1}: The map $\f:End(S_\r X)\to \CC$ which sends $f$ to $\ov a_1$ is a ring homomorphism.

Proof: Clearly $\f$ is $\CC$-linear. So, suppose $h=f\circ g$ where $f,g\in\End(S_\r X)$. Then for any $j\neq i$, $f_{ij}g_{ji}$ will have entries in $t^k R$ where $k=c(\s_iX,\s_jX,\s_iX)>0$. Therefore, module $(t)$, $h_{ii}=f_{ii}g_{ii}$. So, $\f(h)=\f(f)\f(g)$.

\emph{Claim 2}: $f\in\End(S_\r X)$ is invertible if and only if $\f(f)\neq0$. 

Proof: Certainly this condition is necessary. To show it is sufficient, it suffices to show that any element of the form $1+g$ where $g$ is in the kernel of $\f$ is invertible. This follows from the fact that, given any $g$ in the kernel of $\f$, we have that $g^\el$ (where $\el=|G:H|$) has all entries divisible by $t$ and thus $1-g+g^2-g^3+\cdots$ converges to the inverse of $1+g$. To see this note that, in block form, any entry of $g^\el$ is a sum of products of $\el$ blocks:
\[
	 g_{b_{\el}b_{\el-1}} \cdots g_{b_2b_1} g_{b_1b_0}
\]
Since there are only $\el$ possible subscripts, two of them must be equal, say $b_i=b_j$. In that case the product $g_{b_i\ast}\cdots g_{\ast b_j}$ is divisible by $t$ by the proof of Claim 1.
\end{proof}

\begin{thm}
Every object of the Frobenius category $\cF^G$ is isomorphic to a direct sum of strongly indecomposable objects of the form $S_\r X$ where $\r$ is an irreducible complex representation of $H_X$. Furthermore, two such objects $S_\ll X, S_\mu Y$ are isomorphic if and only if there is some $\g\in G$ so that
\begin{enumerate} 
\item $Y\cong \g X$. In particular, $H_Y=\g H_X\g^{-1}$. And 
\item $\ll$ is conjugate to the composition of $\mu$ with the isomorphism $H_X\to\g H_X\g^{-1}$ given by conjugation with $\g$.
\end{enumerate}
\end{thm}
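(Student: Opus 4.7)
My plan is to prove the theorem in three stages: decompose $SX$ for each $X\in\cX$, reduce arbitrary objects of $\cF^G$ to the $SX$ case via the adjunction, and classify the indecomposables up to isomorphism. The argument will be a nonabelian upgrade of the abelian case treated in Lemma \ref{decomposition of singular SX}, with Maschke's theorem for nonabelian groups replacing the character-table computation and Azumaya's theorem packaging the strong indecomposability of each $S_\r X$ (proved in the lemma preceding the theorem) into a Krull--Schmidt-type conclusion.

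For the first stage, I will show that $SX\cong S_{\mathrm{reg}}X$, where $\mathrm{reg}\colon H\to GL(|H|,\CC)$ is the left regular representation of $H=H_X$, and then invoke the Maschke decomposition $\mathrm{reg}\cong\bigoplus_\r\r^{\dim\r}$ to obtain $SX\cong\bigoplus_\r S_\r X^{\dim\r}$. The identification $SX\cong S_{\mathrm{reg}}X$ is a direct matching of definitions: under the identification of summands $\bigoplus_{\g\in G}\g X=\bigoplus_{i=1}^\el\bigoplus_{h\in H}\s_i h X$, both the permutation structure $\xi$ of $SX$ and the structure $\xi^{\mathrm{reg}}$ of $S_{\mathrm{reg}}X$ send the $(i,h)$-summand of $\g SX$ to the $(j,\eta_j(\g)h)$-summand of $SX$ by the identity, where $\g\s_i=\s_j\eta_j(\g)$. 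Maschke's theorem applies because $|H|$ is invertible in $R$, and the decomposition is witnessed by the central idempotents $e_\r=\frac{\dim\r}{|H|}\sum_{h\in H}\overline{\chi_\r(h)}\,h\in\CC[H]\subset R[H]$; combining with property (2) of the proposition stated above then gives $SX\cong\bigoplus_\r S_\r X^{\dim\r}$.

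For the second stage, any $(Z,\xi)\in\cF^G$ is a direct summand of $SZ$ in $\cF^G$: the unit $\eta\colon(Z,\xi)\to SZ$ and counit $\varepsilon\colon SZ\to(Z,\xi)$ of the adjunction $F\dashv S$ from Proposition \ref{adjunction formula} have $\g$-components $\xi_\g^{-1}$ and $\xi_\g$ respectively, so $\varepsilon\circ\eta=\sum_\g 1_Z=|G|\cdot 1_{(Z,\xi)}$, which is a unit since $|G|\in R^\times$. By the first stage, $SZ$ is a finite direct sum of strongly indecomposable $S_\r Y$'s, so Azumaya's theorem forces $(Z,\xi)$ itself to be a direct sum of such. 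For the third stage, if $\phi\colon S_\r X\to S_\mu Y$ is an isomorphism in $\cF^G$, Krull--Schmidt in $\cF$ applied to the underlying $\cF$-isomorphism forces $Y\cong\g X$ for some $\g\in G$, whence $H_Y=\g H_X\g^{-1}$. Rewriting the definition of $S_\mu Y$ using coset representatives $\{\s_i\g^{-1}\}$ of $H_Y$ identifies $S_\mu Y\cong S_{\mu\circ c_\g}X$, where $c_\g(h)=\g h\g^{-1}$; this reduces the classification to $X=Y$, and then $S_\r X\cong S_\mu X\Leftrightarrow\r\cong\mu$ follows by applying Azumaya to the two expressions $SX\cong\bigoplus_\nu S_\nu X^{\dim\nu}$.

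The main technical work is in the third stage: verifying the identification $S_\mu(\g X)\cong S_{\mu\circ c_\g}X$ directly from the definitions requires tracking that the cocycle $\eta_j(\g')\in H_Y$ relative to the coset representatives $\tau_i=\s_i\g^{-1}$ of $H_Y$ equals $c_\g(\eta_j(\g'))$, where on the right-hand side $\eta_j(\g')\in H_X$ is the cocycle relative to the representatives $\s_i$ of $H_X$. This computation, together with the Maschke decomposition in the first stage, is the essential new ingredient beyond the abelian case of Lemma \ref{decomposition of singular SX}.
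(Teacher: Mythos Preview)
Your stages 1 and 2 are correct and coincide with the paper's argument: both use the unit--counit composite $\varepsilon\circ\eta=|G|\cdot 1_{(Z,\xi)}$ to exhibit $(Z,\xi)$ as a summand of $SZ$, and both decompose $SX$ via the regular representation of $H_X$ (the paper records this as part (3) of the ``basic properties'' proposition just before the strong-indecomposability lemma). Your reduction $S_\mu(\g X)\cong S_{\mu\circ c_\g}X$ by passing to the coset representatives $\{\s_i\g^{-1}\}$ of $H_Y=\g H_X\g^{-1}$ is a clean maneuver that the paper does not make explicitly; the paper instead works directly with the isomorphism $S_\mu Y\to S_\ll X$ without first reducing to the case $X=Y$.

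There is, however, a gap in the last step of your stage 3. You assert that $S_\r X\cong S_\mu X\Leftrightarrow\r\cong\mu$ ``follows by applying Azumaya to the two expressions $SX\cong\bigoplus_\nu S_\nu X^{\dim\nu}$''. Azumaya's theorem only gives essential uniqueness of a decomposition into strongly indecomposables; applied to the \emph{same} decomposition twice it yields no information, and in particular it does not show that the $S_\nu X$ for distinct irreducible $\nu$ are pairwise nonisomorphic. You have no independent count of the isomorphism classes of summands of $SX$ to compare against the number of irreducibles of $H_X$. The paper closes this gap by a direct matrix computation: given an $\cF^G$-isomorphism $f\colon S_\mu Y\to S_\ll X$, Krull--Schmidt in $\cF$ singles out one index $i$ with $Y\cong\s_i X$, and the corresponding block $f_{i1}\colon Y^d\to\s_i X^d$ is then invertible. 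The equivariance condition $f\circ\xi^\mu_\b=\xi^\ll_\b\circ\b f$ restricted to this block yields the matrix identity $[f_{i1}]\,\mu(\b)=\ll(\s_i^{-1}\b\s_i)\,[f_{i1}]$ for all $\b\in H_Y$, which is exactly the intertwining relation (2). This is the same Schur-type manipulation already used in the preceding lemma to show $\End(S_\r X)$ is local; you have the tool in hand and only need to apply it here in place of the Azumaya appeal.
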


\begin{proof}
Given any object $(Y,\xi)$ in $\cF^G$, the maps
\[
	(Y,\xi)\to SY\to (Y,\xi)
\]
which are adjoint to the identity on $Y$ are equal to $\sum \xi_\g^{-1}$ and $\sum \xi_\g$. Their composition is $\sum \xi_\g\xi_\g^{-1}$ which is multiplication by $n=|G|$. Therefore, $(Y,\xi)$ is a sum of components of $SY$ and these all have the form $S_\r X$ for indecomposable $X$ and irreducible $\r$.

If $f: S_\mu Y\cong S_\ll X$ is an isomorphism in $\cF^G$ then, by counting components in $\cF$, we see that $\mu,\ll$ have the same degree, say $d$. As an isomorphism in $\cF$, the composition $f_{i1}:Y^d=\s_1 Y^d\into S_\mu Y\to S_\ll X\onto \s_i X^d$ is an isomorphism for some $i$ proving (1) for $\g=\s_i$. Then $H_Y=\s_i H_X\s_i^{-1}$. For any $\b\in H_Y$, we have the following commuting diagram where $\b'=\s_i^{-1}\b\s_i\in H_X$:
\[
%\xymatrixrowsep{10pt}\xymatrixcolsep{10pt}
\xymatrix{%begin xy matrix
\b Y^d\ar[d]_{T_{\mu(\b)}}\ar[r]^{\b f_{i1}} &
	\b \s_iX^d\ar[d]^{T_{\ll(\b')}}\\
 Y^d \ar[r]^{f_{i1}}& 
	\s_i X^d
	}%end xy matrix
\]
This gives the matrix equation $[f_{i1}]\mu(\b)=\ll(\s_i^{-1}\b\s_i)[f_{i1}]$. In other words conjugation of $\mu$ by the invertible matrix $[f_{i1}]$ is equal to the composition of $\ll$ with the isomorphism $H_Y\to H_X$ given by conjugation by $\s_i^{-1}$. This is equivalent to (2).

It is easy to see that, conversely, (1) and (2) imply that $S_\mu Y\cong S_\ll X$.
\end{proof}

%\newpage

%\bibliography{onlineBib2010}

\begin{thebibliography}{aa}

\bibitem{BM94}
Apostolos Beligiannis and Nikolaos Marmaridis, \emph{Left triangulated
  categories arising from contravariantly finite subcategories}, Comm. Algebra
  \textbf{22} (1994), no.~12, 5021--5036.

\bibitem{BIRSc}
A.~B. Buan, O.~Iyama, I.~Reiten, and J.~Scott, \emph{Cluster structures for 2-{C}alabi-{Y}au categories and unipotent groups}, Compos. Math. \textbf{145} (2009), no.~4, 1035--1079.

 \bibitem{BMRRT}
Aslak~Bakke Buan, Robert~J. Marsh, Idun Reiten, and Gordana Todorov,
  \emph{Tilting theory and cluster combinatorics}, Adv. Math. \textbf{204}
  (2006), no.~2, 572--618.
  
  \bibitem{CCS}
P.~Caldero, F.~Chapoton, and R.~Schiffler, \emph{Quivers with relations arising
  from clusters ({$A_n$} case)}, Trans. Amer. Math. Soc. \textbf{358} (2006),
  no.~3, 1347--1364.
  
    
  
\bibitem{FSTu}
Anna Felikson, Michael Shapiro, and Pavel Tumarkin, \emph{Cluster algebras and triangulated orbifolds}, arXiv:1111.3449.

  
\bibitem{FST}
Sergey Fomin, Michael Shapiro, and Dylan Thurston, \emph{Cluster algebras and triangulated surfaces. {I}. {C}luster complexes}, Acta Math. \textbf{201}
  (2008), no.~1, 83--146.
  
\bibitem{HappelBook}
Dieter Happel, \emph{Triangulated categories in the representation theory of
 finite dimensional algebras}, London Math. Soc. Lecture Note Ser., vol. 119,
 Cambridge Univ. Press, Cambridge, 1988.
 
 \bibitem{HJ12}
 T. Holm and P. J\o rgensen, \emph{On a cluster category of infinite Dynkin type, and the relation to triangulations of the infinity-gon}, Math. Z. 270 (2012), 277--295
 
 \bibitem{HJ11}
\bysame, \emph{Realizing higher cluster categories of Dynkin type as stable module categories}, arXiv:1110.0171.

\bibitem{IT09} K. Igusa and G. Todorov, \emph{Continuous Frobenius categories}, arXiv:1209.0038, to appear in Proceedings of the Abel Symposium 2011.
 
\bibitem{IT10} \bysame, \emph{Continuous cluster categories I}, arXiv:1209.1879.
 
%\bibitem{ITccc2} K. Igusa and G. Todorov, \emph{Continuous cluster categories II: Continuous cluster-tilted categories}, in preparation.

\bibitem{IT11} \bysame, \emph{Cluster categories coming from cyclic posets}, arXiv:1303.6697.




\bibitem{K} B. Keller, \emph{Chain complexes and stable categories}, Manuscripta Math 67 (1990), 379--417.

\bibitem{KellerReiten} B. Keller and I. Reiten, \emph{Acyclic Calabi-Yau categories, with an appendix by Michel Van den Bergh}, Compos. Math. 144 (2008), 1332--1348.

\bibitem{Neeman} Amnon Neeman, \emph{Triangulated categories}, Ann of Math Studies 148, (2001), Princeton University Press.


\bibitem{S}
Ralf Schiffler,  \emph{A geometric model for cluster categories of type {$D_n$}}, J.
  Algebraic Combin. \textbf{27} (2008), no.~1, 1--21.

  
  
\end{thebibliography}

%%%%%%%%%%%%%%%%%%%%%%%%%%%%%%%%%%%
\end{document}
%%%%%%%%%%%%%%%%%%%%%%%%%%%%%%%%%%%
%